\setlist[itemize]{topsep=0.5em, itemsep=0em, parsep=0em, left=1.5em}
\setlist[enumerate]{topsep=0.5em, itemsep=0em, parsep=0em, left=1.5em}
\newtheoremstyle{boldplain}
  {\topsep}   % space above
  {\topsep}   % space below
  {\normalfont}  % body font (corsivo)
  {0pt}       % indent amount
  {\bfseries} % head font (grassetto)
  {.}         % punctuation after head
  {.5em}      % space after head
  {\thmname{#1}\thmnumber{ #2}\thmnote{ (#3)}} 
\newtheoremstyle{bolddefinition}
  {\topsep}   % space above
  {\topsep}   % space below
  {\normalfont} % body font (corsivo)
  {0pt}         % indent amount
  {\bfseries}   % head font (grassetto)
  {.}           % punctuation after head
  {.5em}        % space after head
  {\thmname{#1}\thmnumber{ #2}\thmnote{ (#3)}}
\newtheoremstyle{indentedremark} 
{\topsep} % spazio sopra 
{\topsep} % spazio sotto 
{\normalfont\leftskip=0.5cm} % font del corpo (dritto) E CON INDENTAZIONE GLOBALE 
{0pt} % indent amount
{\itshape} % font del titolo (corsivo) 
{.} % punteggiatua dopo il titolo 
{.5em} % spazio dopo il titolo 
{} % spec. custom
\theoremstyle{boldplain}
\newtheorem{theorem}{Theorem}[section]
\newtheorem{lemma}[theorem]{Lemma}
\newtheorem{proposition}[theorem]{Proposition}
\theoremstyle{bolddefinition}
\newtheorem{definition}[theorem]{Definition}
\newtheorem{example}[theorem]{Example}
\theoremstyle{indentedremark}
\newtheorem{remark}[theorem]{Remark}
\newcommand{\N}{\mathbb{N}}
\DeclareMathOperator{\Hom}{Hom}
\DeclareMathOperator{\End}{End}
\DeclareMathOperator{\Aut}{Aut}
\DeclareMathOperator{\id}{id}
\begin{document}

% --- TITOLO, AUTORE, DATA ---
\title{A gentle introduction to Algebraic Operads}
\author{Felicia Ferraioli}
\date{} 

\maketitle

% --- ABSTRACT ---
\begin{abstract}
This review introduces the theory of algebraic operads, a formalism that unifies many branches of modern algebra. The central idea is that key algebraic structures — associative, commutative, and Lie algebras — can be understood as algebras over specific operads, known as the “three graces” of algebra: $\mathsf{As}$, $\mathsf{Com}$, and $\mathsf{Lie}$.
We present two complementary perspectives on operads. The first is the classical definition, which is shown to be equivalent to the structure of a multicategory with a single object. The second is a constructive approach, where operads are defined by generators and relations, extending familiar notions such as free objects and quotients to the operadic setting.
A main result discussed is the categorical equivalence between the category of algebras over each of the operads $\mathsf{As}$, $\mathsf{Com}$, and $\mathsf{Lie}$ and the corresponding classical algebraic categories. This shows that operad theory provides a unifying language capable of encoding entire algebraic theories within single mathematical objects.
This review is intended for undergraduate students and provides an accessible primer on the language and ideas of operad theory.
\end{abstract}

% --- INDICE ---
\tableofcontents

\cleardoublepage 
\markboth{}{}    

\section*{Acknowledgments}

I once reflected on an interesting thought experiment: if I could lay my internal organs out on a table and listen to my heart beat, I would not say, "That is me", but rather, "That is my heart". It is a subtle distinction between what we possess and what we are. And I firmly believe that we are what we love.

\bigskip

\noindent I love Mathematics. I used to think that passion was its only driving force, but that is not entirely accurate: one is also taught to love it. This document is an adaptation of my bachelor's thesis in Mathematics at the Università degli Studi di Salerno and it is a testament to that teaching. For this, I wish to express my deepest gratitude to my supervisors, Professor Chiara Esposito, Professor Luca Vitagliano, and Dr. Marvin Dippell. They guided me through this work with expertise and passion, allowing me to witness firsthand its beauty and sharing with me an authentic view on research. I am thankful for their support, their inspiration, and for contributing to the image of the person and researcher I aspire to become.

\bigskip

\noindent Finally, I wish to extend my heartfelt gratitude to my family and friends. Their unwavering support, encouragement, and patience were the foundation upon which this work was built. This achievement is as much theirs as it is mine.

\cleardoublepage 
\markboth{}{}    

\section{Introduction}
\label{sec:1}
\begin{flushright}
\textit{“Mathematics is the art of giving the same name to different things.”}\\
\smallskip
\textbf{Henri Poincaré}
\end{flushright}
Abstraction lies at the heart of mathematical thought. Across many areas of mathematics, we encounter sets equipped with operations — addition of real numbers, function composition, matrix multiplication — that, despite acting on different objects, share key structural properties: associativity, and often the presence of a neutral element. These shared features motivate the definition of an abstract algebraic structure known as a \emph{monoid}.

Rather than treating each example independently, it is often more fruitful to extract and formalize the common properties and work directly with the abstract structures that embody them. For instance, the associativity axiom ensures that any product $x_1 x_2 \cdots x_n$ in a monoid is well-defined regardless of how it is parenthesized. This kind of abstraction allows us to formulate general theorems that apply across multiple contexts, avoiding redundancy and providing conceptual clarity.

Many familiar algebraic structures — semigroups, monoids, groups, rings, vector spaces — also share deeper patterns. Most are built around associative operations, often accompanied by neutral elements or symmetries. This observation leads naturally to a further level of abstraction: rather than studying each type of algebra separately, we can ask whether there exists a general framework capable of describing whole families of such structures. \emph{Operads} provide precisely this kind of meta-algebraic formalism: they are designed to describe operations (of varying arity) and the rules for composing them \cite{samchuckschnarch}.

This review explores the theory of algebraic operads. A particularly insightful analogy appears in \cite{hilgerponcin}:
\begin{quotation}
“Operads are to algebras, what algebras are to matrices, or, better, to representations.”
\end{quotation}
Just as algebras describe how operations act on a space, operads encode the types of algebraic structures compatible with those operations. When viewed as algebraic theories, operads give rise to algebras — models that realize the abstract operations in concrete terms.

One way to understand operads is as a generalization of the notion of representation. In classical theory, a representation of an algebra is a homomorphism into $\End(V)$, the algebra of linear endomorphisms of a vector space. Similarly, a representation of an operad — also known as an \emph{algebra over an operad} — is a morphism into the collection ${\End}_V = \{ \Hom(V^{\otimes n}, V) \}_{n \geq 0}$, which encodes all multilinear operations on $V$. In this sense, operads extend the representation-theoretic perspective to include $n$-ary operations and their compositional structure.

Although the formal definition of operads first appeared in the 1970s in the work of May \cite{may_geometry}, the ideas trace back to earlier developments, such as Stasheff’s work on associahedra \cite{Stasheff1963} and the theory of operator categories by Boardman and Vogt \cite{boardmanvogt}. May coined the term \emph{operad} — a blend of ``operation'' and ``monad'' — to describe the structure of loop spaces in algebraic topology.

Since the 1990s, operad theory has experienced a remarkable resurgence, expanding far beyond its topological origins. Its influence is particularly profound in mathematical physics, where it provides the fundamental language for describing phenomena in quantum field theory and string theory. A prime example is Kontsevich's celebrated Formality theorem which proves the existence and classification of star products on any Poisson manifolds \cite{Kontsevich2003}. This result relies on the structure of $L_\infty$-algebras, a type of ``Lie algebra up to homotopy'' whose properties are naturally governed by an operad. The conceptual power of this approach was confirmed when Tamarkin provided a groundbreaking new proof of the formality theorem by masterfully employing the language of operads, specifically through the resolution of the little discs operad \cite{Tamarkin1998}. The theory of formality remains today an extremely active field of research, as demonstrated by many subsequent developments, e.g.,  \cite{Calaque2005}, \cite{CalaqueWillwacher2015}, \cite{Dolgushev2005a}, \cite{esposito:2021}, \cite{esposito:2025}, \cite{Willwacher2015}.

Furthermore, operads have become indispensable in string theory. The algebraic structure of string interactions in String Field Theory is governed by homotopy algebras, and the Batalin-Vilkovisky (BV) formalism, an essential framework for quantization, is formally described as the structure of an algebra over the BV operad \cite{Getzler1994, Zwiebach1993}. This operadic perspective has unveiled deep connections, for instance, by showing that the classic Feynman diagrams correspond graphically to the compositions defined by the Homotopy Transfer Theorem, a foundational result in homological algebra with historical roots in the work of Kadeishvili \cite{Kadeishvili1980} and whose modern treatment is central to operad theory \cite{lodayvallette, vallette14}.

The influence of operads extends deeply into algebraic geometry and its applications to topological string theory. The homology of the Deligne-Mumford moduli spaces of stable curves \cite{DeligneMumford1969}, central objects in these theories, is naturally endowed with an operad structure. This discovery has led to profound insights into problems such as the intersection theory on these spaces \cite{Kontsevich1992, Getzler1995, GetzlerKapranov1995}.

Of course, operad theory remains central to its native soil of algebraic topology. It was originally introduced by May to describe the algebraic structure of iterated loop spaces \cite{may_geometry} and now provides the definitive language for ``algebras up to homotopy,'' such as the $A_\infty$-algebras pioneered by Stasheff \cite{Stasheff1963}.

Finally, operads offer a powerful unifying perspective on algebra itself. They allow for the construction of a general cohomology theory that subsumes classical theories like Hochschild, Chevalley-Eilenberg, and Harrison cohomology \cite{lodayvallette}. Tools like Koszul duality, extended to the operadic context by Ginzburg and Kapranov \cite{GinzburgKapranov1994}, have also become fundamental in studying the structure and properties of algebras.

One of the reasons for this renewed interest is the unifying power of operads. The motivation behind this review was to show how a wide variety of algebraic structures — often studied independently — can be elegantly and systematically described using operadic language. In particular, we will see how associative, commutative, and Lie algebras are governed by three fundamental operads, known in the literature as the \emph{“three graces”} of algebra: {$\mathsf{As}$}, {$\mathsf{Com}$}, and {$\mathsf{Lie}$}. This approach not only simplifies many classical proofs, but also offers a more conceptual understanding of the results and suggests natural generalizations. As Leinster puts it:
\begin{quote}
“Is it just language? There would be no shame if it were: language can have the most profound effect. A new language can make new concepts thinkable and render old, seemingly obscure concepts suddenly natural and obvious.”
\end{quote}
At the same time, the elegance of operad theory is known for its technical prerequisites. Many standard references assume prior knowledge of homotopical algebra or higher category theory. This review aims to offer a \emph{self-contained and accessible introduction to algebraic operads} for readers with a solid background in abstract algebra and category theory. The goal is to develop the theory using elementary tools, emphasizing how operads encode algebraic theories and providing a constructive approach via generators and relations.
A key methodological feature of this work is the systematic use of \emph{tree diagrams}. Going beyond their traditional role as simple illustrations, they are formalized here through graph-theoretic language to rigorously express compositional rules.

The main sources of inspiration are the books \emph{Algebraic Operads} by Loday and Vallette \cite{lodayvallette}, and \emph{Operads in Algebra, Topology and Physics} by Markl, Schnider, and Stasheff \cite{markl_shnider_stasheff}, along with more recent lecture notes by Samchuk-Schnarch \cite{samchuckschnarch}, Hilger and Poncin \cite{hilgerponcin}, and Vallette \cite{vallette14}.
The literature on operads is vast, and further important references include, e.g., \cite{krizmay},  \cite{smirnov}, and the works of Fresse \cite{fresse2009,fresse2017}.

The work is structured into five sections. Section 2 introduces the necessary mathematical background on category and representation theory. Section 3 presents the formal definition of operads, develops key examples like As and Com, and relates them to categories of algebras. Section 4 explores equivalent formulations to highlight their flexibility, while Section 5 uses a constructive approach based on generators and relations to define free operads and the Lie operad. Finally, Section 6 concludes with a survey of generalizations (such as colored operads, properads, and PROPs) and their applications.

\section{Preliminary Notions}
\label{chap:preliminaries}

In this section we introduce the mathematical concepts and tools that are prerequisites for the study of the theory of algebraic operads. We begin with a review of category theory, whose formal language is indispensable for defining the notion of an operad. Subsequently, we recall elements of the representation theory of groups and algebras, which provide the context for understanding the motivations of this work.

\subsection{Category Theory}

Category theory offers the language and conceptual foundations to rigorously define operads, the object of study of this review. The central idea of this approach is to abstract the properties of mathematical structures not by focusing on the elements of sets, but on the maps (or morphisms) that preserve their structure.

In this subsection, we  therefore introduce the basic notions of category, object, morphism, and composition. Subsequently, we  extend the discussion to more structured contexts, defining symmetric monoidal categories \cite{maclane98}. This richer environment is the natural setting in which the modern theory of operads develops, allowing for the description of the composition of operations in domains such as vector spaces, which will be of primary interest in the subsequent sections of this work.

\begin{definition}[Category]
A \emph{category} $\mathscr{C}$ consists of the following data:
\begin{enumerate}
    \item A collection (e.g., a set or a proper class) of \emph{objects}, denoted by $\mathrm{Ob}(\mathscr{C})$.
    \item For every pair of objects $x, y \in \mathrm{Ob}(\mathscr{C})$, a collection of \emph{morphisms} from $x$ to $y$, denoted by $\Hom_{\mathscr{C}}(x,y)$ or simply $\Hom(x,y)$ when the category is clear.
    \item For every triple of objects $x, y, z \in \mathrm{Ob}(\mathscr{C})$, a \emph{composition} function $\circ: \Hom(y,z) \times \Hom(x,y) \rightarrow \Hom(x,z)$, which we usually write as $\circ(g,f) =: g \circ f$.
    \item For every object $x \in \mathrm{Ob}(\mathscr{C})$, an element $id_x \in \Hom(x,x)$ called the \emph{identity morphism} for $x$.
\end{enumerate}
Morphisms from $x$ to $y$ are also called morphisms with \emph{domain} $x$ and \emph{codomain} $y$, morphisms with \emph{source} $x$ and \emph{target} $y$, or \emph{arrows} from $x$ to $y$. For any morphism $f \in \Hom(x,y)$, we write $f:x \rightarrow y$ to indicate its source and target. We also require the following axioms to be satisfied:
\begin{itemize}
    \item Composition is associative. That is, for any quadruple of objects $w,x,y,z \in \mathrm{Ob}(\mathscr{C})$ and triple of morphisms $h \in \Hom(w,x)$, $g \in \Hom(x,y)$, and $f \in \Hom(y,z)$, we have $(f \circ g) \circ h = f \circ (g \circ h)$.
    \item Identity morphisms act as neutral elements for composition. That is, for any $x, y \in \mathrm{Ob}(\mathscr{C})$ and $f \in \Hom(x,y)$, we have $id_y \circ f = f = f \circ id_x$.
\end{itemize}
\end{definition}

\begin{example}
\label{ex:category_examples}
We present two important examples of categories, which will play a central role later on.

\begin{itemize}
    \item The \emph{category of sets}, denoted by $\texttt{Set}$. The objects are sets and the morphisms are functions between sets. Composition is the usual composition of functions.
    \item The \emph{category of vector spaces} over a field $\mathbb{K}$, denoted by $\texttt{Vect}$. The objects are vector spaces over $\mathbb{K}$ and the morphisms are $\mathbb{K}$-linear maps. Composition is the usual composition of linear functions.
\end{itemize}
\end{example}

\begin{definition}[Inverse morphism, isomorphism]
Let $\mathscr{C}$ be a category, and $f \in \Hom(x,y)$ any morphism in $\mathscr{C}$. A morphism $g \in \Hom(y,x)$ is called an \emph{inverse morphism} of $f$ if $g \circ f = id_x$ and $f \circ g = id_y$. We write $g = f^{-1}$ for the inverse of $f$. A morphism that has an inverse is called an \emph{isomorphism}.
\end{definition}

\begin{remark}
    Inverse morphisms, when they exist, are unique.
\end{remark}

\begin{samepage}
\begin{definition}[Terminal object, initial object]
Let $\mathscr{C}$ be a category.
\begin{enumerate}
    \item An object $t \in \mathrm{Ob}(\mathscr{C})$ is called a \emph{terminal object} if for every object $x \in \mathrm{Ob}(\mathscr{C})$, there exists a unique morphism from $x$ to $t$.
    \item An object $i \in \mathrm{Ob}(\mathscr{C})$ is called an \emph{initial object} if for every object $x \in \mathrm{Ob}(\mathscr{C})$, there exists a unique morphism from $i$ to $x$.
\end{enumerate}
\end{definition}
\end{samepage}

\begin{samepage}
\begin{example}
\leavevmode
\begin{itemize}
    \item In \texttt{Set}, any singleton set is a terminal object and the empty set is the unique initial object.
    \item In \texttt{Vect}, the zero vector space is a terminal object and the zero vector space is an initial object.
\end{itemize}
\end{example}
\end{samepage}

\begin{definition}[Product category]
Let $\mathscr{C}$ and $\mathscr{D}$ be categories. The \emph{product category} $\mathscr{C} \times \mathscr{D}$ is defined as follows:
\begin{enumerate}
    \item The collection of objects in $\mathscr{C} \times \mathscr{D}$ consists of ordered pairs $(c,d)$, where $c \in \mathrm{Ob}(\mathscr{C})$ and $d \in \mathrm{Ob}(\mathscr{D})$.
    \item For any $(x,x'),(y,y') \in \mathrm{Ob}(\mathscr{C} \times \mathscr{D})$, we define $\Hom\bigl((x,x'),(y,y')\bigr)$ as the set of ordered pairs $(f,a)$, where $f \in \Hom_{\mathscr{C}}(x,y)$ and $a \in \Hom_{\mathscr{D}}(x',y')$.
    \item For any $(f,a) \in \Hom\bigl((x,x'),(y,y')\bigr)$ and $(g,b) \in \Hom\bigl((y,y'),(z,z')\bigr)$, the composition in $\mathscr{C} \times \mathscr{D}$ is defined by: $(g,b) \circ (f,a) \coloneqq (g \circ f, b \circ a)$.
    \item The identity morphisms in the product category are the pairs of identity morphisms from $\mathscr{C}$ and $\mathscr{D}$. That is, $id_{(c,d)} \coloneqq (id_c, id_d)$ for every $(c,d) \in \mathrm{Ob}(\mathscr{C} \times \mathscr{D})$.
\end{enumerate}
\end{definition}

\begin{definition}[Functor]
Let $\mathscr{C}$ and $\mathscr{D}$ be categories. A \emph{functor} $F$ from $\mathscr{C}$ to $\mathscr{D}$, denoted by $F:\mathscr{C} \rightarrow \mathscr{D}$, consists of:
\begin{enumerate}
    \item A component-function of the classes of objects $F_{ob}: \mathrm{Ob}(\mathscr{C}) \rightarrow \mathrm{Ob}(\mathscr{D})$.
    \item For every $x, y \in \mathrm{Ob}(\mathscr{C})$, a component-function of sets of morphisms $F_{x,y}: \Hom_{\mathscr{C}}(x,y) \rightarrow \Hom_{\mathscr{D}}\bigl(F_{ob}(x), F_{ob}(y)\bigr)$.
\end{enumerate}
We usually suppress the subscripts and write $F(x) \coloneqq F_{ob}(x)$ and $F(f) \coloneqq F_{x,y}(f)$. We require that these assignments respect composition $\bigl(F(g \circ f) \coloneqq F(g) \circ F(f)\bigr)$ and identities $\bigl(F(id_x) = id_{F(x)}\bigr)$.
Given two composable functors, $F: \mathscr{C} \to \mathscr{D}$ and $G: \mathscr{D} \to \mathscr{E}$, their \emph{composition} $G \circ F: \mathscr{C} \to \mathscr{E}$ is again a functor, whose action on objects and morphisms is defined by:
\( (G \circ F)(x) \coloneqq G\bigl(F(x)\bigr) \quad \text{and} \quad (G \circ F)(f) \coloneqq G\bigl(F(f)\bigr). \)
\end{definition}

The \emph{identity functor} on $\mathscr{C}$, denoted $id_{\mathscr{C}}: \mathscr{C} \rightarrow \mathscr{C}$, is defined by $id_{\mathscr{C}}(x) \coloneqq x$ and $id_{\mathscr{C}}(f) \coloneqq f$.

\begin{definition}[Isomorphism of categories]
Let $\mathscr{C}$ and $\mathscr{D}$ be categories. A functor $F:\mathscr{C} \rightarrow \mathscr{D}$ is called an \emph{isomorphism of categories} if there exists a functor $G:\mathscr{D} \rightarrow \mathscr{C}$ such that $G \circ F = id_{\mathscr{C}}$ and $F \circ G = id_{\mathscr{D}}$. In this case, $\mathscr{C}$ and $\mathscr{D}$ are called isomorphic categories.
\end{definition}

\begin{definition}[Natural transformation and natural isomorphism]
Let $F, G: \mathscr{C} \rightarrow \mathscr{D}$ be two functors. A \emph{natural transformation} $\alpha$ from $F$ to $G$, denoted $\alpha: F \Rightarrow G$, is a family of morphisms in $\mathscr{D}$, indexed by the objects of $\mathscr{C}$, consisting of a morphism $\alpha_x: F(x) \rightarrow G(x)$ for each object $x \in \mathrm{Ob}(\mathscr{C})$, such that for every morphism $f: x \rightarrow y$ in $\mathscr{C}$, the following diagram commutes:
\begin{equation}
\begin{tikzcd}
F(x) \arrow{r}{F(f)} \arrow[d, "\alpha_x"'] & F(y) \arrow{d}{\alpha_y} \\
G(x) \arrow{r}{G(f)} & G(y)
\end{tikzcd}.
\end{equation}
That is, $\alpha_y \circ F(f) = G(f) \circ \alpha_x$. A natural transformation $\alpha$ such that $\alpha_x$ is an isomorphism for all $x$ is called a \emph{natural isomorphism}.
\end{definition}

\begin{example}[Isomorphism of the bidual]
\label{ex:double_dual}
A classical example of a natural isomorphism arises in the category $\texttt{FDVect}$ of finite-dimensional vector spaces \cite{leinster14}. Consider the following two functors from $\texttt{FDVect}$ to $\texttt{FDVect}$:
\begin{itemize}
    \item The \emph{identity functor}, $\mathrm{id}_{\texttt{FDVect}}$.
    \item The \emph{bidual functor}, $(-)^{**}$. It maps each vector space $V \in \texttt{FDVect}$ to its \emph{bidual space} $V^{**} \coloneqq \Hom\bigl(\Hom(V, \mathbb{K}), \mathbb{K}\bigr)$ and each linear map $f: V \to W$ to the transpose of its transpose, i.e., the map $f^{**}: V^{**} \to W^{**}$.
\end{itemize}
There exists a natural isomorphism $\epsilon: \mathrm{id}_{\texttt{FDVect}} \Rightarrow (-)^{**}$. For each vector space $V$, its component $\epsilon_V: V \to V^{**}$ is the evaluation map defined by:
\begin{equation}
\epsilon_V(v)(f) \coloneqq f(v), \quad \text{for all } v \in V \text{ and } f \in V^*.
\end{equation}
This map is an isomorphism.
\end{example}

\subsection{Monoidal Categories}
\label{subsec:monoidal_categories}
In order to describe algebraic notions such as the composition of operations in an abstract context, the basic structure of a category must be equipped with additional structure. Thus, the concept of a monoidal category is introduced, which endows a category with a notion of a tensor product and a unit object, governed by precise axioms of coherence \cite{maclane98}.

\begin{definition}[Monoidal category]
A \emph{monoidal category} $(\mathscr{C}, \otimes, u, \alpha, \lambda, \rho)$, or simply $(\mathscr{C}, \otimes, u)$, consists of the following data:
\begin{enumerate}
    \item A category $\mathscr{C}$.
    \item A functor $\otimes : \mathscr{C} \times \mathscr{C} \to \mathscr{C}$, called the \emph{tensor product}.
    \item An object $u \in \text{Ob}(\mathscr{C})$, called the \emph{unit object}.
    \item A natural isomorphism $\alpha$, called the \emph{associativity constraint}, with components
        \begin{equation}
         \alpha_{x,y,z} : (x \otimes y) \otimes z \xrightarrow{\cong} x \otimes (y \otimes z), \quad x,y,z \in \text{Ob}(\mathscr{C}).
        \end{equation}
    \item A natural isomorphism $\lambda$, called the \emph{left unit constraint}, with components
        \begin{equation}
         \lambda_x : u \otimes x \xrightarrow{\cong} x, \quad x \in \text{Ob}(\mathscr{C}).
        \end{equation}
    \item A natural isomorphism $\rho$, called the \emph{right unit constraint}, with components
        \begin{equation}
        \rho_x : x \otimes u \xrightarrow{\cong} x, \quad x \in \text{Ob}(\mathscr{C}).
        \end{equation}
\end{enumerate}
This data must satisfy the following axioms:
\begin{itemize}
    \item The \emph{Pentagon Axiom}: For every quadruple of objects $w, x, y, z$, the following diagram commutes:
\begin{equation}
\begin{tikzcd}
    ((w \otimes x) \otimes y) \otimes z \arrow[rr, "\alpha_{w \otimes x, y, z}"] \arrow[d, "\alpha_{w,x,y} \otimes \mathrm{id}_z"'] & & (w \otimes x) \otimes (y \otimes z) \arrow[d, "\alpha_{w,x,y\otimes z}"] \\
    (w \otimes (x \otimes y)) \otimes z \arrow[dr, "\alpha_{w, x \otimes y, z}"'] & & w \otimes (x \otimes (y \otimes z)) \\
    & w \otimes ((x \otimes y) \otimes z) \arrow[ur, "\mathrm{id}_w \otimes \alpha_{x,y,z}"'] &
\end{tikzcd}
    .
\end{equation}

    \item The \emph{Triangle Axiom}: For every pair of objects $x, y$, the following diagram commutes:
    \begin{equation}
     \begin{tikzcd}[column sep=huge]
        (x \otimes u) \otimes y \arrow[rr, "\alpha_{x,u,y}"] \arrow[dr, "\rho_x \otimes \mathrm{id}_y"'] & & x \otimes (u \otimes y) \arrow[dl, "\mathrm{id}_x \otimes \lambda_y"] \\
        & x \otimes y &
     \end{tikzcd}
     .
     \end{equation}
\end{itemize}
\end{definition}
The axioms of a monoidal category handle the re-parenthesizing of products (associativity), but not the reordering of factors. To coherently relate an object like $a \otimes b$ to $b \otimes a$, further structure is needed.

To formally define the notion of symmetry, we first introduce the \emph{swap functor} (or \emph{twist functor}) $\tau: \mathscr{C} \times \mathscr{C} \to \mathscr{C} \times \mathscr{C}$, defined on objects and morphisms as:
\begin{equation}
\tau(x, y) \coloneqq (y, x) \quad \text{and} \quad \tau(f, g) \coloneqq (g, f).
\end{equation}

\begin{definition}[Symmetric monoidal category]
A monoidal category $(\mathscr{C}, \otimes, u)$ is called \emph{braided} if it is equipped with a natural isomorphism
\begin{equation}
B_{x,y} : x \otimes y \xrightarrow{\cong} y \otimes x
\end{equation}
called the \emph{braiding}, which is natural in both $x$ and $y$. This braiding must satisfy two coherence conditions known as the \emph{Hexagon Axioms}. For every triple of objects $x, y, z$, the following two diagrams must commute:
\begin{equation}
\begin{tikzcd}[row sep=huge, column sep=huge]
(x \otimes y) \otimes z \arrow[r, "\alpha_{x,y,z}"] \arrow[d, "B_{x,y} \otimes \mathrm{id}_z"'] & x \otimes (y \otimes z) \arrow[r, "B_{x, y \otimes z}"] & (y \otimes z) \otimes x \arrow[d, "\alpha_{y,z,x}"] \\
(y \otimes x) \otimes z \arrow[r, "\alpha_{y,x,z}"] & y \otimes (x \otimes z) \arrow[r, "\mathrm{id}_y \otimes B_{x,z}"] & y \otimes (z \otimes x)
\end{tikzcd}
\end{equation}
\begin{equation}
\begin{tikzcd}[row sep=huge, column sep=huge]
x \otimes (y \otimes z) \arrow[r, "\alpha^{-1}_{x,y,z}"] \arrow[d, "\mathrm{id}_x \otimes B_{y,z}"'] & (x \otimes y) \otimes z \arrow[r, "B_{x \otimes y, z}"] & z \otimes (x \otimes y) \arrow[d, "\alpha^{-1}_{z,x,y}"] \\
x \otimes (z \otimes y) \arrow[r, "\alpha^{-1}_{x,z,y}"] & (x \otimes z) \otimes y \arrow[r, "B_{x,z} \otimes \mathrm{id}_y"] & (z \otimes x) \otimes y
\end{tikzcd}.
\end{equation}
A braided monoidal category is called \emph{symmetric} if its braiding is a \emph{symmetry}, meaning it satisfies the condition:
\begin{equation}
B_{y,x} \circ B_{x,y} = \mathrm{id}_{x \otimes y} \quad \forall x,y \in \mathrm{Ob}(\mathscr{C}).
\end{equation}
\end{definition}

\begin{remark}
It is worth noting that while the definition of a \emph{braided} category requires checking both hexagon axioms, the situation simplifies in the \emph{symmetric} case. The symmetry condition, $B_{y,x} \circ B_{x,y} = \mathrm{id}_{x \otimes y}$, implies that one of the two hexagon diagrams commutes if and only if the other one does as well \cite[Chap. XI]{maclane98}. Therefore, to verify that a symmetric braiding satisfies the coherence axioms, it is sufficient to check just one of the two hexagons.
\end{remark}

\begin{example}
Fundamental examples of symmetric monoidal categories, which will be central to this work, include:
\begin{itemize}
    \item The category \texttt{Set} with the Cartesian product $\times$ as the tensor product and any singleton set as the unit object. The braiding is given by the swap isomorphism.
    \item The category of vector spaces over a field $\mathbb{K}$, \texttt{Vect}, with the standard tensor product ($\otimes_{\mathbb{K}}$) and the field $\mathbb{K}$ itself as the unit object. The braiding is given by the swap isomorphism.
\end{itemize}
\end{example}

\begin{theorem}[Mac Lane's Coherence Theorem]
In a monoidal category, any diagram built from the associativity constraint, left and right unit constraint, identities, and the tensor product commutes. If the monoidal category is symmetric, any \emph{linear} diagram (where each variable appears at most once in each object) that also involves the braiding commutes.
\end{theorem}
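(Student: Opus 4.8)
The plan is to reduce the statement to a purely combinatorial fact about a single universal example, the \emph{free monoidal category}, and then to establish coherence there by a normal-form (rewriting) argument. First I would observe that any diagram of the type described in an arbitrary monoidal category $\mathscr{C}$ is obtained by applying a structure-preserving functor to a corresponding ``formal'' diagram. Concretely, for a set $S$ of generators one builds the free monoidal category $\mathscr{F}(S)$, whose objects are parenthesized words in the symbols of $S$ and the unit $u$, and whose morphisms are the composites and tensor products of the (inverses of the) components of $\alpha$, $\lambda$, $\rho$ together with identities. The universal property supplies, for any choice of objects of $\mathscr{C}$ assigned to the generators, a unique monoidal functor $\mathscr{F}(S) \to \mathscr{C}$ carrying the formal diagram to the given one; since a functor sends commuting diagrams to commuting diagrams, it suffices to prove that \emph{every} such diagram commutes in $\mathscr{F}(S)$. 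As every morphism of $\mathscr{F}(S)$ is an isomorphism built from the structural data, this is equivalent to showing that $\Hom_{\mathscr{F}(S)}(x,y)$ has \emph{at most one} element for all objects $x,y$.

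Next I would set up a normal form for objects. Using the unitors to delete every occurrence of $u$ and the associator to reparenthesize to the right, each object $x$ admits a canonical ``reduced word'' $\overline{x}$ together with a distinguished structural isomorphism $x \xrightarrow{\cong} \overline{x}$ obtained by a fixed reduction procedure; two objects share a reduced word exactly when they have the same underlying sequence of generators. This settles the \emph{existence} half (any two objects with the same underlying word are formally isomorphic) and reduces the whole theorem to the \emph{uniqueness} half: that the canonical isomorphism does not depend on the order in which the reductions are performed, equivalently that any two parallel structural morphisms coincide.

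The crux, and the step I expect to be the main obstacle, is precisely this uniqueness. I would prove it by a confluence (Church--Rosser) argument: whenever two elementary reductions can be applied to the same object, the resulting pair of morphisms can be completed to a commuting square. The only non-trivial ``critical pairs'' that arise are exactly the reassociation of a quadruple and the interaction of a unit with an associator, and these close up precisely because of the \emph{Pentagon} and \emph{Triangle} coherence conditions (together with the naturality of $\alpha,\lambda,\rho$ and the functoriality of $\otimes$, which handle reductions in disjoint subwords). An induction on the complexity of objects then propagates this local coherence to arbitrary parallel morphisms. The delicate point is to verify that the pentagon really is sufficient to close every critical pair---that no further independent coherence relation is needed---which is the genuine content of Mac Lane's theorem.

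Finally, for the symmetric refinement I would enlarge $\mathscr{F}(S)$ by adjoining the braiding $B$ as an additional generating isomorphism, so that objects with the same generators in \emph{different} orders become connected. Restricting to \emph{linear} diagrams, the underlying data of an object is now a sequence of \emph{distinct} variables, and the structural isomorphisms between two such objects are classified by the permutation in $S_n$ relating their orders. The uniqueness argument then amounts to showing that the morphism realizing a given permutation is independent of its factorization into adjacent transpositions; this follows from the \emph{Hexagon} diagram and the symmetry condition $B_{y,x}\circ B_{x,y}=\mathrm{id}_{x\otimes y}$, which correspond exactly to the braid relation and the involution relation in the Coxeter presentation of the symmetric group. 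Here the linearity hypothesis is essential: it guarantees that $S_n$ acts \emph{freely} on the set of orderings, so that distinct factorizations yielding the same permutation are the only source of potential ambiguity---with repeated variables the braiding $B_{x,x}$ would generate genuine nontrivial automorphisms and coherence would fail.
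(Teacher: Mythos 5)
The paper does not actually prove this theorem: its entire ``proof'' is a citation to Mac Lane's 1963 article, so there is no argument in the text for your proposal to match or diverge from. What you have written is, in substance, the argument of that cited reference and its modern refinements: pass to the free monoidal category on a set of generators via the universal property, fix a right-associated, unit-free normal form for objects, and prove uniqueness of parallel structural morphisms by a confluence argument whose critical pairs are closed by the pentagon and triangle axioms together with naturality and the functoriality of $\otimes$; in the symmetric case, classify structural morphisms between linear objects by the permutation they induce and verify the Coxeter relations of $\mathbb{S}_n$ against the hexagon and the condition $B_{y,x}\circ B_{x,y}=\mathrm{id}_{x\otimes y}$. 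As a sketch this is correct, and you rightly isolate the genuine content---that the pentagon suffices to close every critical pair---as the step demanding real work. Two points to be aware of if you carry it out: first, closing the unit-related critical pairs uses auxiliary identities such as $\lambda_u=\rho_u$ and the compatibility of $\lambda,\rho$ with $\alpha$, which (by a result of Kelly) are themselves consequences of the pentagon and triangle rather than extra axioms; second, in the symmetric case the commutation relation for disjoint transpositions ($s_is_j=s_js_i$ for $|i-j|\geq 2$) is supplied by the interchange law of the bifunctor $\otimes$, not by the hexagon, so it should be cited explicitly alongside the braid and involution relations. Your closing observation about why linearity is indispensable---that with repeated variables $B_{x,x}$ is a genuinely nontrivial automorphism, so coherence fails---is exactly the right justification for the hypothesis in the statement.
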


\begin{proof}
See \cite[Chapter VII, Theorem 1]{maclane98}.
\end{proof}

Within a monoidal category, it is possible to define algebraic structures in a purely diagrammatic way, generalizing the classical notions of semigroup and monoid from sets to objects of an abstract category.

\begin{definition}[Semigroup object]
\label{def:semigroup_object}
Let $\mathscr{C}$ be a monoidal category. A \emph{semigroup object} in $\mathscr{C}$ is a pair $(O, m)$, where $O \in \mathrm{Ob}(\mathscr{C})$ and $m: O \otimes O \rightarrow O$ is a morphism (called \emph{multiplication}) that satisfies the associativity axiom, i.e., it makes the following diagram commute:
\begin{equation} 
\label{eq:monoid_associativity_diagram}
\begin{tikzcd}
(O \otimes O) \otimes O \arrow[r, "m \otimes \mathrm{id}_O"] \arrow[d, "\alpha_{O,O,O}"'] & O \otimes O \arrow[dd, "m"] \\
O \otimes (O \otimes O) \arrow[d, "\mathrm{id}_O \otimes m"'] & \\
O \otimes O \arrow[r, "m"'] & O
\end{tikzcd}
.
\end{equation}
\end{definition}

\begin{definition}[Monoid object]
\label{def:monoid_object}
A \emph{monoid object} in $\mathscr{C}$ is a triple $(O, m, I)$, where $(O, m)$ is a semigroup object and $I: u \rightarrow O$ is a morphism (called \emph{unit}) such that the following diagrams commute:
\begin{equation}
\label{eq:monoid_unit_diagrams}
\begin{tikzcd}
u \otimes O \arrow[rr, "I \otimes \mathrm{id}_O"] \arrow[dr, "\lambda_O"'] & & O \otimes O \arrow[dl, "m"] \\
& O &
\end{tikzcd}
\quad \text{and} \quad
\begin{tikzcd}
O \otimes u \arrow[rr, "\mathrm{id}_O \otimes I"] \arrow[dr, "\rho_O"'] & & O \otimes O \arrow[dl, "m"] \\
& O &
\end{tikzcd}
.
\end{equation}
\end{definition}

\begin{definition}[Commutative semigroup object, commutative monoid object]
\label{def:commutative_semigroup}
Let $\mathscr{C}$ be a {symmetric} monoidal category. A semigroup (or monoid) object $(O, m)$ in $\mathscr{C}$ is called \emph{commutative} if its multiplication $m$ makes the following diagram commute:
\begin{equation}
\begin{tikzcd}
O \otimes O \arrow[rr, "B_{O,O}"] \arrow[dr, "m"'] & & O \otimes O \arrow[dl, "m"] \\
& O &
\end{tikzcd}
\end{equation}
where $B_{O,O}$ is the component of the braiding on $(O,O)$. The axiom thus requires $m \circ B_{O,O} = m$.
\end{definition}

\begin{example}
The abstract definitions of semigroup and monoid objects (Definitions \ref{def:semigroup_object}-\ref{def:commutative_semigroup}) capture familiar structures in concrete categories:
\begin{itemize}
    \item A \emph{semigroup object} corresponds to a classical \emph{semigroup} in \texttt{Set}, and to an \emph{associative algebra} in \texttt{Vect}.
    \item A \emph{monoid object} corresponds to a classical \emph{monoid} in \texttt{Set}, and to a \emph{unital associative algebra} in \texttt{Vect}.
    \item A \emph{commutative semigroup object} corresponds to a \emph{commutative semigroup} in \texttt{Set}, and to a \emph{commutative (associative) algebra} in \texttt{Vect}.
    \item A \emph{commutative monoid object} corresponds to a \emph{commutative monoid} in \texttt{Set}, and to a \emph{unital commutative (associative) algebra} in \texttt{Vect}.
\end{itemize}
\end{example}

\begin{definition}[Morphism of semigroup objects and monoid objects]
\label{def:morphism_monoid_objects}
Let $\mathscr{C}$ be a monoidal category and let $(O,m)$ and $(P,n)$ be two semigroup objects in $\mathscr{C}$.
\begin{enumerate}
    \item A \emph{morphism of semigroup objects} is a morphism $f: O \rightarrow P$ in $\mathscr{C}$ that preserves multiplication, i.e., the following diagram commutes:
    \begin{equation}
    \begin{tikzcd}
    O \otimes O \arrow[r, "m"] \arrow[d, "f \otimes f"'] & O \arrow[d, "f"] \\
    P \otimes P \arrow[r, "n"] & P
    \end{tikzcd}
    .
    \end{equation}
    \item A \emph{morphism of monoid objects} between $(O, m, I)$ and $(P, n, J)$ is a morphism of semigroup objects that also preserves the unit, i.e., such that $f \circ I = J$.
\end{enumerate}
\end{definition}

\begin{definition}[Categories of algebraic objects]
Let $\mathscr{C}$ be a monoidal category. The notions of object and morphism from Definitions \ref{def:semigroup_object}, \ref{def:monoid_object}, and \ref{def:morphism_monoid_objects} allow for the definition of several categories:
\begin{enumerate}
    \item The \emph{category of semigroup objects} in $\mathscr{C}$, denoted by $\texttt{Semi}_{\mathscr{C}}$.
    \item The \emph{category of monoid objects} in $\mathscr{C}$, denoted by $\texttt{Mon}_{\mathscr{C}}$.
    \item Their commutative counterparts, denoted by $\texttt{CSemi}_{\mathscr{C}}$ and $\texttt{CMon}_{\mathscr{C}}$, in the case where $\mathscr{C}$ is symmetric.
\end{enumerate}
\end{definition}

\subsection{Representation Theory}

In this subsection, we recall some notions from representation theory. This branch of algebra studies algebraic structures, such as groups, by representing their elements as linear transformations of vector spaces. This approach allows one to reframe problems from group theory in terms of linear algebra, thereby providing a more tractable framework.

In particular, in this review, the representation theory of the symmetric group $\mathbb{S}_n$ is of central importance. It provides the necessary tools to define and understand the action of permutations on the operations of an operad, as will be discussed in Subsection \ref{sec: s operad}.

\subsubsection{Group Representations}

We begin by recalling the basic definitions and results concerning the representation theory of groups.

Throughout this paper, we work over a field $\mathbb{K}$ of characteristic 0, unless otherwise specified. All algebraic objects such as vector spaces, tensor products, and algebras are defined over this field.

\begin{definition}[Group representation]
\label{def:group_representation}
Let $G$ be a group. A \emph{left representation} of $G$ on a vector space $V$ is a group homomorphism:
\begin{equation}
    \rho: G \rightarrow \Aut(V),
\end{equation}
where $\Aut(V)$ is the group of linear automorphisms of $V$.
The space $V$ is called the \emph{representation space}, or simply a \emph{representation} of the group $G$, or a \emph{left $G$-module}.
The homomorphism $\rho$ defines a \emph{left linear action} by setting $g \cdot v \coloneqq \rho(g)(v)$. The homomorphism property $\rho(gg') = \rho(g)\rho(g')$ ensures compatibility with the group operation:
\begin{equation}
    (gg') \cdot v = g \cdot (g' \cdot v)
\end{equation}
for all $g, g' \in G$ and $v \in V$.
\smallskip
\\Alternatively, a \emph{right representation} of $G$ on $V$ is defined by a group anti-homomorphism
\begin{equation}
    \rho: G \rightarrow \Aut(V).
\end{equation}
The latter defines a \emph{right linear action} by setting $v \cdot g \coloneqq \rho(g)(v)$. The anti-homomorphism condition $\rho(gg') = \rho(g')\rho(g)$ ensures that:
\begin{equation}
    v \cdot (gg') = (v \cdot g) \cdot g'
\end{equation}
for all $g, g' \in G$ and $v \in V$. In this case, $V$ is called a \emph{right $G$-module}.

In both cases, the identity element $e \in G$ acts as the identity: $\rho(e) = \mathrm{id}_V$.
\end{definition}

In operad theory, particularly for the action of the symmetric group, the convention of right actions is prevalent. This choice is motivated by the manner in which permutations act on the composition of operations, as will be seen in Subsection \ref{sec: s operad}. For consistency with the standard literature in the field, this review adopts the convention of right actions; consequently, we will henceforth use ``representation'', ``action'', and ``$G$-module'' to mean their right-sided counterparts, unless otherwise specified.

\begin{definition}[Morphism of representations]
\label{def:morphism_of_representations}
Let $G$ be a group. A \emph{morphism} between two representations $(V, \rho_V)$ and $(W, \rho_W)$ of $G$ is a linear map $\varphi: V \rightarrow W$ such that
\begin{equation}
\label{eq:morphism_of_reps}
\varphi(v \cdot g) = \varphi(v) \cdot g
\end{equation}
for all $g \in G$ and $v \in V$. Such a map is also called a \emph{$G$-morphism} or a \emph{$G$-linear map}. The space of $G$-morphisms between the representations $V$ and $W$ is denoted by $\Hom_G(V, W)$.
\end{definition}

The property \eqref{eq:morphism_of_reps} is also called \emph{equivariance} of $\varphi$.

\begin{definition}[Category of representations]
\label{def:category_of_representations}
The representations of a group $G$ form a category, denoted by \texttt{Rep(G)}.
\begin{enumerate}
    \item The \emph{objects} are the representations of $G$.
    \item The \emph{morphisms} between two representations $(V, \rho_V)$ and $(W, \rho_W)$ are the $G$-morphisms $\varphi \in \Hom_G(V, W)$.
    \item The \emph{composition} of morphisms is the standard composition of linear maps.
    \item The \emph{identity morphism} on an object $(V, \rho)$ is the identity map $\mathrm{id}_V$.
\end{enumerate}
\end{definition}

\begin{samepage}
\begin{example}
\label{ex:representations}
Some fundamental examples of representations:
\begin{itemize}
    \item The \emph{trivial representation}, where $V = \mathbb{K}$ and the action of $G$ is defined by $v \cdot g = v$ for all $v \in V$ and $g \in G$.
    \item The \emph{regular representation}. Consider a set $\{e_g\}_{g \in G}$ indexed by the elements of the group $G$. The representation space $V$ is defined as the set of all linear combinations of these elements with coefficients in $\mathbb{K}$:
    \begin{equation}
    \label{eq:group_algebra_space}
    V = \mathbb{K}[G]:= \left\{ \sum_{g \in G} k_g e_g \mid k_g \in \mathbb{K} \text{, almost all zero} \right\}.
    \end{equation}
    This space is also known as the \emph{group algebra} of $G$. The action of an element $g' \in G$ on a vector in $V$ is given by
    \begin{equation}
        \left( \sum_{g \in G} k_g e_g \right) \cdot g' = \sum_{g \in G} k_g e_{gg'}.
    \end{equation}
    \item If $G = S_n$, the \emph{sign representation} is given by the one-dimensional vector space $V = \mathbb{K}$ with the action $v \cdot g = \mathrm{sign}(g)v$.
\end{itemize}
\end{example}
\end{samepage}

\subsubsection{Algebras}
\label{subsec:algebras}

The concepts related to algebras are well-known, so this subsection is only intended to fix the notation and terminology that will be used in what follows.

\begin{definition}[Associative algebra]
An \emph{associative algebra} is a vector space $A$ equipped with a bilinear and associative product $\mu: A \otimes A \to A$. If the algebra has a multiplicative identity element, called a \emph{unit}, the algebra is said to be \emph{unital}. A \emph{morphism} between (unital) associative algebras is a linear map that preserves the product (and the unit).
The category of (not necessarily unital) associative algebras over $\mathbb{K}$ is denoted by $\texttt{Alg}_{\mathbb{K}}$, while that of unital associative algebras is denoted by $\texttt{uAlg}_{\mathbb{K}}$.
\end{definition}

\begin{definition}[Commutative algebra]
An associative algebra $(A, \mu)$ is called \emph{commutative} if its product satisfies the relation $\mu(x,y) = \mu(y,x)$\footnote{For a bilinear map such as $\mu: A \otimes A \to A$, we often use the convenient notation $\mu(x,y)$ to denote the image of the tensor $x \otimes y$.} for all $x,y \in A$. A commutative algebra that is also unital is called a \emph{unital commutative algebra}. Morphisms between (unital) commutative algebras are the same as for associative algebras.
The category of (not necessarily unital) commutative associative algebras is denoted by $\texttt{CommAlg}_{\mathbb{K}}$, while that of unital commutative associative algebras is denoted by $\texttt{uCommAlg}_{\mathbb{K}}$.
\end{definition}

\begin{definition}[Lie algebra]
A \emph{Lie algebra} is a vector space $\mathfrak{g}$ equipped with a bilinear product, called the \emph{Lie bracket} $[-,-]: \mathfrak{g} \otimes \mathfrak{g} \to \mathfrak{g}$, which satisfies the following properties for all $x, y, z \in \mathfrak{g}$:
\begin{enumerate}
    \item \emph{Alternating:} $[x,x] = 0$.
    \item \emph{Jacobi identity:} $[x,[y,z]] + [y,[z,x]] + [z,[x,y]] = 0$.
\end{enumerate}
A \emph{morphism} between Lie algebras is a linear map that preserves the Lie bracket. The category of Lie algebras is denoted by $\texttt{LieAlg}_{\mathbb{K}}$.
\end{definition}

\begin{remark}
    Note that if the field $\mathbb{K}$ has a characteristic not equal to 2 (i.e., $\mathrm{char}(\mathbb{K}) \neq 2$), the alternating property is equivalent to \emph{anticommutativity}, i.e.:
    \begin{equation}
      [x,y] = -[y,x], \quad \forall x,y \in \mathfrak{g}.
    \end{equation}
\end{remark}

\begin{remark}[From Associative to Lie Algebras and back]
Associative algebras and Lie algebras are deeply related. Any associative algebra $(A, \mu)$ can be given the structure of a Lie algebra by defining the Lie bracket as the \emph{commutator}, $[x,y] \coloneqq \mu(x,y) - \mu(y,x)$. This process defines a functor from the category of associative algebras to the category of Lie algebras.

Conversely, there is a functor in the opposite direction which constructs, for any Lie algebra $\mathfrak{g}$, its \emph{universal enveloping algebra} $U(\mathfrak{g})$, which is an associative algebra. These two functors form an adjoint pair, where the universal enveloping algebra functor is left adjoint to the commutator functor.

This relationship can be elegantly captured within operad theory itself. There exists a morphism of operads from the operad $\texttt{Lie}$ to the operad $\texttt{As}$, which precisely encodes the transformation from an associative product to its commutator. While a full description of this adjoint pair requires a more general language like that of PROPs (see \cite{AppelLaredo2018}), the underlying connection is a fundamental example of how operads relate different algebraic theories.
\end{remark}

\subsubsection{Representations of Algebras}
\label{subsec:algebra_representations}

Drawing a parallel with group theory, the representation of an algebra offers a powerful method for analyzing its structure. This is achieved by reframing its abstract properties in the more concrete language of linear maps.

\begin{definition}[Representation of an algebra]
\label{def:algebra_rep}
Let $A$ be a (unital) associative algebra and let $V$ be a vector space. A \emph{representation} of $A$ on $V$ is a (unital) algebra morphism
\begin{equation}
\rho: A \longrightarrow \End(V),
\end{equation}
where $\End(V)$ is the algebra of linear endomorphisms of $V$, in which the product is given by composition and the unit is the identity map $\mathrm{id}_V$.
\end{definition}

\begin{definition}[Morphism of algebra representations]
\label{def:morphism_algebra_reps}
Let $(V, \rho_V)$ and $(W, \rho_W)$ be two representations of an algebra $A$. A \emph{morphism of representations} (or \emph{$A$-morphism}) is a linear map $\varphi: V \to W$ that commutes with the action of $A$, i.e., such that the following diagram commutes for all $a \in A$:
\begin{equation}
\begin{tikzcd}
V \arrow[r, "\rho_V(a)"] \arrow[d, "\varphi"'] & V \arrow[d, "\varphi"] \\
W \arrow[r, "\rho_W(a)"'] & W
\end{tikzcd}
,
\end{equation}
explicitly $\varphi \circ \rho_V(a) = \rho_W(a) \circ \varphi$. The vector space of such morphisms is denoted by $\Hom_A(V, W)$.
\end{definition}

\begin{definition}[Category of representations of an algebra]
\label{def:category_algebra_reps}
The representations of an algebra $A$ form a category, denoted by $A\texttt{-Mod}$.
\begin{enumerate}
    \item The \emph{objects} are the representations of $A$, i.e., the pairs $(V, \rho)$.
    \item The \emph{morphisms} between two objects $(V, \rho_V)$ and $(W, \rho_W)$ are the morphisms of representations $\varphi \in \Hom_A(V, W)$.
    \item The \emph{composition} is the standard composition of linear maps.
    \item The \emph{identity morphism} on an object $(V, \rho)$ is the identity map $\mathrm{id}_V$.
\end{enumerate}
\end{definition}

\subsubsection{Group Algebra}

The group algebra $\mathbb{K}[G]$ is an important concept that allows reformulating results from group representation theory in terms of representations of unital associative algebras.

\begin{definition}[Group algebra]
The \emph{group algebra} $\mathbb{K}[G]$ of a group $G$ over a field $\mathbb{K}$ is the vector space $\mathbb{K}[G]$ from \eqref{eq:group_algebra_space}, equipped with a bilinear multiplication defined on the basis vectors by $e_g \cdot e_{g'} = e_{gg'}.$
\end{definition}

The following proposition establishes a fundamental link between the representations of a group and those of its group algebra.

\begin{proposition}
\label{prop:rep_group_algebra}
The representations of the group $G$ and the representations of the group algebra $\mathbb{K}[G]$ are in one-to-one correspondence.
\end{proposition}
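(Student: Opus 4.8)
The plan is to construct two mutually inverse assignments between representations of $G$ and representations of $\mathbb{K}[G]$, exploiting the fact that $\{e_g\}_{g\in G}$ is simultaneously a vector-space basis of $\mathbb{K}[G]$ and a copy of the group sitting inside its units. Given a representation $\rho\colon G \to \Aut(V)$, I would define $\widehat\rho\colon \mathbb{K}[G] \to \End(V)$ by extending $\rho$ linearly on the basis, $\widehat\rho\bigl(\sum_{g} k_g e_g\bigr) := \sum_{g} k_g\,\rho(g)$. Conversely, given an algebra representation $\sigma\colon \mathbb{K}[G] \to \End(V)$, I would restrict it to the basis and set $\check\sigma(g) := \sigma(e_g)$.

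First I would check that the forward assignment is well defined, i.e.\ that $\widehat\rho$ is a unital algebra morphism. Linearity holds by construction, and unitality follows from $\widehat\rho(e_e)=\rho(e)=\mathrm{id}_V$ since $e_e$ is the unit of $\mathbb{K}[G]$. Multiplicativity is forced on the basis by the defining rule $e_g\cdot e_{g'}=e_{gg'}$, which turns the homomorphism identity $\rho(gg')=\rho(g)\rho(g')$ into $\widehat\rho(e_g e_{g'})=\widehat\rho(e_g)\widehat\rho(e_{g'})$. Here one must keep track of the left/right convention: for the right representations adopted in this thesis the identity reads $\rho(gg')=\rho(g')\rho(g)$, so the same computation realizes $V$ as a right $\mathbb{K}[G]$-module, which is the notion naturally paired with a right group representation.

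Next I would verify that the backward assignment lands in the right place, which I expect to be the only genuinely non-formal step: one must show $\sigma(e_g) \in \Aut(V)$, and not merely in $\End(V)$. This is where the group structure enters through inverses. Since $e_g\,e_{g^{-1}} = e_e = e_{g^{-1}}\,e_g$ and $\sigma$ is a unital algebra morphism, applying $\sigma$ yields $\sigma(e_g)\,\sigma(e_{g^{-1}}) = \mathrm{id}_V = \sigma(e_{g^{-1}})\,\sigma(e_g)$, so $\sigma(e_g)$ is invertible with inverse $\sigma(e_{g^{-1}})$. Multiplicativity of $\sigma$ then makes $\check\sigma$ a group (anti-)homomorphism into $\Aut(V)$, i.e.\ a representation of $G$.

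Finally I would check that the two assignments are mutually inverse. Composing them in either order returns the original datum on the basis $\{e_g\}$---and hence everywhere, by linearity on $\mathbb{K}[G]$ and by definition on $G$---so $\widehat{(\,\cdot\,)}$ and $\check{(\,\cdot\,)}$ are inverse bijections. The main obstacle is thus not the bijection itself but the invertibility check above, together with the bookkeeping needed to keep the left/right conventions of Definitions \ref{def:group_representation} and \ref{def:algebra_rep} consistent. Once these are settled the correspondence is immediate, and in fact one can check it upgrades to an isomorphism of the categories \texttt{Rep(G)} and $\mathbb{K}[G]\texttt{-Mod}$ by verifying that a linear map is $G$-equivariant if and only if it is $\mathbb{K}[G]$-linear.
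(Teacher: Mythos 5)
Your proposal is correct and follows essentially the same route as the paper's proof: extend $\rho$ linearly over the basis $\{e_g\}$ in one direction, restrict to the basis in the other, and establish invertibility of $\sigma(e_g)$ via $\sigma(e_g)\sigma(e_{g^{-1}})=\sigma(e_e)=\mathrm{id}_V$. Your explicit check that the two assignments are mutually inverse, and your attention to the left/right convention, are points the paper's proof leaves implicit, but they do not change the argument.
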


\begin{proof}
Given a group representation $\rho: G \rightarrow \Aut(V)$, one can define an algebra representation $\tilde{\rho}: \mathbb{K}[G] \rightarrow \End(V)$ by extending $\rho$ by linearity:
\begin{equation}
\tilde{\rho}\left(\sum_{g\in G} k_g e_g\right) \coloneqq \sum_{g\in G} k_g \rho(g).
\end{equation}
Using the fact that $\rho$ is a group homomorphism, it is easy to verify that $\tilde{\rho}$ is a morphism of unital associative algebras. Conversely, given an algebra representation $\tilde{\rho}: \mathbb{K}[G] \rightarrow \End(V)$, one can define a group representation $\rho: G \rightarrow \Aut(V)$ by restricting it to the basis vectors:
\begin{equation}
\rho(g) \coloneqq \tilde{\rho}(e_g).
\end{equation}
Again, it is immediate to verify that $\rho$ is a group homomorphism. It is also necessary to show that the image of $\rho$ is indeed contained in $\Aut(V)$, i.e., that $\tilde{\rho}(e_g)$ is invertible. Its inverse is given by $\tilde{\rho}(e_{g^{-1}})$, in fact:
\begin{equation}
\tilde{\rho}(e_g) \circ \tilde{\rho}(e_{g^{-1}}) = \tilde{\rho}(e_g e_{g^{-1}}) = \tilde{\rho}(e_{gg^{-1}}) = \tilde{\rho}(e_e) = \tilde{\rho}(1) = \mathrm{id}_V.
\end{equation}
Similarly, it can be shown that $\tilde{\rho}(e_{g^{-1}}) \circ \tilde{\rho}(e_g) = \mathrm{id}_V$.
\end{proof}

\subsubsection{Motivating the Theory}
\label{subsec:motivations}

The notions of algebra and representation discussed so far serve to motivate the introduction of operads. The central idea is to generalize classical representation theory to a context that can accommodate operations with an arbitrary number of inputs.

Classical representation theory studies algebras through their actions on vector spaces. A representation of a unital associative algebra $A$ is an algebra morphism $\rho: A \to \End(V)$ (Definition \ref{def:algebra_rep}). This means that we associate to each abstract element $a \in A$ a concrete operation, the endomorphism $\rho(a): V \to V$. In this sense, the algebra $A$ can be seen as a model that encodes a type of \emph{unary operation} (one input, one output) and its compositions \cite[Section 2.1]{vallette14}.

A natural question thus arises: how can we extend this powerful paradigm to describe operations with multiple inputs, i.e., \emph{multilinear operations}?
The archetypal structure for such a theory is no longer the single space $\End(V)$, but the entire collection of spaces of multilinear maps:
\begin{equation}
\End_V := \left\{ \Hom(V^{\otimes n}, V) \right\}_{n \ge 0}.
\end{equation}
This collection is endowed with a much richer algebraic structure than an algebra alone possesses. For example, it is possible to compose a $k$-ary operation with $k$ other operations of arbitrary arities, obtaining a new operation whose arity is the sum of the arities of the inserted operations.

The mathematical objects designed to capture exactly this rich structure are \emph{operads}. An operad $\mathcal{P}$ can be thought of as a collection of spaces $\mathcal{P}(n)$, whose elements are ``abstract $n$-ary operations'', equipped with composition laws that generalize the composition of multilinear functions, and a unit operation $1 \in \mathcal{P}(1)$ \cite[Section 2.2]{vallette14}.

In this framework, a \emph{representation of an operad} $\mathcal{P}$ on a vector space $V$ (also called a \emph{$\mathcal{P}$-algebra}) is a morphism of operads:
\begin{equation}
\Phi: \mathcal{P} \longrightarrow \End_V.
\end{equation}
This notion generalizes classical representation theory: operads are to multilinear operations what associative algebras are to unary operations.

\section{The Classical Definition of an Operad}

In this section, we present the classical definition of an operad, beginning with the non-symmetric case, which offers a more intuitive version of these structures. We  consider them from a categorical perspective, defining them as multicategories with a single object. After formalizing the notions of a multicategory and a morphism of multicategories, we proceed to explicitly define non-symmetric operads, highlighting the one-to-one correspondence with single-object multicategories. Subsequently, the focus shifts to operads defined in the category of vector spaces. We  introduce the notions of a non-symmetric operad on \texttt{Vect}, a morphism of operads, and a representation of an operad, analyzing in detail the endomorphism operad $\End_V$ as a canonical example. The section concludes with an analysis of two foundational examples of non-symmetric operads: $\mathsf{As}$ and $\mathsf{uAs}$. It will be shown that algebras over $\mathsf{As}$ are in one-to-one correspondence with associative $\mathbb{K}$-algebras, while algebras over $\mathsf{uAs}$ correspond to unital associative $\mathbb{K}$-algebras. Finally, we introduce symmetric operads, a generalization that incorporates an action of the permutation group. For this symmetric case, the definitions of a morphism and a representation are also provided. The discussion will then focus on the operads $\mathsf{Com}$ and $\mathsf{uCom}$, which model commutative and unital commutative algebras, respectively.

\subsection{Multicategories}

There are several approaches to the notion of an \emph{operad}, each emphasizing different aspects of the theory. One possibility is to define operads as collections of operations endowed with a composition law and a unit element, subject to appropriate relations. Alternatively, operads can be considered as special cases of \emph{multicategories} – structures where morphisms can have a source consisting of a tuple of objects.

In this section we follow the latter approach. This perspective offers a natural motivation for operadic composition and is particularly well-suited for representation via \emph{planar trees}. These graphical tools are used to model the compositional structure in a context without symmetries.

Let us begin with the definition of a multicategory.

\begin{definition}[Multicategory]
\label{def:multicategory}
A \emph{multicategory} $\mathscr{C}$ consists of the following data:
\begin{enumerate}
    \item A collection of objects, denoted by $\mathrm{Ob}(\mathscr{C})$;
    \item For each $n \in \mathbb{N}_0$ and $a_1, \ldots, a_n, a \in \mathrm{Ob}(\mathscr{C})$, a set $\Hom(a_1, \ldots, a_n; a)$ of morphisms, called \emph{morphisms from $(a_1, \ldots, a_n)$ to $a$}; if $\theta \in \Hom(a_1,\ldots,a_n;a)$, the string $(a_1,\ldots,a_n)$ is called the \emph{source} of $\theta$ and $a$ is called the \emph{target};

    \item For any choice of objects $a_i, a_{ij}, a \in \mathrm{Ob}(\mathscr{C}), \ i=1,\ldots,n, \ k_i \in \mathbb{N}, \ j=1,\ldots,k_i$, a composition map
    \begin{equation}
    (\theta; \theta_1, \ldots, \theta_n) \mapsto \theta \circ (\theta_1, \ldots, \theta_n), 
    \end{equation}
    where $\theta \in \Hom(a_1, \ldots, a_n; a)$ and $\theta_i \in \Hom(a_{i1}, \ldots, a_{ik_i}; a_i)$ for each $i$, yielding a composite morphism in $\Hom(a_{11}, \ldots, a_{nk_n}; a)$;

    \item For each object $a \in \mathrm{Ob}(\mathscr{C})$, a \emph{unit} element $1_a \in \Hom(a; a)$, such that the compositions satisfy the following laws:
    \begin{itemize}
        \item \emph{Associativity:} given $\theta \in \Hom(a_1,\ldots,a_n; a)$, $\theta_i \in \Hom(a_{i1},\ldots,a_{ik_i}; a_i)$, and $\theta_{ij} \in \Hom(a_{ij1},\ldots,a_{ijk_{ij}}; a_{ij})$, we have
                \begin{equation}
                \begin{split}
                    & \theta \circ \bigl(\theta_1 \circ (\theta_{11}, \ldots, \theta_{1k_1}), \ldots, \theta_n \circ (\theta_{n1}, \ldots, \theta_{nk_n})\bigr) = \\
                    & = \bigl(\theta \circ (\theta_1, \ldots, \theta_n)\bigr) \circ (\theta_{11}, \ldots, \theta_{nk_n}).
                \end{split}
                \end{equation}
        \item \emph{Unit:} for any $\theta \in \Hom(a_1,\ldots,a_n; a)$ the following identities hold
                \begin{equation}
                    \theta \circ (1_{a_1}, \ldots, 1_{a_n}) = \theta \qquad \text{and} \qquad
                    1_a \circ (\theta) = \theta.
                \end{equation}
    \end{itemize}
\end{enumerate}
\end{definition}

\begin{remark}[Foundational Aspects]
In the definition above, we assume that the morphisms between any given source and target objects form a \emph{set}. A multicategory with this property is typically called \emph{locally small}.

Regarding the objects, while $\mathrm{Ob}(\mathscr{C})$ can be a proper class in more general formulations, this work focuses on cases where the collection of objects is also a set. Such a multicategory is known as a \emph{small multicategory}. These conventions are adopted to streamline the presentation and avoid advanced set-theoretic considerations.
\end{remark}

\begin{remark}
In the definition of a multicategory, the sets \(\Hom(a_1, \ldots, a_n; a)\) can also include the case \(n=0\), i.e., morphisms with an empty domain. The composition
$\theta \circ (\theta_1, \ldots, \theta_n)$
is well-defined even when some of the operations \(\theta_i\) have arity zero, understood as morphisms \(\theta_i \in \Hom(\ \cdot \ ; a_i)\).
\end{remark}

\noindent It can be helpful to imagine the composition in terms of trees.

\begin{figure}[ht]
\centering
\resizebox{\textwidth}{!}{
\begin{tikzpicture}[every node/.style={font=\scriptsize}, thick]

\node (a) at (0,-6.5) {$a$};
\node (theta) at (0,-5) {$\theta$};
\draw (a) -- (theta);

\node (a1) at (-4.5,-3.2) {$a_1$};
\node (a2) at (-1.5,-3.2) {$a_2$};
\node (an) at (3,-3.2) {$a_n$};
\node at (1,-3.2) {$\cdots$};

\draw (theta) -- (a1);
\draw (theta) -- (a2);
\draw (theta) -- (an);

\node (theta1) at (-4.5,-1.5) {$\theta_1$};
\node (theta2) at (-1.5,-1.5) {$\theta_2$};
\node (thetan) at (3,-1.5) {$\theta_n$};

\draw (a1) -- (theta1);
\draw (a2) -- (theta2);
\draw (an) -- (thetan);

\node at (-6,0.4) {$a_{11}$};
\node at (-5.2,0.4) {$a_{12}$};
\node at (-4.5,0.4) {$\cdots$};
\node at (-3.8,0.4) {$a_{1k_1}$};

\draw (theta1) -- (-6,0);
\draw (theta1) -- (-5.2,0);
\draw (theta1) -- (-4.5,0);
\draw (theta1) -- (-3.8,0);

\node at (-2.7,0.4) {$a_{21}$};
\node at (-1.9,0.4) {$a_{22}$};
\node at (-1.2,0.4) {$\cdots$};
\node at (-0.5,0.4) {$a_{2k_2}$};

\draw (theta2) -- (-2.7,0);
\draw (theta2) -- (-1.9,0);
\draw (theta2) -- (-1.2,0);
\draw (theta2) -- (-0.5,0);

\node at (1.5,0.4) {$a_{n1}$};
\node at (2.2,0.4) {$a_{n2}$};
\node at (3,0.4) {$\cdots$};
\node at (3.8,0.4) {$a_{nk_n}$};

\draw (thetan) -- (1.5,0);
\draw (thetan) -- (2.2,0);
\draw (thetan) -- (3,0);
\draw (thetan) -- (3.8,0);

\draw[->, thick] (4.6,-3) -- (6.1,-3);

\begin{scope}[xshift=8cm, yshift=-3cm, grow'=up, level distance=1.2cm,
  sibling distance=0.7cm, every node/.style={font=\scriptsize},
  edge from parent/.style={draw, thick}]

\node (res) {$\theta \circ (\theta_1, \ldots,\theta_n)$}
  child {node {$a_{11}$}}
  child {node {$a_{12}$}}
  child {node {$\cdots$}}
  child {node {$a_{nk_n}$}};
  
\node (out) at (0,-1.3) {$a$};
\draw[thick] (out) -- (res);

\end{scope}
\end{tikzpicture}
}
\caption{Representation of the composition $\theta \circ (\theta_1, \ldots, \theta_n)$ in a multicategory.}
\label{fig:multicat-comp}
\end{figure}
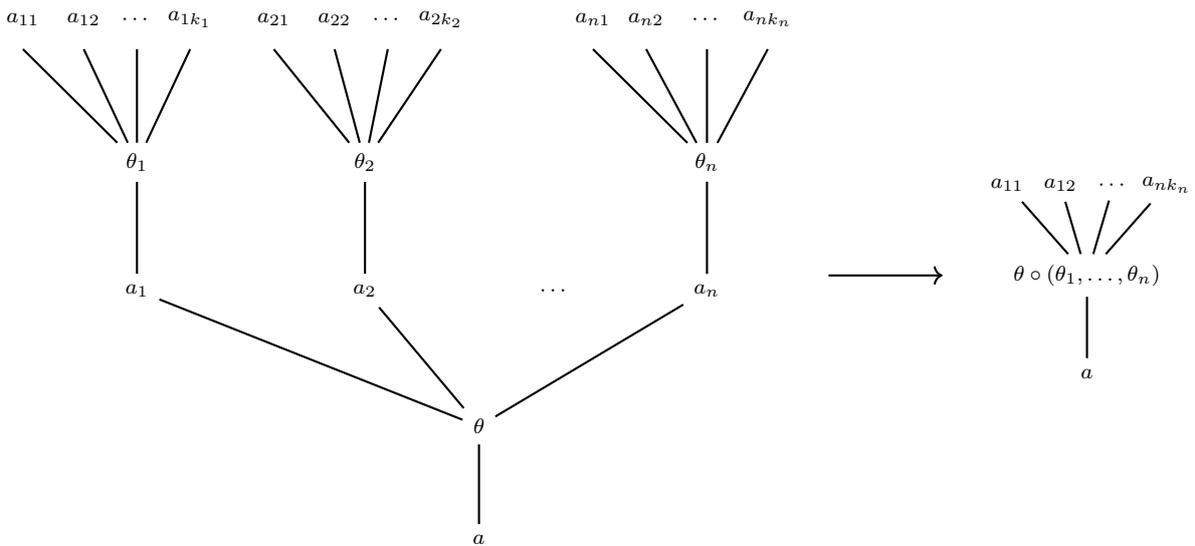
\noindent Figure \ref{fig:multicat-comp} represents the composition of an operation $\theta \in \Hom(a_1, \ldots, a_n; a)$ with a family of operations
$\theta_i \in \Hom(a_{i1}, \ldots, a_{ik_i}; a_i) \ \text{for } i = 1, \ldots, n,$ for which we obtain a new operation
$\theta \circ (\theta_1, \ldots, \theta_n) \in \Hom(a_{11}, \ldots, a_{nk_n}; a).$

The left side of the figure depicts the pre-composition process:
\begin{itemize}
    \item The node labeled with $a$ at the base of the tree represents the final target object of the composite operation.
    \item The node immediately above, labeled $\theta$, represents the main operation, whose source is the tuple of objects $(a_1, \ldots, a_n)$ and whose target is the object $a$.
    \item Each $a_i$ is an object of the multicategory and serves as the target of the operation $\theta_i$.
    \item Each operation $\theta_i$ has the tuple $(a_{i1}, \ldots, a_{ik_i})$ as its source.
    \item The nodes $a_{ij}$ represent the source objects for the initial layer of operations; they are the primitive inputs of the entire composition.
\end{itemize}

To the right of the arrow, the resulting tree corresponds to the composite operation:
\begin{itemize}
    \item The node $\theta \circ (\theta_1, \ldots, \theta_n)$ represents the composed morphism.
    \item The leaves, labeled $a_{11}, \ldots, a_{nk_n}$, form the source of this new morphism, created by the ordered concatenation (according to lexicographical order) of the source objects of each $\theta_i$.
    \item The composition has the object $a$ as its target, conventionally drawn as the root of the tree.
\end{itemize}

In summary, the objects $a_i$ and $a_{ij}$ specify the sources and targets for the operators (the morphisms $\theta$ and $\theta_i$), while $\theta$ and $\theta_i$ are the actual morphisms acting between these objects. The number of inputs of each morphism is called its \emph{arity}.

From this point forward, \emph{trees} will serve as a fundamental tool for the rigorous description and representation of composition in multicategories. While they have been employed so far as intuitive and schematic aids, we will later introduce a formal definition of an \emph{operadic tree} (in the combinatorial sense appropriate for our context), which will allow us to formalize composition operations precisely and to handle more advanced notions.

We now present the definition of a morphism of multicategories.
\begin{definition}
A \emph{morphism of multicategories} $F : \mathrm{Ob}(\mathscr{M}) \to \mathrm{Ob}(\mathscr{N})$ between two multicategories $\mathscr{M}$ and $\mathscr{N}$ consists of:
\begin{enumerate}
  \item A function between the sets of objects
        $ F_0 : \mathrm{Ob}(\mathscr{M}) \to \mathrm{Ob}(\mathscr{N});$
  \item A function
         $F : \Hom_{\mathscr{M}}(a_1, \ldots, a_n; a) \to \Hom_{\mathscr{N}}\bigl(F_0(a_1), \ldots, F_0(a_n); F_0(a)\bigr)$ for every $n\geq0$, $a_1, \ldots, a_n, a \in \mathrm{Ob}(\mathscr{M})$
\noindent such that:
  \item Composition is preserved:
  \begin{equation}
        F\bigl(\theta \circ (\theta_1, \ldots, \theta_n)\bigr) = F(\theta) \circ \bigl(F(\theta_1), \ldots, F(\theta_n)\bigr);
  \end{equation}
  \item Units are preserved:
  \begin{equation}
      F(\mathrm{1}_a) = \mathrm{1}_{F_0(a)}.
  \end{equation}
  \end{enumerate}
  The composition of morphisms of multicategories is the obvious one.
\end{definition}

\begin{remark}
Multicategories form a category, commonly denoted by \texttt{MultiCat}. The \emph{objects} are multicategories and the \emph{morphisms} are morphisms of multicategories.
\end{remark}

\subsection{Non-symmetric Operads}
\label{sec:non_symmetric_operads}

An operad can be defined concisely as a multicategory with only one object. By restricting a multicategory to a single object, the entire structure simplifies: all operations must necessarily act on this single type, and the composition laws are constrained accordingly. This ``monotypical'' perspective is fundamental to the theory and motivates the explicit, axiomatic definition that follows. A more detailed analysis of this correspondence is presented in Remark \ref{re:mult_1_obj}.

\begin{definition}[Non-symmetric operad]
\label{def: ns operad}
A \emph{non-symmetric operad} (also called an \emph{operad without symmetry}) \( \mathcal{P} \) consists of:

\begin{enumerate}
    \item A family \( \bigl(\mathcal{P}(n)\bigr)_{n \in \mathbb{N}_0} \) of sets, whose elements are called the \emph{abstract n-ary operations} of \( \mathcal{P} \).
    
    \item For each \( n, k_1, \dots, k_n \in \mathbb{N}_0\), a composition map
        \begin{equation}
        \begin{split}
        \gamma_{k_1,\dots ,k_n}\colon\;
        \mathcal{P}(n)\times\mathcal{P}(k_1)\times\cdots\times\mathcal{P}(k_n)
        &\;\longrightarrow\;
        \mathcal{P}(k_1+\cdots+k_n) \\[2pt] 
        (\theta;\,\theta_1,\dots,\theta_n)
        &\;\longmapsto\;
        \theta\circ(\theta_1,\dots,\theta_n)
        \label{eq:gamma}
        \end{split}.
        \end{equation}
    
    \item An element \( 1 \in \mathcal{P}(1) \), called the \emph{unit}.
\end{enumerate}
Moreover, for all $\theta \in \Hom(a_1,\ldots,a_n; a)$, $\theta_i \in \Hom(a_{i1},\ldots,a_{ik_i}; a_i)$, $\theta_{ij} \in \Hom(a_{ij1},\ldots,a_{ijk_{ij}}; a_{ij})$, the following \emph{associativity laws}
\begin{equation}
\begin{split}
    &\theta \circ \big( \theta_1 \circ (\theta_{1,1}, \dots, \theta_{1,k_1}), \dots, \theta_n \circ (\theta_{n,1}, \dots, \theta_{n,k_n}) \big) = \\
    & = \bigl(\theta \circ (\theta_1, \dots, \theta_n)\bigr) \circ (\theta_{1,1}, \dots, \theta_{1,k_1}, \dots, \theta_{n,1}, \dots, \theta_{n,k_n})
\label{eq:associativita-ns}
\end{split}
\end{equation}
and \emph{unit laws}
\begin{equation}
    \theta \circ (1, \dots, 1) = \theta = 1 \circ \theta
    \label{eq: leggi di unità ns}
\end{equation}
must be satisfied.
\end{definition}

\begin{remark}[Non-symmetric operads as single-object multicategories]
\label{re:mult_1_obj}
The definition of a non-symmetric operad coincides with the definition of a multicategory having only one object. This correspondence merits further elaboration.

Let $\mathscr{M}$ be a multicategory with a single object $a$. Given the uniqueness of this object, every morphism in the multicategory have the form $\theta \in \Hom(a, \ldots, a; a)$, where $n$ copies of $a$ appear in the source. We may define a collection of sets $\mathcal{P} = \{ \mathcal{P}(n) \}_{n \in \mathbb{N}_0}$ by setting:
\begin{equation}
    \mathcal{P}(n) \coloneqq \Hom(\underbrace{a, \ldots, a}_n; a).
\end{equation}
The composition law of the multicategory
\begin{equation}
    \gamma_{k_1, \ldots, k_n} : \Hom(a, \ldots, a; a) \times \prod_{i=1}^{n} \Hom(\underbrace{a, \ldots, a}_{k_i}; a) \longrightarrow \Hom(\underbrace{a, \ldots, a}_{k_1 + \cdots + k_n}; a)
\end{equation}
naturally induces a composition law for the collection $\mathcal{P}$:
\begin{equation}
    \gamma_{k_1, \dots, k_n} : \mathcal{P}(n) \times \mathcal{P}(k_1) \times \cdots \times \mathcal{P}(k_n) \longrightarrow \mathcal{P}(k_1 + \cdots + k_n).
\end{equation}
This is precisely the composition specified in Definition \ref{def: ns operad}. The unit $1_a \in \Hom(a; a)$ of the multicategory corresponds to the unit element $1 \in \mathcal{P}(1)$ of the operad. Consequently, the associativity and unit axioms for a multicategory specialize to the corresponding laws for an operad, requiring that iterated composition is well-defined regardless of how the operations are associated, and that the element $1 \in \mathcal{P}(1)$ acts as the compositional identity.

This correspondence is not merely a technical formality; it is conceptually significant. An operad describes a ``monotypical'' algebraic theory — one where all operations act on a single type of object. This restriction simplifies the multicategorical framework, as there is no need to distinguish between different object types. The lack of distinction between the source and target types of internal operations is what makes their representation via \emph{planar trees} so natural: since all internal edges of a tree correspond to the same object, they do not need to be labeled, allowing the graphical representation to focus purely on the compositional structure.

In summary:
\begin{itemize}
    \item Every non-symmetric operad can be viewed as a multicategory with a single object.
    \item Every single-object multicategory gives rise, by restricting to its set of morphism, to a non-symmetric operad.
    \item The correspondence is one-to-one and preserves all structures: composition, unit, arity.
\end{itemize}
\end{remark}

\begin{remark}
\label{rem:cat_nonsym_op}
The collection of all non-symmetric operads forms a \emph{full subcategory} of the category of multicategories, {\texttt{MultiCat}}.
To distinguish this category from its symmetric counterpart, which we will introduce in Section \ref{sec: s operad}, we will denote the category of non-symmetric operads by ${\texttt{NonSymOp}}$.
\end{remark}

\begin{remark}
    The definition of an operad can be generalized by requiring that the collections of morphisms $\mathcal{P}(n)$ are not merely sets, but objects in another category (e.g., modules over a commutative unital ring \( R \), vector spaces over a field \( \mathbb{K} \), or more generally objects of a symmetric monoidal category \( \mathscr{C} \)). This leads to the concept of an \emph{enriched operad}, a topic for which we refer the reader to \cite{Kelly1982}. A more detailed treatment of operads defined on a general symmetric monoidal category is provided in Section \ref{sec:operads-monoidal-category}. In such cases, the composition maps must be R-multilinear, $\mathbb{K}$-multilinear, or, more generally, morphisms in \( \mathscr{C} \), where the Cartesian product is replaced by the tensor product defined by the monoidal structure.
\end{remark}

\begin{remark}
In a (symmetric or non-symmetric) operad, the elements of $\mathcal{P}(0)$ are called \emph{constant operations}, as they represent operations that require no input but still produce an output. Formally, each $\theta \in \mathcal{P}(0)$ is a ``0-ary operation''; that is, a morphism $\theta \in \Hom(; a)$ in the corresponding multicategory, where the source is empty and the target is the single object $a$.

The decision to include or exclude such operations often depends on the specific theoretical framework. In some formulations, for instance, it is desirable for every operation to be constructed from at least one input, leading to the convention $\mathcal{P}(0) = \varnothing$. This is typical in contexts such as:
\begin{itemize}
    \item \emph{Theories based on rooted planar trees}, where each internal node is required to have at least one input.
    \item \emph{Algebraic models without units}, such as semigroups, where the absence of a neutral element is reflected by excluding constant operations.
\end{itemize}
Conversely, zero-arity operations are essential in other frameworks:
\begin{itemize}
    \item \emph{Algebraic theories with constants}, where structures like monoids include distinguished elements that must be modeled by operads with $\mathcal{P}(0) \neq \varnothing$.
    \item \emph{Topological or categorical operads}, where constant operations can represent fundamental structures like base points or units.
\end{itemize}
In the definition of an operad within a symmetric monoidal category $(\mathscr{C}, \otimes, u)$, it is therefore customary to set $\mathcal{P}(0) \coloneqq u$, where $u$ is the unit object. This convention ensures formal compatibility with composition. For any representation of the operad on an object $X \in \mathrm{Ob}(\mathscr{C})$, the component $\mathcal{P}(0)$ corresponds to morphisms of the type $u \to X$, i.e., to the choice of a ``constant'' element in $X$. In the category of vector spaces, for example, this unit object is the field $\mathbb{K}$, and a map $\mathbb{K} \to V$ is uniquely determined by the image of $1 \in \mathbb{K}$, which corresponds to selecting an element of $V$. This convention facilitates the natural treatment of units in algebras over operads.
\end{remark}

\subsection{Linear Operads}
\label{subsec:linear_operads}

While the definitions provided so far are general, the aim of this review is to study operads that encode algebraic structures built upon vector spaces, such as associative, commutative, and Lie algebras. In this context, the abstract operations of an operad must be interpreted as multilinear transformations.

For this reason, the focus will now shift to operads defined within the category of vector spaces. An operad in the symmetric monoidal category $(\texttt{Vect}, \otimes, \mathbb{K})$ is called a \emph{linear operad}. For clarity, we will now restate the definition of a non-symmetric operad in this specific setting.

\begin{definition}[Non-symmetric operad on \texttt{Vect}]
A non-symmetric operad $\mathcal{P}$ in the category of vector spaces consists of:
\begin{enumerate}
    \item A family of vector spaces \( \{ \mathcal{P}(n) \}_{n \in \mathbb{N}_0} \), whose elements are called \emph{abstract n-ary operations}.

    \item For each \( n, k_1, \dots, k_n \in \mathbb{N}_0\), a multilinear composition map
\begin{equation}
\begin{split}
    \gamma_{k_1, \dots, k_n} \colon 
    \mathcal{P}(n) \otimes \mathcal{P}(k_1) \otimes \cdots \otimes \mathcal{P}(k_n) 
    & \ \longrightarrow \ \mathcal{P}(k_1 + \cdots + k_n) \\ 
    \theta \otimes \theta_1 \otimes \cdots \otimes \theta_n 
    & \ \longmapsto \ \theta \circ (\theta_1 \otimes \cdots \otimes \theta_n).
\end{split}
\end{equation}

    \item An element \( 1 \in \mathcal{P}(1) \), called the \emph{unit}.
\end{enumerate}

Furthermore, the usual \emph{associativity} and \emph{unit laws} must be satisfied.

\end{definition}

We use the notation \( \theta \circ (\theta_1 \otimes \dots \otimes \theta_n) \) to emphasize that the composition is a multilinear map in the setting of the category \texttt{Vect}. When there is no ambiguity, we will omit the \( \otimes \) symbol and use the more common notation \( \theta \circ (\theta_1, \dots, \theta_n) \).

The primary significance of operads lies in their representations, which are also known as \emph{algebras over an operad}. Drawing a parallel with classical algebra, where a representation of an algebra $A$ is a structure-preserving map into the algebra of endomorphisms $\End(V)$, a representation of an operad $\mathcal{P}$ is conceived as a map that realizes its abstract operations as concrete multilinear operations on a vector space $V$.

The canonical structure that collects all such multilinear operations is the endomorphism operad $\End_V$, introduced in Example \ref{es: end}. Consequently, the notion of a $\mathcal{P}$-algebra is formalized precisely as a \emph{morphism of operads} from the abstract operad $\mathcal{P}$ to the concrete operad $\End_V$. To make this definition rigorous, we must therefore first establish what constitutes a morphism between operads.

\begin{definition}[Morphism of non-symmetric operads]
Let \( \mathcal{P} \) and \( \mathcal{Q} \) be two non-symmetric operads on \texttt{Vect}. A \emph{morphism of non-symmetric operads} is a morphism $\varphi: \mathcal{P \to \mathcal{Q}}$ of multicategories such that for all $n \in \mathbb{N}_0$, $\varphi=:\varphi_n:\mathcal{P}(n) \to \mathcal{Q}(n)$ is a linear map. Thus, \( \varphi : \mathcal{P} \to \mathcal{Q} \) is a family of linear maps
\begin{equation}
\varphi_n : \mathcal{P}(n) \longrightarrow \mathcal{Q}(n)
\end{equation}
such that:
\begin{enumerate}
    \item \emph{Compatibility with composition:} for any \( \theta \in \mathcal{P}(n) \), \( \theta_i \in \mathcal{P}(k_i) \),
    \begin{equation}
            \varphi_{k_1 + \cdots + k_n} \big( \theta \circ (\theta_1, \dots, \theta_n) \big)
    = \varphi_n(\theta) \circ \big( \varphi_{k_1}(\theta_1), \dots, \varphi_{k_n}(\theta_n) \big)
    \end{equation}
    or, if $\gamma$ is the composition of $\mathcal{P}$ and $\xi$ is the composition of $\mathcal{Q}$, we can write
    \begin{equation}
             \xi_{k1,\ldots,k_n} \circ \bigl(\varphi_n \circ (\varphi_{k_1}, \ldots, \varphi_{k_n})\bigr)= \varphi_{k_1+ \cdots +k_n} \circ \gamma_{k1,\ldots,k_n};
    \end{equation}

    \item \emph{Compatibility with the unit:}
    \begin{equation}
        \varphi_1(1_\mathcal{P}) = 1_\mathcal{Q}.
    \end{equation}
\end{enumerate}
A morphism of operads $\varphi$ is an \emph{isomorphism} if there exists an inverse morphism $\psi: \mathcal{Q} \to \mathcal{P}$ such that $\psi \circ \varphi = \mathrm{id}_{\mathcal{P}}$ and $\varphi \circ \psi = \mathrm{id}_{\mathcal{Q}}$. Equivalently, $\varphi$ is an isomorphism if and only if each of its components $\varphi_n$ is an isomorphism of vector spaces.

\label{def: morfismo di operad ns}
\end{definition}

\begin{example}[Endomorphism structure]
\label{es: end}
Let \( V \) be a vector space. We define the \emph{endomorphism structure} of \( V \), denoted \( \End_V \), as the collection of vector spaces
  \begin{equation}
  \End_V(n) \coloneqq \Hom(V^{\otimes n}, V), \quad  \text{for each} \ n \in \mathbb{N}_0, 
  \end{equation}
  where by convention \( V^{\otimes 0} \coloneqq \mathbb{K} \).
  This structure is equipped with the following operations:
\begin{itemize}
  \item \emph{Composition}: given \( f \in \End_V(n) \) and $f_i \in \End_V(k_i)$, $i=0,1,\ldots,n$, we define:
  \begin{equation}
        f \circ (f_1 \otimes \cdots \otimes f_n) \in \End_V(k_1 + \cdots + k_n)
  \end{equation}
  as the multilinear map obtained from the usual composition of multilinear applications, i.e.:
  \begin{equation}
  \begin{split}
  & \bigl(f \circ (f_1 \otimes \cdots \otimes f_n)\bigr)(v_1 \otimes \cdots \otimes v_{k_1 + \cdots + k_n}) = \\
  & = f\left( f_1(v_1 \otimes \cdots \otimes v_{k_1}) \otimes \cdots \otimes f_n(v_{k_1 + \cdots + k_{n-1} + 1} \otimes \cdots \otimes v_{k_1 + \cdots + k_n}) \right).
  \label{eq:def_comp_end}
  \end{split}
  \end{equation}
  \item \emph{Unit}: the {unit} is the identity map \( \mathrm{id}_V \in \End_V(1) \).
\end{itemize}
\end{example}

\begin{lemma}
Let $V$ be a vector space.
The {endomorphism structure}, with the operations defined above, is a well-defined non-symmetric operad.
\label{le: end}
\end{lemma}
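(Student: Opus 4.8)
The plan is to verify directly that $\End_V$, with the composition and unit from Example \ref{es: end}, satisfies every axiom of a non-symmetric operad on \texttt{Vect}: the composition maps must be well-defined and multilinear, and they must obey the associativity law \eqref{eq:associativita-ns} and the unit laws \eqref{eq: leggi di unità ns}, with $\mathrm{id}_V \in \End_V(1)$ as the unit. The guiding principle is that each axiom reduces to an elementary property of the category \texttt{Vect}, namely the composition of linear maps, the functoriality of the tensor product, and the associativity of function composition.

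First I would check well-definedness. Given $f \in \End_V(n)$ and $f_i \in \End_V(k_i)$, the tensor product $f_1 \otimes \cdots \otimes f_n$ is a linear map $V^{\otimes(k_1 + \cdots + k_n)} \to V^{\otimes n}$; post-composing with $f \colon V^{\otimes n} \to V$ yields an element of $\Hom(V^{\otimes(k_1 + \cdots + k_n)}, V) = \End_V(k_1 + \cdots + k_n)$, exactly as required. Multilinearity of $\gamma_{k_1, \ldots, k_n}$ is then immediate: composition of linear maps is bilinear and the tensor product of linear maps is linear in each factor, so the assignment $(f; f_1, \ldots, f_n) \mapsto f \circ (f_1 \otimes \cdots \otimes f_n)$ is linear in every argument.

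The substantive step is associativity. I would evaluate both sides of \eqref{eq:associativita-ns} on a generic elementary tensor $v_1 \otimes \cdots \otimes v_N$, where $N$ is the total number of inputs, and show they agree. On the left-hand side each inner composite $f_i \circ (f_{i,1} \otimes \cdots \otimes f_{i,k_i})$ contracts the $i$-th block of inputs into a single element of $V$, after which $f$ combines the $n$ resulting elements; on the right-hand side one first forms $f \circ (f_1 \otimes \cdots \otimes f_n)$ and then feeds in the fully partitioned inputs. Since both expressions nest the same linear maps applied to the same inputs in the same order, the equality is precisely the associativity of function composition together with the interchange law for the tensor product, which lets the outer and inner tensor factors be regrouped. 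I expect this to be the main obstacle—not conceptually, but because of the index bookkeeping: one must track carefully how the slots $v_1, \ldots, v_N$ are distributed among the blocks so that both sides partition them identically.

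Finally I would settle the unit laws. For the right-hand identity, $f \circ (\mathrm{id}_V \otimes \cdots \otimes \mathrm{id}_V) = f \circ \mathrm{id}_{V^{\otimes n}} = f$, using that the tensor product of $n$ copies of $\mathrm{id}_V$ is $\mathrm{id}_{V^{\otimes n}}$. For the left-hand identity, $\mathrm{id}_V \circ f = f$ holds because $\mathrm{id}_V$ is neutral for composition in $\Hom(V^{\otimes n}, V)$. With all axioms verified, $\End_V$ is a non-symmetric operad.
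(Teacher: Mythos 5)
Your proposal is correct and follows essentially the same route as the paper's proof: direct verification of the axioms, with associativity established by evaluating both sides of \eqref{eq:associativita-ns} on a generic elementary tensor partitioned into blocks, and the unit laws checked directly with $\mathrm{id}_V$. Your additional remarks on well-definedness and multilinearity of the composition maps are a sensible supplement that the paper leaves implicit, but they do not change the substance of the argument.
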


\begin{proof}
We verify that $\End_V$ satisfies axioms \eqref{eq:associativita-ns} and \eqref{eq: leggi di unità ns}. An element $f \in \End_V(n)$ is a linear map $f: V^{\otimes n} \to V$. The composition $\theta \circ (\theta_1, \dots, \theta_n)$ is defined such that the output of each ``inner'' function $\theta_i$ becomes the $i$-th input of the ``outer'' function $\theta$. This corresponds precisely to the composition of multilinear functions: one first evaluates the internal functions, and their results are then used as arguments for the external one, following a compositional hierarchy.

The associativity law for a non-symmetric operad is given by the equation:
\begin{equation} \label{eq:operad_associativity}
\begin{split}
    &\theta \circ \bigl( \theta_1 \circ (\theta_{1,1}, \dots, \theta_{1,k_1}), \dots, \theta_n \circ (\theta_{n,1}, \dots, \theta_{n,k_n}) \bigr) = \\
    &= \bigl(\theta \circ (\theta_1, \dots, \theta_n)\bigr) \circ (\theta_{1,1}, \dots, \theta_{1,k_1}, \dots, \theta_{n,1}, \dots, \theta_{n,k_n}).
\end{split}
\end{equation}
To establish \eqref{eq:operad_associativity}, we consider an arbitrary tensor $v \in V^{\otimes K}$, where $K = \sum_{i=1}^n \sum_{j=1}^{k_i} k_{i,j}$ (and where $k_i$ is the arity of $\theta_i$ and $k_{i,j}$ is the arity of $\theta_{i,j}$). We split $v$ into blocks:
\begin{equation}
v = v_{1,1} \otimes \cdots \otimes v_{1,k_1} \otimes v_{2,1} \otimes \cdots \otimes v_{2,k_2} \otimes \cdots \otimes v_{n,1} \otimes \cdots \otimes v_{n,k_n}.
\end{equation}
We apply the definition of composition \eqref{eq:def_comp_end} to both sides of \eqref{eq:operad_associativity}.
For the left-hand side (LHS):
\begin{equation}
\begin{split}
& \left( \theta \circ \big( \theta_1 \circ (\theta_{1,1}, \dots, \theta_{1,k_1}), \dots, \theta_n \circ (\theta_{n,1}, \dots, \theta_{n,k_n}) \big) \right)(v)= \\
&= \theta \bigl( \left( \theta_1 \circ (\theta_{1,1}, \dots, \theta_{1,k_1}) \right)(v_{1,1} \otimes \cdots \otimes v_{1,k_1}) \otimes \cdots \\ 
& \qquad \otimes \left( \theta_n \circ (\theta_{n,1}, \dots, \theta_{n,k_n}) \right)(v_{n,1} \otimes \cdots \otimes v_{n,k_n}) \bigr) = \\
&= \theta \bigl( \theta_1(\theta_{1,1}(v_{1,1}) \otimes \cdots \otimes \theta_{1,k_1}(v_{1,k_1})) \otimes \cdots \\ 
& \qquad \otimes \theta_n(\theta_{n,1}(v_{n,1}) \otimes \cdots \otimes \theta_{n,k_n}(v_{n,k_n})) \bigr).
\end{split}
\end{equation}
For the right-hand side (RHS):
let $\Psi \coloneqq \theta \circ (\theta_1, \dots, \theta_n)$; this is an operation with arity $k_1 + \dots + k_n$.
\begin{equation}
\begin{split}
& \bigl( (\theta \circ (\theta_1, \dots, \theta_n)) \circ (\theta_{1,1}, \dots, \theta_{n,k_n}) \bigr)(v)= \\
& = \Psi \bigl( \theta_{1,1}(v_{1,1}) \otimes \cdots \otimes \theta_{1,k_1}(v_{1,k_1}) \otimes \cdots \bigr. \\
& \qquad \bigl. \otimes \, \theta_{n,1}(v_{n,1}) \otimes \cdots \otimes \theta_{n,k_n}(v_{n,k_n}) \bigr) = \\
& = \theta \bigl( \theta_1(\theta_{1,1}(v_{1,1}) \otimes \cdots \otimes \theta_{1,k_1}(v_{1,k_1})) \otimes \cdots \bigr. \\
& \qquad \bigl. \otimes \, \theta_n(\theta_{n,1}(v_{n,1}) \otimes \cdots \otimes \theta_{n,k_n}(v_{n,k_n})) \bigr).
\end{split}
\end{equation}
Since LHS = RHS for every $v \in V^{\otimes K}$, the equality is verified.
The unit laws require that:
\begin{equation}
\theta \circ (1, \dots, 1) = \theta
\label{eq:right_unit_law}
\end{equation}
and
\begin{equation}
    1 \circ \theta = \theta
    \label{eq:left_unit_law}
\end{equation}
where $1 = \mathrm{id}_V \in \End_V(1)$ is the identity map on $V$. Let $\theta \in \End_V (k)$.

\begin{itemize}
\item We first verify the right unit law \eqref{eq:right_unit_law}. For every $v_1 \otimes \cdots \otimes v_n \in V^{\otimes n}$:
\begin{equation}
\begin{split}
& \bigl(\theta \circ (\mathrm{id}_V, \dots, \mathrm{id}_V)\bigr)(v_1 \otimes \cdots \otimes v_n) = \\
& \qquad = \theta\bigl(\mathrm{id}_V(v_1) \otimes \cdots \otimes \mathrm{id}_V(v_n)\bigr) = \theta(v_1 \otimes \cdots \otimes v_n).
\end{split}
\end{equation}
This shows that composition with the unit on the right does not change the operation.

\item Next, we verify the left unit law \eqref{eq:left_unit_law}. For every $v_1 \otimes \cdots \otimes v_k \in V^{\otimes k}$:
\begin{equation}
(\mathrm{id}_V \circ \theta)(v_1 \otimes \cdots \otimes v_k) = \mathrm{id}_V\bigl(\theta(v_1 \otimes \cdots \otimes v_k)\bigr) = \theta(v_1 \otimes \cdots \otimes v_k).
\end{equation}
This shows that composition with the unit on the left does not change the operation.
\end{itemize}
\end{proof}

\begin{remark}
    Lemma \ref{le: end} shows that \( \End_V \) is a non-symmetric operad in the category \( \texttt{Vect} \). This operad is also called \emph{tautological}.
\end{remark}

% \begin{remark}
% In the compatibility with composition, the use of the tensor product \(\otimes\) between the maps \(\Phi_{k_i}(\theta_i)\) indicates that these maps act on distinct blocks of vectors in \(V^{\otimes (k_1 + \cdots + k_n)}\), and the total tensor is identified with the tensor in the iterated tensor product of the spaces \(V^{\otimes k_i}\).
% Thus the composition
% \begin{equation}
% \Phi_n(\theta) \circ (\Phi_{k_1}(\theta_1) \otimes \cdots \otimes \Phi_{k_n}(\theta_n))
% \end{equation}
% is to be understood as:
% \begin{equation}
% V^{\otimes (k_1 + \cdots + k_n)} \xrightarrow{\cong} V^{\otimes k_1} \otimes \cdots \otimes V^{\otimes k_n} \xrightarrow{\Phi_{k_1}(\theta_1) \otimes \cdots \otimes \Phi_{k_n}(\theta_n)} V^{\otimes n} \xrightarrow{\Phi_n(\theta)} V.
% \end{equation}
% \end{remark}

\begin{definition}[Representation of a non-symmetric operad]
\label{def: P-algebra}
Let \(\mathcal{P}\) be a non-symmetric linear operad. A \emph{representation} of $\mathcal{P}$ (or $\mathcal{P}$-\emph{algebra}) is a pair $(V, \Phi)$, consisting of a vector space $V$ and a family of linear maps

\begin{equation}
\Phi = \{\Phi_n : \mathcal{P}(n) \to \Hom(V^{\otimes n}, V)\}_{n \in \mathbb{N}_0},
\end{equation}
such that, \(n, k_1, \dots, k_n \in \mathbb{N}_0\), for every $\theta \in \mathcal{P}(n), \theta_i \in \mathcal{P}(k_i),$
compatibility with composition is satisfied
\begin{equation}
    \Phi_{k_1 + \cdots + k_n} \big( \theta \circ (\theta_1, \dots, \theta_n) \big)
    = \Phi_n(\theta) \circ \big( \Phi_{k_1}(\theta_1) \otimes \cdots \otimes \Phi_{k_n}(\theta_n) \big).
\end{equation}
Here the right-hand side is the usual composition of multilinear maps:
\begin{equation}
    V^{\otimes (k_1 + \cdots + k_n)} \cong V^{\otimes k_1} \otimes \cdots \otimes V^{\otimes k_n}
    \xrightarrow{\Phi_{k_1}(\theta_1) \otimes \cdots \otimes \Phi_{k_n}(\theta_n)} V^{\otimes n} \xrightarrow{\Phi_n(\theta)} V.
\end{equation}
Furthermore, compatibility with the unit must be satisfied
\begin{equation}
    \Phi_1(1_{\mathcal{P}}) = \mathrm{id}_V.
\end{equation}
In other words, a representation is a morphism of non-symmetric operads
$\Phi : \mathcal{P} \to \End_V,$
where $\End_V(n)$ is the endomorphism operad of $V$.
\end{definition}

\begin{definition}[Morphism of $\mathcal{P}$-algebras]
\label{def: morfismo di P-algebre}
Let $\mathcal{P}$ be a non-symmetric operad on \texttt{Vect}.
Given two $\mathcal{P}$-algebras $\bigl( V,\Phi^V: \mathcal{P} \to \End_V \bigr)$ and $\bigl( W, \Phi^W: \mathcal{P} \to \End_W \bigr)$, a \emph{morphism of $\mathcal{P}$-algebras} is a $\mathbb{K}$-linear map $f: V \to W$ such that the following diagram commutes for every operation $\mu_n \in \mathcal{P}(n)$ and for every $n \in \mathbb{N}_0$:
\begin{equation}
    \begin{tikzcd}[column sep=large]
        V^{\otimes n} \arrow[d, "f^{\otimes n}"'] \arrow[r, "\Phi^V_n(\mu_n)"] & V \arrow[d, "f"] \\
        W^{\otimes n} \arrow[r, "\Phi^W_n(\mu_n)"] & W
    \end{tikzcd}
\end{equation}
    i.e., in explicit form:
    \begin{equation}
    \label{eq: condizione morf di P-algebre}
    f \circ \Phi^V_n(\mu_n) = \Phi^W_n(\mu_n) \circ f^{\otimes n}.
    \end{equation}
    This condition requires that the linear map $f$ is compatible with all $n$-ary operations induced by $\mathcal{P}$.
\end{definition}

\begin{definition}[Category of $\mathcal{P}$-algebras]
Let $\mathcal{P}$ be a non-symmetric operad. The \emph{category of $\mathcal{P}$-algebras}, denoted by $\texttt{Alg}(\mathcal{P})$, is defined as follows:

\begin{enumerate}
    \item The objects of $\texttt{Alg} (\mathcal{P})$ are the \emph{representations (or algebras) of the operad $\mathcal{P}$ on a vector space $V$}, in the sense of Definition \ref{def: P-algebra}.    
    \item  The morphisms of $\texttt{Alg} (\mathcal{P})$ are the morphisms of $\mathcal{P}$-algebras, in the sense of Definition \ref{def: morfismo di P-algebre}.

    \item The composition of morphisms in $\texttt{Alg}(\mathcal{P})$ is the usual composition of linear maps. {If $f: V \to W$ and $g: W \to Z$} are morphisms of $\mathcal{P}$-algebras, then $g \circ f: V \to Z$ is a morphism of $\mathcal{P}$-algebras, as can be easily verified.

    \item For each object $\bigl( V, \Phi^V : \mathcal{P} \to \End_V \bigr)$, the identity morphism is the identity $\mathrm{id}_V: V \to V$.
\end{enumerate}
\end{definition}

\subsubsection{The Operad $\mathsf{As}$}
\label{subsec: as}

One of the fundamental examples of a non-symmetric operad is the associative operad, commonly denoted by $\mathsf{As}$. It encodes, in operadic form, the structure of associative algebras: intuitively, an algebra over $\mathsf{As}$ is simply an associative algebra.
The interest in this operad stems from several factors: besides providing a first paradigmatic example of an operad, $\mathsf{As}$ plays a central role in several areas of mathematics, including algebraic topology \cite{may_geometry, boardmanvogt}, deformation theory \cite{Kontsevich2003, Gerstenhaber1964}, and the homotopy theory of algebras \cite{Stasheff1963, Kadeishvili1980}. Indeed, it serves as the foundation for defining more flexible structures, such as $A_\infty$-algebras, in which the associative law holds only up to homotopy.

In the following, we give an explicit definition of $\mathsf{As}$ as a non-symmetric operad on $\texttt{Vect}$, describing its components, composition operations, and identity, and we prove that $\mathsf{As}$-algebras are in one-to-one correspondence with associative $\mathbb{K}$-algebras.

We define $\mathsf{As}$ as a non-symmetric operad such that for each $n \geq 1$, the vector space $\mathsf{As}(n)$ is generated by a single element, which represents the unique $n$-ary operation obtained from an associative composition of $n$ variables. The composition laws of the operad reflect the associativity of the operations.

In detail:
\begin{definition}
We define $\mathsf{As}$ as follows:
\begin{enumerate}
  \item $\mathsf{As}(0) \coloneqq 0$;
  \item For each $n \in \mathbb{N}$, $\mathsf{As}(n) \coloneqq \mathbb{K} \{ \mu_n \} \footnote{Here, $\mathbb{K}\{\mu_n\}$ denotes the one-dimensional $\mathbb{K}$-vector space spanned by the formal basis element $\mu_n$.} \cong \mathbb{K}
    $;
  \item The composition
    \begin{equation}
      \mathsf{As}(n) \otimes \mathsf{As}(k_1) \otimes \cdots \otimes \mathsf{As}(k_n) \longrightarrow \mathsf{As}(k_1 + \cdots + k_n)
    \end{equation}
    is the canonical isomorphism
    \begin{equation}
    \underbrace{\mathbb{K} \otimes \cdots \otimes \mathbb{K}}_{n+1} \longrightarrow \mathbb{K}, \qquad a_1 \otimes \cdots\otimes a_n \mapsto a_1 \cdots a_n.
    \end{equation}
\end{enumerate}
\label{def:as}
\end{definition}

\begin{lemma}
The operad $\mathsf{As}$ of Definition \ref{def:as} is well-defined.
\end{lemma}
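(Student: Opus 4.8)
The plan is to verify directly that the data specified in Definition~\ref{def:as} satisfies the three structural requirements of a non-symmetric operad on \texttt{Vect}, namely that the composition maps are well-defined multilinear maps landing in the correct space, that a genuine unit exists, and that the associativity and unit laws hold. Since each $\mathsf{As}(n)$ for $n \geq 1$ is the one-dimensional space $\mathbb{K}\{\mu_n\} \cong \mathbb{K}$, every structural datum reduces to a statement about the scalar multiplication of $\mathbb{K}$, and the proof amounts to transporting the (already known) associativity and unitality of the field along these identifications.

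\emph{First} I would confirm that the composition map is well-defined: under the identification $\mathsf{As}(n) \cong \mathbb{K}$, the prescribed map sends $\mu_n \otimes \mu_{k_1} \otimes \cdots \otimes \mu_{k_n}$ to $\mu_{k_1+\cdots+k_n}$ via $1 \otimes \cdots \otimes 1 \mapsto 1$, which is exactly the canonical multilinear isomorphism $\mathbb{K}^{\otimes(n+1)} \to \mathbb{K}$; multilinearity is automatic because multiplication in $\mathbb{K}$ is multilinear, and the target arity $k_1+\cdots+k_n$ matches the definition. \emph{Next} I would identify the unit: the element $1 \in \mathsf{As}(1) = \mathbb{K}\{\mu_1\}$ is $\mu_1$, the generator corresponding to $1 \in \mathbb{K}$, which is the identity $1$-ary operation.

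\emph{Then} I would check the axioms on the generators, which suffices by multilinearity. For the unit laws \eqref{eq: leggi di unità ns}, both $\mu_n \circ (\mu_1, \dots, \mu_1)$ and $\mu_1 \circ \mu_n$ reduce to the products $1 \cdot 1 \cdots 1 = 1$ and $1 \cdot 1 = 1$ in $\mathbb{K}$, which correspond to $\mu_n$, as required. For the associativity law \eqref{eq:associativita-ns}, both sides send the appropriate tensor of generators to $\mu_N$ with $N = \sum_{i} k_{i,1} + \cdots + k_{i,k_i}$; tracing through the two nested compositions, each side computes the product of all the scalars $1$ over $\mathbb{K}$, and the two parenthesizations agree precisely because multiplication in $\mathbb{K}$ is associative and commutative. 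Since both outputs equal the generator $\mu_N$, the law holds.

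\textbf{The main obstacle} is not analytic difficulty but bookkeeping: one must make sure that the arity indices on both sides of the associativity law genuinely coincide — that the concatenation $(k_{1,1}, \dots, k_{n,k_n})$ produces the same total arity and the same target generator under both groupings — and that the canonical isomorphisms $\mathbb{K}^{\otimes m} \cong \mathbb{K}$ are applied coherently across the nested levels. Once one observes that every composite reduces to an iterated product of $1$'s in $\mathbb{K}$, the associativity and commutativity of the field collapse all ambiguity, and the verification is routine; I would therefore present the argument compactly, emphasizing the reduction to scalar multiplication rather than writing out each tensor explicitly.
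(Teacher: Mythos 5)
Your proposal is correct and follows essentially the same route as the paper's proof: identify the unit $\mu_1$, reduce the unit laws to products of $1$'s in $\mathbb{K}$, and derive the associativity law from the associativity (and commutativity) of multiplication in the field. Your version is merely more explicit about arity bookkeeping and multilinearity, which the paper compresses into ``it is easily observed.''
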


\begin{proof}
It follows directly from the definition that:
\begin{itemize}
    \item for each $n \in \mathbb{N}_0$, $\mathsf{As}(n)$ is a vector space;
    \item there exists an identity element $1 \in \mathsf{As}(1)$ (the unit of the field $\mathbb{K}$), such that
    \begin{equation}
    1 \circ \mu = 1 \cdot \mu = \mu, \qquad
    \mu \circ (1 \otimes \cdots \otimes 1) = \mu \cdot 1 \cdots 1 = \mu
    \end{equation}
    for every $\mu \in \mathsf{As}(n)$ and for every $n \in \mathbb{N}_0$;
    \item the composition maps are trivially associative, since the product in the field $\mathbb{K}$ is.
\end{itemize}
\end{proof}

The framework allows us to establish the main result of this section: an algebra over the operad $\mathsf{As}$ is precisely a vector space $V \in \texttt{Vect}$ equipped with an associative bilinear map $m_V: V \otimes V \to V$. In other words, algebras over $\mathsf{As}$ are exactly (not necessarily unital) associative algebras.

We now establish the isomorphism between the category of $\mathsf{As}$-algebras and the category of associative algebras. The proof consists of constructing the two functors that realize this isomorphism in the following propositions.

\begin{proposition}
\label{prop:functor_F_As}
The correspondence $F : \emph{\texttt{Alg}}(\mathsf{As}) \to \emph{\texttt{Alg}}_\mathbb{K}$, defined on objects by 
\begin{equation}
F(V,\Phi) \coloneqq \bigr(V, m_V \coloneqq \Phi_2(\mu_2)\bigl)
\end{equation}
and on morphisms by 
\begin{equation}
F(f) \coloneqq f,
\end{equation}
is a well-defined functor.
\end{proposition}

\begin{proof}
We need to verify that F is well-defined on objects and morphisms, and that it preserves identities and composition.
\begin{enumerate}
    \item \emph{Well-definedness on objects.}
    We have to show that for any $\mathsf{As}$-algebra $(V,\Phi)$, the pair $(V,m_V)$ is indeed an object in $\texttt{Alg}_{\mathbb{K}}$, i.e., a (not necessarily unital) associative $\mathbb{K}$-algebra. By construction, $m_V \in \End_V(2)=\Hom(V^{\otimes2},V)$, so it is a bilinear map. It remains to verify associativity.
    In the operad $\mathsf{As}$, the composition is associative, which implies the relation $\mu_2 \circ (\mu_2, \mu_1) = \mu_2 \circ (\mu_1, \mu_2)$, where $\mu_1=\mathrm{id}$. Since $\Phi$ is a morphism of operads, it preserves composition. Therefore:
    \begin{equation}
     \Phi_2(\mu_2) \circ \bigl(\Phi_2(\mu_2) \otimes \Phi_1(\mu_1)\bigr)
      \,=\,
      \Phi_2(\mu_2) \circ \bigl( \Phi_1(\mu_1) \otimes \Phi_2(\mu_2) \bigr).
    \end{equation}
    Since $\Phi_1(\mu_1)=\id_V$ and $\Phi_2(\mu_2)=m_V$, we obtain:
    \begin{equation}
      m_V \circ (m_V \otimes \id_V)
      =
      m_V \circ (\id_V \otimes m_V),
    \end{equation}
    which is precisely the associativity axiom for $m_V$. Thus, $(V,m_V)$ is an associative algebra and $F$ is well-defined on objects.
    \item \emph{Well-definedness on morphisms.}
    We must show that if $f$ is a morphism of $\mathsf{As}$-algebras, then $F(f)=f$ is a morphism of associative $\mathbb{K}$-algebras. The defining condition for a morphism of $\mathsf{As}$-algebras, from Equation \eqref{eq: condizione morf di P-algebre} for $n=2$, is:
    \begin{equation}
      f \circ \Phi^V_2(\mu_2)\;=\;\Phi^W_2(\mu_2)\circ f^{\otimes2}.
      \label{eq:cond_n_2}
    \end{equation}
    Since $m_V = \Phi^V_2(\mu_2)$ and $m_W = \Phi^W_2(\mu_2)$, \eqref{eq:cond_n_2} becomes $f \circ m_V = m_W \circ (f \otimes f)$, which is exactly the condition for $f$ to be a morphism of associative algebras. Thus, $F$ is well-defined on morphisms.
    \item \emph{Functorial properties.}
    Finally, we verify the functorial axioms. $F$ preserves identity morphisms, since for any object $(V,\Phi)$, $F(\id_V) = \id_V = \id_{F(V,\Phi)}$. It also preserves composition, since for any composable morphisms $f$ and $g$, we have $F(g \circ f) = g \circ f = F(g) \circ F(f)$.
\end{enumerate}

Since all conditions hold, $F$ is a well-defined functor.
\end{proof}

\begin{remark}[Structure of representations of the operad \(\mathsf{As}\)]
\label{re: as representation structure}
Let \((V, \Phi)\) be an \(\mathsf{As}\)--algebra, i.e., a representation of the operad \(\mathsf{As}\) on a vector space \(V\), with $
\Phi : \mathsf{As} \longrightarrow \End_V$ and in particular $\Phi_n : \mathsf{As}(n) \longrightarrow \End_V(n) = \Hom(V^{\otimes n}, V).$

The operad \(\mathsf{As}\) is such that for each \(n \ge 1\) there exists a unique element \(\mu_n \in \mathsf{As}(n)\), which represents the \emph{unique} fully associative \(n\)-ary operation. For this reason, every representation \(\Phi\) is completely determined by the collection of images \(\Phi_n(\mu_n)\). Let us now look at the structure of this representation in detail:

\begin{itemize}
 \item \emph{$n=0$}: \(\mathsf{As}(0) = 0\) is the zero vector space. It follows that the unique linear map \(\Phi_0 : 0 \to \Hom(\mathbb{K}, V)\) is the zero map. Thus, no constant operations (or zero-arity elements) are generated by \(\mathsf{As}(0)\).

 \item \emph{$n=1$}: There is a unique generator \(\mu_1 \in \mathsf{As}(1)\), which represents the identity (by definition of the operad's unit). The unit law imposes that
 \begin{equation}
 \Phi_1(\mu_1) = \id_V.
 \end{equation}

 \item \emph{$n=2$}: As discussed in Proposition \ref{prop:functor_F_As}, the unique binary generator \(\mu_2 \in \mathsf{As}(2)\) is sent via \(\Phi_2\) to a bilinear map
 \begin{equation}
 m_V := \Phi_2(\mu_2) : V \otimes V \longrightarrow V,
 \end{equation}
 which defines the composition law of the algebra.

 \item \emph{$n \geq 3$}: The \(n\)-ary operations \(\mu_n \in \mathsf{As}(n)\) are constructed as iterated compositions of \(\mu_2\):
\begin{equation}
\mu_n \circ (\mu_{k_1} \otimes \cdots \otimes \mu_{k_n}) = \mu_{k_1 + \cdots + k_n}.
\end{equation}
 Since \(\Phi\) is a morphism of operads, the following compatibility condition must hold:
 \begin{equation}
 \Phi_{k_1 + \cdots + k_n}(\mu_{k_1 + \cdots + k_n}) = \Phi_n(\mu_n) \circ \bigl( \Phi_{k_1}(\mu_{k_1}) \otimes \cdots \otimes \Phi_{k_n}(\mu_{k_n}) \bigr).
 \end{equation}
 Consequently, every map \(\Phi_n(\mu_n)\) is recursively determined by \(m_V = \Phi_2(\mu_2)\). For example:
 \begin{equation}
 \Phi_3(\mu_3) = m_V \circ (m_V \otimes \id_V) = m_V \circ (\id_V \otimes m_V),
 \end{equation}
 which is the associativity condition. Similarly, every \(\Phi_n(\mu_n)\) is an iterated composition of \(m_V\), according to the $\mathsf{As}$-algebra structure induced by the operad.

\end{itemize}
In conclusion, the structure of an \(\mathsf{As}\)-algebra on \(V\) is uniquely determined by the choice of the associative binary operation \(m_V = \Phi_2(\mu_2)\). This single binary operation is therefore sufficient to determine the entire morphism \(\Phi\).
\end{remark}

\begin{proposition}
\label{prop:functor_G_As}
The correspondence $G : \emph{\texttt{Alg}}_\mathbb{K} \to \emph{\texttt{Alg}}(\mathsf{As})$, defined by 
\begin{equation}
G(A, m_A) \coloneqq (A, \Psi)   
\end{equation}
and 
\begin{equation}
G(g) \coloneqq g,
\end{equation}
where $\Psi$ is the canonical $\mathsf{As}$-algebra structure induced by $m_A$, is a well-defined functor.
\end{proposition}

\begin{proof}
We must show that the functor $G$ is well-defined on both objects and morphisms, and that it respects identity and composition.
\begin{enumerate}
    \item \emph{Well-definedness on objects.}
    First, we show that for any associative algebra $(A, m_A)$, the pair $(A, \Psi)$ is an $\mathsf{As}$-algebra. The morphism of operads $\Psi = \{\Psi_n : \mathsf{As}(n) \to \End_A(n)\}_{n \geq 0}$ is defined as follows:
    \begin{itemize}
        \item $\Psi_0$ is the zero map, since $\mathsf{As}(0)=0$.
        \item $\Psi_1(\mu_1) \coloneqq \mathrm{id}_A$.
        \item $\Psi_2(\mu_2) \coloneqq m_A$.
        \item For $n \geq 3$, $\Psi_n(\mu_n)$ is defined recursively as the iterated composition of $m_A$:
        \begin{equation}
        \Psi_n(\mu_n) := \Psi_2(\mu_2) \circ \bigl(\Psi_{n-1}(\mu_{n-1}) \otimes \Psi_1(\mu_1) \bigr).
        \end{equation}
    \end{itemize}
    This definition is well-defined because $\mathsf{As}(0) = 0$ is the zero vector space, so $\Psi_0$ is necessarily the zero map. The unit element $\mu_1$ of the operad induces $\Psi_1(\mu_1) = \mathrm{id}_A$. The multiplication $m_A$ defines $\Psi_2(\mu_2)$. For $n \geq 3$, the recursive definition constructs the $n$-ary operations as iterated compositions of $m_A$. Compatibility with the composition of the operad $\mathsf{As}$ is guaranteed by the associativity of $m_A$. Therefore, $(A, \Psi)$ is a well-defined $\mathsf{As}$-algebra.
    \item \emph{Well-definedness on morphisms.}
    Next, we show that for any morphism of associative algebras $g : (A, m_A) \to (B, m_B)$, the map $G(g)=g$ is a morphism of $\mathsf{As}$-algebras. We must prove that for every $n \geq 0$, the following identity holds:
    \begin{equation}
    g \circ \Psi^A_n(\mu_n) = \Psi^B_n(\mu_n) \circ (g^{\otimes n}).
    \end{equation}
    We proceed by induction on $n$.
    
        \emph{Base case ($n=1$):} Since $\Psi^A_1(\mu_1) = \mathrm{id}_A$ and $\Psi^B_1(\mu_1) = \mathrm{id}_B$, the identity $g \circ \mathrm{id}_A = \mathrm{id}_B \circ g$ holds trivially.
        
        \smallskip
        \noindent \emph{Inductive step:} Assume the property holds for every $k < n$. From the definition of $\Psi^A_n(\mu_n)$, for every $a_1, \dots, a_n \in A$, we have:
        \begin{equation}
        \Psi^A_n(\mu_n)(a_1 \otimes \cdots \otimes a_n) =\big( m_A( \Psi^A_{n-1}(\mu_{n-1})(a_1 \otimes \cdots \otimes a_{n-1}), a_n ) \big).
        \end{equation}
        Applying $g$, we get
        \begin{equation}
        \begin{split}
        g \circ \Psi^A_n(\mu_n)(a_1 \otimes \cdots \otimes a_n)
        &= g \big( m_A( \Psi^A_{n-1}(\mu_{n-1})(a_1 \otimes \cdots \otimes a_{n-1}), a_n ) \big) \\
        &\overset{(1)}{=} m_B \big( g( \Psi^A_{n-1}(\mu_{n-1})(a_1 \otimes \cdots \otimes a_{n-1}) ), g(a_n) \big) \\
        &\overset{(2)}{=} m_B \big( \Psi^B_{n-1}(\mu_{n-1})(g(a_1) \otimes \cdots \otimes g(a_{n-1})), g(a_n) \big) \\
        &\overset{(3)}{=} \Psi^B_n(\mu_n)\bigl(g(a_1) \otimes \cdots \otimes g(a_n)\bigr) \\
        &= \Psi^B_n(\mu_n) \circ (g^{\otimes n})(a_1 \otimes \cdots \otimes a_n),
        \end{split}
        \end{equation}
        where:
        \begin{itemize}
            \item[(1)]\quad follows from the compatibility of $g$ with multiplication: \(g \circ m_A = m_B \circ (g \otimes g)\),
            \item[(2)]\quad follows from the inductive hypothesis on $\Psi_{n-1}$,
            \item[(3)]\quad follows from the definition of $\Psi^B_n(\mu_n)$.
        \end{itemize}
    This completes the induction, showing that $G$ is well-defined on morphisms.
    \item \emph{Functorial properties.}
    Finally, $G$ clearly preserves identities and composition because its action on morphisms is trivial: $G(\id_A) = \id_A = \id_{G(A, m_A)}$, and $G(g \circ f) = g \circ f = G(g) \circ G(f)$.
\end{enumerate}
Therefore, $G$ is a well-defined functor.
\end{proof}

\begin{theorem}
\label{th:as_isomorphism}
The category of $\mathsf{As}$‑algebras is isomorphic to the category of (not necessarily unital) associative algebras. This isomorphism of categories is established by the functors introduced in Propositions~\ref{prop:functor_F_As} and~\ref{prop:functor_G_As}.
\end{theorem}

\begin{proof}
Let
\(
F : \texttt{Alg}(\mathsf{As}) \longrightarrow \texttt{Alg}_\mathbb{K},
\
G : \texttt{Alg}_\mathbb{K} \longrightarrow \texttt{Alg}(\mathsf{As})
\)
be as in the preceding Propositions. We prove that
\(
F \circ G = \mathrm{Id}_{\texttt{Alg}_\mathbb{K}}
\)
and
\(
G \circ F = \mathrm{Id}_{\texttt{Alg}(\mathsf{As})}.
\)

\smallskip
\noindent\textit{(i) \ $F \circ G = \mathrm{Id}_{\texttt{Alg}_\mathbb{K}}$.}
\begin{itemize}
\item \emph{Objects.}
For $(A,m_A)\in\texttt{Alg}_\mathbb{K}$, \; $G$ yields $(A,\Psi)$ with $\Psi_2(\mu_2)=m_A$; and applying $F$ yields
\(
F\bigl(G(A,m_A)\bigr) = \bigl(A,\Psi_2(\mu_2)\bigr) = (A,m_A).
\)
\item \emph{Morphisms.}
For $g:(A,m_A)\to(B,m_B)$ in $\texttt{Alg}_\mathbb{K}$, we have $F\bigl(G(g)\bigr)=g$ by definition of $F$ and $G$.
\end{itemize}
Thus $F\circ G$ coincides with the identity of $\texttt{Alg}_\mathbb{K}$.

\smallskip
\noindent\textit{(ii) \ $G \circ F = \mathrm{Id}_{\texttt{Alg}(\mathsf{As})}$.}
\begin{itemize}
\item \emph{Objects.}
For $(V,\Phi)\in\texttt{Alg}(\mathsf{As})$, \; $F$ yields $(V,m_V)$ with $m_V=\Phi_2(\mu_2)$; and applying $G$ yields $(V,\widetilde{\Phi})$, where each
\(
\widetilde{\Phi}_n(\mu_n)
\)
is defined recursively from $m_V$, as in Proposition \ref{prop:functor_G_As}.
Since $\Phi$ is a morphism of operads and $m_V=\Phi_2(\mu_2)$, it follows directly from compatibility with composition that
\(
\widetilde{\Phi}_n(\mu_n)=\Phi_n(\mu_n)
\)
for every $n\ge 1$, so $G\bigl(F(V,\Phi)\bigr)=(V,\Phi)$.
\item \emph{Morphisms.}
For $f:(V,\Phi^V)\to(W,\Phi^W)$ in $\texttt{Alg}(\mathsf{As})$, we have $G\bigl(F(f)\bigr)=f$ (as shown in Proposition~\ref{prop:functor_G_As}).
\end{itemize}
Therefore $G\circ F$ coincides with the identity of $\texttt{Alg}(\mathsf{As})$.

\smallskip
$F$ and $G$ are inverses of each other, so they establish an isomorphism of categories:
$\texttt{Alg}(\mathsf{As}) \;\cong\; \texttt{Alg}_\mathbb{K}.$

\end{proof}

\subsubsection{The Operad \(\mathsf{uAs}\)}

A natural extension of the operad \(\mathsf{As}\) is the operad $\mathsf{uAs}$, which models associative algebras \emph{equipped with a unit element}. Analogously to \(\mathsf{As}\), the interest in \(\mathsf{uAs}\) lies in the fact that it provides an operadic description of an important category of algebraic structures, namely unital associative algebras.

\begin{remark}
\label{re: as0}
When modeling \emph{unital algebras} through operadic representations, the definition of the component $\mathcal{P}(0)$ plays a crucial role.
We recall that for the endomorphism operad $\End_V(0) = \Hom(\mathbb{K}, V)$. Each such map is uniquely determined by the image of $1 \in \mathbb{K}$. Consequently, $\Hom(\mathbb{K}, V)$ can be canonically identified with $V$ (each $v \in V$ corresponds to the map $\lambda \mapsto \lambda v$). To ensure that the representations of $\mathcal{P}$ produce algebras with a unit, it is sufficient to impose that the operad $\mathcal{P}$ contains a ``unit-generating'' 0-ary operation. This is accomplished by setting $\mathcal{P}(0) \coloneqq \mathbb{K}.$ Under this definition, a morphism of operads $\Phi: \mathcal{P} \rightarrow \End_V$ induces a specific map for $n=0$:
\begin{equation}
\Phi_0 : \mathcal{P}(0) \longrightarrow \End_V(0)
\end{equation}
i.e.
\begin{equation}
\Phi_0 : \mathbb{K} \longrightarrow \Hom(\mathbb{K}, V).
\end{equation}
Since $\Phi_0$ is a linear map, the image of the scalar unit $1 \in \mathbb{K}$ is a well-defined element in $\Hom(\mathbb{K}, V)$. Identifying $\Hom(\mathbb{K}, V)$ with $V$, the image $\Phi_0(1)$ corresponds to an element $e \in V$.
This element $e$ is precisely the unit element of the induced algebra structure on $V$. For a binary operation $\mu_2 \in \mathcal{P}(2)$ (which is mapped to a product in $V$, denoted by $\cdot$), the properties of the unit (such as $e \cdot x = x$) derive from the compatibility of the operad morphism with composition.
In summary, setting $\mathcal{P}(0) \coloneqq \mathbb{K}$ implicitly endows every $\mathcal{P}$-algebra with a unit element. This is the standard approach used in operad theory to model algebraic structures that possess a unit element.
\end{remark}

We can explicitly construct \(\mathsf{uAs}\) as a non-symmetric operad that extends \(\mathsf{As}\) by including the operation of arity zero that represents the unit.

More precisely:
\begin{definition}
    We define \(\mathsf{uAs}\) as follows:
\begin{enumerate}
  \item \(\mathsf{uAs}(0) \coloneqq \mathbb{K} \{ \eta \} \cong \mathbb{K}\);
  \item For each \(n \geq 1\), we set \(\mathsf{uAs}(n) \coloneqq \mathbb{K} \{ \mu_n \} \cong \mathbb{K}\);
  \item The compositions
    \begin{equation}
    \mathsf{uAs}(n) \otimes \mathsf{uAs}(k_1) \otimes \cdots \otimes \mathsf{uAs}(k_n) \longrightarrow \mathsf{uAs}(k_1 + \cdots + k_n)
    \end{equation}
    are given (fixing the normalization \(\mu_n = 1\) and \(\eta = 1\)) by the usual product of \(n+1\) elements in \(\mathbb{K}\), as in Definition \ref{def:as}.
\end{enumerate}
\label{def: uas}
\end{definition}

\begin{lemma}
\label{le: uas ben definito}
    The operad $\mathsf{uAs}$ of Definition \ref{def: uas} is well-defined.
\end{lemma}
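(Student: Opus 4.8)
The plan is to mirror the verification carried out for $\mathsf{As}$, checking the three structural requirements of Definition \ref{def: ns operad} together with the associativity and unit laws, while paying particular attention to the genuinely new ingredient: the nullary generator $\eta$.

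First I would observe that each component is a $\mathbb{K}$-vector space by construction, with $\mathsf{uAs}(0) = \mathbb{K}\{\eta\} \cong \mathbb{K}$ and $\mathsf{uAs}(n) = \mathbb{K}\{\mu_n\} \cong \mathbb{K}$ for $n \geq 1$. Since every component is one-dimensional, after the identifications with $\mathbb{K}$ each composition map
\[
\mathsf{uAs}(n) \otimes \mathsf{uAs}(k_1) \otimes \cdots \otimes \mathsf{uAs}(k_n) \longrightarrow \mathsf{uAs}(k_1 + \cdots + k_n)
\]
is nothing but the canonical multiplication $\mathbb{K}^{\otimes(n+1)} \to \mathbb{K}$, which is manifestly multilinear and hence well-defined. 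The unit is $\mu_1 \in \mathsf{uAs}(1)$, normalized to $1 \in \mathbb{K}$.

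Next I would dispatch the unit and associativity laws. Under the normalization $\mu_n = \eta = 1$, the unit laws $1 \circ \theta = \theta$ and $\theta \circ (1, \ldots, 1) = \theta$ reduce to the identities $1 \cdot \lambda = \lambda$ and $\lambda \cdot 1 \cdots 1 = \lambda$ in $\mathbb{K}$. For associativity, since every composition collapses to a product of scalars, both sides of the associativity axiom evaluate to the same product of the corresponding scalars, and the equality follows at once from associativity and commutativity of multiplication in $\mathbb{K}$.

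The main point to handle carefully --- the only genuine difference from the $\mathsf{As}$ case --- is the bookkeeping of arities when the nullary operation $\eta$ occupies an input slot. I would check that $\eta$ contributes arity $0$ to the total, so that, for instance, both $\mu_2 \circ (\eta, \mu_1)$ and $\mu_2 \circ (\mu_1, \eta)$ land in $\mathsf{uAs}(1)$ and, being equal to $1 \cdot 1 \cdot 1 = 1 = \mu_1$ under the normalization, reproduce the expected unit behaviour $e \cdot x = x = x \cdot e$ at the operadic level. The hard part is thus not any computation --- which is trivial, everything being a product in $\mathbb{K}$ --- but ensuring that each composition involving $\eta$ targets the correct component $\mathsf{uAs}(k_1 + \cdots + k_n)$, so that the associativity law continues to hold verbatim in the presence of nullary inputs.
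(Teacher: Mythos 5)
Your proposal is correct and takes essentially the same route as the paper: identify each component with $\mathbb{K}$, note that composition becomes the product of scalars, and deduce the unit and associativity laws from field arithmetic. In fact, your explicit attention to the arity bookkeeping for the nullary generator $\eta$ (e.g., that $\mu_2 \circ (\eta, \mu_1)$ lands in $\mathsf{uAs}(1)$ and reproduces the unit behaviour) is a point the paper's proof passes over silently, since it simply replicates the $\mathsf{As}$ verification.
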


\begin{proof}
It follows directly from the definition that:
  \begin{itemize}
    \item for each \(n \in \mathbb{N}_0\), \(\mathsf{uAs}(n)\) is a vector space by construction;
    \item there exists an identity element \(1 \in \mathsf{uAs}(1)\) such that, for any \(\mu \in \mathsf{uAs}(n)\)
    \begin{equation}
    1 \circ \mu = \mu, \qquad \mu \circ (1 \otimes \ldots \otimes 1) = \mu;
    \end{equation}
    \item the composition maps are associative, since the product in the field \(\mathbb{K}\) is.
\end{itemize}  
\end{proof}

\begin{remark}
\label{rem:uAs_remark}
The operad $\mathsf{uAs}$ can be viewed as a ``unitalization'' of $\mathsf{As}$, obtained by formally adding a nullary operation compatible with the binary multiplication.
\end{remark}

The previous observation can be generalized into a formal categorical framework.

We begin by defining two distinct classes of operads based on their spaces of nullary operations.

\begin{definition}[Unitary and non-unitary operads]
A non-symmetric linear operad $\mathcal{P}$ is said to be \emph{non-unitary} if its space of nullary operations is the zero vector space, i.e., $\mathcal{P}(0) = 0$. It is called \emph{unitary} if $\mathcal{P}(0) = \mathbb{K}$.\footnote{It is worth noting that the terminology in this area is not entirely standardized. Some authors use the term ``operad'' to refer exclusively to what we call unitary operads, while others use ``augmented operad'' for the unitary case and reserve ``operad'' for the non-unitary one. The terminology can be subtle; for instance, May in \cite{MayZhangZou2025} uses the term ``unital'', whereas we follow Fresse et al. \cite{FresseTurchinWillwacher18} who use ``unitary''.}
These two classes form distinct categories: the category of non-symmetric unitary operads, denoted by $\texttt{NonSymOp}_+$, and the category of non-symmetric non-unitary operads, denoted by $\texttt{NonSymOp}_\varnothing$. The morphisms in $\texttt{NonSymOp}_+$ are required to preserve the distinguished nullary operation\footnote{The generator of the one-dimensional vector space $\mathcal{P}(0) \cong \mathbb{K}$.}.
\end{definition}

There is a canonical procedure to construct a unitary operad from a non-unitary one. This process defines a standard \emph{unitalization functor}, denoted by $(-)_+$:
\begin{equation}
    (-)_+: \texttt{NonSymOp}_\varnothing \longrightarrow \texttt{NonSymOp}_+. 
\end{equation}
Given an operad $\mathcal{P} \in \texttt{NonSymOp}_\varnothing$, the unitary operad $\mathcal{P}_+$ is defined by setting $\mathcal{P}_+(0) \coloneqq \mathbb{K}$ and $\mathcal{P}_+(n) \coloneqq \mathcal{P}(n)$ for all $n > 0$, with a natural extension of the composition laws. This construction, and its property of being a left adjoint to the forgetful functor from $\texttt{NonSymOp}_+$ to $\texttt{NonSymOp}_\varnothing$, is a well-established result in the literature, as discussed for instance in \cite{FresseTurchinWillwacher18}.

\begin{example}[Unitalization of $\mathsf{As}$]
Applying the unitalization functor to the operad $\mathsf{As} \in \texttt{NonSymOp}_\varnothing$ yields a new operad, $(\mathsf{As})_+$, in the category $\texttt{NonSymOp}_+$. Its spaces of operations are:
\begin{equation}
(\mathsf{As})_+(n) = 
\begin{cases}
    \mathbb{K} & \text{if } n=0, \\
    \mathsf{As}(n) \cong \mathbb{K} & \text{if } n > 0.
\end{cases}
\end{equation}
This resulting operad, where every space of operations is one-dimensional, is precisely the operad $\mathsf{uAs}$ (see Definition \ref{def: uas}). This functorial construction therefore provides a rigorous foundation for the intuitive notion of ``unitalization'' mentioned in Remark \ref{rem:uAs_remark}, formalizing the process of turning the theory of non-unital associative algebras into the theory of unital ones.
\end{example}

An algebra over \(\mathsf{uAs}\) is precisely an object \(V \in \texttt{Vect}\) equipped with:
\begin{itemize}
  \item a bilinear map \(m_V : V \otimes V \to V\);
  \item a unit element \(1_V \in V\),
\end{itemize}
such that the usual relations hold:
\begin{equation}
m_V\bigl(m_V(x, y), z\bigr) = m_V\bigl(x, m_V(y, z)\bigr) \quad \text{and} \quad m_V(1_V, x) = x = m_V(x, 1_V).
\end{equation}
In other words, algebras over \(\mathsf{uAs}\) are exactly the \emph{unital} associative \(\mathbb{K}\)-algebras, as the following Theorem states.

\begin{theorem}
\label{th:uas}
The category of $\mathsf{uAs}$-algebras is isomorphic, as a category, to the category of unital associative $\mathbb{K}$-algebras. The isomorphism of categories is established by two functors constructed as in Propositions \ref{prop:functor_F_As} and \ref{prop:functor_G_As}.
\end{theorem}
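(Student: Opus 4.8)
The plan is to follow the strategy of Theorem~\ref{th:as_isomorphism} almost verbatim, constructing a pair of mutually inverse functors $F$ and $G$ exactly as in Lemmas~\ref{lem:functorF} and~\ref{lem:functorG}, but now keeping track of the extra datum supplied by the nullary generator $\eta \in \mathsf{uAs}(0)$. The whole point is that the single new ingredient, the unit, is encoded operadically by $\eta$, so the entire argument for $\mathsf{As}$ carries over once we explain how $\eta$ produces a unit element and how the unit axioms of an algebra translate into operadic relations.

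First I would define the functor $F : \texttt{Alg}(\mathsf{uAs}) \to \texttt{uAlg}_{\mathbb{K}}$. Given a $\mathsf{uAs}$-algebra $(V, \Phi)$, set $m_V := \Phi_2(\mu_2)$ and, using the canonical identification $\End_V(0) = \Hom(\mathbb{K}, V) \cong V$ recalled in Remark~\ref{re: as0}, set $1_V := \Phi_0(\eta)$, the image of $1 \in \mathbb{K}$. Associativity of $m_V$ is proved exactly as in Lemma~\ref{lem:As_to_assoc}. For the unit axiom I would exploit the two defining relations of $\mathsf{uAs}$,
\begin{equation}
\mu_2 \circ (\eta, \mu_1) = \mu_1 = \mu_2 \circ (\mu_1, \eta),
\end{equation}
which hold because, under the normalization $\mu_n = \eta = 1 \in \mathbb{K}$, every composition is simply the product in $\mathbb{K}$ and $1 \cdot 1 \cdot 1 = 1$. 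Applying the operad morphism $\Phi$ to these relations and using $\Phi_1(\mu_1) = \id_V$ yields $m_V \circ (1_V \otimes \id_V) = \id_V = m_V \circ (\id_V \otimes 1_V)$, that is, $m_V(1_V, x) = x = m_V(x, 1_V)$. On morphisms $F$ is the identity, as in Lemma~\ref{lem:As_morphisms}; compatibility with the unit is automatic, since a morphism of $\mathsf{uAs}$-algebras also intertwines $\Phi_0(\eta)$.

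Next I would build $G : \texttt{uAlg}_{\mathbb{K}} \to \texttt{Alg}(\mathsf{uAs})$. Given a unital associative algebra $(A, m_A, 1_A)$, define $\Psi_0(\eta) := 1_A$ (as the map $\mathbb{K} \to A$, $1 \mapsto 1_A$), $\Psi_1(\mu_1) := \id_A$, $\Psi_2(\mu_2) := m_A$, and the remaining $\Psi_n(\mu_n)$ recursively as in Lemma~\ref{lem:G_on_objects}. The work is to check that $\Psi$ is a morphism of operads, i.e.\ that it respects every composition. The arity-preserving compositions among the $\mu_n$ are handled exactly as before; the genuinely new verifications are those involving $\eta$, which lower arity. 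These all reduce to the unit axioms of $A$: for example the relation $\mu_n \circ (\mu_1, \dots, \mu_1, \eta) = \mu_{n-1}$ in $\mathsf{uAs}$ is sent by $\Psi$ to the statement that inserting $1_A$ in the last slot of the iterated product equals the $(n-1)$-fold product. On morphisms $G$ is again the identity, the induction of Lemma~\ref{lem:G_morphism} now including the base case $g(1_A) = 1_B$, which is exactly the $n=0$ compatibility.

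Finally, the two composites. For $F \circ G = \mathrm{Id}$ one reads off $m_A$ and $1_A$ from $\Psi_2(\mu_2)$ and $\Psi_0(\eta)$, recovering the original unital algebra. For $G \circ F = \mathrm{Id}$ one uses, as in Theorem~\ref{th:as_isomorphism}, that a morphism of operads out of $\mathsf{uAs}$ is determined by its values on generators: since $F$ extracts $m_V = \Phi_2(\mu_2)$ and $1_V = \Phi_0(\eta)$, while $G$ reconstructs every $\Psi_n(\mu_n)$ from $m_V$ and sets $\Psi_0(\eta) = 1_V$, compatibility of $\Phi$ with composition forces $\widetilde{\Phi}_n(\mu_n) = \Phi_n(\mu_n)$ for all $n \ge 1$ and $\widetilde{\Phi}_0(\eta) = \Phi_0(\eta)$. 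The main obstacle, and the only place where $\mathsf{uAs}$ genuinely differs from $\mathsf{As}$, is the verification that $G$ indeed lands in operad morphisms: one must confirm that every composition lowering arity through $\eta$ is respected, and the content of that check is precisely the unit laws of $A$.
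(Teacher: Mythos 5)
Your proposal is correct and follows essentially the same route as the paper: the paper's proof is itself a sketch that replicates Theorem~\ref{th:as_isomorphism}, extracting $m_V = \Phi_2(\mu_2)$ together with the unit $\Phi_0$ of the nullary generator, and using the operadic relation $\mu_2 \circ (\mu_0 \otimes \mathrm{id}) = \mu_1 = \mu_2 \circ (\mathrm{id} \otimes \mu_0)$ to derive the unit law, exactly as you do. If anything, your write-up is more explicit than the paper's sketch on the two points it glosses over—the verification that $G$ respects the arity-lowering compositions through $\eta$, and the $n=0$ base case in the morphism compatibility—so no gap remains.
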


\begin{proof}
A rigorous proof can be done by replicating \emph{mutatis mutandis} the proof of Theorem \ref{th:as_isomorphism}; for completeness, however, we briefly outline the main arguments.\\
The isomorphism is realized by two functors
\begin{equation}
F: \texttt{Alg}(\mathsf{uAs}) \longrightarrow \texttt{Alg}_{\mathbb{K}}, \qquad
G: \texttt{Alg}_{\mathbb{K}} \longrightarrow \texttt{Alg}(\mathsf{uAs}),
\end{equation}
defined as in Propositions \ref{prop:functor_F_As} and \ref{prop:functor_G_As}:
\begin{itemize}
  \item Given an object $(V,\Phi^V)$ in $\mathrm{Alg}(\mathsf{uAs})$, we set
  \begin{equation}
  F\bigl((V,\Phi^V)\bigr) \coloneqq \left(V, m \coloneqq \Phi^V_2(\mu_2),\ \mathbf{1} \coloneqq \Phi^V_0(\mu_0)\right);
  \end{equation}
  \item Given $(V, m, \mathbf{1}) \in \texttt{Alg}_{\mathbb{K}}$, we define $G\bigr((V, m, \mathbf{1})\bigl) \coloneqq (V, \Psi^V)$, where $\Psi^V: \mathsf{uAs} \to \End_V$ is the functor such that
  \begin{equation}
  \Psi^V(\mu_n)(v_1, \dots, v_n) \coloneqq v_1 \cdots v_n \quad \text{(iterated product)}, \quad \Psi^V(\mu_0)(-) \coloneqq \mathbf{1}.
  \end{equation}
\end{itemize}
As in the case of $\mathsf{As}$, the structure of $\mathsf{uAs}$ ensures that every $\mathsf{uAs}$-algebra $\Phi^V$ determines a bilinear multiplication $m \coloneqq \Phi^V_2(\mu_2)$ on $V$, which is associative by compatibility with composition, as well as an element $\mathbf{1} \coloneqq \Phi^V_0(\mu_0) \in V$, which is neutral for multiplication.

In particular, the relation 
\begin{equation}
\mu_2 \circ (\mu_0 \otimes \mathrm{id}) = \mu_1 = \mu_2 \circ (\mathrm{id} \otimes \mu_0)
\end{equation}
represents the property of unitality. Applying $\Phi^V$ yields:
\begin{equation}
\Phi^V(\mu_2) \circ \bigl(\Phi^V(\mu_0) \otimes \mathrm{id}_V\bigr) = \Phi^V(\mu_1) = \mathrm{id}_V = \Phi^V(\mu_2) \circ \bigl(\mathrm{id}_V \otimes \Phi^V(\mu_0)\bigr).
\end{equation}
Explicitly, for every $v \in V$:
\begin{equation}
m(\mathbf{1}, v) = v = m(v, \mathbf{1}).
\end{equation}
Conversely, if $(V, m, \mathbf{1})$ is a unital associative $\mathbb{K}$-algebra, then $\bigl( V, \Psi^V: \mathsf{uAs} \to \End_V \bigr)$ is a $\mathsf{uAs}$-algebra. Indeed, all identities between compositions in the operad $\mathsf{uAs}$ are satisfied as a consequence of the associativity and the unit law of $m$ and $\mathbf{1}$.

Finally, it is immediate to verify that the functors $F$ and $G$ are inverses of each other.
\end{proof}

\subsection{Visual Representation of Operadic Operations via Trees - Symmetric Case} \label{subsec: rappr operad non simm}

The operations of an operad and their compositions can be visualized using special graphs called \emph{operadic trees}. To define these structures rigorously, we  recall the fundamental notions of graph theory, based on the terminology from \cite{diestel17}. Subsequently, we  adapt these classical definitions to introduce a more specific notion of a tree, suitable for representing operadic operations and their composition rules.

\subsubsection{Introduction to Graph Theory}

Given a set \(V\) and an integer \(k\), with the symbol \([V]^k\) we denote the set of all subsets of \(V\) with \(k\) elements.
\begin{definition}[Graph] \label{def:grafo_base}
A \emph{graph} is a pair \(G=(V,E)\) of sets such that \(E \subseteq [V]^2\); the elements of \(V\) are called \emph{vertices} and the elements of \(E\) are called \emph{edges}. The two vertices contained in an edge are called its \emph{endpoints}. 
\end{definition}
Let \(G=(V,E)\) be a graph.
\begin{itemize}
    \item The sets of vertices and edges of \(G\) are denoted by \(V(G)\) and \(E(G)\), respectively.
    \item The \emph{order} of \(G\), denoted by \(|G|\), is the number of its vertices. Its \emph{size}, denoted by \(\|G\|\), is the number of its edges.
    \item A graph is \emph{finite} if its set of vertices is finite. Unless otherwise stated, in this review we  only consider finite graphs.
    \item The graph \((\varnothing, \varnothing)\) is called the \emph{empty graph}. A graph is called \emph{trivial} if its order is 0 or 1.
\end{itemize}

\begin{remark}[Simple and undirected graphs]
The definition of a graph given above, where edges are 2-element subsets of vertices ($E \subseteq [V]^2$), implicitly defines the type of graphs used throughout this review. Specifically:
\begin{itemize}
    \item The graphs are \textit{undirected}, as the edges are sets (e.g., $\{v_1, v_2\}$ is the same as $\{v_2, v_1\}$) and not ordered pairs.
    \item The graphs are \textit{simple}, as this formalism does not allow for loops (an edge from a vertex to itself, which would not be a 2-element set) or multiple edges between the same two vertices (since $E$ is a set of unique subsets).
\end{itemize}
This is the standard convention in this context.
\end{remark}

\begin{definition}[Adjacency and incidence] \label{def:adiacenza_incidenza}
Let \(G=(V,E)\) be a graph.
\begin{enumerate}
    \item A vertex \(v \in V\) is \emph{incident} to an edge \(e \in E\) if \(v \in e\).
    \item Two vertices \(x, y \in V\) are \emph{adjacent} or \emph{neighbors} if \(\{x,y\}\) is an edge of \(G\). The set of neighbors of a vertex \(v\) is denoted by \(N(v)\).
    \item Two edges \(e \neq f\) are \emph{adjacent} if they have a common endpoint.
\end{enumerate}
\end{definition}

\begin{remark}
A graph is an abstract combinatorial structure. Its visual representation, obtained by drawing vertices as points and edges as lines connecting them, serves as an intuitive aid. The geometric position of points and lines is irrelevant to the structure of the graph itself: the only information that matters is which pairs of vertices form an edge.
\end{remark}

\begin{figure}[H]
\centering
\begin{tikzpicture}[
    every node/.style={font=\small},
    vertex/.style={circle, fill=black, inner sep=1.5pt, minimum size=4pt}
]
\node[vertex, label=left:{$v_1$}]  (v1) at (0, 2) {};
\node[vertex, label=left:{$v_2$}]  (v2) at (0, 0) {};
\node[vertex, label=right:{$v_3$}] (v3) at (2, 2) {};
\node[vertex, label=right:{$v_4$}] (v4) at (2, 0) {};
\node[vertex, label=below:{$v_5$}] (v5) at (1, 1) {};
\draw (v1) -- (v2) (v1) -- (v5) (v2) -- (v5) (v3) -- (v5) (v4) -- (v3);
\end{tikzpicture}
\caption{The figure shows an example of a simple, undirected graph \(G=(V,E)\) with \(V=\{v_1, \dots, v_5\}\) and \(E=\{\{v_1,v_2\}, \{v_1,v_5\},\{v_2,v_5\},\{v_3,v_4\},\{v_4,v_5\}\}\).}
\label{fig:esempio-grafo}
\end{figure}

\begin{definition}[Fundamental Concepts in Graph Theory]
\label{def:graph_theory_concepts}
Let $G=(V,E)$ and $G'=(V',E')$ be graphs.
\begin{enumerate}
    \item A graph $G'$ is a \emph{subgraph} of $G$, written $G' \subseteq G$, if $V' \subseteq V$ and $E' \subseteq E$.

    \item Two graphs $G$ and $G'$ are \emph{isomorphic}, written $G \simeq G'$, if there exists a bijection $\varphi\colon V(G) \to V(G')$ such that
    \begin{equation}
    \{x,y\} \in E(G) \iff \{\varphi(x), \varphi(y)\} \in E(G') \quad \forall x,y \in V(G).
    \end{equation}
    Such a map $\varphi$ is called an \emph{isomorphism}. In the following, we will not distinguish isomorphic graphs and will commonly write $G=G'$.

    \item The \emph{degree} $d(v)$ of a vertex $v$ is the number of edges incident to it, i.e., $d(v) = |N(v)|$. A vertex of degree 1 is called a \emph{leaf}\footnote{Except for the \emph{root} of a rooted tree (Definition \ref{def:albero-radicato-ordinato}) which is never called a leaf, even if it has degree 1.}.
    \item A \emph{path} in $G$ is a subgraph of $G$ with distinct vertices $v_0, v_k$ and edges $\{v_0, v_1\}, \dots, \{v_{k-1}, v_k\}$. The \emph{length} of a path is the number of its edges. The vertices $v_0$ and $v_k$ are the \emph{endpoints} of the path, while $v_1, \dots, v_{k-1}$ are its \emph{internal vertices}. A path of vertices $v_0,\dots,v_k$ is often denoted simply by the sequence of its vertices, written $v_0v_1\ldots v_k$\footnote{More precisely, $v_0v_1\ldots v_k$ and $v_kv_{k-1}\ldots v_0$ denote the same path. By fixing one of these two orderings, we may refer to the \emph{first vertex} of a path, with a slight abuse of notation.}. If a path has endpoints $v_0,\dots,v_k$ it is also said to be a path \emph{from} $v_0$ \emph{to} $v_k$ or a path \emph{between} $v_0$ and $v_k$.
        
    \item A \emph{k-cycle} (or simply a \emph{cycle}) in $G$ is a subgraph of $G$ with $k \ge 3$ vertices $v_0, \dots, v_{k-1}$ and edges $\{v_0,v_1\}, \dots, \{v_{k-1},v_0\}$.

    \item A graph $G$ is \emph{connected} if there exists a path in $G$ between every pair of distinct vertices.
    
    \item A \emph{tree} is a connected and acyclic (i.e., one without cycles) graph.
\end{enumerate}
\end{definition}

\begin{theorem}[Characterizations of trees] \label{thm:caratt_alberi}
The following statements are equivalent for a non-trivial graph \(T\):
\begin{enumerate}
    \item \(T\) is a tree.
    \item Every pair of vertices in \(T\) is connected by a unique path.
\end{enumerate}
\end{theorem}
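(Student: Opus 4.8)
The plan is to establish the equivalence by proving the two implications $(1)\Rightarrow(2)$ and $(2)\Rightarrow(1)$ separately, working directly from the definition of a tree as a connected acyclic graph (Def.~\ref{def:albero_classico}).

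For $(1)\Rightarrow(2)$, I would begin from connectedness, which by definition guarantees the \emph{existence} of a path between any two distinct vertices $x$ and $y$. The substance of the argument is therefore \emph{uniqueness}, which I would prove by contradiction. Suppose there are two distinct paths $P$ and $Q$ from $x$ to $y$. Traversing both from $x$, let $u$ be the last vertex up to which the two paths agree, and let $w$ be the first vertex beyond $u$ that lies on both paths again; such a $w$ exists because both paths eventually reach the common endpoint $y$. The arc of $P$ from $u$ to $w$ and the arc of $Q$ from $u$ to $w$ are internally disjoint by the choice of $w$, and together they form a cycle. This contradicts the acyclicity of $T$.

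For $(2)\Rightarrow(1)$, I would verify connectedness and acyclicity in turn. Connectedness is immediate, since the hypothesis that every pair of vertices is joined by a \emph{unique} path in particular asserts that at least one such path exists, which is exactly the definition of a connected graph. For acyclicity I would again argue by contradiction: if $T$ contained a cycle $v_0 v_1 \ldots v_{k-1} v_0$ with $k\ge 3$, then $v_0$ and $v_1$ would be joined by two genuinely distinct paths, namely the single edge $\{v_0,v_1\}$ and the complementary arc $v_1 v_2 \ldots v_{k-1} v_0$, violating the assumed uniqueness. Hence no cycle can exist, and $T$ is a tree.

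The main obstacle I anticipate is the bookkeeping in the $(1)\Rightarrow(2)$ direction: one must extract an \emph{honest} cycle from two distinct paths, which requires pinning down the divergence vertex $u$ and the first reconvergence vertex $w$ precisely, and then checking that the resulting closed walk really is a cycle. In particular I would need to confirm that the two arcs between $u$ and $w$ are internally vertex-disjoint (this is what forces $w$ to be the \emph{first} common vertex after $u$) and that they cannot both reduce to the single edge $\{u,w\}$ (otherwise $P$ and $Q$ would not have diverged at $u$), so that the cycle genuinely has at least three distinct vertices as required by the definition of a cycle.
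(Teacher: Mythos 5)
Your proof is correct, but there is nothing in the paper to compare it against: the paper's entire proof of Theorem~\ref{thm:caratt_alberi} is the citation ``See \cite{diestel17}.'' What you have written is the standard argument---essentially the one given in that reference---so in effect your proposal supplies the proof that the paper outsources. The details check out against the paper's own definitions. For $(1)\Rightarrow(2)$, taking $w$ to be the \emph{first} vertex after $u$ along $P$ that also lies on $Q$ does force the two $u$--$w$ arcs to be internally disjoint (an internal vertex of either arc lying on the other would contradict the minimality of $w$; note also that such a $w$ automatically lies after $u$ on $Q$ as well, since the vertices of $Q$ up to $u$ are shared with $P$ and a path cannot repeat vertices), and since $P$ and $Q$ leave $u$ by different edges the two arcs cannot both be the single edge $\{u,w\}$, so their union is a cycle on at least three vertices, exactly as the paper's definition of a cycle requires. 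For $(2)\Rightarrow(1)$, the two paths $\{v_0,v_1\}$ and $v_1v_2\ldots v_{k-1}v_0$ are genuinely distinct precisely because $k\ge 3$, so the contradiction is sound. The only wording I would tighten is ``first vertex beyond $u$ that lies on both paths again'': specify the ordering in which it is first (first along $P$), since that is what your internal-disjointness argument actually uses.
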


\begin{proof}
    See \cite[Proposition 1.5.1]{diestel17}.
\end{proof}

\begin{definition}[Rooted and ordered tree]
\label{def:albero-radicato-ordinato}
 A \emph{rooted tree} is a pair \((T,r)\), where \(T\) is a tree and \(r \in V(T)\) is a distinguished vertex called the \emph{root}. The choice of a root allows the definition of parent-child relationships among the vertices:
 \begin{enumerate}
 \item The \emph{parent} of a vertex \(v \neq r\) is the unique vertex adjacent to \(v\) that lies on the unique path (guaranteed by Theorem \ref{thm:caratt_alberi}) between the root \(r\) and \(v\).
 \item A vertex \(u\) is a \emph{child} of \(v\) if \(v\) is the parent of \(u\).
 \item The \emph{level} of a vertex \(v\) is its distance from the root \(r\), i.e., the length of the unique path connecting them.
 \end{enumerate}
A rooted tree is called \emph{ordered} (or \emph{planar}) if the set of children of each vertex is totally ordered.
\end{definition}

\subsubsection{Graphs with Half-Edges and Operadic Trees}

Operadic composition is most effectively formalized by means of a graphical language. We next introduce its basic elements, designed to encode maps with several inputs and a single output.

\begin{definition}[Graph with half-edges]
A \emph{graph with half-edges} is a quadruple \(G = (V, E, F, s)\) where:
\begin{enumerate}
    \item \((V,E)\) is a graph. The elements of \(V\) are called \emph{internal vertices}.
    \item \(F\) is a finite set whose elements are called \emph{half-edges}.
    \item \(s : F \to V\) is the \emph{incidence function}, which assigns to each half-edge the unique internal vertex to which it is attached. If \(f \in F\) and \(s(f)=v\), we say that \(f\) is \emph{incident to} the vertex \(v\).
\end{enumerate}
\end{definition}

\begin{definition}[Associated classical graph]
\label{def:associated_graph}
Let ${G}=(V,E,F,s)$ be a graph with half-edges. Its \emph{associated classical graph}, denoted by $T(G)$, is the graph defined by the set of vertices $V \cup F$ and the set of edges $E \cup \{\{f, s(f)\} \mid f \in F\}$. In symbols:
\begin{equation}
T(G) := \bigl(V \cup F, E \cup \{\{f, s(f)\} \mid f \in F\}\bigr).
\end{equation}
\end{definition}

\begin{remark}[Degree of half-edges] \label{oss:grado_semi_archi}
From the definition of the associated classical graph \(T({G})\), we immediately obtain a fundamental structural property. Every half-edge \(f \in F\), which corresponds to a vertex in \(T({G})\), is by construction adjacent to exactly one other vertex, namely its incident vertex \(s(f) \in V\).
Consequently, in the associated classical graph \(T({G})\), all half-edges are vertices of degree 1.
\end{remark}

\begin{definition}[Operadic tree]
\label{def:albero_per_operad}
An \emph{operadic tree} is a graph with half-edges ${G}=(V,E,F,s)$ that satisfies two conditions:
\begin{enumerate}
    \item Its associated classical graph $T(G)$ is a tree (Definition~\ref{def:graph_theory_concepts}).
    \item The set of half-edges $F$ is partitioned into two disjoint subsets:
    \begin{itemize}
        \item A singleton $\{r\}$, whose element $r$ is called the \textit{root} (or output).
        \item A set $L$, whose elements are called \textit{leaves} (or inputs).
    \end{itemize}
\end{enumerate}
\end{definition}

\begin{remark}
Since an operadic tree \({G}\) is defined by the condition that its associated graph \(T({G})\) is a classical tree, we can extend several of the standard terminologies and properties of trees to \({G}\).
In particular, Theorem \ref{thm:caratt_alberi} guarantees that in \(T({G})\) there is a unique path between every pair of vertices. This allows us to define parent–child relations for an operadic tree as well, simply by referring to the corresponding relations in its associated graph.
\end{remark}

By convention, an operadic tree \({G}=(V,E,F,s)\) is represented by drawing its associated classical graph \(T({G})\), albeit with a key graphical simplification. While the internal vertices (the elements of \(V\)) are explicitly drawn as points, those of \(T({G})\) corresponding to half-edges (the elements of \(F\)) are not. As a result, edges of the type \(\{f, s(f)\}\), $f \in F$, appear in the drawing simply as edges incident on an internal vertex. The convention adopted throughout this review is to draw the root at the bottom and the leaves at the top.

\begin{definition}[Corolla]
\label{def:corolla}
For each \(n \ge 0\), the \emph{\(n\)-corolla} is the operadic tree \({C_n}=(V,E,F,s)\) with the following properties:
\begin{enumerate}
    \item It has a single internal vertex, \(V=\{v\}\).
    \item It has no internal edges, \(E=\varnothing\).
    \item It has \(n+1\) half-edges, \(F=\{r, l_1, \dots, l_n\}\), where \(r\) is the root and the \(l_i\) are the leaves.
    \item The incidence function $s: F \to V$ maps every half-edge to the unique internal vertex, i.e., $s(f) = v$ for all $f \in F$.
\end{enumerate}
\end{definition}

\begin{example}[Stump]
The $0$-corolla is called a \emph{stump}.
\end{example}

\begin{definition}[Trivial tree]
\label{def:albero_banale}
The \emph{trivial tree} is a special operadic tree that does not strictly adhere to all conditions of Definition~\ref{def:albero_per_operad}. It is defined as a graph with:
\begin{itemize}
    \item No internal vertices ($V = \emptyset$).
    \item A single half-edge that serves as both its root and its only leaf.
\end{itemize}
By convention, it is the unique tree with one input and one output that has no internal vertices.
\end{definition}

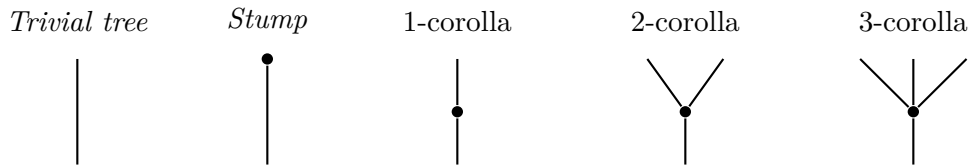
\begin{figure}[H]
    \centering
    \resizebox{\textwidth}{!}{
        \begin{tikzpicture}[thick]
            % Trivial Tree
            \node at (-8, 1.2) {\emph{Trivial tree}};
            \coordinate (trivial_leaf) at (-8,0.7);
            \coordinate (trivial_root) at (-8,-0.7);
            \draw (trivial_leaf) -- (trivial_root);

            % Stump (0-ary operation)
            \node at (-5.5, 1.2) {\emph{Stump}};
            \node[circle, fill=black, inner sep=1.5pt] (stump) at (-5.5,0.7) {};
            \coordinate (stump_root) at (-5.5,-0.7);
            \draw (stump) -- (stump_root);

            % 1-Corolla
            \node at (-3, 1.2) {1-corolla};
            \node[circle, fill=black, inner sep=1.5pt] (op1) at (-3,0) {};
            \coordinate (leaf1) at (-3,0.7);
            \coordinate (root1) at (-3,-0.7);
            \draw (leaf1) -- (op1);
            \draw (op1) -- (root1);

            % 2-Corolla
            \node at (0, 1.2) {2-corolla};
            \node[circle, fill=black, inner sep=1.5pt] (op2) at (0,0) {};
            \coordinate (leaf2a) at (-0.5,0.7);
            \coordinate (leaf2b) at (0.5,0.7);
            \coordinate (root2) at (0,-0.7);
            \draw (leaf2a) -- (op2);
            \draw (leaf2b) -- (op2);
            \draw (op2) -- (root2);

            % 3-Corolla
            \node at (3, 1.2) {3-corolla};
            \node[circle, fill=black, inner sep=1.5pt] (op3) at (3,0) {};
            \coordinate (leaf3a) at (2.3,0.7);
            \coordinate (leaf3b) at (3,0.7);
            \coordinate (leaf3c) at (3.7,0.7);
            \coordinate (root3) at (3,-0.7);
            \draw (leaf3a) -- (op3);
            \draw (leaf3b) -- (op3);
            \draw (leaf3c) -- (op3);
            \draw (op3) -- (root3);
        \end{tikzpicture}
    } 
    \caption{Visual representation of the trivial tree, the stump, and some corollas.}
    \label{fig:corolle}
\end{figure}

\begin{remark}[Trivial tree vs. 0-corolla]
It is important to distinguish the \emph{trivial tree} from the \emph{0-corolla} (or stump). The 0-corolla has one internal vertex and represents a 0-ary operation (a constant). In contrast, the trivial tree has no internal vertices and represents the 1-ary identity operation.
\end{remark}

\begin{definition}[Composition by grafting]
Let \(G_0 = (V_0, E_0, F_0, s_0)\) be an operadic tree with \(n\) leaves \(L_0 = (l_1, \dots, l_n) \subseteq F_0\) for which an ordering has been fixed. Let \((G_1, \dots, G_n)\) be sequence of \(n\) operadic trees, where each \(G_i = (V_i, E_i, F_i, s_i)\) has root \(r_i\). The \emph{composition} (or \emph{grafting}) of \((G_1, \dots, G_n)\) onto \(G_0\), denoted \(\gamma(G_0; G_1, \dots, G_n)\), is the new operadic tree \(G=(V,E,F,s)\) defined as follows:
\begin{enumerate}
    \item The set of internal vertices is the union of all vertices:
    \begin{equation}
    V = V_0 \cup V_1 \cup \dots \cup V_n.
    \end{equation}
    \item The set of internal edges is
    \begin{equation}
    E = E_0 \cup E_1 \cup \dots \cup E_n \cup \bigcup_{i=1}^{n} \{\{s_0(l_i), s_i(r_i)\}\}.
    \end{equation}
    \item The set of external half-edges is 
    \begin{equation}
    F = \{r_0\} \cup L_1 \cup \dots \cup L_n, 
    \end{equation}
    where \(r_0\) is the root of \(G_0\) and \(L_i\) is the set of leaves of \(G_i\).
    \item The incidence function \(s: F \to V\) is defined by:
    \begin{equation}
    s(f) = 
    \begin{cases}
        s_i(f) & \text{if } f \in L_i \text{ for some } i \in \{1,\dots,n\} \\
        s_0(r_0) & \text{if } f = r_0.
    \end{cases}
    \end{equation}
\end{enumerate}
The root of \(G\) is \(r_0\). The set of leaves of \(G\) is given by the union of the sets \(L_1, \dots, L_n\).
\end{definition}

\begin{figure}[H]
\centering
\begin{tikzpicture}[
    scale=1, 
    every node/.style={transform shape},
    v/.style={fill=white, inner sep=1.5pt}, 
    leaf/.style={inner sep=0pt}, 
]

\node[v] (v0) at (0,0) {$v_0$};
\node at (0, -1.7) {$G_0$};

\node[leaf] (r0) at (0, -1.2) {}; 
\node[leaf] (l1) at (-1.5, 1.2) {}; 
\node[leaf] (l2) at (0, 1.2) {};   
\node[leaf] (l3) at (1.5, 1.2) {};  

\draw (v0) -- (r0) node[midway, right=1pt] {$r_0$};
\draw (v0) -- (l1) node[midway, left=1pt] {$l_1$};
\draw (v0) -- (l2) node[midway, right=1pt] {$l_2$};
\draw (v0) -- (l3) node[midway, right=1pt] {$l_3$};

\begin{scope}[xshift=-1.5cm, yshift=1.7cm]
    \node at (0, 0) {$G_1$};
    \node[v] (v1) at (0,1) {$v_1$};
    \node[leaf] (r1) at (0, 0.4) {}; 
    \node[leaf] (f11) at (-0.5, 1.8) {}; 
    \node[leaf] (f12) at (0.5, 1.8) {};
    \draw (v1) -- (r1) node[midway, right=1pt] {$r_1$};
    \draw (v1) -- (f11);
    \draw (v1) -- (f12);
\end{scope}

\begin{scope}[xshift=0cm, yshift=1.7cm]
    \node at (0, 0) {$G_2$};
    \node[v] (v2) at (0,1) {$v_2$};
    \node[leaf] (r2) at (0, 0.4) {}; 
    \node[leaf] (f21) at (0, 1.8) {}; 
    \draw (v2) -- (r2) node[midway, right=1pt] {$r_2$};
    \draw (v2) -- (f21);
\end{scope}

\begin{scope}[xshift=1.5cm, yshift=1.7cm]
    \node at (0, 0) {$G_3$};
    \node[v] (v3) at (0,1) {$v_3$};
    \node[leaf] (r3) at (0, 0.4) {}; 
    \node[leaf] (f31) at (-0.5, 1.8) {}; 
    \node[leaf] (f32) at (0.5, 1.8) {};
    \draw (v3) -- (r3) node[midway, right=1pt] {$r_3$};
    \draw (v3) -- (f31);
    \draw (v3) -- (f32);
\end{scope}

\node at (3.5, 1) {\huge$\overset{\gamma}{\longrightarrow}$};

\begin{scope}[xshift=7cm, yshift=0cm]
    \node at (0, -1.7) {$\gamma(G_0; G_1, G_2, G_3)$};

    \node[v] (v0_new) at (0,0) {$v_0$};
    \node[v] (v1_new) at (-1.5, 1.5) {$v_1$};
    \node[v] (v2_new) at (0, 1.5) {$v_2$};
    \node[v] (v3_new) at (1.5, 1.5) {$v_3$};

    \draw (v0_new) -- (0, -1.2); 
    \draw (v0_new) -- (v1_new);
    \draw (v0_new) -- (v2_new);
    \draw (v0_new) -- (v3_new);

    \draw (v1_new) -- ++(-0.5, 0.8);
    \draw (v1_new) -- ++(0.5, 0.8);
    
    \draw (v2_new) -- ++(0, 0.8);

    \draw (v3_new) -- ++(-0.5, 0.8);
    \draw (v3_new) -- ++(0.5, 0.8);
\end{scope}

\end{tikzpicture}
\caption{Graphical representation of tree composition via grafting.}
\label{fig:innesto_alberi_invertito}
\end{figure}
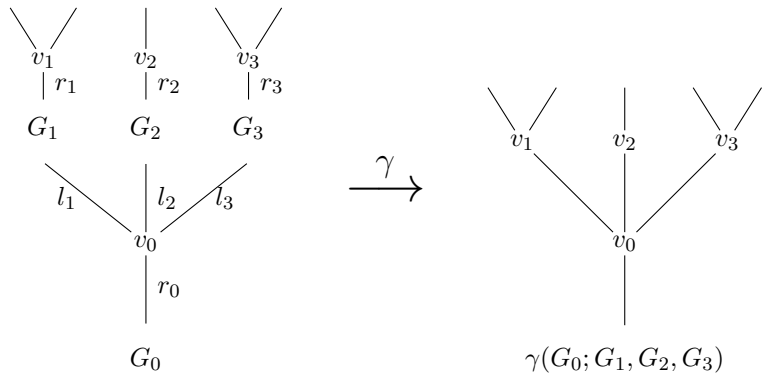

\begin{remark}
The trivial tree acts as a neutral element for composition. If a trivial tree is grafted onto a leaf \(l_i\), no internal vertices are added, no new internal edges are created, and the leaf $l_i$ is replaced by the leaf of the trivial tree, leaving the overall structure of the graph unchanged.
\end{remark}

\subsubsection{Graphical Representation of Abstract Operations and Composition}

Now that we have formally defined operadic trees and their composition, we can explain how they are used to represent the abstract operations of an operad.

An abstract operation of arity \(n\) is usually represented graphically by an \emph{\(n\)-corolla} \(C_n\) (Definition \,\ref{def:corolla}) where:
\begin{itemize}
  \item the \emph{leaves} represent the \(n\) \emph{inputs} of the operation;
  \item the \emph{root} represents the single \emph{output};
  \item the internal vertex \(v\) represents the abstract operation itself.
\end{itemize}
In particular, the stump represents a 0-ary operation, which can be regarded as a constant, while the trivial tree, on the other hand, models the identity operation.

Composition by grafting trees corresponds to the composition of the abstract operations they represent.

\begin{figure}[H]
\centering
\begin{tikzpicture}[thick, scale=1.3, every node/.style={circle, fill, inner sep=1.5pt}, baseline={(current bounding box.center)}]

\node[draw=none, fill=none] at (-4.3, 0.2) {$\gamma\Biggl($};

\node (a0) at (-3.5, 0.2) {};
\coordinate (a1) at (-3.7, 0.5);
\coordinate (a3) at (-3.3, 0.5);
\draw (a0) -- (a1);
\draw (a0) -- (a3);
\coordinate (o) at (-3.5, -0.1);
\draw (o) -- (a0);

\node[draw=none, fill=none] at (-3.0, 0.2) {$;$};

\node (b0) at (-2.2, 0.2) {};
\coordinate (b1) at (-2.4, 0.5);
\coordinate (b2) at (-2.2, 0.5);
\coordinate (b3) at (-2.0, 0.5);
\draw (b0) -- (b1);
\draw (b0) -- (b2);
\draw (b0) -- (b3);
\coordinate (o2) at (-2.2, -0.1);
\draw (o2) -- (b0);

\node[draw=none, fill=none] at (-1.6, 0.2) {$,$};

\node (c0) at (-0.9, 0.2) {};
\coordinate (c1) at (-1.1, 0.5);
\coordinate (c3) at (-0.7, 0.5);
\draw (c0) -- (c1);
\draw (c0) -- (c3);
\coordinate (o3) at (-0.9, -0.1);
\draw (o3) -- (c0);

\node[draw=none, fill=none] at (-0.2, 0.2) {$\Biggr)$};

\node[draw=none, fill=none] at (0.6, 0.2) {$=$};

\node (d0) at (1.8, -0.2) {}; 
\node (d1) at (1.3, 0.3) {};  
\node (d2) at (2.3, 0.3) {};  

\coordinate (d1a) at (0.9, 0.8);
\coordinate (d1b) at (1.3, 0.8);
\coordinate (d1c) at (1.7, 0.8);
\draw (d1) -- (d1a);
\draw (d1) -- (d1b);
\draw (d1) -- (d1c);
\draw (d0) -- (d1);
\draw (d0) -- (d2);

\coordinate (d2a) at (2.1, 0.8);
\coordinate (d2b) at (2.5, 0.8);
\draw (d2) -- (d2a);
\draw (d2) -- (d2b);

\draw (d0) -- +(0,-0.3);

\end{tikzpicture}
\caption{Example of composition of binary and ternary operations.}
\end{figure}
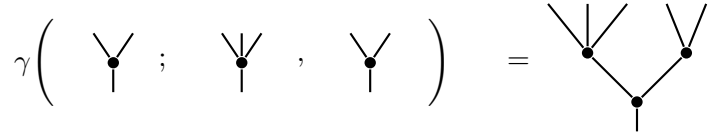

\begin{remark}
In general, one may need to distinguish between different abstract operations having the same number of inputs. To distinguish between such operations, one can label the vertex of the corolla with the symbol of the operation being considered. For example, let us consider three different binary operations, indicated respectively by \(\mu\), \(\nu_1\), and \(\nu_2\). The trees corresponding to these operations are the 2-corollas with the vertex labeled by each of these operations.
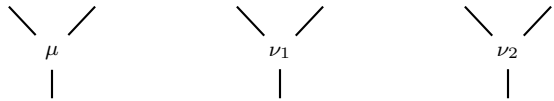
\begin{figure}[H]
\centering
\begin{tikzpicture}[every node/.style={font=\scriptsize}, thick]

\node (ou) at (0,-1.5) {};
\node (theta) at (0,-0.75) {$\mu$};
\draw (ou) -- (theta);

\node (a1) at (-0.7,0) {};
\node (a2) at (+0.7,0) {};

\draw (theta) -- (a1);
\draw (theta) -- (a2);

\node (ou3) at (6,-1.5) {};
\node (theta3) at (6,-0.75) {$\nu_2$};
\draw (ou3) -- (theta3);

\node (a5) at (5.3,0) {};
\node (a6) at (+6.7,0) {};

\draw (theta3) -- (a5);
\draw (theta3) -- (a6);

\node (ou2) at (3,-1.5) {};
\node (theta2) at (3,-0.75) {$\nu_1$};
\draw (ou2) -- (theta2);

\node (a3) at (2.3,0) {};
\node (a4) at (+3.7,0) {};

\draw (theta2) -- (a3);
\draw (theta2) -- (a4);

\end{tikzpicture}
\caption{Representation of binary operations using labeled 2-corollas.}
\label{fig:corolle etichettate}
\end{figure}
The composition of these operations yields a new operation whose arity is the sum of the arities of the operations being composed. If this new operation is indicated by \(\lambda\), we can write
\begin{equation}
    \label{eq: innesto}
    \gamma(\mu; \nu_1, \nu_2) = \lambda,
\end{equation}
where \(\gamma\) represents composition by grafting. Below is the tree representation of \eqref{eq: innesto}.
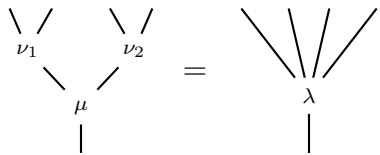
\begin{figure}[H]
\centering
\begin{tikzpicture}[every node/.style={font=\scriptsize}, thick]

\node (ou) at (0,-1.5) {};
\node (theta) at (0,-0.75) {$\mu$};
\draw (ou) -- (theta);

\node (a1) at (-0.7,0) {$\nu_1$};
\node (a2) at (0.7,0) {$\nu_2$};

\draw (theta) -- (a1);
\draw (theta) -- (a2);

\node (a11) at (-1,0.7) {};
\node (a12) at (-0.3,0.7) {};
\node (a21) at (0.3,0.7) {};
\node (a22) at (1,0.7) {};

\draw (a1) -- (a11);
\draw (a1) -- (a12);
\draw (a2) -- (a21);
\draw (a2) -- (a22);

\node at (1.5, -0.3) {\large $=$};

\node (ou2) at (3, -1.5) {};
\node (theta2) at (3, -0.6) {$\lambda$};
\draw (ou2) -- (theta2);

\node (b1) at (2,0.7) {};
\node (b2) at (2.7,0.7) {};
\node (b3) at (3.3,0.7) {};
\node (b4) at (4,0.7) {};

\draw (theta2) -- (b1);
\draw (theta2) -- (b2);
\draw (theta2) -- (b3);
\draw (theta2) -- (b4);

\end{tikzpicture}
\caption{Tree representation of the composition $\gamma(\mu; \nu_1, \nu_2) = \lambda$.}
\end{figure}
\end{remark}

\subsubsection{Planar and Non-Planar Trees}

The distinction between non-symmetric operads and symmetric operads, which we will formally define in Section~\ref{sec: s operad}, is reflected in the structure of the trees used to represent their operations.

\begin{definition}[Planar and non-planar operadic tree]
\label{def:albero_planare_e_non}
A \emph{planar operadic tree} is an operadic tree \(G=(V,E,F,s)\) equipped with a total order on its set of leaves $L$.
A \emph{non-planar operadic tree} is simply an operadic tree.
\end{definition}

\begin{remark}[Planar Trees, embeddings, and labeling conventions]
A key distinction is made between planar and non-planar trees. A \emph{planar tree} is equipped with an intrinsic total order on its leaves, which is tipically represented by a left-to-right arrangement in diagrams. This induces a global ordering that makes labeling the leaves superfluous.

In contrast, a \emph{non-planar tree} has no intrinsic ordering of its leaves. Any drawing of such a tree on a plane, known as a \emph{planar embedding}, requires an arbitrary choice of ordering. In order to distinguish between the different possible embeddings of a non-planar tree with $n$ leaves, its leaves are labeled (typically with integers). All possible embeddings can be generated from a single one by the action of the symmetric group $\mathbb{S}_n$, which permutes these labels. For instance, a non-planar 3-corolla has $3! = 6$ distinct planar embeddings, each corresponding to a unique permutation in $\mathbb{S}_3$. In particular, if \(G\) is a non-planar \(n\)-corolla and we have fixed a planar embedding with labels \((1,\dots,n)\), every other arrangement corresponds to the action of a permutation \(\sigma\in\mathbb{S}_n\) that renumbers leaf \(i\) to \(\sigma(i)\).

Henceforth, when we refer to a ``planar tree'' or a ``non-planar tree'', we  implicitly mean its planar embedding — the graphical arrangement that respects the intrinsic order (for planar trees) or a chosen labeled order (for non-planar trees).
\end{remark}

\begin{example}[Planar embedding of a planar and non-planar 3-corolla]
Figure \ref{fig:planar_embeddings_example} illustrates the unique planar embedding of a planar 3-corolla and the six possible planar embeddings (choices of leaf ordering) for a non-planar 3-corolla.

\begin{figure}[H]
\centering
\resizebox{\textwidth}{!}{
\begin{tikzpicture}[every node/.style={font=\scriptsize}, thick]

\node (ou) at (0,-1.5) {};
\node (theta) at (0,-0.75) {$\theta$};
\draw (ou) -- (theta);

\node (a1) at (-0.7,0) {1};
\node (a3) at (0.7,0) {3};
\node (a2) at (0,0) {2};

\draw (theta) -- (a1);
\draw (theta) -- (a2);
\draw (theta) -- (a3);

\node at (1.2, -1) {\large ,};

\node (ou2) at (2.5, -1.5) {};
\node (theta2) at (2.5, -0.75) {$\theta$};
\draw (ou2) -- (theta2);

\node (b1) at (1.8,0) {1};
\node (b3) at (2.5,0) {3};
\node (b4) at (3.2,0) {2};

\draw (theta2) -- (b1);
\draw (theta2) -- (b3);
\draw (theta2) -- (b4);

\node at (3.7, -1) {\large ,};

\node (ou3) at (5,-1.5) {};
\node (theta3) at (5,-0.75) {$\theta$};
\draw (ou3) -- (theta3);

\node (c1) at (4.3,0) {2};
\node (c2) at (5,0) {1};
\node (c3) at (5.7,0) {3};

\draw (theta3) -- (c1);
\draw (theta3) -- (c2);
\draw (theta3) -- (c3);

\node at (6.2, -1) {\large ,};

\node (ou4) at (7.5,-1.5) {};
\node (theta4) at (7.5,-0.75) {$\theta$};
\draw (ou4) -- (theta4);

\node (d1) at (6.8,0) {2};
\node (d2) at (7.5,0) {3};
\node (d3) at (8.2,0) {1};

\draw (theta4) -- (d1);
\draw (theta4) -- (d2);
\draw (theta4) -- (d3);

\node at (8.7, -1) {\large ,};

\node (ou5) at (10,-1.5) {};
\node (theta5) at (10,-0.75) {$\theta$};
\draw (ou5) -- (theta5);

\node (e1) at (9.3,0) {3};
\node (e2) at (10,0) {1};
\node (e3) at (10.7,0) {2};

\draw (theta5) -- (e1);
\draw (theta5) -- (e2);
\draw (theta5) -- (e3);

\node at (11.2, -1) {\large ,};

\node (ou6) at (12.5,-1.5) {};
\node (theta6) at (12.5,-0.75) {$\theta$};
\draw (ou6) -- (theta6);

\node (f1) at (11.8,0) {3};
\node (f2) at (12.5,0) {2};
\node (f3) at (13.2,0) {1};

\draw (theta6) -- (f1);
\draw (theta6) -- (f2);
\draw (theta6) -- (f3);

\node at (13.7, -1) {\large .};

\end{tikzpicture}
}
\caption{Possible \emph{planar embeddings} of a \emph{non-planar} 3-corolla.}
\label{fig:planar_embeddings_example}
\end{figure}
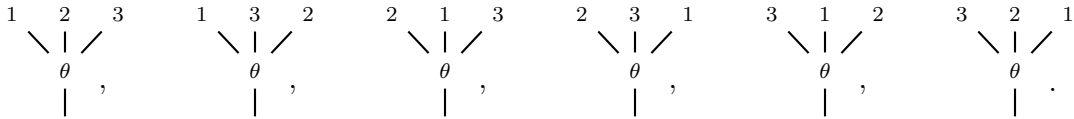
\end{example}

In \emph{non-symmetric operads}, planar trees are used. In a non-symmetric operad, the order of the inputs is fixed and is an integral part of the operation, meaning that altering the order results in a distinct operation.
In Section \ref{sec: s operad}, we will see that non-planar trees are used to represent abstract operations in \emph{symmetric operads}.

\subsection{Examples of Non-Symmetric Operads}

\begin{example}[Symmetries operad]
Consider the non-symmetric operad of sets \( \mathsf{S} \), called the \emph{symmetries operad}, defined by setting
\begin{equation}
\mathsf{S}(n) \coloneqq \mathbb{S}_n,
\end{equation}
i.e., the symmetric group on $n$ elements.

A permutation \( \sigma \in \mathbb{S}_n \) can be visualized as a \emph{string diagram} where the strings go from left to right: each string connects the starting position \( i \) to the ending position \( \sigma(i) \). For example, the permutation
\begin{equation}
\sigma = \begin{pmatrix} 1 & 2 & 3 \\ 2 & 3 & 1 \end{pmatrix} \in \mathbb{S}_3
\end{equation}
can be represented graphically as follows:

\begin{center}
\begin{tikzpicture}[scale=0.8, baseline=(current bounding box.center)]
    \foreach \i/\j in {1/2, 2/3, 3/1} {
        \draw[thick] (0,4-\i) -- (3,4-\j);
    }

    \foreach \i in {1,2,3} {
        \node[left] at (0,4-\i) {\i};
    }
    \foreach \i in {1,2,3} {
        \node[right] at (3,4-\i) {\i};
    }
    \node at (3.6, 0.9) {.};

\end{tikzpicture}
\end{center}
Given $\sigma \in \mathbb{S}_k, \tau_1 \in \mathbb{S}_{\ell_1}, \dots, \tau_k \in \mathbb{S}_{\ell_k},$ the composition
$\sigma \circ (\tau_1, \dots, \tau_k) \in \mathbb{S}_{\ell_1 + \cdots + \ell_k}$
is defined as the permutation obtained by:
\begin{itemize}
    \item applying each \( \tau_i \) within the \( i \)-th block (composed of \( \ell_i \) elements),
    \item and then reordering the blocks according to \( \sigma \).
\end{itemize}

For example, let
\[
\sigma = \begin{pmatrix} 1 & 2 & 3 \\ 2 & 3 & 1 \end{pmatrix} \in \mathbb{S}_3, \quad
\tau_1 = \begin{pmatrix} 1 & 2 \\ 1 & 2 \end{pmatrix} \in \mathbb{S}_2, \quad
\tau_2 = \begin{pmatrix} 1 & 2 & 3 \\ 3 & 1 & 2 \end{pmatrix} \in \mathbb{S}_3, \quad
\tau_3 = \begin{pmatrix} 1 & 2 \\ 2 & 1 \end{pmatrix} \in \mathbb{S}_2.
\]

Consider the composition \( \sigma \circ (\tau_1, \tau_2, \tau_3) \in \mathbb{S}_7 \). The blocks are:
\begin{itemize}
    \item Block 1: positions 1, 2 (on which \( \tau_1 \) acts),
    \item Block 2: positions 3, 4, 5 (on which \( \tau_2 \) acts),
    \item Block 3: positions 6, 7 (on which \( \tau_3 \) acts),
\end{itemize}
and the permutation \( \sigma \) reorders them as block 2, then 3, then 1.

\begin{center}

\begin{tikzpicture}[scale=0.5, baseline={([yshift=-.5ex]current bounding box.center)}]

    \draw[thin] (0, 4) -- (2, 4);
    \draw[thin] (0, 3.5) -- (2, 3.5);

    \draw[thin] (0, 2.5) -- (2, 1);
    \draw[thin] (0, 1.5) -- (2, 2.5);
    \draw[thin] (0, 1) -- (2, 2);

    \draw[thin] (0, 0) -- (2, -1);
    \draw[thin] (0, -1) -- (2, 0);
\end{tikzpicture}

\quad $\circ$ \quad

\begin{tikzpicture}[scale=0.5, baseline={([yshift=-.5ex]current bounding box.center)}]

    \draw[thin] (0, 1) -- (4, -0.5);
    \draw[thin] (0, 0) -- (4, -1.5);
    \draw[thin] (0, -1.5) -- (4, 1);
\end{tikzpicture}

\quad $=$ \quad

\begin{tikzpicture}[scale=0.55, baseline={([yshift=-.5ex]current bounding box.center)}]

    \foreach \N in {1,...,7} {
        \node[left] at (0, -\N) {\N};
        \node[right] at (5.5, -\N) {\N};
    }

    \draw[thin] (0, -1) -- (1.5, -1) -- (5.5, -3); % 1 -> 3
    \draw[thin] (0, -2) -- (1.5, -2) -- (5.5, -4); % 2 -> 4
    \draw[thin] (0, -3) -- (1.5, -5) -- (5.5, -7); % 3 -> 7
    \draw[thin] (0, -4) -- (1.5, -3) -- (5.5, -5); % 4 -> 5
    \draw[thin] (0, -5) -- (1.5, -4) -- (5.5, -6); % 5 -> 6
    \draw[thin] (0, -6) -- (1.5, -7) -- (5.5, -2); % 6 -> 2
    \draw[thin] (0, -7) -- (1.5, -6) -- (5.5, -1); % 7 -> 1
\end{tikzpicture}
\end{center}
We thus obtain the permutation:
\begin{equation}
\sigma \circ (\tau_1, \tau_2, \tau_3) = \begin{pmatrix}
1 & 2 & 3 & 4 & 5 & 6 & 7 \\
3 & 4 & 7 & 5 & 6 & 2 & 1
\end{pmatrix} \in \mathbb{S}_7.
\end{equation}
The unit of the operad is the trivial permutation of one element
\begin{equation}
\mathrm{id} = \begin{pmatrix} 1 \\ 1 \end{pmatrix} \in \mathsf{S}(1).
\end{equation}
\end{example}

Showing that $\mathsf{S}$ defines an operad requires a non-trivial proof. In particular, it is necessary to check that the composition defined above is associative and compatible with the identity, which involves a detailed analysis of block permutations and their iterated compositions. A rigorous treatment of these aspects can be found, for example, in \cite{lodayvallette}.

\begin{example}[Non-symmetric operad of trees]
The non-symmetric operad of trees \( \mathsf{T}_{\mathrm{ns}} \) is the operad defined as follows:
\begin{enumerate}
    \item  \( \forall n \in \mathbb{N}_0 \ \ \mathsf{T}_{\mathrm{ns}}(n) \) consists of all \textit{rooted}, \textit{planar} trees with \( n \) leaves.
    \item  The composition maps in \( \mathsf{T}_{\mathrm{ns}} \) are the functions:
    \begin{equation}
    \gamma : \mathsf{T}_{\mathrm{ns}}(n) \times \mathsf{T}_{\mathrm{ns}}(k_1) \times \dots \times \mathsf{T}_{\mathrm{ns}}(k_n) \to \mathsf{T}_{\mathrm{ns}}(k_1 + \dots + k_n)
    \end{equation}
    defined as follows: given a tree \( T \in \mathsf{T}_{\mathrm{ns}}(n) \), and trees \( T_1, \dots, T_n \), with \( T_i \in \mathsf{T}_{\mathrm{ns}}(k_i) \), the composition
    \begin{equation}
    \gamma(T; T_1, \dots, T_n)
    \end{equation}
    is the tree obtained by grafting the root of each \( T_i \) onto the \( i \)-th leaf of \( T \), for \( i = 1, \dots, n \). After grafting, the leaves of the resulting tree are re-labeled from \( 1 \) to \( k_1 + \dots + k_n \), respecting the induced (lexicographical) order.
    \item The unit is the trivial tree \( \mathrm{\mathbf{1}} \in T_{\mathrm{ns}}(1) \).
\end{enumerate}
\end{example}

Verifying that \(\mathsf{T}_{\mathrm{ns}}\) indeed defines a non-symmetric operad structure requires a detailed check of the associativity laws of grafting. In particular, it must be shown that the result of grafting onto two distinct leaves is independent of the order in which the operations are performed, and that the \emph{trivial tree} acts as a neutral element. A rigorous treatment of these aspects can be found in \cite{markl_shnider_stasheff}.

\subsection{Symmetric Operads} \label{sec: s operad}
In the context of non-symmetric operads, each n-ary operation is uniquely associated with a given ordered sequence of n arguments. The order of the inputs is, therefore, an integral part of the structure: two operations that differ by a permutation of the arguments are considered distinct, and a priori there is no relationship between them. However, in many algebraic situations, the operations of interest possess symmetries with respect to the permutation of arguments. A classic example is that of commutative algebras, where multiplication satisfies $x \cdot y = y \cdot x$ for every pair of elements $x,y$. To model such structures, it is natural to introduce a more general notion of an operad, which takes into account the action of the symmetric group $\mathbb{S}_n$ on the n-ary operations.

A \emph{symmetric operad} is therefore an operad $\mathcal{P}$ in which, for each $n \in \mathbb{N}_0$, the set $\mathcal{P}(n)$ is endowed with a (right) action of the group $\mathbb{S}_n$, and the composition laws are compatible with this action.

In the following, we will provide a formal definition of a symmetric operad, as well as those of a morphism and representation. We will then present two fundamental examples: the operad $\mathsf{Com}$, which encodes commutative algebras, and its unital counterpart $\mathsf{uCom}$.

\begin{remark}[Action of \(\mathbb{S}_n\) on $V^{\otimes n}$]
\label{rem:azione_sn}
Let \(V\) be a vector space. We define a left and a right action of the symmetric group \(\mathbb S_n\) on \(V^{\otimes n}\).

\begin{itemize}
    \item \emph{Left Action:} 
    $\sigma \cdot (v_1 \otimes \dots \otimes v_n)
    \coloneqq v_{\sigma^{-1}(1)} \otimes \dots \otimes v_{\sigma^{-1}(n)}.$

    \item \emph{Right Action:}
    $(v_1 \otimes \dots \otimes v_n) \cdot \sigma
    \coloneqq v_{\sigma(1)} \otimes \dots \otimes v_{\sigma(n)}.$
\end{itemize}
These two actions are well-defined and compatible\footnote{Compatibility in this context refers to the relationship between the two actions. For any $\sigma \in \mathbb{S}_n$ and $v \in V^{\otimes n}$, the identity $\sigma \cdot v = v \cdot \sigma^{-1}$ holds.}. In operad theory, and for consistency with standard references such as \cite{samchuckschnarch}, adopting the right action is the prevailing convention.
\end{remark}

As in the non-symmetric case, we begin by introducing the definition of a symmetric multicategory.

\begin{definition}[Symmetric multicategory]
\label{def: multicategoria simmetrica}
A \emph{symmetric multicategory} is a multicategory \(\mathscr C\) that, for any sequence of objects \(a_1,\dots,a_n,a \in \text{Ob}(\mathscr C)\) and any permutation \(\sigma\in\mathbb S_n\), is equipped with a family of maps:
\begin{equation}
\begin{array}{r c l}
(-)\cdot\sigma: \Hom\bigl(a_1,\dots,a_n; a\bigr) & \longrightarrow & \Hom\bigl(a_{\sigma(1)},\dots,a_{\sigma(n)}; a\bigr) \\[1ex]
\theta & \longmapsto & \theta\cdot\sigma,
\end{array}
\end{equation}
that satisfy:
\begin{enumerate}
\item \emph{Group action axioms:}
\begin{equation}
  (\theta\cdot\sigma)\cdot\tau = \theta\cdot(\sigma\circ\tau),
  \quad
  \theta\cdot\mathrm{id} = \theta,
\end{equation}
for every \(\theta\in\Hom(a_1,\dots,a_n;a)\) and \(\sigma,\tau\in\mathbb S_n\).

\item {Compatibility with composition (\emph{equivariance} property):}
\begin{equation}
\begin{split}
  \label{eq:equivarianzam}
  &(\theta\cdot\sigma)\;\circ\;
  \bigl(\theta_{\sigma(1)}\!\cdot\pi_{\sigma(1)},\dots,\theta_{\sigma(n)}\!\cdot\pi_{\sigma(n)}\bigr)
  = \\
  &= \bigl(\theta\circ(\theta_1,\dots,\theta_n)\bigr)
  \;\cdot\;\bigl(\sigma\circ(\pi_1,\dots,\pi_n)\bigr),
  \end{split}
\end{equation}
for every
\(\theta\in \Hom(a_1,\dots,a_n; a)\),
\(\theta_i\in \Hom(a_{i1},\dots,a_{i k_i}; a_i)\),
\(\sigma\in\mathbb S_n\) and \(\pi_i\in\mathbb S_{k_i}\).
\item {Compatibility with units:}
if \(1_a\in\Hom(a;a)\) is the unit, then
\begin{equation}
  1_a\cdot\id = 1_a,
\end{equation}
where \(\id\in\mathbb S_1\) is the unique permutation on one element.
\end{enumerate}
\end{definition}

\begin{remark}
For each \(n,\sigma,a_1,\dots,a_n,a\), the map
\begin{equation}
  \Hom(a_1,\dots,a_n; a)
  \;\longrightarrow\;
  \Hom(a_{\sigma(1)},\dots,a_{\sigma(n)}; a),
  \quad
  \theta \mapsto \theta\cdot\sigma
\end{equation}
is a bijection whose inverse is 
\(\theta' \mapsto \theta'\cdot\sigma^{-1}\).
\end{remark}

\begin{remark}
In Equation \eqref{eq:equivarianzam}, the term $\sigma\circ(\pi_1,\dots,\pi_n)\in\mathbb S_{k_1+\cdots+k_n}$ denotes the composition of permutations as defined in the symmetries operad.
\end{remark}

A symmetric operad is a symmetric multicategory with a single object. In analogy with the non-symmetric case, we give an explicit definition, specializing to the case of symmetric operads on \texttt{Vect}.
\begin{definition}[Symmetric operad]
\label{def: operad simmetrico}
A \emph{symmetric operad} $\mathcal{P}$ is a non-symmetric operad (see Definition \ref{def: ns operad}) such that:
\begin{enumerate}
    \item For each $n \in \mathbb{N}_0$, the vector space $\mathcal{P}(n)$ is endowed with a right linear action of the symmetric group $\mathbb{S}_n$, i.e., there exists a group anti-homomorphism $\rho_n \colon \mathbb{S}_n \;\to \; \Aut\bigl(\mathcal{P}(n)\bigr)$, also denoted $\rho_n(\sigma)(\theta) =: \theta \cdot \sigma $
    \item The composition
    \begin{equation}
      \gamma_{k_1,\dots,k_n}\colon
      \mathcal{P}(n)\otimes\mathcal{P}(k_1)\otimes\cdots\otimes\mathcal{P}(k_n)
      \;\longrightarrow\;
      \mathcal{P}\bigl(k_1+\cdots+k_n\bigr)
    \end{equation}
    is compatible with the action of the symmetric group (\emph{equivariance} law): given
    $\theta\in\mathcal{P}(n)$, $\theta_i\in\mathcal{P}(k_i)$, $\sigma\in\mathbb{S}_n$ and
    $\pi_i\in\mathbb{S}_{k_i}$, we have
    \begin{equation}
    \begin{split}
      \label{eq:equivarianza}
      & (\theta\cdot\sigma)\circ\bigl(\theta_{\sigma(1)}\cdot\pi_{\sigma(1)},\dots,
      \theta_{\sigma(n)}\cdot\pi_{\sigma(n)}\bigr)
      = \\
      &= \bigl(\theta\circ(\theta_1,\dots,\theta_n)\bigr)\cdot
      \bigl(\sigma\circ(\pi_1,\dots,\pi_n)\bigr),
    \end{split}
    \end{equation}
    where $\sigma\circ(\pi_1,\dots,\pi_n)\in\mathbb{S}_{k_1+\cdots+k_n}$ is the composition of permutations in the symmetries operad.
    \item The unit $1\in\mathcal{P}(1)$ is compatible with the action of the symmetric group:
    \begin{equation}
      1\cdot\mathrm{id} \;=\; 1,
    \end{equation}
    where $\mathrm{id}\in\mathbb{S}_1$ is the unique permutation on one element.
\end{enumerate}
\end{definition}

A sequence $(\mathcal{P}(n))_{n \in \mathbb{N}_0}$ of vector spaces $\mathcal{P}(n)$, each endowed with a right action of the symmetric group $\mathbb{S}_n$, as in the previous definition, is sometimes also called an \emph{$\mathbb{S}$-module} $\mathcal{P}$. 

\begin{remark}[Symmetric operads as multicategories with one object]
Analogously to the non-symmetric case, the definition of a symmetric operad is equivalent to the definition of a symmetric multicategory with a single object. If $\mathcal{M}$ is a symmetric multicategory with a single object $a$, then
\begin{equation}
\mathcal{P}(n) \coloneqq \Hom(\underbrace{a, \ldots, a}_n; a).
\end{equation}
The action of the symmetric group $\mathbb{S}_n$ on $\mathcal{P}(n)$ is the obvious one. The composition laws and compatibilities derive directly from the axioms of the symmetric multicategory.
\end{remark}

\begin{definition}[Morphism of symmetric operads]
\label{def: s morf}
Let $\mathcal{P}$ and $\mathcal{Q}$ be two symmetric operads on \texttt{Vect}. A \emph{morphism of symmetric operads} $\varphi: \mathcal{P} \to \mathcal{Q}$ is a morphism of non-symmetric operads (as in Definition \ref{def: morfismo di operad ns}) that, in addition, satisfies the following condition of \emph{compatibility with the action of the symmetric group:} for every $\theta \in \mathcal{P}(n)$ and $\sigma \in \mathbb{S}_n$,
    \begin{equation}
        \varphi_n(\theta \cdot \sigma) = \varphi_n(\theta) \cdot \sigma. \label{eq:comp_azione_simm}
    \end{equation}
A morphism of symmetric operads $\varphi$ is an \emph{isomorphism} if there exists an inverse morphism $\psi: \mathcal{Q} \to \mathcal{P}$ (also of symmetric operads) such that $\psi \circ \varphi = \mathrm{id}_{\mathcal{P}}$ and $\varphi \circ \psi = \mathrm{id}_{\mathcal{Q}}$. Equivalently, $\varphi$ is an isomorphism if and only if each of its components $\varphi_n$ is an isomorphism of vector spaces.
\end{definition}

Analogously to the non-symmetric case, one can define the category of all symmetric multicategories, which we  denote by ${\texttt{SymMultiCat}}$. The collection of all symmetric operads thus forms a \emph{full subcategory} of ${\texttt{SymMultiCat}}$, which we  denote by ${\texttt{SymOp}}$.

\begin{example}[Symmetric endomorphism operad]
\label{es: s end}
Let $V$ be a vector space.
The \emph{(symmetric) endomorphism operad} of $V$, denoted $\End_V^{\mathrm{sym}}$, is defined as follows.
\begin{itemize}
    \item For each $n \in \mathbb{N}_0$, we set:
    \begin{equation}
        \End_V^{\mathrm{sym}}(n) \coloneqq \Hom(V^{\otimes n}, V).
    \end{equation}
    \label{eq: azione su end}

    \item The \emph{action of the symmetric group} $\mathbb{S}_n$ on $\End_V^{\mathrm{sym}}(n)$ is given by: for $f \in \Hom(V^{\otimes n}, V)$ and $\sigma \in \mathbb{S}_n$,
    \begin{equation}
        (f \cdot \sigma)(v_1 \otimes \cdots \otimes v_n) = f\bigl(\sigma \cdot (v_1 \otimes \ldots \otimes v_n)\bigr) = f(v_{\sigma^{-1}(1)} \otimes \cdots \otimes v_{\sigma^{-1}(n)}).
    \end{equation}
    \item The \emph{composition} is defined as in the non-symmetric case.
    \item The \emph{unit} is the identity map $\mathrm{id}_V \in \End_V^{\mathrm{sym}}(1)$.
\end{itemize}
Lemma \ref{le: end} extends to this case. In other words, $\End_V^{\mathrm{sym}}$ is a well-defined symmetric operad, since the action of the symmetric group and the composition of multilinear functions are compatible.
\end{example}

\begin{definition}[Representation of a symmetric operad]
\label{def: P-algebra-sym}
Let $\mathcal{P}$ be a symmetric operad. A \emph{representation} of $\mathcal{P}$ (or $\mathcal{P}$-\emph{algebra}) consists of a vector space $V$ together with a morphism of symmetric operads
\begin{equation}
\Phi : \mathcal{P} \to \End_V^{\mathrm{sym}}.
\end{equation}
This implies that for each $n \in \mathbb{N}_0$, $\Phi_n : \mathcal{P}(n) \to \Hom(V^{\otimes n}, V)$ is a linear map that respects the action of the symmetric group, the composition, and the unit, as required by Definition \ref{def: s morf}.
\end{definition}

\begin{definition}[Morphism of $\mathcal{P}$-algebras (symmetric)]
\label{def: morfismo di P-algebre simm}
Let $\mathcal{P}$ be a symmetric operad and let $\left( V,\Phi^V: \mathcal{P} \to \End_V^{\mathrm{sym}} \right)$ and $\left( W, \Phi^W: \mathcal{P} \to \End_W^{\mathrm{sym}} \right)$ be two $\mathcal{P}$-algebras. A \emph{morphism of $\mathcal{P}$-algebras} from the first to the second is a linear map $f: V \to W$ such that the following diagram commutes for every operation $\mu_n \in \mathcal{P}(n)$ and for every $n \in \mathbb{N}_0$:
\begin{equation}
\begin{tikzcd}[column sep=large] 
V^{\otimes n} \arrow[d, "f^{\otimes n}"'] \arrow[r, "\Phi^V_n(\mu_n)"] & V \arrow[d, "f"] \\
W^{\otimes n} \arrow[r, "\Phi^W_n(\mu_n)"] & W
\end{tikzcd}
\end{equation}
i.e., in explicit form:
\begin{equation}
f \circ \Phi^V_n(\mu_n) = \Phi^W_n(\mu_n) \circ f^{\otimes n}.
\end{equation}
This condition is identical to the condition for morphisms of non-symmetric operads, but it applies in a context where the operations respect symmetries.
\end{definition}

\begin{definition}[Category of $\mathcal{P}$-algebras]
Let $\mathcal{P}$ be a symmetric operad. The category of $\mathcal{P}$-algebras, or algebras over $\mathcal{P}$, denoted $\texttt{Alg}(\mathcal{P})$, is defined as follows:
\begin{enumerate}
    \item The {objects} of $\texttt{Alg}(\mathcal{P})$ are the representations (or algebras) of the operad $\mathcal{P}$ on a vector space $V$, in the sense of Definition \ref{def: P-algebra-sym}.
    \item The {morphisms} of $\texttt{Alg}(\mathcal{P})$ are the morphisms of $\mathcal{P}$-algebras, in the sense of Definition \ref{def: morfismo di P-algebre simm}. 
    \item The {composition of morphisms} in $\texttt{Alg}(\mathcal{P})$ is the usual composition of linear maps. If $f: V \rightarrow W$ and $g: W \rightarrow Z$ are morphisms of $\mathcal{P}$-algebras, then $g \circ f: V \rightarrow Z$ is a morphism of $\mathcal{P}$-algebras, as can be easily verified.
    \item For each object $(V, \Phi^V: \mathcal{P} \rightarrow \End_V^{\text{sym}})$, the {identity morphism} is the identity $\text{id}_V: V \rightarrow V$.
\end{enumerate}
\end{definition}

\subsection{Visual Representation of Operadic Operations via Trees - Symmetric Case}

Recalling our discussion from Section \ref{subsec: rappr operad non simm}, we now focus on the symmetric case. The fundamental difference between non-symmetric and symmetric operads is that while in the non-symmetric case the order of inputs is rigid, in the symmetric case it is flexible and governed by the action of the symmetric group.

The abstract operations of a symmetric operad are represented graphically by non-planar operadic trees (see Definition \ref{def:albero_planare_e_non}). Recall that a tree is \emph{non-planar} when no order is fixed for its leaves. As a result, the tree represents an abstract operation $\theta$ without specifying the order of its inputs. To draw such a tree, one must \emph{choose} an ordering for the leaves, i.e., create a \emph{planar embedding}.

An abstract $n$-ary operation $\theta \in \mathcal{P}(n)$ is thus represented by a non-planar $n$-corolla, which is the object representing the operation itself, regardless of the order of its arguments. A specific application, such as $\theta(x_1, \dots, x_n)$, corresponds to a particular planar embedding of the corolla, obtained by labeling the leaves from 1 to $n$. The action of the symmetric group $\sigma \in \mathbb{S}_n$ on the operation, yielding $\theta \cdot \sigma$, corresponds to permuting these labels of the leaves of the planar embedding.

Fundamentally, all possible permutations of the inputs of an operation $\theta$ are not distinct operations, but rather different representations of the same underlying operation, linked together by the action of $\mathbb{S}_n$.

\begin{example}
Let us consider a ternary operation $\theta$, which we  think of as a multiplication. We can apply $\theta$ to 3 elements $a, b, c$, in 6 different ways: 
      \begin{equation}
      \theta_1(a,b,c) = abc,\qquad \theta_2(a,b,c) = cab,\qquad \theta_3(a,b,c) = bca, \quad \ldots \ .      
      \end{equation}
These 6 possibilities derive from a single underlying operation $\theta$. For instance, if we designate $\theta_1$ as the reference operation, the others are obtained by permuting its inputs, such as $\theta_2(a,b,c) = \theta_1(c,a,b)$ and $\theta_3(a,b,c) = \theta_1(b,c,a)$. This means that, having fixed one of the $\theta_i$, the remaining $\theta_i$ are obtained by permuting the inputs.
For example, having fixed $\theta_1$, we have
$\theta_1 = \theta \cdot \text{id}$, $\theta_2 = \theta \cdot \sigma$,
\smallskip
\\ $\theta_3 = \theta \cdot \sigma'$, \ldots,
where $\sigma = \left(\begin{smallmatrix} 1 & 2 & 3 \\ 2 & 3 & 1 \end{smallmatrix}\right)$ and $\sigma' = \left(\begin{smallmatrix} 1 & 2 & 3 \\ 3 & 1 & 2 \end{smallmatrix}\right)$. We can also identify the operations $\theta_1, \ldots, \theta_6$ with corollas:

\begin{figure}[H]
    \centering
    \begin{tikzpicture}[scale=0.9, every node/.style={font=\scriptsize}, thick]

    % θ1
    \node (ou1) at (0,-1.5) {};
    \node (theta1) at (0,-0.75) {$\theta_1$};
    \draw (ou1) -- (theta1);
    \node (a1) at (-0.7,0) {1}; \node (a2) at (0,0) {2}; \node (a3) at (0.7,0) {3};
    \draw (theta1) -- (a1); \draw (theta1) -- (a2); \draw (theta1) -- (a3);
    \node at (0.9, -1) {\large ,};

    % θ2
    \node (ou2) at (2.5, -1.5) {};
    \node (theta2) at (2.5,-0.75) {$\theta_2$};
    \draw (ou2) -- (theta2);
    \node (b1) at (1.8,0) {1}; \node (b2) at (3.2,0) {3}; \node (b3) at (2.5,0) {2};
    \draw (theta2) -- (b1); \draw (theta2) -- (b3); \draw (theta2) -- (b2);
    \node at (3.4, -1) {\large ,};

    % θ3
    \node (ou3) at (5,-1.5) {};
    \node (theta3) at (5,-0.75) {$\theta_3$};
    \draw (ou3) -- (theta3);
    \node (c1) at (4.3,0) {1}; \node (c2) at (5,0) {2}; \node (c3) at (5.7,0) {3};
    \draw (theta3) -- (c1); \draw (theta3) -- (c2); \draw (theta3) -- (c3);
    \node at (5.9, -1) {\large ,};

    \node at (6.5,-1) {\large \dots};
    \node at (7,-1) {\large ,};
    
    % θ6
    \node (ou6) at (8.5,-1.5) {};
    \node (theta6) at (8.5,-0.75) {$\theta_6$};
    \draw (ou6) -- (theta6);
    \node (f1) at (7.8,0) {1}; \node (f2) at (8.5,0) {2}; \node (f3) at (9.2,0) {3};
    \draw (theta6) -- (f1); \draw (theta6) -- (f2); \draw (theta6) -- (f3);
    \node at (9.4, -1) {\large .};

    \end{tikzpicture}
    \caption{Identification of operations with corollas.}
    \label{fig:operations_as_corollas}
\end{figure}
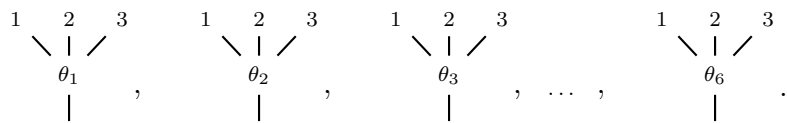

where

\begin{figure}[H]
    \centering
\resizebox{\textwidth}{!}{\begin{tikzpicture}[scale=0.9, every node/.style={font=\scriptsize}, thick]

    \node (ou1) at (0,-1.5) {};
    \node (theta1) at (0,-0.75) {$\theta_1$};
    \draw (ou1) -- (theta1);
    \node (a1) at (-0.7,0) {1}; \node (a2) at (0,0) {2}; \node (a3) at (0.7,0) {3};
    \draw (theta1) -- (a1); \draw (theta1) -- (a2); \draw (theta1) -- (a3);

    \node at (1.2, -0.75) {\large =}; 
    
    \node (ou2) at (2.4,-1.5) {}; 
    \node (theta2) at (2.4,-0.75) {$\theta$};
    \draw (ou2) -- (theta2);
    \node (b1) at (1.7,0) {1}; \node (b2) at (2.4,0) {2}; \node (b3) at (3.1,0) {3}; 
    \draw (theta2) -- (b1); \draw (theta2) -- (b2); \draw (theta2) -- (b3);

    \node at (3.8, -0.75) {\large ,}; 
    \node (ou3) at (5.0,-1.5) {}; 
    \node (theta3) at (5.0,-0.75) {$\theta_2$};
    \draw (ou3) -- (theta3);
    \node (c1) at (4.3,0) {1}; \node (c2) at (5.0,0) {2}; \node (c3) at (5.7,0) {3}; 
    \draw (theta3) -- (c1); \draw (theta3) -- (c2); \draw (theta3) -- (c3);

    \node at (6.2, -0.75) {\large =}; 
    \node (ou4) at (7.4,-1.5) {}; 
    \node (theta4) at (7.4,-0.75) {$\theta$};
    \draw (ou4) -- (theta4);
    \node (d1) at (6.7,0) {2}; \node (d2) at (7.4,0) {3}; \node (d3) at (8.1,0) {1}; 
    \draw (theta4) -- (d1); \draw (theta4) -- (d2); \draw (theta4) -- (d3);

    \node at (8.8, -0.75) {\large ,}; 

    \node (ou5) at (10.0,-1.5) {}; 
    \node (theta5) at (10.0,-0.75) {$\theta_3$};
    \draw (ou5) -- (theta5);
    \node (e1) at (9.3,0) {1}; \node (e2) at (10.0,0) {2}; \node (e3) at (10.7,0) {3}; 
    \draw (theta5) -- (e1); \draw (theta5) -- (e2); \draw (theta5) -- (e3);
    
    \node at (11.2, -0.75) {\large =};

    \node (ou6) at (12.4,-1.5) {}; 
    \node (theta6) at (12.4,-0.75) {$\theta$};
    \draw (ou6) -- (theta6);
    \node (f1) at (11.7,0) {3}; \node (f2) at (12.4,0) {1}; \node (f3) at (13.1,0) {2}; 
    \draw (theta6) -- (f1); \draw (theta6) -- (f2); \draw (theta6) -- (f3);

    \node at (13.8, -0.75) {\large ,}; 
    \node at (14.6, -0.75) {\large $\dots$}; 
\end{tikzpicture}}
    \caption{Non-planar corollas from Figure \ref{fig:operations_as_corollas} identified with $\theta$ and its permutations.}
    \label{fig:enter-label}
\end{figure}
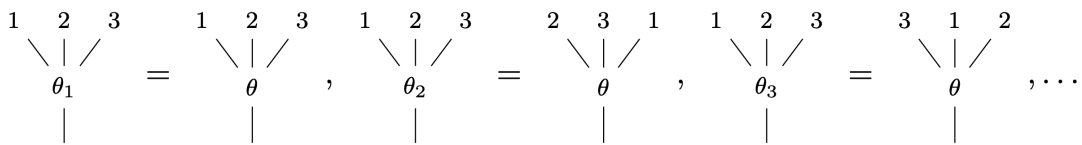
\end{example}

\begin{remark}
Building upon the previous discussion, we can now provide more details about the action of the symmetric group $\mathbb{S}_n$ on the endomorphism operad $\text{End}^{\mathrm{sym}}_V(n) = \Hom(V^{\otimes n}, V)$, which is specified by Equation \eqref{eq: azione su end}.
To understand this action in terms of trees, let us consider an operation $f$ with $n$ inputs. The operation $f \cdot \sigma$ can be represented graphically by re-labeling the leaves of the corolla corresponding to $f$. When we apply $(f \cdot \sigma)$ to a tensor $v_1 \otimes \ldots \otimes v_n$, the inputs are permuted according to $\sigma^{-1}$ before being supplied as arguments to $f$. This is illustrated for $n=3$ with $\sigma = \left(\begin{smallmatrix} 1 & 2 & 3 \\ 2 & 3 & 1 \end{smallmatrix}\right)$:
\begin{figure}[H]
    \centering
    \begin{tikzpicture}[every node/.style={font=\scriptsize}, thick]
        \node (root_left) at (0,-1.5) {};
        \node (phi_sigma) at (0,-0.75) {$f \cdot \sigma$};
        \draw (root_left) -- (phi_sigma);

        \node (v1_left) at (-0.7,0) {$1$};
        \node (v2_left) at (0,0) {$2$};
        \node (v3_left) at (0.7,0) {$3$};

        \node  at (-0.7,0.4) {$v_1$};
        \node  at (0,0.4) {$v_2$};
        \node  at (0.7,0.4) {$v_3$};

        \draw (phi_sigma) -- (v1_left);
        \draw (phi_sigma) -- (v2_left);
        \draw (phi_sigma) -- (v3_left);

        \node at (1.5, -0.75) {\large =};

        \node (root_left1) at (3,-1.5) {};
        \node (phi_sigma1) at (3,-0.75) {$f$};
        \draw (root_left1) -- (phi_sigma1);

        \node (v1_left1) at (2.3,0) {$2$};
        \node (v2_left1) at (3,0) {$3$};
        \node (v3_left1) at (3.7,0) {$1$};

        \node  at (2.3,0.4) {$v_1$};
        \node  at (3,0.4) {$v_2$};
        \node  at (3.7,0.4) {$v_3$};

        \draw (phi_sigma1) -- (v1_left1);
        \draw (phi_sigma1) -- (v2_left1);
        \draw (phi_sigma1) -- (v3_left1);

    \end{tikzpicture}
    \caption{Visualization of the action $(f \cdot \sigma)(v_1 \otimes \ldots \otimes v_n) = f(v_{\sigma^{-1}(1)} \otimes \ldots \otimes v_{\sigma^{-1}(n)})$.}
    \label{fig:endomorphism_operad_action}
\end{figure}
The left side represents the application of the permuted operation $(f \cdot \sigma)$ to the ordered inputs $(v_1, v_2, v_3)$. The right side shows that this is equivalent to applying the operation $f$ to the reordered inputs $(v_{\sigma^{-1}(1)}, v_{\sigma^{-1}(2)}, v_{\sigma^{-1}(3)})$. For the permutation $\sigma = \left(\begin{smallmatrix} 1 & 2 & 3 \\ 2 & 3 & 1 \end{smallmatrix}\right)$, its inverse is $\sigma^{-1} = \left(\begin{smallmatrix} 1 & 2 & 3 \\ 3 & 1 & 2 \end{smallmatrix}\right)$. The inputs are therefore permuted to $(v_3, v_1, v_2)$.
\end{remark}

\begin{remark}
Let us return to Definition \ref{def: operad simmetrico} of a symmetric operad, in particular the \emph{equivariance} property, which is given by Equation \eqref{eq:equivarianza}.
To clarify the meaning of this equivariance condition, it is useful to visualize the operations and their compositions through planar embeddings of non-planar trees, i.e., visual representations of the latter where the ordering of the leaves is made explicit by numerical labels.
Let us consider a ternary operation $\theta$, a 5-ary operation $\varphi_1$, a 2-ary operation $\varphi_2$, and a 4-ary operation $\varphi_3$. Let us also consider the permutations
$    \sigma = \left( \begin{smallmatrix} 1 & 2 & 3 \\ 2 & 3 & 1 \end{smallmatrix} \right) \in \mathbb{S}_3, \ \pi_1 = \left( \begin{smallmatrix} 1 & 2 & 3 & 4 & 5 \\ 5 & 1 & 2 & 3 & 4 \end{smallmatrix} \right) \in \mathbb{S}_5, \ \pi_2 = \left( \begin{smallmatrix} 1 & 2 \\ 2 & 1 \end{smallmatrix} \right) \in \mathbb{S}_2, \ \pi_3 = \left( \begin{smallmatrix} 1 & 2 & 3 & 4 \\ 1 & 4 & 2 & 3 \end{smallmatrix} \right)\in \mathbb{S}_4.$

The {left-hand side} of Equation \eqref{eq:equivarianza}, i.e., $(\theta \cdot \sigma) \circ (\varphi_{\sigma(1)} \cdot \pi_{\sigma(1)}, \varphi_{\sigma(2)} \cdot \pi_{\sigma(2)}, \varphi_{\sigma(3)} \cdot \pi_{\sigma(3)})$, corresponds to the following tree diagram.

\begin{figure}[H]
    \centering
    \resizebox{\textwidth}{!}{
        \begin{tikzpicture}[every node/.style={font=\scriptsize}, thick]
            \node (phi2pi2) at (-3, 1.5) {$\varphi_2 \cdot \pi_2$};
            \node (phi3pi3) at (0, 1.5) {$\varphi_3 \cdot \pi_3$};
            \node (phi1pi1) at (3, 1.5) {$\varphi_1 \cdot \pi_1$};

            \node at (-3.5, 2.7) {1}; 
            \node at (-2.5, 2.7) {2};
            \draw (phi2pi2) -- (-3.5, 2.5);
            \draw (phi2pi2) -- (-2.5, 2.5);
            
            \node at (-1, 2.7) {1};
            \node at (-0.4, 2.7) {2}; 
            \node at (0.2, 2.7) {3}; 
            \node at (0.8, 2.7) {4};
            \draw (phi3pi3) -- (-1, 2.5);
            \draw (phi3pi3) -- (-0.4, 2.5);
            \draw (phi3pi3) -- (0.2, 2.5);
            \draw (phi3pi3) -- (0.8, 2.5);

            \node at (2.2, 2.7) {1}; 
            \node at (2.6, 2.7) {2}; 
            \node at (3, 2.7) {3}; 
            \node at (3.4, 2.7) {4}; 
            \node at (3.8, 2.7) {5};
            \draw (phi1pi1) -- (2.2, 2.5);
            \draw (phi1pi1) -- (2.6, 2.5);
            \draw (phi1pi1) -- (3, 2.5);       
            \draw (phi1pi1) -- (3.4, 2.5);       
            \draw (phi1pi1) -- (3.8, 2.5);

            \node (int_node2) at (-3, 0.5) {1};
            \node (int_node3) at (0, 0.5) {2};
            \node (int_node1) at (3, 0.5) {3};

            \draw (phi2pi2) -- (int_node2);
            \draw (phi3pi3) -- (int_node3);
            \draw (phi1pi1) -- (int_node1);

            \node (theta_sigma) at (0, -0.5) {$\theta \cdot \sigma$};
            \draw (int_node2) -- (theta_sigma);
            \draw (int_node3) -- (theta_sigma);
            \draw (int_node1) -- (theta_sigma);

            \node (root) at (0, -1.5) {};
            \draw (theta_sigma) -- (root);

            \node at (4.2,0.5) {\large =};

            \begin{scope}[shift={(11.2,0)}]

                \node (phi2pi2b) at (-6, 1.5) {$\varphi_2$};
                \node (phi3pi3b) at (-3, 1.5) {$\varphi_3$};
                \node (phi1pi1b) at (0, 1.5) {$\varphi_1$};

                \node at (-6.5, 2.7) {2}; 
                \node at (-5.5, 2.7) {1};
                \draw (phi2pi2b) -- (-6.5, 2.5);
                \draw (phi2pi2b) -- (-5.5, 2.5);

                \node at (-4, 2.7) {1};
                \node at (-3.4, 2.7) {4}; 
                \node at (-2.8, 2.7) {2}; 
                \node at (-2.2, 2.7) {3};
                \draw (phi3pi3b) -- (-4, 2.5);
                \draw (phi3pi3b) -- (-3.4, 2.5);
                \draw (phi3pi3b) -- (-2.8, 2.5);
                \draw (phi3pi3b) -- (-2.2, 2.5);

                \node at (-0.8, 2.7) {5}; 
                \node at (-0.4, 2.7) {1}; 
                \node at (0, 2.7) {2}; 
                \node at (0.4, 2.7) {3}; 
                \node at (0.8, 2.7) {4};
                \draw (phi1pi1b) -- (-0.8, 2.5);
                \draw (phi1pi1b) -- (-0.4, 2.5);
                \draw (phi1pi1b) -- (0, 2.5);       
                \draw (phi1pi1b) -- (0.4, 2.5);       
                \draw (phi1pi1b) -- (0.8, 2.5);

                \node (int_node2b) at (-6, 0.5) {2};
                \node (int_node3b) at (-3, 0.5) {3};
                \node (int_node1b) at (0, 0.5) {1};

                \draw (phi2pi2b) -- (int_node2b);
                \draw (phi3pi3b) -- (int_node3b);
                \draw (phi1pi1b) -- (int_node1b);

                \node (theta_sigma_b) at (-3, -0.5) {$\theta$};
                \draw (int_node2b) -- (theta_sigma_b);
                \draw (int_node3b) -- (theta_sigma_b);
                \draw (int_node1b) -- (theta_sigma_b);

                \node (rootb) at (-3, -1.5) {};
                \draw (theta_sigma_b) -- (rootb);
            \end{scope}
        \end{tikzpicture}
    } 
    \caption{Representation of the left-hand side of Equation \eqref{eq:equivarianza}.}
    \label{fig:lhs_equivariance}
\end{figure}
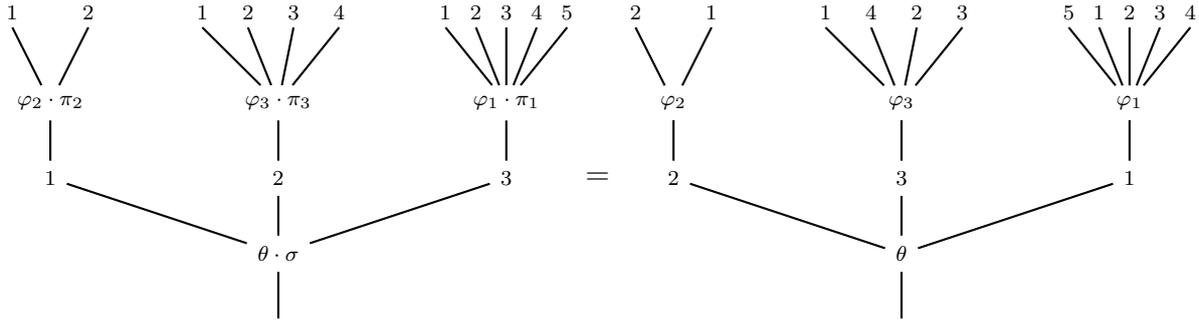

The left diagram represents the result of the composition in which the operations $\varphi_i$ (modified by $\pi_i$) are inserted into the reordered inputs of $\theta$ (modified by $\sigma$). 
The final tree can be identified with an 11-corolla with a specific ordering of the leaves resulting from the concatenation of the permuted inputs. 

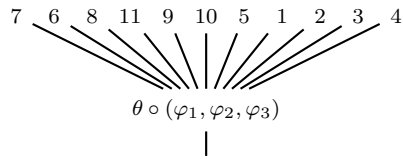
\begin{figure}[H]
    \centering
    \begin{tikzpicture}[scale=1.1, every node/.style={font=\scriptsize}, thick]

        \node (corolla_root) at (8, -1.5) {};
        \node (corolla_op) at (8, -0.75) {$\theta \circ (\varphi_1,\varphi_2,\varphi_3)$};
        \draw (corolla_root) -- (corolla_op);

        \node (l1) at (5.5, 0.5) {7}; 
        \node (l2) at (6, 0.5) {6}; 
        \node (l3) at (6.5, 0.5) {8};
        \node (l4) at (7, 0.5) {11};  
        \node (l5) at (7.5, 0.5) {9}; 
        \node (l6) at (8.0, 0.5) {10}; 
        \node (l7) at (8.5, 0.5) {5};  
        \node (l8) at (9, 0.5) {1};  
        \node (l9) at (9.5, 0.5) {2};  
        \node (l10) at (10, 0.5) {3}; 
        \node (l11) at (10.5, 0.5) {4}; 

        \draw (corolla_op) -- (l1);
        \draw (corolla_op) -- (l2);
        \draw (corolla_op) -- (l3);
        \draw (corolla_op) -- (l4);
        \draw (corolla_op) -- (l5);
        \draw (corolla_op) -- (l6);
        \draw (corolla_op) -- (l7);
        \draw (corolla_op) -- (l8);
        \draw (corolla_op) -- (l9);
        \draw (corolla_op) -- (l10);
        \draw (corolla_op) -- (l11);

    \end{tikzpicture}
    \caption{Representation of the 11-corolla resulting from the composition.}
    \label{fig:11_corolla}
\end{figure}

For the {right-hand side} of Equation 
\eqref{eq:equivarianza}, i.e., $\bigl(\theta \circ (\varphi_1, \varphi_2, \varphi_3)\bigr) \cdot \bigl(\sigma \circ (\pi_1, \pi_2, \pi_3)\bigr)$, we first calculate the permutation $\sigma \circ (\pi_1, \pi_2, \pi_3)$. This is the permutation given by the diagram: 

\begin{center}
\begin{tikzpicture}[scale=0.5, baseline={([yshift=-.5ex]current bounding box.center)}]

    \foreach \N in {1,...,11} {
        \node[left] at (0, -\N) {\N};
        \node[right] at (7, -\N) {\N};
    }

    \draw[thin] (0, -1) -- (3, -2) -- (7, -7);   
    \draw[thin] (0, -2) -- (3, -1) -- (7, -6);  
    \draw[thin] (0, -3) -- (3, -3) -- (7, -8);   
    \draw[thin] (0, -4) -- (3, -6) -- (7, -11);   
    \draw[thin] (0, -5) -- (3, -4) -- (7, -9);   
    \draw[thin] (0, -6) -- (3, -5) -- (7, -10);   
    \draw[thin] (0, -7) -- (3, -11) -- (7, -5);   
    \draw[thin] (0, -8) -- (3, -7) -- (7, -1);   
    \draw[thin] (0, -9) -- (3, -8) -- (7, -2);   
    \draw[thin] (0, -10) -- (3, -9) -- (7, -3);
    \draw[thin] (0, -11) -- (3, -10) -- (7, -4);
\end{tikzpicture}
\end{center}
i.e.
\begin{equation}
\sigma \circ (\pi_1, \pi_2, \pi_3) = \left(\begin{array}{ccccccccccc} 1 & 2 & 3 & 4 & 5 & 6 & 7 & 8 & 9 & 10 & 11 \\ 7 & 6 & 8 & 11 & 9 & 10 & 5 & 1 & 2 & 3 & 4 \end{array}\right).
\end{equation}
The application of this permutation to $\theta \circ (\varphi_1, \varphi_2, \varphi_3)$ coincides with Figure \ref{fig:11_corolla}, as shown below.

\begin{figure}[H]
    \centering

    \resizebox{\textwidth}{!}{
        \begin{tikzpicture}[every node/.style={font=\scriptsize}, thick]

            \node (corolla_root) at (8, -1.5) {};
            \node (corolla_op) at (8, -0.75) {$\bigl(\theta \circ (\varphi_1,\varphi_2,\varphi_3)\bigr) \cdot \bigl(\sigma \circ (\pi_1,\pi_2,\pi_3)\bigr)$};
            \draw (corolla_root) -- (corolla_op);

            \node (l1) at (5.5, 0.5) {1};
            \node (l2) at (6, 0.5) {2};
            \node (l3) at (6.5, 0.5) {3};
            \node (l4) at (7, 0.5) {4};
            \node (l5) at (7.5, 0.5) {5};
            \node (l6) at (8.0, 0.5) {6};
            \node (l7) at (8.5, 0.5) {7};
            \node (l8) at (9, 0.5) {8};
            \node (l9) at (9.5, 0.5) {9};
            \node (l10) at (10, 0.5) {10};
            \node (l11) at (10.5, 0.5) {11};

            \draw (corolla_op) -- (l1);
            \draw (corolla_op) -- (l2);
            \draw (corolla_op) -- (l3);
            \draw (corolla_op) -- (l4);
            \draw (corolla_op) -- (l5);
            \draw (corolla_op) -- (l6);
            \draw (corolla_op) -- (l7);
            \draw (corolla_op) -- (l8);
            \draw (corolla_op) -- (l9);
            \draw (corolla_op) -- (l10);
            \draw (corolla_op) -- (l11);

            \node at (12.3,-0.4) {\large =};

            \node (corolla_root2) at (16, -1.5) {};
            \node (corolla_op2) at (16, -0.75) {$\theta \circ (\varphi_1,\varphi_2,\varphi_3)$};
            \draw (corolla_root2) -- (corolla_op2);
            
            \node (r1) at (13.5, 0.5) {7};
            \node (r2) at (14, 0.5) {6};
            \node (r3) at (14.5, 0.5) {8};
            \node (r4) at (15, 0.5) {11};
            \node (r5) at (15.5, 0.5) {9};
            \node (r6) at (16.0, 0.5) {10};
            \node (r7) at (16.5, 0.5) {5};
            \node (r8) at (17, 0.5) {1};
            \node (r9) at (17.5, 0.5) {2};
            \node (r10) at (18, 0.5) {3};
            \node (r11) at (18.5, 0.5) {4};
            
            \draw (corolla_op2) -- (r1);
            \draw (corolla_op2) -- (r2);
            \draw (corolla_op2) -- (r3);
            \draw (corolla_op2) -- (r4);
            \draw (corolla_op2) -- (r5);
            \draw (corolla_op2) -- (r6);
            \draw (corolla_op2) -- (r7);
            \draw (corolla_op2) -- (r8);
            \draw (corolla_op2) -- (r9);
            \draw (corolla_op2) -- (r10);
            \draw (corolla_op2) -- (r11);
        \end{tikzpicture}
    } 
    \label{fig:11_corolla_uguale} 
\end{figure}
\end{remark}

\subsubsection{The Operad $\mathsf{Com}$}

Within the framework of symmetric operads, the commutative operad, commonly denoted by $\mathsf{Com}$, plays a very important role. In contrast to the operad $\mathsf{As}$, which models associative algebras, $\mathsf{Com}$ is specifically designed to encapsulate, in operadic form, the structure of (associative) commutative algebras: a $\mathsf{Com}$-algebra is simply a commutative algebra.

The interest in this operad stems not only from it being a paradigmatic example of a symmetric operad, but also from its role in providing an operadic description of an important category of algebraic structures in which every \(n\)-ary operation $\mu$ is completely symmetric, i.e., \emph{invariant under any permutation of the inputs}:
\begin{equation}
\mu(x_1, x_2, \dots, x_n) = \mu(x_{\sigma(1)}, x_{\sigma(2)}, \dots, x_{\sigma(n)})
\end{equation}
for every $\sigma \in \mathbb{S}_n$. This corresponds to saying that the symmetric group \(\mathbb{S}_n\) acts \emph{trivially} on \(\mathcal{P}(n)\), the set of \(n\)-ary operations of the operad. All possible permutations of the arguments correspond to the same operation.

We now provide an explicit definition of $\mathsf{Com}$ as a symmetric operad, detailing its components, composition rules, and unit element. Finally, we  prove that $\mathsf{Com}$-algebras are in one-to-one correspondence with commutative algebras.

\begin{definition}
\label{def:operad-com}
We define $\mathsf{Com}$ as follows:
\begin{enumerate}
    \item $\mathsf{Com}(0) \coloneqq 0$.
    \item For each $n \in \mathbb{N}$, $\mathsf{Com}(n) \coloneqq \mathbb{K}\{\mu_n\} \cong \mathbb{K}$.
    \item The action of the symmetric group $\mathbb{S}_n$ on $\mathsf{Com}(n)$ is the {trivial} action. This means that for every $\mu_n \in \mathsf{Com}(n)$ and every $\sigma \in \mathbb{S}_n$, we have $\mu_n \cdot \sigma \coloneqq \mu_n$. 
    \item The composition $\gamma_{k_1, \ldots, k_n}: \mathsf{Com}(n) \otimes \mathsf{Com}(k_1) \otimes \ldots \otimes \mathsf{Com}(k_n) \longrightarrow \mathsf{Com}(k_1 + \cdots + k_n)$ is the canonical isomorphism given by the product in $\mathbb{K}$:
    \begin{equation}
    \underbrace{\mathbb{K} \otimes \cdots \otimes \mathbb{K}}_{n+1} \longrightarrow \mathbb{K}, \qquad a_1 \otimes \cdots\otimes a_n \mapsto a_1 \cdots a_n.
    \end{equation}
    \end{enumerate}
\end{definition}

\begin{lemma}
The operad $\mathsf{Com}$ of Definition \ref{def:operad-com} is well-defined.
\end{lemma}

\begin{proof}
We verify the following points based on the definition:
\begin{itemize}
    \item For each $n \in \mathbb{N}_0$, $\mathsf{Com}(n)$ is a vector space by construction.
    \item There exists a unit element $1 \in \mathsf{Com}(1)$ (the unit of the field $\mathbb{K}$ for $\mu_1$), such that for every $\mu \in \mathsf{Com}(n) \cong \mathbb{K}$ the unit laws hold:
    \begin{equation}
    1 \circ \mu = 1 \cdot \mu = \mu \qquad \text{and} \qquad \mu \circ (1 \otimes \dots \otimes 1) = \mu \cdot 1 \cdots 1 = \mu.
    \end{equation}
    \item The composition maps are associative, since the product in the field $\mathbb{K}$ is.
    \item The action of the symmetric group is trivial ($\mu_n \cdot \sigma = \mu_n$) and is compatible with composition. This follows from the fact that each component $\mathsf{Com}(n)$ is one-dimensional and the trivial action of the symmetric group does not alter its elements, which guarantees compatibility with composition and equivariance.
\end{itemize}
\end{proof}

\begin{remark}
    In the binary case (\(n = 2\)), in a {symmetric operad}, two versions of the same binary product are distinguished, differing only in the order of the inputs, represented by the following planar embeddings of a non-planar 2-corolla.
\begin{figure}[H]
    \centering
    \begin{tikzpicture}[every node/.style={font=\scriptsize}, thick]

        \node (out1) at (0,-1.5) {};
        \node (op1) at (0,-0.75) {\(\mu\)};
        \draw (out1) -- (op1);
        \node (l1_1) at (-0.5,0) {1};
        \node (l1_2) at (0.5,0) {2};
        \draw (op1) -- (l1_1);
        \draw (op1) -- (l1_2);

        \node at (1.5, -0.75) {\Large\(\neq\)};

        \node (out2) at (3,-1.5) {};
        \node (op2) at (3,-0.75) {\(\mu\)};
        \draw (out2) -- (op2);
        \node (l2_1) at (2.5,0) {2}; 
        \node (l2_2) at (3.5,0) {1}; 
        \draw (op2) -- (l2_1);
        \draw (op2) -- (l2_2);

    \end{tikzpicture}
    \caption{Two distinct planar embeddings of a non-planar 2-corolla representing a binary product $\mu$.}
\end{figure}
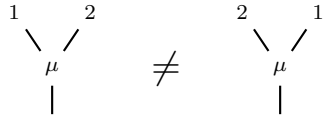
In \(\mathsf{Com}\), however, these two configurations are {identified}, since the multiplication is commutative:
\begin{equation}
    \mu(a, b) = \mu(b, a).
\end{equation}
This is reflected in the fact that the same trees now represent {the same operation}.

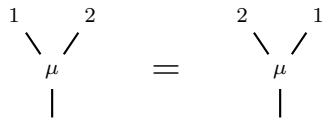
\begin{figure}[H]
    \centering
    \begin{tikzpicture}[every node/.style={font=\scriptsize}, thick]

        \node (out1) at (0,-1.5) {};
        \node (op1) at (0,-0.75) {\(\mu\)};
        \draw (out1) -- (op1);
        \node (l1_1) at (-0.5,0) {1};
        \node (l1_2) at (0.5,0) {2};
        \draw (op1) -- (l1_1);
        \draw (op1) -- (l1_2);

        \node at (1.5, -0.75) {\Large\(=\)};

        \node (out2) at (3,-1.5) {};
        \node (op2) at (3,-0.75) {\(\mu\)};
        \draw (out2) -- (op2);
        \node (l2_1) at (2.5,0) {2}; 
        \node (l2_2) at (3.5,0) {1}; 
        \draw (op2) -- (l2_1);
        \draw (op2) -- (l2_2);

    \end{tikzpicture}
    \caption{In \(\mathsf{Com}\), permutations of the inputs define the same operation.}
\end{figure}
\end{remark}

We now establish the main result concerning the operad $\mathsf{Com}$: the category of $\mathsf{Com}$-algebras is isomorphic to the category of commutative associative $\mathbb{K}$-algebras.

Our proof strategy is to construct a pair of functors, one in each direction, and then show that they are mutually inverse. The following two propositions are dedicated to the rigorous construction of these functors.

\begin{proposition}
\label{prop:functor_F_Com}
The correspondence $F : \emph{\texttt{Alg}}(\mathsf{Com}) \to \emph{\texttt{CommAlg}}_\mathbb{K}$, defined on objects by 
\begin{equation}
    F(V,\Phi) \coloneqq \bigl(V, m_V \coloneqq \Phi_2(\mu_2)\bigr)
\end{equation}
and on morphisms by
\begin{equation}
    F(f) \coloneqq f,
\end{equation}
is a well-defined functor.
\end{proposition}

\begin{proof}
We verify that $F$ is well-defined on objects and morphisms, and that it satisfy the functorial axioms.
\begin{enumerate}
    \item \emph{Well-definedness on objects.} We show that for any $\mathsf{Com}$-algebra $(V, \Phi)$, the pair $(V, m_V)$ is a commutative associative algebra. By construction, $m_V$ is a bilinear map. Associativity follows from the same argument as for $\mathsf{As}$-algebras (Proposition \ref{prop:functor_F_As}). For commutativity, we note that the action of the transposition $\tau \in \mathbb{S}_2$ on $\mathsf{Com}(2)$ is trivial, so $\mu_2 \cdot \tau = \mu_2$. Since $\Phi$ is a morphism of symmetric operads, it preserves this action:
    \begin{equation}
    \Phi_2(\mu_2) = \Phi_2(\mu_2 \cdot \tau) = \Phi_2(\mu_2) \cdot \tau.
    \end{equation}
    From the definition of the action on $\End_V^{\mathrm{sym}}(2)$ (Example \ref{es: s end}), this implies that for every $v_1, v_2 \in V$:
    \begin{equation}
       m_V(v_1 \otimes v_2) = (m_V \cdot \tau)(v_1 \otimes v_2) = m_V(v_2 \otimes v_1). 
    \end{equation}
    Thus, $m_V$ is commutative.
    \item \emph{Well-definedness on morphisms.} The proof is identical to that of Proposition \ref{prop:functor_F_As}, as the compatibility condition \eqref{eq: condizione morf di P-algebre} is the same.
    \item \emph{Functorial properties.} The axioms for identity and composition are obvious, analogously to what was seen in Proposition \ref{prop:functor_F_As}.
\end{enumerate}

Therefore, $F$ is a well-defined functor.
\end{proof}

\begin{proposition}
\label{prop:functor_G_Com}
The correspondence $G : \emph{\texttt{CommAlg}}_\mathbb{K} \longrightarrow \emph{\texttt{Alg}}(\mathsf{Com})$, defined on objects by 
\begin{equation}
  G(A, m_A) \coloneqq (A, \Psi)  
\end{equation}
and on morphisms by
\begin{equation}
    G(g) \coloneqq g,
\end{equation}
where $\Psi$ is the canonical $\mathsf{Com}$-algebra structure on $A$ induced by the multiplication $m_A$ (as in Proposition \ref{prop:functor_G_As}), is a well-defined functor.
\end{proposition}

\begin{proof}
We need to verify that $G$ is well-defined on objects and morphisms, and that it satisfies the functorial axioms.
\begin{enumerate}
    \item \emph{Well-definedness on objects.} For any commutative algebra $(A, m_A)$, we show that the canonically induced map $\Psi: \mathsf{Com} \to \End_A^{\mathrm{sym}}$ is a well-defined morphism of symmetric operads. The underlying morphism of non-symmetric operads is defined as in Proposition \ref{prop:functor_G_As}, and its compatibility is guaranteed by the associativity of $m_A$. We only need to verify the compatibility with the symmetric group action, i.e., that $\Psi_n(\mu_n \cdot \sigma) = \Psi_n(\mu_n) \cdot \sigma$ for all $\sigma \in \mathbb{S}_n$.
    Since the action on $\mathsf{Com}(n)$ is trivial, the left-hand side is $\Psi_n(\mu_n)$. For the right-hand side, the operation $\Psi_n(\mu_n)$ is an iterated composition of the multiplication $m_A$. Since $m_A$ is both associative and commutative, any such iterated composition is a symmetric operation. This means it is invariant under any permutation of its inputs, so $\Psi_n(\mu_n) \cdot \sigma = \Psi_n(\mu_n)$. Therefore, $G$ is well-defined on objects.
    \item \emph{Well-definedness on morphisms.} Let $g : (A, m_A) \to (B, m_B)$ be a morphism of commutative algebras. We show it is also a morphism of $\mathsf{Com}$-algebras. This requires verifying the identity $g \circ \Psi^A_n(\mu_n) = \Psi^B_n(\mu_n) \circ g^{\otimes n}$. As this condition does not depend on the symmetric structure, the proof is identical to that for the non-symmetric case, as established in Proposition \ref{prop:functor_G_As}.
    \item \emph{Functorial properties.} The functorial axioms for identity and composition hold, since the action of $G$ on morphisms is the identity map $\bigl( G(g) \coloneqq g \bigr)$.
\end{enumerate}

Therefore, $G$ is a well-defined functor.
\end{proof}

\begin{theorem}
\label{th:iso_com}
The category of \(\mathsf{Com}\)-algebras is isomorphic to the category of commutative associative algebras.
\end{theorem}

\begin{proof}
The functors $F$ (Proposition \ref{prop:functor_F_Com}) and $G$ (Proposition \ref{prop:functor_G_Com}) provide the isomorphism. The verification that $F \circ G = \mathrm{Id}_{\texttt{CommAlg}_{\mathbb{K}}}$ and $G \circ F = \mathrm{Id}_{\texttt{Alg}(\mathsf{Com})}$ is identical to the argument in the proof of Theorem \ref{th:as_isomorphism}.
\end{proof}

\subsubsection{The Operad $\mathsf{uCom}$}
\label{sec:operad-ucom}

The operad $\mathsf{uCom}$ is a variant of the operad $\mathsf{Com}$. As with $\mathsf{Com}$, the interest in $\mathsf{uCom}$ lies in the fact that the latter provides an operadic description of an important class of algebraic structures, namely unital commutative algebras.

We can explicitly construct $\mathsf{uCom}$ as a symmetric operad with the same components as $\mathsf{Com}$, but which, in addition, includes the operation that represents the unit.

\begin{definition}
\label{def:ucom}
We define $\mathsf{uCom}$ as follows:
\begin{enumerate}
    \item We set $\mathsf{uCom}(0) \coloneqq \mathbb{K}\{\eta\} \cong \mathbb{K}$.
    \item For each $n \ge 1$, we set $\mathsf{uCom}(n) \coloneqq \mathbb{K}\{\mu_n\} \cong \mathbb{K}$.
    \item The action of the symmetric group $\mathbb{S}_n$ on $\mathsf{uCom}(n)$ is the trivial action.
    \item The compositions are given by the usual product of $n+1$ elements in $\mathbb{K}$, analogously to the case of $\mathsf{Com}$ in Definition \ref{def:operad-com}.
\end{enumerate}
\end{definition}

\begin{remark}
Compared to the case of $\mathsf{Com}$, the only difference in Definition \ref{def:ucom} is that $\mathsf{uCom}(0) \cong \mathbb{K}$, a choice that corresponds to the presence of the unit. In this sense, $\mathsf{uCom}$ can be seen as a ``unitalization'' of $\mathsf{Com}$, obtained by formally adding a nullary operation compatible with the binary multiplication.
\end{remark}

\begin{lemma}
The operad $\mathsf{uCom}$ of Definition \ref{def:ucom} is well-defined.
\end{lemma}

\begin{proof}
The proof is analogous to the one for $\mathsf{uAs}$ (Lemma \ref{le: uas ben definito}). We observe that by construction $\mathsf{uCom}(n)$ is a vector space over $\mathbb{K}$ for every $n \in \mathbb{N}_0$, that the unit element $1 \in \mathsf{uCom}(1)$ satisfies the unit laws, and that the composition maps are associative. Compatibility with the trivial action of the symmetric group is obvious.
\end{proof}

It follows from this definition that an algebra over $\mathsf{uCom}$ is a vector space $V$ equipped with a bilinear map $m_V: V \otimes V \rightarrow V$ that is associative and commutative, and a unit element $1_V \in V$, such that $m_V(1_V, x) = x = m_V(x, 1_V)$ for every $x \in V$. 

\begin{theorem}
The category of $\mathsf{uCom}$-algebras is isomorphic to the category of unital commutative associative algebras. The isomorphisms are the same as in Theorem \ref{th:iso_com}.
\end{theorem}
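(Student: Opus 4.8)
The plan is to follow verbatim the strategy of Theorems~\ref{th:as_isomorphism}, \ref{th:uas}, and \ref{th:iso_com}, simply merging the features that handle the unit (inherited from the $\mathsf{uAs}$ case) with those that handle commutativity (inherited from the $\mathsf{Com}$ case). Concretely, I would construct a pair of mutually inverse functors $F\colon\texttt{Alg}(\mathsf{uCom})\to\texttt{uCommAlg}_{\mathbb{K}}$ and $G\colon\texttt{uCommAlg}_{\mathbb{K}}\to\texttt{Alg}(\mathsf{uCom})$ and then check that $F\circ G$ and $G\circ F$ are the respective identity functors.

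First I would define $F$ on objects by $F(V,\Phi)\coloneqq\bigl(V,\ m_V\coloneqq\Phi_2(\mu_2),\ \mathbf{1}\coloneqq\Phi_0(\eta)\bigr)$ and by the identity on morphisms. That $(V,m_V,\mathbf{1})$ is a unital commutative associative algebra is the conjunction of three facts already established elsewhere: associativity of $m_V$ is obtained exactly as in Lemma~\ref{lem:As_to_assoc}; commutativity follows from the triviality of the $\mathbb{S}_2$-action on $\mathsf{uCom}(2)$ together with the equivariance of $\Phi$, word for word as in Lemma~\ref{lem:Com_to_comm_alg}; and the fact that $\mathbf{1}$ is a two-sided unit follows, as in the proof of Theorem~\ref{th:uas}, by applying $\Phi$ to the operadic identity $\mu_2\circ(\eta\otimes\mathrm{id})=\mu_1=\mu_2\circ(\mathrm{id}\otimes\eta)$ and using $\Phi_1(\mu_1)=\mathrm{id}_V$.

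Next I would define $G$ on objects by $G(A,m_A,\mathbf{1}_A)\coloneqq(A,\Psi)$, where $\Psi_0(\eta)\coloneqq\mathbf{1}_A$, $\Psi_1(\mu_1)\coloneqq\mathrm{id}_A$, $\Psi_2(\mu_2)\coloneqq m_A$, and $\Psi_n(\mu_n)$ for $n\ge 3$ is built recursively from $m_A$ as in Lemma~\ref{lem:G_on_objects}. The only check requiring attention is that $\Psi$ is a morphism of \emph{symmetric} operads: compatibility with operadic composition is guaranteed by associativity of $m_A$, while equivariance with respect to the trivial $\mathbb{S}_n$-action amounts to the assertion that the iterated product $\Psi_n(\mu_n)$ is totally symmetric. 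This is precisely the content of Lemma~\ref{lem:G_on_objects_com}, which uses both commutativity and associativity of $m_A$ and carries over unchanged to the unital setting.

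Finally, I would verify that the two functors are mutually inverse, exactly as in Theorem~\ref{th:as_isomorphism}. On the one side, $F\circ G$ returns $(A,m_A,\mathbf{1}_A)$ because $\Psi_2(\mu_2)=m_A$ and $\Psi_0(\eta)=\mathbf{1}_A$ by construction. On the other, $G\circ F$ returns $(V,\Phi)$ because a $\mathsf{uCom}$-algebra is completely determined by the data $\Phi_2(\mu_2)$ and $\Phi_0(\eta)$: since $\Phi$ preserves composition, every higher component $\Phi_n(\mu_n)$ is forced to coincide with the recursively reconstructed $\widetilde{\Phi}_n(\mu_n)$. I do not expect any genuine obstacle, since every ingredient has already appeared in the preceding proofs; the one point meriting care is confirming that equivariance for $G$ holds for all $n$ and not merely for $n=2$, which is exactly what Lemma~\ref{lem:G_on_objects_com} supplies.
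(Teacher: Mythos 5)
Your proposal is correct and follows essentially the same route as the paper: the paper's proof simply says to replicate the proof of Theorem~\ref{th:uas} (construction of mutually inverse functors $F$ and $G$, with the unit handled via $\mu_2\circ(\eta\otimes\mathrm{id})=\mu_1=\mu_2\circ(\mathrm{id}\otimes\eta)$) with the necessary adjustments, and your adjustments—commutativity via the trivial $\mathbb{S}_2$-action as in Lemma~\ref{lem:Com_to_comm_alg}, and equivariance for all $n$ via Lemma~\ref{lem:G_on_objects_com}—are precisely those the paper intends. In fact, your write-up is more explicit than the paper's own sketch, which is a point in its favor.
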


\begin{proof}
A rigorous proof can be done by replicating that of Theorem \ref{th:uas} with the necessary adjustments.
\end{proof}

\begin{remark}
\label{rem:symmetric_group_action_encoding}
We observe that the action of the symmetric group encodes the symmetries of the operations of the considered algebraic structure. More precisely, commutativity corresponds to the \emph{trivial action}, since any permutation of the factors yields the same result. If no symmetry is present, we get the \emph{regular action}, since, in general, any permutation of the factors produces a different result. Following this idea, anticommutativity corresponds to the \emph{sign action} (see Example \ref{ex:representations}), while the complete absence of symmetry is captured by the framework of non-symmetric operads.
\end{remark}

\subsection{Relationship Between Symmetric and Non-Symmetric Operads}

Having defined both symmetric and non-symmetric operads, it is natural to formalize the relationship between them. This connection is best described by pairs of adjoint functors between their respective categories, $\texttt{SymOp}$ and $\texttt{NonSymOp}$. In this section, we detail two fundamental adjunctions that link these categories, each providing a different perspective on their interplay.

\subsubsection{The Free-Forgetful Adjunction}
The most standard way to relate the two categories involves a pair of functors: one ``forgets'' the symmetric structure, while the other provides a canonical way to ``symmetrize'' a non-symmetric operad.

\begin{remark}
It is important to note that, although in this work the operad $\mathsf{As}$ has been defined as a non-symmetric operad, it is also possible to define a symmetric operad that encodes associative algebras. Such an operad, denoted by $\mathsf{Ass}$, is constructed as follows:
\begin{itemize}
    \item $\mathsf{Ass}(0) := 0$.
    \item For each $n \ge 1$, $\mathsf{Ass}(n) := \mathbb{K}[\mathbb{S}_n]$.
    \item The action of the symmetric group $\mathbb{S}_n$ on $\mathsf{Ass}(n)$ is the right regular action (see Example \ref{ex:representations}).
    \item The composition $\gamma_{k_1, \ldots, k_n}: \mathsf{Ass}(n) \otimes \mathsf{Ass}(k_1) \otimes \ldots \otimes \mathsf{Ass}(k_n) \longrightarrow \mathsf{Ass}(k_1 + \ldots + k_n)$ on the basis elements is the usual composition in the symmetries operad.
    \item The unit element is $1 \in \mathsf{Ass}(1)$ (the identity in $\mathbb{S}_1$). 
\end{itemize}
This means that each permutation of the inputs is treated as a distinct operation, which reflects the fact that a generic associative algebra is not commutative.
\end{remark}

Indeed, we can say more:

\begin{remark}[The symmetrization functor]
\label{rem:symmetrization}
We note that there is a canonical procedure, known as \emph{symmetrization}, which allows constructing a symmetric operad from a non-symmetric one. This construction defines a functor, which we  denote by $\mathsf{Sym}$, from the category of non-symmetric operads to the category of symmetric operads.

Given a non-symmetric operad $\mathcal{P}$, the corresponding symmetric operad $\mathsf{Sym}(\mathcal{P})$ is defined as follows:

\begin{enumerate}
    \item \emph{Spaces of operations.} For each $n \in \mathbb{N}_0$, the space of $n$-ary operations of $\mathsf{Sym}(\mathcal{P})$ is given by the tensor product of the original space with the group algebra of the symmetric group:
    \begin{equation}
    \mathsf{Sym}(\mathcal{P})(n) \coloneqq \mathcal{P}(n) \otimes_{\mathbb{K}} \mathbb{K}[\mathbb{S}_n].
    \end{equation}
    An element in this space is a linear combination of tensors of the form $p \otimes \sigma$, where $p \in \mathcal{P}(n)$ and $\sigma \in \mathbb{S}_n$.

    \item \emph{Action of the symmetric group.} The right action of $\tau \in \mathbb{S}_n$ on $\mathsf{Sym}(\mathcal{P})(n)$ is defined by acting only on the second factor via multiplication in the group algebra (right regular action):
    \begin{equation}
    (p \otimes \sigma) \cdot \tau \coloneqq p \otimes (\sigma \tau).
    \end{equation}

    \item \emph{Composition.} The composition law in $\mathsf{Sym}(\mathcal{P})$ is a combination of the original composition $\gamma_{\mathcal{P}}$ and the composition of permutations. Given $\theta = p \otimes \sigma \in \mathsf{Sym}(\mathcal{P})(n)$ and $\theta_i = p_i \otimes \tau_i \in \mathsf{Sym}(\mathcal{P})(k_i)$ for $i=1,\ldots,n$, their composition is defined by:
    \begin{equation}
    \gamma_{\mathsf{Sym}(\mathcal{P})}(\theta; \theta_1, \ldots, \theta_n) \coloneqq \gamma_{\mathcal{P}}(p; p_{\sigma(1)}, \ldots, p_{\sigma(n)}) \otimes \bigl(\sigma \circ (\tau_1, \ldots, \tau_n)\bigr).
    \end{equation}
    Note how the permutation $\sigma$ associated with the main operation $p$ acts by permuting the operations $p_i$ that are being composed.
\end{enumerate}
\end{remark}

\begin{example}
    The most paradigmatic example of symmetrization is the application of the $\mathsf{Sym}$ functor to the operad $\mathsf{As}$, which yields the operad $\mathsf{Ass}$.
Recall that $\mathsf{As}(n) \cong \mathbb{K}$ for $n \ge 1$, generated by $\mu_n$. Applying the $\mathsf{Sym}$ construction yields:
\begin{equation}
\mathsf{Sym}(\mathsf{As})(n) = \mathsf{As}(n) \otimes \mathbb{K}[\mathbb{S}_n] \cong \mathbb{K} \otimes \mathbb{K}[\mathbb{S}_n] \cong \mathbb{K}[\mathbb{S}_n].
\end{equation}
This is precisely the vector space that defines $\mathsf{Ass}(n)$. The action of $\mathbb{S}_n$ is the right regular action, which coincides with the action on $\mathsf{Ass}$. Finally, the composition of elements $(\mu_n \otimes \sigma)$ and $(\mu_{k_i} \otimes \tau_i)$ in $\mathsf{Sym}(\mathsf{As})$ is given by:
\begin{align}
&\gamma_{\mathsf{As}}(\mu_n; \mu_{k_{\sigma(1)}}, \ldots, \mu_{k_{\sigma(n)}}) \otimes \bigl(\sigma \circ (\tau_1, \ldots, \tau_n)\bigr) = \nonumber \\
&=\mu_{k_1+\dots+k_n} \otimes \bigl(\sigma \circ (\tau_1, \ldots, \tau_n)\bigr).
\end{align}
Identifying the element $\mu_k \otimes \pi$ with the element $\pi \in \mathbb{K}[\mathbb{S}_k]$, this composition rule corresponds exactly to the block composition of permutations that defines the structure of the operad $\mathsf{Ass}$.

This construction, therefore, formalizes the idea of endowing a non-symmetric operad with a coherent symmetric structure.
\end{example}

\begin{remark}[The forgetful functor]
\label{rem:forgetful_functor}
As a non-symmetric operad can be canonically symmetrized to yield a symmetric one, it is also possible to reverse the process: given a symmetric operad, one can obtain a non-symmetric operad by simply ``forgetting'' the structure given by the action of the symmetric groups.

This operation defines a \emph{forgetful functor} $U$ from the category of symmetric operads to that of non-symmetric operads.
\begin{equation}
U: {\texttt{SymOp}} \longrightarrow {\texttt{NonSymOp}}.
\end{equation}
Given a symmetric operad $\mathcal{P}_{\text{sym}}$, the corresponding non-symmetric operad, $U(\mathcal{P}_{\text{sym}})$, is defined as follows:
\begin{itemize}
    \item \emph{Spaces of operations:} The vector spaces are the same as in the starting operad. For each $n \in \mathbb{N}_0$:
    \begin{equation}
    U(\mathcal{P}_{\text{sym}})(n) \coloneqq \mathcal{P}_{\text{sym}}(n).
    \end{equation}
    \item \emph{Composition and Unit:} The composition maps and the unit element are the same as in the starting operad.
\end{itemize}
Concretely, this functor acts by retaining all the data that define a non-symmetric operad (the spaces $\mathcal{P}(n)$, the composition $\gamma$, and the unit $1$) while ignoring the action of $\mathbb{S}_n$. Since a symmetric operad must already satisfy the associativity and unit axioms (Definition \ref{def: operad simmetrico}), the resulting structure is automatically a well-defined non-symmetric operad.
\end{remark}

\begin{example}
    If we apply the forgetful functor to the symmetric operad $\mathsf{Ass}$, we obtain a \emph{non-symmetric} operad whose spaces are $\mathsf{Ass}(n) = \mathbb{K}[\mathbb{S}_n]$ and whose composition is the block composition of permutations. It is important to note that this non-symmetric operad is much larger and structurally different from the operad $\mathsf{As}$ (where $\mathsf{As}(n) \cong \mathbb{K}$).
\end{example}

The previous example clarifies that the symmetrization functor $\mathsf{Sym}$ and the forgetful functor $U$ are not inverses of each other. Their formal relationship is that of \emph{adjoint functors}. Specifically, the symmetrization functor $\mathsf{Sym}$ is the \emph{left adjoint} of the forgetful functor $U$; this is a standard result in the theory of algebraic operads, see e.g. \cite[Section 5.8]{lodayvallette}. This relationship is expressed by the natural isomorphism in the sets of morphisms:
\begin{equation}
\Hom_{\text{\texttt{SymOp}}}\bigl(\mathsf{Sym}(\mathcal{P}), \mathcal{Q}\bigr) \cong \Hom_{\text{\texttt{NonSymOp}}}\bigl(\mathcal{P}, U(\mathcal{Q})\bigr)
\end{equation}
for any non-symmetric operad $\mathcal{P}$ and any symmetric operad $\mathcal{Q}$.

\subsubsection{The Quotient-Inclusion Adjunction}
The previous discussion raises a natural question: since $U(\mathsf{Ass}) \neq \mathsf{As}$, is there a functor that correctly maps $\mathsf{Ass}$ to $\mathsf{As}$? The answer is yes, and it leads to a second, distinct adjunction.

\begin{remark}[The quotient functor]
\label{rem:quotient_functor}
The construction that maps $\mathsf{Ass}$ to $\mathsf{As}$ is given by a \emph{quotient functor} $Q: \texttt{SymOp} \to \texttt{NonSymOp}$.
Given a symmetric operad $\mathcal{P}$, this functor defines a non-symmetric operad $Q(\mathcal{P})$ where the space of operations of arity $n$ is the quotient of $\mathcal{P}(n)$ by the action of the symmetric group. Specifically, it is the quotient of $\mathcal{P}(n)$ by the subspace spanned by all differences between an element and its permutations:
\begin{equation}
Q(\mathcal{P})(n) \coloneqq \mathcal{P}(n) / \mathrm{Span}\{p - p \cdot \sigma \mid p \in \mathcal{P}(n), \sigma \in \mathbb{S}_n\}.
\end{equation}
This construction ``forces'' symmetry by identifying all operations that differ only by a permutation of their inputs. The composition maps for $Q(\mathcal{P})$ are well-defined as they are induced by the composition of $\mathcal{P}$.
\end{remark}

\begin{example}
    Applying the quotient functor to $\mathsf{Ass}$ yields exactly the operad $\mathsf{As}$. Indeed, since the action of $\mathbb{S}_n$ on $\mathbb{K}[\mathbb{S}_n]$ permutes the basis elements, the quotient space becomes one-dimensional for $n \ge 1$: $Q(\mathsf{Ass})(n) \cong \mathbb{K} \cong \mathsf{As}(n)$.
\end{example}

\begin{remark}[The inclusion functor]
The right adjoint to the quotient functor $Q$ is the \emph{inclusion functor} $I: \texttt{NonSymOp} \to \texttt{SymOp}$. This functor takes a non-symmetric operad $\mathcal{P}$ and views it as a symmetric one by endowing it with the \emph{trivial} $\mathbb{S}_n$-action, where $p \cdot \sigma = p$ for all $p \in \mathcal{P}(n)$ and $\sigma \in \mathbb{S}_n$.

This functor provides an embedding of $\texttt{NonSymOp}$ as a full subcategory of $\texttt{SymOp}$.
\end{remark}

The relationship between $Q$ and $I$ is that of an adjoint pair, where the quotient functor $Q$ is the \emph{left adjoint} of the inclusion functor $I$:
\begin{equation}
\Hom_{\texttt{NonSymOp}}\bigl(Q(\mathcal{P}), \mathcal{Q}\bigr) \cong \Hom_{\texttt{SymOp}}\bigl(\mathcal{P}, I(\mathcal{Q})\bigr).
\end{equation}

\subsubsection{Summary and Interplay of Functors}
In conclusion, there are at least two fundamental ways to relate symmetric and non-symmetric operads, each described by a pair of adjoint functors with different conceptual meanings.

\begin{center}
\begin{tabular}{|l|l|}
\hline
\textbf{Left Adjoint} & \textbf{Right Adjoint}  \\ \hline
Symmetrization ($\mathsf{Sym}$) & Forgetful ($U$) \\
Quotient ($Q$) & Inclusion ($I$) \\
\hline
\end{tabular}
\end{center}

Beyond the two adjunctions discussed, there is a noteworthy direct relationship between the symmetrization functor $\mathsf{Sym}$ and the quotient functor $Q$. While they do not form an adjoint pair, their composition in one direction yields the identity functor, revealing a form of ``annihilation'' of the symmetrization process.

Specifically, the composition $Q \circ \mathsf{Sym}: \texttt{NonSymOp} \to \texttt{NonSymOp}$ is naturally isomorphic to the identity functor $\mathrm{Id}_{\texttt{NonSymOp}}$.
For any non-symmetric operad $\mathcal{P}$, we have $(Q \circ \mathsf{Sym})(\mathcal{P}) \cong \mathcal{P}$.

This shows that the quotient functor effectively "undoes" the free symmetrization. The composition in the opposite direction, $\mathsf{Sym} \circ Q$, is not the identity, as seen in the following examples.

\begin{example}[From $\mathsf{As}$ to $\mathsf{Ass}$ and back]
We can easily observe that:
\begin{enumerate}
    \item $\mathsf{Sym}(\mathsf{As}) = \mathsf{Ass}.$
    \item $Q(\mathsf{Ass}) = \mathsf{As}.$
\end{enumerate}
The sequence $\mathsf{As} \xrightarrow{\mathsf{Sym}} \mathsf{Ass} \xrightarrow{Q} \mathsf{As}$ demonstrates how the quotient functor inverts the symmetrization in this case.
\end{example}

\begin{example}[From $\mathsf{Com}$ to $\mathsf{As}$]
The composition in the opposite direction, $\mathsf{Sym} \circ Q$, is not the identity. Consider the operad $\mathsf{Com}$ (where $\mathsf{Com}(n) = \mathbb{K}$ with trivial action).
\begin{enumerate}
    \item Applying the quotient functor yields
    \begin{equation}
    Q(\mathsf{Com})(n) = \mathsf{Com}(n) / W_n = \mathbb{K} / \{0\} \cong \mathbb{K} \cong \mathsf{As}(n). \end{equation}
    So, $Q(\mathsf{Com}) = \mathsf{As}$.
    \item Applying the symmetrization functor to $\mathsf{As}$ gives
    \begin{equation}
    \mathsf{Sym}(\mathsf{As}) = \mathsf{Ass}. \end{equation}
\end{enumerate}
The sequence $\mathsf{Com} \xrightarrow{Q} \mathsf{As} \xrightarrow{\mathsf{Sym}} \mathsf{Ass}$ shows that this composition is not the identity, as we started with $\mathsf{Com}$ and ended with $\mathsf{Ass}$. This highlights that the relationship is not that of an inverse equivalence.
\end{example}

\subsection{Symmetric Operads on a Symmetric Monoidal Category}
\label{sec:operads-monoidal-category}

For completeness, more generally, and without claiming exhaustive rigor on all formal details, this section provides an overview of operads defined on a generic symmetric monoidal category. For a complete and rigorous treatment, see \cite[Chapter VII]{maclane98}.

In Sections~\ref{sec:non_symmetric_operads} and \ref{sec: s operad}, we introduced the definitions of non-symmetric and symmetric operads, focusing on the category \texttt{Vect} of vector spaces. However, the formalism of operads is significantly more general and extends to the broader context of a symmetric monoidal category. To formulate this generalization, we can adapt the definition previously given for linear operads by replacing the specific structures of \texttt{Vect} with their categorical counterparts. 

For completeness, we state this general definition explicitly.

\begin{definition}[Symmetric operad in a symmetric monoidal category]
\label{def:operad_in_C}
Let $(\mathscr{C}, \otimes, u)$ be a symmetric monoidal category. A \emph{symmetric operad} in $\mathscr{C}$ consists of the following data:
\begin{enumerate}
    \item A collection of objects, $\{\mathcal{P}(n) \in \mathrm{Ob}(\mathscr{C})\}_{n \in \mathbb{N}_0}$. 
    
    \item For each $n \ge 0$, a right action of the symmetric group $\mathbb{S}_n$ on the object $\mathcal{P}(n)$. This is given by a family of morphisms $\cdot\sigma: \mathcal{P}(n) \to \mathcal{P}(n)$ for each $\sigma \in \mathbb{S}_n$, satisfying the usual axioms of a group action, formulated as commuting diagrams in $\mathscr{C}$.
    
    \item A family of \textit{composition morphisms} in $\mathscr{C}$:
    \begin{equation}
    \gamma_{k_1, \dots, k_n}: \mathcal{P}(n) \otimes \mathcal{P}(k_1) \otimes \cdots \otimes \mathcal{P}(k_n) \to \mathcal{P}(k_1 + \cdots + k_n).
    \end{equation}
    
    \item A \textit{unit morphism} $1: u \to \mathcal{P}(1)$ in $\mathscr{C}$. 
\end{enumerate}
This data is required to satisfy axioms of associativity, unitality, and equivariance. These axioms take the form of the commutativity of diagrams in $\mathscr{C}$, analogous to the pentagon and triangle diagrams that govern the coherence of the monoidal category itself.
\end{definition}

While this definition is fully general, to construct the endomorphism operad and generalize the definition of algebra over an operad, one needs to work in a richer setting. Specifically, we use \emph{symmetric monoidal categories closed over sets.} Such categories have the following features:

\begin{itemize}
    \item \emph{Underlying sets:} The objects admit an underlying set, an idea formalized by the existence of a \emph{forgetful functor} $U: \mathscr{C} \to \texttt{Set}$. For any object $O$, its underlying set is denoted by $U(O)$. For example, the underlying set of a vector space $(V,\mathbb{K},+,\cdot)$ is the set $V$ of its vectors.
    
    \item \emph{Underlying functions:} Every morphism $f: X \to Y$ in $\mathscr{C}$ has an underlying function between the corresponding sets, denoted by $U(f): U(X) \to U(Y)$ (e.g., linear maps are functions between vectors).
    
    \item \emph{Internal Hom-objects:} The \emph{Hom-sets} (sets of morphisms between two objects) are themselves objects in the same category (e.g., the set of linear maps between two $\mathbb{K}$-vector spaces is itself a $\mathbb{K}$-vector space).
    
    \item \emph{Tensor map:} There exists a map
    \begin{equation}
    \otimes: U(X) \times U(Y) \rightarrow U(X \otimes Y)
    \end{equation}
    from the Cartesian product of the underlying sets of two objects $X, Y$ to the underlying set of their tensor product, satisfying the universal property of the tensor product \cite{Vitagliano2025}.
    
    \item \emph{Unit:} The underlying set of the monoidal unit, $U(u)$, contains a distinguished element $1 \in U(u)$ such that for every object $X$ and every element $x \in U(X)$, there exists a unique morphism $f: u \rightarrow X$ with $U(f)(1) = x$. Furthermore, for the left and right unit constraints, $\lambda$ and $\rho$, their underlying functions must satisfy the following identities:
\begin{equation}
U(\lambda)(1 \otimes x) = x = U(\rho)(x \otimes 1).
\end{equation}
\end{itemize}

In categories possessing this additional structure, the endomorphism operads ($\End_O$) can be defined in an analogous way, and the isomorphism theorems seen for $\mathsf{As}$, $\mathsf{uAs}$, $\mathsf{Com}$, and $\mathsf{uCom}$ generalize, establishing correspondences with the respective algebras in these broader contexts. For example, the endomorphism operad on an object $O$ of a symmetric monoidal category closed over sets is given by:
\begin{equation}
\End_O(n) = \Hom(O^{\otimes n}, O),
\end{equation}
with composition defined analogously to the case of linear operads.

The following results, summarized in a single theorem for clarity, characterize the correspondence between algebras over specific operads and the corresponding algebraic structures in a generic symmetric monoidal category closed over sets $\mathscr{C}$. The notations $\mathsf{As}^{\mathscr{C}}$, $\mathsf{uAs}^{\mathscr{C}}$, $\mathsf{Com}^{\mathscr{C}}$, and $\mathsf{uCom}^{\mathscr{C}}$ denote the versions of these operads defined specifically on the category $\mathscr{C}$.

\begin{theorem}
\label{thm:operad_algebra_correspondence_general}
Let $\mathscr{C}$ be a symmetric monoidal category closed over sets. Then:
\begin{enumerate}
    \item If $\mathscr{C}$ has an initial object, the category of $\mathsf{As}$-algebras on $\mathscr{C}$, denoted $\emph{\texttt{Alg}}(\mathsf{As}^{\mathscr{C}})$, is isomorphic to the category of semigroup objects in $\mathscr{C}$, denoted \emph{\texttt{Semi}}$_{\mathscr{C}}$. In symbols:
    \begin{equation}
        \emph{\texttt{Alg}}(\mathsf{As}^{\mathscr{C}}) \cong \emph{\texttt{Semi}}_{\mathscr{C}}.
    \end{equation}
    
    \item The category of $\mathsf{uAs}$-algebras on $\mathscr{C}$, denoted $\emph{\texttt{Alg}}(\mathsf{uAs}^{\mathscr{C}})$, is isomorphic to the category of monoid objects in $\mathscr{C}$, denoted \emph{\texttt{Mon}}$_{\mathscr{C}}$. In symbols:
    \begin{equation}
        \emph{\texttt{Alg}}(\mathsf{uAs}^{\mathscr{C}}) \cong \emph{\texttt{Mon}}_{\mathscr{C}}.
    \end{equation}
    
    \item If $\mathscr{C}$ has an initial object, the category of $\mathsf{Com}$-algebras on $\mathscr{C}$, denoted $\emph{\texttt{Alg}}(\mathsf{Com}^{\mathscr{C}})$, is isomorphic to the category of commutative semigroup objects in $\mathscr{C}$, denoted $\emph{\texttt{CSemi}}_{\mathscr{C}}$. In symbols:
    \begin{equation}
        \emph{\texttt{Alg}}(\mathsf{Com}^{\mathscr{C}}) \cong \emph{\texttt{CSemi}}_{\mathscr{C}}.
    \end{equation}
    
    \item The category of $\mathsf{uCom}$-algebras on $\mathscr{C}$, denoted $\emph{\texttt{Alg}}(\mathsf{uCom}^{\mathscr{C}})$, is isomorphic to the category of commutative monoid objects in $\mathscr{C}$, denoted $\emph{\texttt{CMon}}_{\mathscr{C}}$. In symbols:
    \begin{equation}
        \emph{\texttt{Alg}}(\mathsf{uCom}^{\mathscr{C}}) \cong \emph{\texttt{CMon}}_{\mathscr{C}}.
    \end{equation}
\end{enumerate}
These isomorphisms of categories are established by functors constructed analogously to those for \emph{\texttt{Vect}}.
\end{theorem}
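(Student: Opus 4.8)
The plan is to transport, essentially verbatim, the four isomorphism theorems already established over $\texttt{Vect}$ (Theorems~\ref{th:as_isomorphism}, \ref{th:uas}, and~\ref{th:iso_com}, together with its unital analogue) to the general setting, exploiting the ``closed over sets'' hypothesis to reduce every morphism identity to an element-wise computation. Recall that a $\mathsf{P}^{\mathscr{C}}$-algebra on an object $O$ is a morphism of operads $\Phi : \mathsf{P}^{\mathscr{C}} \to \End_O$ with $\End_O(n) = \Hom(O^{\otimes n}, O)$; the whole proof consists in matching such morphisms with the algebraic objects of Definitions~\ref{def:semigroup_object} and~\ref{def:monoid_object}.

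First I would treat the associative non-unital case. Define $F : \texttt{Alg}(\mathsf{As}^{\mathscr{C}}) \to \texttt{Semi}_{\mathscr{C}}$ by $F(O, \Phi) := (O, m)$ with $m := \Phi_2(\mu_2) : O \otimes O \to O$. The operadic relation $\mu_2 \circ (\mu_2, \mu_1) = \mu_3 = \mu_2 \circ (\mu_1, \mu_2)$, pushed through the operad morphism $\Phi$, yields $m \circ (m \otimes \mathrm{id}_O) = m \circ (\mathrm{id}_O \otimes m)$ after inserting the associator $\alpha_{O,O,O}$; this is exactly the associativity square of Definition~\ref{def:semigroup_object}, so the argument of Lemma~\ref{lem:As_to_assoc} applies with the Cartesian product replaced by $\otimes$. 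The hypothesis that $\mathscr{C}$ has an initial object $i$ is what lets us set $\mathsf{As}^{\mathscr{C}}(0) := i$, so that $\Phi_0$ is the unique map out of $i$ and imposes no unit. The inverse functor $G : \texttt{Semi}_{\mathscr{C}} \to \texttt{Alg}(\mathsf{As}^{\mathscr{C}})$ is built by the recursive construction of Lemma~\ref{lem:G_on_objects}, with well-definedness on morphisms and functoriality following Lemmas~\ref{lem:G_morphism} and~\ref{lem:functorG}; the identities $F \circ G = \mathrm{Id}$ and $G \circ F = \mathrm{Id}$ are then verified precisely as in Theorem~\ref{th:as_isomorphism}. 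For the unital case one changes only $\mathsf{uAs}^{\mathscr{C}}(0) := u$, so that $I := \Phi_0(\eta) : u \to O$ is a genuine unit morphism; the relation $\mu_2 \circ (\eta, \mathrm{id}) = \mu_1 = \mu_2 \circ (\mathrm{id}, \eta)$ transports to the two unit triangles of Definition~\ref{def:monoid_object} (now decorated with the unitors $\lambda_O, \rho_O$), giving $\texttt{Alg}(\mathsf{uAs}^{\mathscr{C}}) \cong \texttt{Mon}_{\mathscr{C}}$ as in Theorem~\ref{th:uas}.

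For the two commutative cases I would add exactly one ingredient to the above: the fact that $\Phi$ is now a morphism of \emph{symmetric} operads (Definition~\ref{def: s morf}). Since $\mathbb{S}_2$ acts trivially on $\mathsf{Com}^{\mathscr{C}}(2)$, equivariance forces $\Phi_2(\mu_2) = \Phi_2(\mu_2 \cdot \tau) = \Phi_2(\mu_2) \cdot \tau$, and unwinding the $\mathbb{S}_2$-action on $\End_O$ in terms of the braiding yields $m \circ B_{O,O} = m$, which is the commutativity axiom; this is the content of Lemma~\ref{lem:Com_to_comm_alg} in the general category. Conversely, the recursive operation $\Psi_n(\mu_n)$ built from a commutative $m$ is totally symmetric, so $\Psi$ respects the trivial $\mathbb{S}_n$-action and defines a bona fide symmetric operad morphism, exactly as in Lemma~\ref{lem:G_on_objects_com}. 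The remaining identifications then give $\texttt{Alg}(\mathsf{Com}^{\mathscr{C}}) \cong \texttt{CSemi}_{\mathscr{C}}$ and $\texttt{Alg}(\mathsf{uCom}^{\mathscr{C}}) \cong \texttt{CMon}_{\mathscr{C}}$.

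The main obstacle is the bookkeeping of the coherence data. Over $\texttt{Vect}$ one silently identifies $(O \otimes O) \otimes O$ with $O \otimes (O \otimes O)$ and suppresses the unitors, whereas here each commuting diagram must carry its associators, unitors, and braiding, and one must appeal to Mac Lane's Coherence Theorem to paste them without ambiguity. The point of the ``closed over sets'' hypothesis is precisely to defuse this: the underlying-set functor, the underlying functions, the tensor map $\otimes : X' \times Y' \to (X \otimes Y)'$, and the distinguished element $1 \in u'$ together guarantee that two parallel morphisms agree as soon as they agree on elements, so every element-level computation from the $\texttt{Vect}$ proofs carries over unchanged. The only genuinely new verification is the correct choice of $\mathsf{P}^{\mathscr{C}}(0)$ — the initial object $i$ in the non-unital cases and the monoidal unit $u$ in the unital ones — which is exactly what makes the presence or absence of a unit morphism match the chosen target category.
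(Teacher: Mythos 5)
Your proposal is correct and takes essentially the same route as the paper: one constructs mutually inverse functors by transporting the \texttt{Vect} constructions (with $m := \Phi_2(\mu_2)$ and, in the unital cases, the unit morphism coming from $\Phi_0$) and checks they are strict inverses. The paper's own proof is only a brief sketch, so your added detail — choosing $\mathsf{P}^{\mathscr{C}}(0)$ to be the initial object in the non-unital cases versus the monoidal unit in the unital ones (which is exactly what the initial-object hypothesis is for), reading the $\mathbb{S}_2$-action on $\End_O(2)$ via the braiding to get $m \circ B_{O,O} = m$, and invoking coherence plus the closed-over-sets structure to justify element-wise computation — supplies precisely the verifications the paper omits.
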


\begin{proof}
The proof follows by a direct verification. For each case, one constructs functors in both directions and shows they establish an isomorphism of categories. For instance, given a $\mathsf{uAs}^{\mathscr{C}}$-algebra, the operation $\gamma: A \otimes A \to A$ and the unit $u: I \to A$ satisfy associativity and unit axioms for a monoid object. Conversely, a monoid object immediately provides the structure of a $\mathsf{uAs}^{\mathscr{C}}$-algebra. The other cases are similar.
\end{proof}

\begin{example}[Correspondences for the category \texttt{Set}]
\label{ex:set_correspondences}
An instructive example of the application of Theorem~\ref{thm:operad_algebra_correspondence_general} is obtained by considering the category $\texttt{Set}$. The category $\texttt{Set}$ is a symmetric monoidal category closed over sets: it has an initial object (the empty set $\varnothing$) and a terminal object (any singleton). Its \emph{Hom-sets} are sets of functions, and the tensor product coincides with the Cartesian product.

In the context of $\texttt{Set}$, the $\mathscr{C}$-algebraic objects of Theorem~\ref{thm:operad_algebra_correspondence_general} specialize as follows:
\begin{itemize}
    \item Semigroup objects in \texttt{Semi}$_{\texttt{Set}}$ are semigroups.
    \item Monoid objects in \texttt{Mon}$_{\texttt{Set}}$ are monoids.
    \item Commutative semigroup objects in \texttt{CSemi}$_{\texttt{Set}}$ are commutative semigroups.
    \item Commutative monoid objects in \texttt{CMon}$_{\texttt{Set}}$ are commutative monoids.
\end{itemize}

Therefore,
\begin{itemize}
    \item The category of $\mathsf{As}^{\texttt{Set}}$-algebras is isomorphic to the category of semigroups.
    \item The category of $\mathsf{uAs}^{\texttt{Set}}$-algebras is isomorphic to the category of monoids.
    \item The category of $\mathsf{Com}^{\texttt{Set}}$-algebras is isomorphic to the category of commutative semigroups.
    \item The category of $\mathsf{uCom}^{\texttt{Set}}$-algebras is isomorphic to the category of commutative monoids. 
\end{itemize}

Thus, operads provide a unified language for describing and classifying different algebraic structures, highlighting their common properties through a categorical formalism.
\end{example}

\begin{remark}[Operads as enriched multicategories]
\label{rem:enriched_multicategories}
The link between operads and multicategories, already explored in the case of sets (see Remark~\ref{rem:cat_nonsym_op}), extends to the more general context of a symmetric monoidal category $\mathscr{C}$.

As an operad on \texttt{Set} is a multicategory with a single object, an operad on $\mathscr{C}$ can be defined in a completely equivalent way as a \emph{multicategory enriched over $\mathscr{C}$} with a single object. The foundational reference for theory of enriched categories is Kelly \cite{Kelly1982}. By ``enriched'' over $\mathscr{C}$, we mean that:
\begin{itemize}
    \item The collections of morphisms of the multicategory, $\Hom(x_1, \dots, x_n; y)$, are \emph{objects} of the category $\mathscr{C}$.
    \item The composition maps are \emph{morphisms} in $\mathscr{C}$.
\end{itemize}
In our case, since the operad has only one type of object, the enriched multicategory has a single object. Its collections of $n$-ary morphisms correspond to the objects $\mathcal{P}(n)$ of the operad, and the composition maps of the operad are morphisms in the category $\mathscr{C}$, as required by the definition (e.g., the map $\gamma$ in Definition~\ref{def: ns operad} is a morphism in \texttt{Vect}, i.e., a linear map).

This point of view solidifies the idea that operads are nothing more than the single-object version of multicategories, within a given categorical context $(\mathscr{C}, \otimes, u)$.
\end{remark}

\section{Alternative Definitions of an Operad}
\label{chap:def_alternative}
In this section, after introducing the classical definition of an operad as a multicategory with a single object, we explore some equivalent formulations. These alternative definitions can be more suitable depending on the context and highlight different aspects of the structure of an operad.

\subsection{Partial Definition of an Operad}

Besides the classical definition of an operad based on a single ``total'' composition map $\gamma$, there is an alternative and equivalent axiomatic approach. This approach, known as the \emph{partial definition}, is often more suitable for combinatorial and homological algebra. It was developed extensively by Markl (see e.g. \cite{markl08, markl_shnider_stasheff}) and is based on more elementary operations.

The central idea is to use a series of \emph{partial compositions}, denoted by $\circ_i$, instead of a single total one. Each $\circ_i$ is a binary operation that describes a fundamental action: inserting an operad element $\nu$ into the $i$-th input of another element $\mu$. This perspective decomposes complex compositions into their atomic building blocks.

While this approach results in a larger set of axioms (for example, two distinct associativity laws), it provides a powerful combinatorial handle on the operad structure. This fine-grained control is crucial for developing deep structural results, such as the theory of Koszul duality for operads pioneered by Ginzburg and Kapranov \cite{GinzburgKapranov1994}.

We  only consider the case of symmetric operads, as the non-symmetric case can be obtained by simply forgetting the action of the symmetric group.

\begin{definition}[Partial definition of an operad]
\label{def:operad_partial}
A \emph{symmetric operad} $\mathcal{P}$ is a sequence of vector spaces $(\mathcal{P}(n))_{n \in \mathbb{N}_0}$ equipped with a right $\mathbb{S}_n$-module structure (Definition \ref{def:group_representation}), a unit element $1 \in \mathcal{P}(1)$, and a family of \emph{partial composition maps}
\begin{equation}
\circ_i : \mathcal{P}(m) \otimes \mathcal{P}(n) \rightarrow \mathcal{P}(m+n-1),
\end{equation}
for $1 \le i \le m$ and $n \ge 0$. These maps are required to satisfy the following axioms:

\begin{enumerate}
    \item \emph{Associativity laws:} For any $\lambda \in \mathcal{P}(\ell)$, $\mu \in \mathcal{P}(m)$, $\nu \in \mathcal{P}(n)$, the following relations hold:
    \begin{align}
        (\lambda \circ_i \mu) \circ_{i+j-1} \nu &= \lambda \circ_i (\mu \circ_j \nu), && \text{for } 1 \le i \le \ell, 1 \le j \le m. \label{eq:assoc_partial_nested} \\
        (\lambda \circ_i \mu) \circ_{k-1+m} \nu &= (\lambda \circ_k \nu) \circ_i \mu, && \text{for } 1 \le i < k \le \ell. \label{eq:assoc_partial_disjoint}
    \end{align}
    
    \item \emph{Unit laws:} For any $\mu \in \mathcal{P}(n)$ and $1 \le i \le n$:
    \begin{equation}
    \mu \circ_i 1 = \mu \quad \text{and} \quad 1 \circ_1 \mu = \mu.
    \end{equation}
    
    \item \emph{Equivariance:} For any $\mu \in \mathcal{P}(m)$, $\nu \in \mathcal{P}(n)$, $\sigma \in \mathbb{S}_m$, $\tau \in \mathbb{S}_n$:
    \begin{equation}
    (\mu \cdot \sigma) \circ_i (\nu \cdot \tau) = (\mu \circ_{\sigma(i)} \nu) \cdot (\sigma \circ_i \tau),
    \end{equation}
    where $\sigma \circ_i \tau \in \mathbb{S}_{m+n-1}$ is the permutation obtained by inserting the block of inputs of $\tau$ into the $i$-th position of the input blocks of $\sigma$.
\end{enumerate}
\end{definition}

Graphically, the partial composition $\mu \circ_i \nu$ corresponds to grafting the root of the tree representing $\nu$ onto the $i$-th leaf of the tree representing $\mu$.

\begin{figure}[H]
    \centering
    \begin{tikzpicture}[scale=1.4]

    \node (mu) at (0,0) {$\mu$};
    \node (nu) at (0,1.5) {$\nu$};

    \draw (mu) -- (nu);

    \node at (0.15,0.75) {$i$};

    \draw (mu) -- (0,-0.5);

    \draw (mu) -- (-1, 0.5);
    \draw (mu) -- (-0.5, 0.5);
    \draw (mu) -- (0.5, 0.5);
    \draw (mu) -- (1, 0.5);

    \draw (nu) -- (-1, 2);
    \draw (nu) -- (-0.5, 2);
    \draw (nu) -- (0, 2);
    \draw (nu) -- (0.5, 2);
    \draw (nu) -- (1, 2);
    \end{tikzpicture}
    \caption{Graphical representation of the composition $\mu \circ_i \nu$}
    \label{fig:partialcomp}
\end{figure}
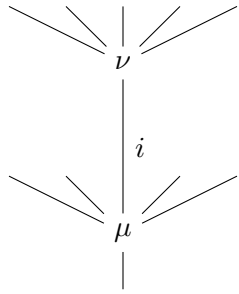

\begin{remark}
There are two distinct associativity laws because there exist two topologically different ways to compose three operations $\lambda, \mu, \nu$ through successive graftings.
\begin{enumerate}
    \item \emph{Nested composition:} The first case describes a "vertical" composition, where the operation $\nu$ is grafted into $\mu$, and the result $(\mu \circ_j \nu)$ is then grafted into $\lambda$. The relation \eqref{eq:assoc_partial_nested} states that the result is the same, regardless of how the operations are grouped.
    
    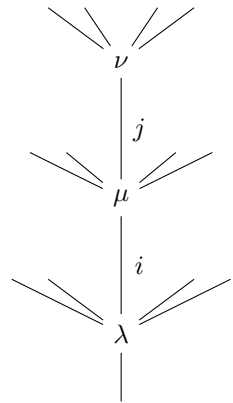
\begin{figure}[H]
        \centering
        \begin{tikzpicture}[scale=1.1, every node/.style={font=\small}]
            \node (l) at (0,0) {$\lambda$};
            \node (m) at (0,1.5) {$\mu$};
            \node (n) at (0,3) {$\nu$};
            
            \draw (l) -- (m);
            \draw (m) -- (n);
            \draw (l) -- (0,-0.75); 
            
            \node at (0.2, 0.75) {$i$};
            \node at (0.2, 2.25) {$j$};
            
            \draw (l) -- (-1.2, 0.6); \draw (l) -- (-0.8, 0.6);
            \draw (l) -- (0.8, 0.6); \draw (l) -- (1.2, 0.6);
            
            \draw (m) -- (-1, 2); \draw (m) -- (-0.6, 2.0);
            \draw (m) -- (0.6, 2.0); \draw (m) -- (1, 2);
            
            \draw (n) -- (-0.8, 3.6); \draw (n) -- (-0.4, 3.6);
            \draw (n) -- (0.4, 3.6); \draw (n) -- (0.8, 3.6);
        \end{tikzpicture}
        \caption{Visualization of the associativity law for nested graftings.}
    \end{figure}

    \item \emph{Disjoint composition:} The second case describes a "horizontal" composition, where the operations $\mu$ and $\nu$ are grafted onto two \emph{distinct} leaves ($i$ and $k$) of the operation $\lambda$. The axiom \eqref{eq:assoc_partial_disjoint} states that the order of these two independent graftings does not affect the result.
    
    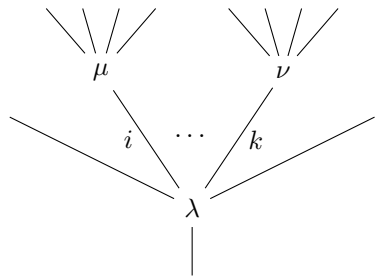
\begin{figure}[H]
        \centering
        \begin{tikzpicture}[scale=1.2, every node/.style={font=\small}]
            \node (l) at (0,0) {$\lambda$};
            \node (m) at (-1, 1.5) {$\mu$};
            \node (n) at (1, 1.5) {$\nu$};
            
            \draw (l) -- (m);
            \draw (l) -- (n);
            \draw (l) -- (0, -0.75); 
            \draw (l) -- (-2, 1);
            \draw (l) -- (2, 1);

            \node at (0,0.8) {\ldots};

            \node at (-0.7, 0.75) {$i$};
            \node at (0.7, 0.75) {$k$};

            \draw (m) -- (-1.6, 2.2); \draw (m) -- (-1.2, 2.2); \draw (m) -- (-0.8, 2.2); \draw (m) -- (-0.4, 2.2);
            \draw (n) -- (0.4, 2.2); \draw (n) -- (0.8, 2.2); \draw (n) -- (1.2, 2.2); \draw (n) -- (1.6, 2.2);
        \end{tikzpicture}
        \caption{Visualization of the associativity law for disjoint graftings.}
    \end{figure}
\end{enumerate}
\end{remark}

\begin{example}
To better understand the equivariance condition for partial compositions, let us consider a concrete example. Let $\mu \in \mathcal{P}(4)$, $\nu \in \mathcal{P}(3)$, and let the permutations be $\sigma = \left(\begin{smallmatrix} 1 & 2 & 3 & 4 \\ 3 & 4 & 2 & 1 \end{smallmatrix}\right) \in \mathbb{S}_4$ and $\tau = \left(\begin{smallmatrix} 1 & 2 & 3 \\ 2 & 3 & 1 \end{smallmatrix}\right) \in \mathbb{S}_3$. 
We want to verify equivariance for the composition for $i=2$:
\begin{equation}
(\mu \cdot \sigma) \circ_2 (\nu \cdot \tau) = (\mu \circ_{\sigma(2)} \nu) \cdot (\sigma \circ_2 \tau). 
\end{equation}
The following chain of diagrams illustrates this identity.
\vspace{0.3cm}

\begin{tikzpicture}[scale=0.85]
        
        \begin{scope}[shift={(-7,0)}]
            \node (op) at (0,0) {$(\mu\cdot\sigma)\circ_2(\nu\cdot\tau)$};
            \node(r) at (0,-1){};
            \draw (r)-- (op);
            \foreach \x/\l in {-1.75/1, -1.25/2, -0.5/3, 0.5/4, 1.25/5, 1.75/6} {
                \draw (op)--(\x,1);
                \node at (\x,1.2){\tiny \l};
            }
        \end{scope}

        \node at (-3.5,0) {\large =};
        
        \begin{scope}[shift={(0.5,-1)}]
            \node(m) at (0,0){$\mu\cdot\sigma$};
            \node(n) at (-0.5,2){$\nu\cdot\tau$};
            \draw (m)--(0,-1);
            \draw (m)--(-1.5,1); \node[above] at (-1.5,1){\tiny 1};
            \draw (m)--(-0.5,1); \node[above] at (-0.5,1){\tiny 2};
            \draw (m)--(0.5,1); \node[above] at (0.5,1){\tiny 3};
            \draw (m)--(1.5,1); \node[above] at (1.5,1){\tiny 4};
            \draw (-0.5,1.5)--(n);
            \draw (n)--(-1,2.5); \node[above] at (-1,2.5){\tiny 1};
            \draw (n)--(-0.5,2.5); \node[above] at (-0.5,2.5){\tiny 2};
            \draw (n)--(0,2.5); \node[above] at (0,2.5){\tiny 3};
        \end{scope}

        \node (align_point) at (3.8,0) {\large =};

        \node at (-8.5,-5.5) {\large =};

        \begin{scope}[shift={(1.5, -6cm)}]
             \node(m) at (0,0){$\mu$};
             \node(n) at (-0.5,2){$\nu$};
             \draw (m)--(0,-1);
             \draw (m)--(-1.5,1); \node[above] at (-1.5,1){\tiny 3};
             \draw (m)--(-0.5,1); \node[above] at (-0.5,1){\tiny 4};
             \draw (m)--(0.5,1); \node[above] at (0.5,1){\tiny 2};
             \draw (m)--(1.5,1); \node[above] at (1.5,1){\tiny 1};
             \draw (-0.5,1.5)--(n);
             \draw (n)--(-1,2.5); \node[above] at (-1,2.5){\tiny 2};
             \draw (n)--(-0.5,2.5); \node[above] at (-0.5,2.5){\tiny 3};
             \draw (n)--(0,2.5); \node[above] at (0,2.5){\tiny 1};
        \end{scope}

        \node at (-1.5, -5.5cm) {\large =};

        \begin{scope}[shift={(-5, -6cm)}]
            \node (op) at (0,0) {$\mu\circ_4\nu$};
            \node(r) at (0,-1){};
            \draw (r)-- (op);
            \foreach \x/\l in {-1.75/3, -1.25/5, -0.5/6, 0.5/4, 1.25/2, 1.75/1} {
                \draw (op)--(\x,1);
                \node at (\x,1.2){\tiny \l};
            }
        \end{scope}

\end{tikzpicture}

\vspace{0.3cm}

The composite permutation $\sigma \circ_2 \tau \in \mathbb{S}_6$ is given by the diagram

\begin{center}
\begin{tikzpicture}[scale=0.65, baseline={([yshift=-.5ex]current bounding box.center)}]

    \foreach \N in {1,...,6} {

        \node[left] at (0, -\N) {\N};
        \node[right] at (6, -\N) {\N};
    }

    \draw (2, -1) -- (6, -3);  % 1 -> 3
    \draw (0, -2) -- (6, -5);  % 2 -> 5
    \draw (0, -3) -- (6, -6);  % 3 -> 6
    \draw (0,-4) -- (2,-2) -- (6, -4); 
    \draw (2, -5) -- (6,-2);
    \draw (2,-6) -- (6,-1);
    \node at (7, -6) {\large ,};
\end{tikzpicture}
\end{center}
and $(\mu \circ_{\sigma(2)} \nu) \cdot (\sigma \circ_2 \tau)$ corresponds to 
\begin{figure}[H]
    \centering
    \begin{tikzpicture}[scale=0.9]
        \begin{scope}[shift={(-3,0)}]
            \node (op) at (0,0) {$(\mu \circ_4\nu) \cdot (\sigma \circ_2 \tau)$};
            \node(r) at (0,-1){};
            \draw (r)-- (op);
            \foreach \x/\l in {-1.75/1, -1.25/2, -0.5/3, 0.5/4, 1.25/5, 1.75/6} {
                \draw (op)--(\x,1);
                \node at (\x,1.2){\tiny \l};
            }
        \end{scope}

        \node at (0,0) {\large =};

            \begin{scope}[shift={(3,0)}]
            \node (op) at (0,0) {$\mu\circ_4\nu$};
            \node(r) at (0,-1){};
            \draw (r)-- (op);
            \foreach \x/\l in {-1.75/3, -1.25/5, -0.5/6, 0.5/4, 1.25/2, 1.75/1} {
                \draw (op)--(\x,1);
                \node at (\x,1.2){\tiny \l};
                \node at (1.25, 0) {\large .};
            }
        \end{scope}

    \end{tikzpicture}
\end{figure}

\end{example}

\begin{proposition}[Equivalence of operad definitions]
\label{prop:equivalence_of_operad_definitions}
The classical definition of an operad (based on total composition $\gamma$) and the partial definition (based on partial compositions $\circ_i$) are equivalent.
\end{proposition}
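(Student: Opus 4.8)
The plan is to prove the equivalence by exhibiting explicit constructions in both directions, checking that each sends the axioms of one definition to the axioms of the other, and finally verifying that the two constructions are mutually inverse. I would organize the argument as: (1) total $\gamma$ yields partial $\circ_i$; (2) partial $\circ_i$ yields total $\gamma$; (3) the round trips are the identity. Throughout, the unit $1 \in \mathcal{P}(1)$ and the right $\mathbb{S}_n$-actions are carried along unchanged, so only the composition data need to be translated.

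For the first direction, given an operad in the sense of Definition~\ref{def: operad simmetrico} with total composition $\gamma$, I would define partial composition by inserting the unit into every input except the $i$-th:
\[
\mu \circ_i \nu := \gamma\bigl(\mu;\, \underbrace{1, \dots, 1}_{i-1},\, \nu,\, \underbrace{1, \dots, 1}_{m-i}\bigr),
\qquad \mu \in \mathcal{P}(m),\ \nu \in \mathcal{P}(n),\ 1 \le i \le m.
\]
I would then verify that the two partial associativity laws \eqref{eq:assoc_partial_nested} and \eqref{eq:assoc_partial_disjoint} both follow from the single total associativity law \eqref{eq:associativita-ns} together with the unit laws \eqref{eq: leggi di unità ns}: the nested law corresponds to grafting into an already-filled slot, the disjoint law to grafting into two distinct slots, and in each case the surplus units collapse under the unit laws. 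The partial unit laws $\mu \circ_i 1 = \mu$ and $1 \circ_1 \mu = \mu$ are immediate from the total unit laws, and equivariance for $\circ_i$ is read off from the equivariance \eqref{eq:equivarianza} of $\gamma$ by specializing the inner permutations to identities on the unit slots.

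For the converse, given partial compositions I would reconstruct the total composition by iterated grafting, inserting from right to left so that no index ever shifts:
\[
\gamma(\mu; \nu_1, \dots, \nu_n) := \bigl(\cdots\bigl((\mu \circ_n \nu_n) \circ_{n-1} \nu_{n-1}\bigr) \cdots\bigr) \circ_1 \nu_1,
\qquad \mu \in \mathcal{P}(n),\ \nu_i \in \mathcal{P}(k_i).
\]
Grafting $\nu_n$ onto the last leaf leaves leaves $1, \dots, n-1$ in place, so each subsequent $\circ_i$ targets the correct original slot of $\mu$; the equivalent left-to-right description uses the shifted indices $\circ_{k_1 + \cdots + k_{i-1} + 1}$, and the disjoint law \eqref{eq:assoc_partial_disjoint} shows these two orderings agree, which simultaneously proves the definition is independent of the insertion order. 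I would then check that this $\gamma$ satisfies \eqref{eq:associativita-ns}, the unit laws, and equivariance.

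The round-trip checks are then routine: composing the two constructions and using the unit laws to absorb or extract the inserted units returns the original $\gamma$ in one direction and the original $\circ_i$ in the other. The hard part will be the associativity verification in the partial-to-total direction: proving \eqref{eq:associativita-ns} from \eqref{eq:assoc_partial_nested} and \eqref{eq:assoc_partial_disjoint} requires a careful induction on the arity $n$ of the outer operation, peeling off one partial composition at a time and invoking the nested law for graftings stacked in the same branch and the disjoint law to commute graftings on different branches, with delicate index bookkeeping. The equivariance verification is of comparable difficulty, since it demands precise control of how the block permutation $\sigma \circ (\pi_1, \dots, \pi_n)$ in the symmetries operad decomposes into the one-slot permutations $\sigma \circ_i \tau$ appearing in the partial equivariance axiom.
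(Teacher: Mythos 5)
Your proposal is correct and follows essentially the same route as the paper: partial compositions are extracted from $\gamma$ by inserting the unit into all slots but the $i$-th, and $\gamma$ is rebuilt from the $\circ_i$ by iterated grafting whose well-definedness rests on the two partial associativity laws. Your version is in fact more careful than the paper's sketch --- the right-to-left insertion order that avoids index shifting, the round-trip verifications, and the explicit plan for checking associativity and equivariance of the reconstructed $\gamma$ are all points the paper passes over silently.
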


\begin{proof}
The equivalence is shown by constructing each type of composition in terms of the other.

\begin{enumerate}
    \item \emph{From total to partial composition.} Given the total composition map $\gamma$, the partial composition is defined as a special case of $\gamma$ where only one non-trivial operation is grafted onto the $i$-th leaf, while all other leaves are grafted with the unit $1$:
    \begin{equation}
    \mu \circ_i \nu \coloneqq \gamma(\mu; 1, \dots, 1, \underbrace{\nu}_{\text{pos. } i}, 1, \dots, 1).
    \end{equation}

    \item \emph{From partial to total composition.} Given the family of partial compositions $\circ_i$, the total composition $\gamma(\mu; \nu_1, \dots, \nu_k)$ is uniquely defined by induction on the number $k$ of grafted operations.
    \begin{itemize}
    \item \emph{Base case ($k=1$):} The composition with a single operation is simply $\gamma(\mu; \nu_1) \coloneqq \mu \circ_1 \nu_1$.
    \item \emph{Inductive step ($k>1$):} Assuming that $\gamma(\mu; \nu_1, \dots, \nu_{k-1})$ has been defined, one defines
    \begin{equation}
    \gamma(\mu; \nu_1, \dots, \nu_k) \coloneqq \gamma\bigl(\gamma(\mu; \nu_1, \dots, \nu_{k-1}); \nu_k\bigr), \end{equation}
    where the composition on the right-hand side of this definition is an iterated application of partial compositions. Since the associativity laws \eqref{eq:assoc_partial_nested} and \eqref{eq:assoc_partial_disjoint} ensure that the order of these compositions does not matter, the result is well-defined.
\end{itemize}
\end{enumerate}
\end{proof}

\subsection{Functorial Definition of an Operad}
\label{sec:functorial_definition}

For completeness, we introduce a third definition of an operad, which is purely categorical. This approach, known as the \emph{functorial definition} \cite{lodayvallette}, defines an operad as a particular type of \emph{monad}.

To formulate this definition, one considers the \emph{category of endofunctors} on \texttt{Vect}, denoted by $\texttt{End}(\texttt{Vect})$. The objects of this category are the functors $F\colon \texttt{Vect}\to\texttt{Vect}$, and the morphisms are the natural transformations between these functors. This category is monoidal: the monoidal product is given by the \emph{composition of functors} and the unit object is the \emph{identity functor} $\mathrm{Id}$.

\begin{definition}[Functorial definition of an operad]
A (symmetric) \emph{operad} is a \emph{monoid} in the monoidal category $(\texttt{End}(\texttt{Vect}), \circ, \mathrm{Id})$. Such a structure is more commonly known as a \emph{monad}.

Explicitly, an operad $\mathcal{P}$ is an endofunctor $\mathcal{P} \in \mathrm{Ob}(\texttt{End}(\texttt{Vect}))$ equipped with two natural transformations:
\begin{enumerate}
    \item a \emph{multiplication} $\gamma\colon \mathcal{P}\circ \mathcal{P}\;\Longrightarrow\;\mathcal{P}$,
    \item a \emph{unit} $\eta\colon \mathrm{Id}\;\Longrightarrow\;\mathcal{P}$,
\end{enumerate}
that satisfy the coherence diagrams for the associativity \eqref{eq:monoid_associativity_diagram} and unitality \eqref{eq:monoid_unit_diagrams} of a monoid. The components of these natural transformations, for each vector space $V$, are linear maps:
\begin{equation}
 \gamma_V\colon \mathcal{P}\bigl(\mathcal{P}(V)\bigr)\to \mathcal{P}(V)
 \quad \text{and} \quad
 \eta_V\colon V\to \mathcal{P}(V).
\end{equation}
\end{definition}
It is important to emphasize that this functorial approach is entirely equivalent to the classical definition. The equivalence between the two formulations is established by a construction known as the \emph{Schur functor}, which canonically associates an endofunctor to every $\mathbb{S}$-module.

A treatment of this functor is beyond the scope of this review. For more details, see \cite{lodayvallette, hilgerponcin}.

\section{Operads via Generators and Relations}
\label{chap:gen_rel}

 In this section, we present a more constructive and algebraic approach to defining operads, known as the \emph{definition of an operad via generators and relations}. This method allows for the description of an operad by specifying a minimal set of generating operations and the relations. To develop this formalism, three fundamental algebraic concepts are necessary, which we adapt to the operadic context: \emph{ideals}, \emph{quotients}, and \emph{free} structures. We begin by defining the notion of an operadic ideal and a quotient operad. Subsequently, we introduce the crucial concept of a \emph{free operad}, which can be thought of as the operad constructed from a set of operations without imposing any additional relations. By combining these tools, we show how an operad can be defined as the quotient of a free operad by an ideal generated by the relations. Finally, we  apply this technique to construct the example of the $\mathsf{Lie}$ operad, which encodes the structure of Lie algebras.

\subsection{Operadic Ideals and Quotient Operads}
\label{sec:ideals_and_quotients}
\begin{definition}[Operadic ideal]
\label{def:operadic_ideal}
Let $\mathcal{P}$ be a symmetric operad. An \emph{operadic ideal} $\mathcal{I}$ of $\mathcal{P}$ is a collection of vector spaces $\mathcal{I} = \{\mathcal{I}(n) \subseteq \mathcal{P}(n)\}_{n \in \mathbb{N}_0}$ that satisfies the following conditions:
\begin{enumerate}
    \item For each $n \ge 0$, $\mathcal{I}(n)$ is a sub-$\mathbb{S}_n$-module of $\mathcal{P}(n)$ (Definition \ref{def:group_representation}). This means that $\mathcal{I}(n)$ is a vector subspace of $\mathcal{P}(n)$ that is stable under the action of the symmetric group, i.e.:
    \begin{equation}
        \forall x \in \mathcal{I}(n), \ \forall \sigma \in \mathbb{S}_n, \quad x \cdot \sigma \in \mathcal{I}(n).
    \end{equation}
    
    \item It is closed with respect to composition with any element of the operad (absorption property). Formally, for any $\mu \in \mathcal{P}(k)$ and $\nu_j \in \mathcal{P}(n_j)$ for $j=1,\dots,k$, the composition $\gamma(\mu; \nu_1, \dots, \nu_k)$ belongs to $\mathcal{I}(n_1+\dots+n_k)$ if at least one of the component operations ($\mu$ or one of the $\nu_j$) belongs to $\mathcal{I}$.
\end{enumerate}
\end{definition}

\begin{proposition}
\label{prop:intersection_of_ideals}
The intersection of an arbitrary family of operadic ideals is an operadic ideal.
\end{proposition}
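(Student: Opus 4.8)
The plan is to define the intersection componentwise and then verify, directly and term by term, the two defining conditions of an operadic ideal from Definition \ref{def:operadic_ideal}. Let $\{\mathcal{I}_\alpha\}_{\alpha \in A}$ be an arbitrary family of operadic ideals of $\mathcal{P}$, and set
\[
\mathcal{I}(n) := \bigcap_{\alpha \in A} \mathcal{I}_\alpha(n) \subseteq \mathcal{P}(n), \qquad n \in \mathbb{N}_0.
\]
First I would establish condition~1, namely that each $\mathcal{I}(n)$ is a sub-$\mathbb{S}_n$-module of $\mathcal{P}(n)$. That $\mathcal{I}(n)$ is a vector subspace is immediate from the standard fact that an arbitrary intersection of subspaces is again a subspace. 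For stability under the right action of $\mathbb{S}_n$, one observes that if $x \in \mathcal{I}(n)$, then $x \in \mathcal{I}_\alpha(n)$ for every $\alpha$; since each $\mathcal{I}_\alpha(n)$ is stable under the action, $x \cdot \sigma \in \mathcal{I}_\alpha(n)$ for every $\alpha$ and every $\sigma \in \mathbb{S}_n$, whence $x \cdot \sigma \in \bigcap_{\alpha} \mathcal{I}_\alpha(n) = \mathcal{I}(n)$.

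Next I would verify condition~2, the absorption property. Suppose we are given a composition $\gamma(\mu; \nu_1, \dots, \nu_k)$ in which at least one of the component operations --- say $\mu$, or one of the $\nu_j$ --- lies in $\mathcal{I}$. The crucial point is that membership in $\mathcal{I}$ means membership in the intersection, so this absorbing component lies in $\mathcal{I}_\alpha$ for \emph{every} index $\alpha$ simultaneously. Therefore, for each fixed $\alpha$, the hypothesis of the absorption property of the ideal $\mathcal{I}_\alpha$ is met, and since $\mathcal{I}_\alpha$ is itself an operadic ideal we conclude $\gamma(\mu; \nu_1, \dots, \nu_k) \in \mathcal{I}_\alpha(n_1 + \dots + n_k)$. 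As this holds for every $\alpha$, the composite lies in the intersection $\mathcal{I}(n_1 + \dots + n_k)$, as required.

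Both conditions being satisfied, $\mathcal{I}$ is an operadic ideal. I do not anticipate any genuine obstacle: the argument is a routine pointwise verification across the family. The only step demanding a modicum of care is the logical structure of the absorption argument --- one must notice that the single absorbing component, precisely by virtue of lying in the intersection, automatically lies in each $\mathcal{I}_\alpha$, and it is this observation that allows each ideal to absorb it individually before one passes back to the intersection.
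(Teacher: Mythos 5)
Your proposal is correct and follows essentially the same argument as the paper's proof: componentwise intersection, the standard subspace-intersection and pointwise $\mathbb{S}_n$-stability argument for condition~1, and for condition~2 the observation that the absorbing operand lies in every ideal of the family simultaneously, so each ideal absorbs the composition before passing back to the intersection. The only cosmetic difference is that you treat the absorbing operand ($\mu$ or any $\nu_j$) uniformly, whereas the paper fixes $\mu$ and declares the other cases analogous.
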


\begin{proof}
Let $\mathcal{P}$ be a symmetric operad and let $\{\mathcal{I}_j\}_{j \in J}$ be a family of operadic ideals. We define their intersection as the collection $\mathcal{I} \coloneqq \bigcap_{j \in J} \mathcal{I}_j$, where for each $n \ge 0$, the space $\mathcal{I}(n)$ is given by $\mathcal{I}(n) \coloneqq \bigcap_{j \in J} \mathcal{I}_j(n)$.

We need to verify that $\mathcal{I}$ satisfies the two conditions of Definition~\ref{def:operadic_ideal}.
\begin{itemize}
\item \emph{Stability as a sub-$\mathbb{S}_n$-module:} for each $n \ge 0$, $\mathcal{I}(n)$ is a vector subspace of $\mathcal{P}(n)$, since the intersection of any family of vector subspaces is still a vector subspace. Furthermore, if an element $x$ belongs to $\mathcal{I}(n)$, then $x$ belongs to every $\mathcal{I}_j(n)$. Since each $\mathcal{I}_j(n)$ is stable under the action of $\mathbb{S}_n$, it follows that $x \cdot \sigma \in \mathcal{I}_j(n)$ for every $j \in J$. Therefore, $x \cdot \sigma$ belongs to their intersection, $\mathcal{I}(n)$. Thus, $\mathcal{I}(n)$ is a sub-$\mathbb{S}_n$-module of $\mathcal{P}(n)$.
    
\item \emph{Absorption property:} consider a composition $\gamma(\mu; \nu_1, \dots, \nu_k)$ and suppose that at least one of the operands belongs to $\mathcal{I}$. Assume that $\mu \in \mathcal{I}(k)$; if $\nu_j \in \mathcal{I}(k)$ the reasoning is analogous.
By definition of intersection, if $\mu \in \mathcal{I}(k)$, then $\mu \in \mathcal{I}_j(k)$ for every $j \in J$.
Since each $\mathcal{I}_j$ is an operadic ideal, its absorption property implies that $\gamma(\mu; \nu_1, \dots, \nu_k) \in \mathcal{I}_j(n_1+\dots+n_k)$ for every $j \in J$. Since the composition belongs to every ideal $\mathcal{I}_j$ in the family, it also belong to their intersection. Thus, $\gamma(\mu; \nu_1, \dots, \nu_k) \in \mathcal{I}$.
\end{itemize}

\end{proof}

\begin{definition}[Construction of the quotient structure]
\label{def:quotient_construction}
Let $\mathcal{P}$ be a symmetric operad and let $\mathcal{I}$ be an operadic ideal in $\mathcal{P}$. We define the \emph{quotient structure} $\mathcal{P}/\mathcal{I}$ as the collection of quotient vector spaces
\begin{equation}
(\mathcal{P}/\mathcal{I})(n) \coloneqq \frac{\mathcal{P}(n)}{\mathcal{I}(n)}, \quad \text{for each } n \ge 0,
\end{equation}
equipped with the following operations defined via representatives of the equivalence classes:
\begin{enumerate}
    \item \emph{Unit:} the unit is the equivalence class of the unit $1 \in \mathcal{P}(1)$, i.e., $[1] \coloneqq 1 + \mathcal{I}(1)$.
    
    \item \emph{Composition:} for any $\mu \in \mathcal{P}(k)$ and any $\nu_j \in \mathcal{P}(n_j)$ with $j=1, \ldots,k$, the composition of the respective equivalence classes is defined as:
    \begin{equation}
    \label{eq:quotient_composition}
    \gamma([\mu]; [\nu_1], \dots, [\nu_k]) \coloneqq [\gamma(\mu; \nu_1, \dots, \nu_k)].
    \end{equation}
    
    \item \emph{Symmetric group action:} for any $\mu \in \mathcal{P}(n)$ and any permutation $\sigma \in \mathbb{S}_n$, the action on the equivalence class $[\mu]$ is defined as:
    \begin{equation}
    [\mu] \cdot \sigma \coloneqq [\mu \cdot \sigma].
    \end{equation}
\end{enumerate}
\end{definition}

\begin{proposition}
The quotient structure $\mathcal{P}/\mathcal{I}$, with the operations defined above, is a well-defined symmetric operad. This operad is called the \emph{quotient operad}.
\end{proposition}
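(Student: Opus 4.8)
The plan is to show that every operation introduced in Definition~\ref{def:quotient_construction} is independent of the chosen representatives, and then to deduce the operad axioms by transporting the corresponding identities from $\mathcal{P}$ along the canonical projection. First I would record that each $(\mathcal{P}/\mathcal{I})(n) = \mathcal{P}(n)/\mathcal{I}(n)$ is a genuine vector space: condition (1) of Definition~\ref{def:operadic_ideal} guarantees that $\mathcal{I}(n)$ is a vector subspace of $\mathcal{P}(n)$, so the quotient is formed in the usual way. I would then name the canonical linear projections $\pi_n \colon \mathcal{P}(n) \to (\mathcal{P}/\mathcal{I})(n)$, $\mu \mapsto [\mu]$; the whole proof amounts to checking that these maps are compatible with unit, action, and composition.

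Next I would verify that the symmetric group action descends. If $[\mu] = [\mu']$ then $\mu - \mu' \in \mathcal{I}(n)$, and since $\mathcal{I}(n)$ is a sub-$\mathbb{S}_n$-module we have $(\mu - \mu')\cdot\sigma = \mu\cdot\sigma - \mu'\cdot\sigma \in \mathcal{I}(n)$, whence $[\mu\cdot\sigma] = [\mu'\cdot\sigma]$. The right-action axioms and the linearity of $[\mu] \mapsto [\mu]\cdot\sigma$ then follow at once from the corresponding statements in $\mathcal{P}(n)$, because $\pi_n$ is linear and equivariant by construction.

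The crucial and most delicate step, and where I expect the main obstacle to lie, is the well-definedness of the composition \eqref{eq:quotient_composition}. Since the original $\gamma$ is multilinear, it suffices to control the effect of changing representatives one argument at a time. Replacing $\mu$ by $\mu + a$ with $a \in \mathcal{I}(k)$ and each $\nu_j$ by $\nu_j + b_j$ with $b_j \in \mathcal{I}(n_j)$, and expanding $\gamma(\mu + a; \nu_1 + b_1, \dots, \nu_k + b_k)$ by multilinearity, produces $\gamma(\mu; \nu_1, \dots, \nu_k)$ plus a sum of terms each having at least one argument in $\mathcal{I}$. The absorption property (condition (2) of Definition~\ref{def:operadic_ideal}) is exactly what forces every such term into $\mathcal{I}(n_1 + \dots + n_k)$, so the total difference lies in $\mathcal{I}$ and the two composites have the same class. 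The induced composition on quotients is again multilinear, for the same reason.

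Finally, once well-definedness is secured, no further computation is needed: the associativity laws \eqref{eq:associativita-ns}, the unit laws \eqref{eq: leggi di unità ns}, and the equivariance law \eqref{eq:equivarianza} are identities among compositions and actions that already hold in $\mathcal{P}$, and by the previous steps the projections $\pi_n$ intertwine the structures of $\mathcal{P}$ and $\mathcal{P}/\mathcal{I}$. Applying $\pi_n$ to each side of these identities yields the corresponding identities for equivalence classes; for equivariance, for instance, one rewrites $([\mu]\cdot\sigma)\circ(\cdots)$ as the class of $(\mu\cdot\sigma)\circ(\cdots)$, invokes \eqref{eq:equivarianza} in $\mathcal{P}$, and reads off the right-hand side as $([\mu]\circ(\cdots))\cdot(\cdots)$. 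This shows that $\mathcal{P}/\mathcal{I}$ satisfies all the axioms of Definition~\ref{def: operad simmetrico} and is therefore a well-defined symmetric operad.
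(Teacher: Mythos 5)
Your proposal is correct and follows essentially the same strategy as the paper's (sketched) proof: the crucial point is well-definedness of the induced operations, which you establish via the multilinear expansion of $\gamma(\mu+a;\nu_1+b_1,\dots,\nu_k+b_k)$ together with the absorption property, and the operad axioms then descend along the canonical projections. In fact, your argument fills in the details that the paper's proof sketch leaves implicit, so it is a strictly more complete version of the same approach.
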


\begin{proof}
The essential task is to show that the composition and the symmetric group action are well-defined, i.e., that they do not depend on the choice of representatives. This is guaranteed by the fact that $\mathcal{I}$ is an ideal. For example, for the composition (Equation \eqref{eq:quotient_composition}), if different representatives $\mu' \in [\mu]$ and $\nu_j' \in [\nu_j]$ are chosen, the difference $\gamma(\mu; \nu_1,\ldots,\nu_k) - \gamma(\mu'; \nu_1',\ldots,\nu_k')$, can be rewritten as a sum of compositions in which at least one of the component operations belongs to the ideal $\mathcal{I}$. Due to the absorption property of the ideal, the difference also belongs to $\mathcal{I}$, and thus the two compositions belong to the same equivalence class.

Once this property is established, the associativity, unit, and equivariance axioms for $\mathcal{P}/\mathcal{I}$ follow directly from the corresponding axioms for the operad $\mathcal{P}$.
\end{proof}

\begin{proposition}[Universal property of the quotient]
\label{prop:universal_property_quotient}
Let $\mathcal{P}$ be an operad, $\mathcal{I}$ an ideal, and $\pi: \mathcal{P} \to \mathcal{P}/\mathcal{I}$ the canonical projection onto the quotient. For any other operad $\mathcal{Q}$ and any operad morphism $f: \mathcal{P} \to \mathcal{Q}$ such that the ideal $\mathcal{I}$ is contained in the kernel of $f$, there exists a \emph{unique} operad morphism $\tilde{f}: \mathcal{P}/\mathcal{I} \to \mathcal{Q}$ that makes the following diagram commute:
\begin{equation}
\begin{tikzcd}
\mathcal{P} \arrow[r, "\pi"] \arrow[dr, "f"'] & \mathcal{P}/\mathcal{I} \arrow[d, dashed, "\exists! \tilde{f}"] \\
& \mathcal{Q}
\end{tikzcd}.
\end{equation}
\end{proposition}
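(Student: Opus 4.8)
The plan is to mimic the classical universal property of quotients from linear algebra, lifting it to the operadic level. The canonical projection acts componentwise as $\pi_n : \mathcal{P}(n) \to (\mathcal{P}/\mathcal{I})(n)$, $\mu \mapsto [\mu] = \mu + \mathcal{I}(n)$, and is surjective in each arity. The candidate morphism $\tilde{f}$ is forced by the commutativity requirement $\tilde{f} \circ \pi = f$: since every class in $(\mathcal{P}/\mathcal{I})(n)$ has the form $[\mu]$, one is compelled to set $\tilde{f}_n([\mu]) := f_n(\mu)$. This single prescription simultaneously dictates the definition and yields uniqueness, so the only genuine work is to check that it is well-posed and respects all of the operadic structure.

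First I would verify that each $\tilde{f}_n$ is well-defined, i.e. independent of the chosen representative. If $[\mu] = [\mu']$ then $\mu - \mu' \in \mathcal{I}(n)$; by hypothesis $f(\mathcal{I}) = 0$, so $f_n(\mu - \mu') = 0$, and by linearity of $f_n$ we obtain $f_n(\mu) = f_n(\mu')$. Linearity of $\tilde{f}_n$ then descends immediately from that of $f_n$ together with the vector-space structure on $(\mathcal{P}/\mathcal{I})(n) = \mathcal{P}(n)/\mathcal{I}(n)$ from Definition \ref{def:quotient_construction}.

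Next I would confirm that $\tilde{f}$ is a morphism of symmetric operads, checking the three compatibility conditions using the operations defined on the quotient. For composition, since $\gamma$ on the quotient is computed on representatives, $\tilde{f}\bigl(\gamma([\mu];[\nu_1],\dots,[\nu_k])\bigr) = \tilde{f}\bigl([\gamma(\mu;\nu_1,\dots,\nu_k)]\bigr) = f\bigl(\gamma(\mu;\nu_1,\dots,\nu_k)\bigr) = \gamma\bigl(f(\mu); f(\nu_1),\dots,f(\nu_k)\bigr)$, the last step because $f$ is an operad morphism; this equals $\gamma\bigl(\tilde{f}([\mu]); \tilde{f}([\nu_1]),\dots,\tilde{f}([\nu_k])\bigr)$. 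The unit is preserved because $\tilde{f}([1]) = f(1) = 1_{\mathcal{Q}}$, and equivariance follows analogously from $\tilde{f}([\mu]\cdot\sigma) = \tilde{f}([\mu\cdot\sigma]) = f(\mu\cdot\sigma) = f(\mu)\cdot\sigma = \tilde{f}([\mu])\cdot\sigma$.

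Finally, commutativity of the triangle holds by construction, since $(\tilde{f}\circ\pi)_n(\mu) = \tilde{f}_n([\mu]) = f_n(\mu)$. For uniqueness, suppose $g$ is any operad morphism with $g\circ\pi = f$; then for every class $[\mu] = \pi_n(\mu)$ one has $g_n([\mu]) = g_n(\pi_n(\mu)) = f_n(\mu) = \tilde{f}_n([\mu])$, and surjectivity of $\pi_n$ forces $g = \tilde{f}$. I do not expect any step to present a real obstacle; the only points requiring care are the well-definedness check, where the hypothesis $f(\mathcal{I}) = 0$ is precisely what is needed, and the structure-compatibility verification, which is routine exactly because the quotient operad's operations were defined on representatives in Definition \ref{def:quotient_construction}.
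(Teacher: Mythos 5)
Your proof is correct and follows essentially the same route as the paper's own argument: define $\tilde{f}_n([\mu]) \coloneqq f_n(\mu)$, use $f(\mathcal{I})=0$ for well-definedness, verify the operad-morphism axioms on representatives, and deduce uniqueness from surjectivity of $\pi$. In fact, you spell out the unit and equivariance checks that the paper dismisses as ``analogous,'' so nothing is missing.
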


\begin{proof}
The proof proceeds in two parts: the construction of the morphism $\tilde{f}$ and the verification of its uniqueness.
\begin{itemize}
\item \emph{Existence of $\tilde{f}$.}
We define the family of maps $\{\tilde{f}_n\}$ for each $n \ge 0$ as follows: for each equivalence class $[p] \in (\mathcal{P}/\mathcal{I})(n)$, where $p \in \mathcal{P}(n)$, we set:
\begin{equation}
\tilde{f}_n([p]) \coloneqq f_n(p).
\end{equation}
We must first show that this definition is well-posed, i.e., that it does not depend on the choice of the representative $p$ for the class $[p]$. Let us choose another representative $p' \in [p]$. By definition of an equivalence class, $p' - p \in \mathcal{I}(n)$. Since by hypothesis $\mathcal{I} \subseteq \ker(f)$, we have $f_n(p' - p) = 0$. By the linearity of $f_n$, this implies $f_n(p') - f_n(p) = 0$, i.e., $f_n(p') = f_n(p)$. The map $\tilde{f}_n$ is therefore well-defined.
We verify that $\tilde{f} = \{\tilde{f}_n\}$ is a morphism of operads.
\begin{itemize}
    \item \emph{Compatibility with composition:}
    \begin{equation}
    \begin{split}
        \tilde{f}\bigl(\gamma([\mu]; [\nu_1], \dots, [\nu_k])\bigr)
        &\overset{(1)}{=} \tilde{f}\bigl([\gamma(\mu; \nu_1, \dots, \nu_k)]\bigr) \\
        &\overset{(2)}{=} f\bigl(\gamma(\mu; \nu_1, \dots, \nu_k)\bigr) \\
        &\overset{(3)}{=} \gamma\bigl(f(\mu); f(\nu_1), \dots, f(\nu_k)\bigr) \\
        &\overset{(4)}{=} \gamma\bigl(\tilde{f}([\mu]); \tilde{f}([\nu_1]), \dots, \tilde{f}([\nu_k])\bigr).
        \end{split}
    \end{equation}
    where:
    \begin{itemize}
        \item[(1)]\quad follows from the definition of composition $\gamma$ in the quotient operad $\mathcal{P}/\mathcal{I}$;
        \item[(2)]\quad follows from the definition of $\tilde{f}$;
        \item[(3)]\quad follows from the fact that $f$ is an operad morphism,
        \item[(4)]\quad follows from the definition of $\tilde{f}$.
    \end{itemize}
    \smallskip
    \item \emph{Compatibility with the unit and the $\mathbb{S}_n$-action} is proven analogously, as these properties also follow directly from the fact that $f$ is an operad morphism.
\end{itemize}

\item \emph{Uniqueness of $\tilde{f}$.}
Suppose there exists another operad morphism $g: \mathcal{P}/\mathcal{I} \to \mathcal{Q}$ such that $g \circ \pi = f$. For any class $[p] \in \mathcal{P}/\mathcal{I}$, it holds that $g([p]) = g\bigl(\pi(p)\bigr) = (g \circ \pi)(p) = f(p)$. But by definition $\tilde{f}([p]) = f(p)$, so $g([p]) = \tilde{f}([p])$ for every $[p]$. Therefore, $g = \tilde{f}$.
\end{itemize}
\end{proof}

To lighten the notation, from now on sometimes we  omit the square brackets for equivalence classes in the context of a quotient operad $\mathcal{P}/\mathcal{I}$. We  thus simply write $\mu$ to denote the class $[\mu]$. The context  always make it clear whether we are referring to an element of the operad $\mathcal{P}$ or its class in the quotient.

\begin{definition}[Ideal generated by a collection]
\label{def:generated_ideal}
Let $\mathcal{P}$ be a symmetric operad and let $R = \{R(n) \subseteq \mathcal{P}(n)\}_{n \in \mathbb{N}_0}$ be any sub-collection of $\mathcal{P}$.
The \emph{ideal generated} by $R$, denoted by $(R)$, is defined as the intersection of all operadic ideals of $\mathcal{P}$ that contain $R$.
\end{definition}

\begin{remark}
The definition of a generated ideal is well-defined. Indeed, the family of ideals containing $R$ is never empty, since the operad $\mathcal{P}$ itself can be considered an ideal that contains every sub-collection of itself, including $R$.

Furthermore, by Proposition~\ref{prop:intersection_of_ideals}, the intersection of this family of ideals is still an operadic ideal. Consequently, $(R)$ is the \emph{smallest} operadic ideal (with respect to inclusion) of $\mathcal{P}$ that contains $R$.
\end{remark}

\subsection{The Free Symmetric Operad}
\label{subsec:free_operad}

This section is dedicated to the construction of the free symmetric operad, a fundamental concept in the algebraic approach to the theory. First, we  construct a concrete symmetric operad whose elements are decorated trees. Second, we introduce the abstract definition of free operad via its universal property. Finally, we prove that our concrete construction satisfies this property, thereby establishing its existence and uniqueness.

\subsubsection{The Operad of Decorated Trees}
\label{subsec:construction_free_operad}

We begin by constructing an operad whose elements are trees where each internal vertex is labeled, or ``decorated'', by a generator.

\begin{definition}[P-decorated tree]
Let $P = \{P(n)\}_{n \in \mathbb{N}_0}$ be a collection of generators. A \emph{P-decorated tree} is a planar operadic tree where each internal vertex with $k$ children (inputs) is assigned a specific generator $p \in P(k)$. This assignment is called a \emph{decoration}.
\end{definition}

As specified in \cite{samchuckschnarch}, the construction starts from a collection of sets of generators $P = \{P(n)\}_{n \in \mathbb{N}_0}$. Formally, one could require that $P(1)$ contains a distinguished element for the unit. However, since every operad (free or not) must have a unit, by convention, the unit is omitted from the set of generators $P$. It is assumed that the unit element $1$ is a structural part of the operad we are constructing, represented by the \emph{trivial tree} (Definition~\ref{def:albero_banale}), and not a generator in $P$.

\begin{definition}[Operad of decorated trees]
\label{def:construction_trees_set}
Let $P = \{P(n)\}_{n \in \mathbb{N}_0}$ be a collection of generators. The \emph{operad of $P$-decorated trees}, denoted $\mathrm{Tree}(P)$, is a symmetric operad in the category $\texttt{Set}$ defined as follows:
\begin{enumerate}
    \item For each $n \ge 0$, its set of $n$-ary operations, $\mathrm{Tree}(P)(n)$, consists of all $P$-decorated planar operadic trees with $n$ leaves.
    \item The composition $\gamma$ is the grafting of trees.
    \item The action of $\sigma \in \mathbb{S}_n$ on a tree is the permutation of its leaf labels.
    \item The unit is the trivial tree.
\end{enumerate}
\end{definition}

\begin{remark}[Linearization]
The operad $\mathrm{Tree}(P)$ is defined on sets. To obtain an operad in the category of vector spaces, we apply a process of \emph{linearization}. Let $\mathbb{K}$ be a field. We define the linear operad $\mathcal{T}(P)$ as the collection of vector spaces
\begin{equation}
\mathcal{T}(P)(n) \coloneqq \mathbb{K}[\mathrm{Tree}(P)(n)].
\end{equation}
The composition operations and the symmetric group action are extended by linearity to $\mathcal{T}(P)$. Henceforth, $\mathcal{T}(P)$ refers to this linear operad.
\end{remark}

\subsubsection{The Free Operad and its Universal Property}

Now that we have constructed a concrete object, the operad of decorated trees, we introduce the abstract definition of free operad and prove that our construction fulfills this definition.

\begin{definition}[Inclusion functions]
\label{def:inclusion_functions}
For each $n \ge 0$, the \emph{inclusion function} $i_n: P(n) \to \mathcal{T}(P)(n)$ maps a generator $p \in P(n)$ to the element of $\mathcal{T}(P)(n)$ corresponding to the \emph{n-corolla} whose unique internal vertex is decorated with $p$.
\begin{equation}
i_n(p) \coloneqq
 \begin{tikzpicture}[baseline={([yshift=-.5ex]current bounding box.center)}, thick, scale=0.7, font=\tiny]
    \draw (0,-0.7) -- (0,0); 
    \draw (-0.8, 0.8) -- (0,0); 
    \draw (-0.3, 0.8) -- (0,0); 
    \node at (0.2, 0.5) {$\dots$};
    \draw (0.8, 0.8) -- (0,0); 
    \node[fill=white, inner sep=1pt] at (0,0) {$p$};
    \node at (-0.8, 1) {1}; \node at (-0.3, 1) {2}; \node at (0.8, 1) {n};
 \end{tikzpicture}
 .
\end{equation}
\end{definition}

\begin{definition}[Free symmetric operad]
\label{def:free_operad_universal}
Let $P = \{P(n)\}_{n \in \mathbb{N}_0}$ be a collection of generators. A \emph{free symmetric operad} on $P$ is a pair $\bigl(\mathcal{F}(P), i\bigr)$, where $\mathcal{F}(P)$ is a symmetric operad and $i = \{i_n: P(n) \to \mathcal{F}(P)(n)\}_{n \ge 0}$ is a family of maps, that satisfies the following \emph{universal property}:

For any other symmetric operad $\mathcal{Q}$ and any family of maps $f = \{f_n: P(n) \to \mathcal{Q}(n)\}$, there exists a \emph{unique} operad morphism $\tilde{f}: \mathcal{F}(P) \to \mathcal{Q}$  that extends the family $f$, i.e., such that the following diagram commutes for each $n$:
\begin{equation}
\begin{tikzcd}
P(n) \arrow[r, "i_n"] \arrow[dr, "f_n"'] & \mathcal{F}(P)(n) \arrow[d, dashed, "\exists! \tilde{f}_n"] \\
& \mathcal{Q}(n)
\end{tikzcd}
.
\end{equation}
\end{definition}

The following lemma is the key combinatorial result needed to prove that the operad of trees is free.

\begin{lemma}
\label{lemma:tree_decomposition}
Every element of $\mathrm{Tree}(P)$ can be expressed as a finite sequence of compositions of elements of $P$ (viewed as elementary corollas via the inclusion maps), followed by an action of the symmetric group.
\end{lemma}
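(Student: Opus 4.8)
The plan is to argue by induction on the number of internal vertices of the decorated tree, after first reducing to a normalized leaf labeling. Recall that the action of $\mathbb{S}_n$ on $\mathrm{Tree}(P)(n)$ permutes the leaf labels, hence acts transitively and freely on the set of all labelings of a fixed underlying planar decorated tree. Thus, given any decorated tree $T$ with $n$ leaves, let $T^{\mathrm{can}}$ denote the same underlying planar decorated tree equipped with the \emph{canonical} labeling $1, \dots, n$ read from left to right; there is a unique $\sigma \in \mathbb{S}_n$ with $T = T^{\mathrm{can}} \cdot \sigma$. It therefore suffices to prove that every canonically labeled tree $T^{\mathrm{can}}$ is a finite iterated grafting of decorated corollas $i_k(p)$, since then $T = \bigl(\text{grafting of corollas}\bigr) \cdot \sigma$ is exactly of the claimed form.

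Next I would carry out the induction on the number $N$ of internal vertices of $T^{\mathrm{can}}$. In the base cases, $N = 0$ gives the trivial tree, which by convention is the unit $1 \in \mathrm{Tree}(P)(1)$ (Definition~\ref{def:albero_banale}), while $N = 1$ gives an $n$-corolla (Definition~\ref{def:corolla}) whose unique internal vertex is decorated by some $p \in P(n)$, that is, exactly $i_n(p)$ (Definition~\ref{def:inclusion_functions}). For the inductive step with $N \geq 2$, I would single out the root vertex $v_0$ — the unique internal vertex incident on the root half-edge — decorated by some $p \in P(k)$ with children $c_1, \dots, c_k$ in planar order. Deleting $v_0$ together with the edge towards the root decomposes $T^{\mathrm{can}}$ into the $k$-corolla at $v_0$, which is $i_k(p)$, together with the $k$ maximal subtrees $T_1, \dots, T_k$ rooted at $c_1, \dots, c_k$. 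Each $T_i$ has strictly fewer than $N$ internal vertices, so, endowed with its own canonical labeling, the induction hypothesis expresses $T_i$ as an iterated grafting of corollas.

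Finally I would reassemble: by the definition of the grafting composition $\gamma$ in $\mathrm{Tree}(P)$, one has $T^{\mathrm{can}} = \gamma\bigl(i_k(p); T_1, \dots, T_k\bigr)$, and substituting the inductive expressions for the $T_i$ writes $T^{\mathrm{can}}$ as a finite sequence of compositions of corollas, as required. The one point demanding care — and the step I expect to be the main obstacle — is the bookkeeping of leaf labels. I must verify that grafting the canonically labeled subtrees onto the corolla in planar order, together with the lexicographic relabeling that $\gamma$ performs, reproduces precisely the canonical labeling $1, \dots, n$ of $T^{\mathrm{can}}$. This is a purely combinatorial check: the leaves of $T^{\mathrm{can}}$ read left to right are exactly the concatenation, for $i = 1, \dots, k$, of the leaves of $T_1, \dots, T_k$ read left to right, which is precisely the order imposed by the grafting composition. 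Once this identification of labelings is confirmed, the induction closes and the lemma follows.
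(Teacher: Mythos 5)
Your proof is correct and follows essentially the same route as the paper's: induction over the size of the tree (the paper inducts on height, you on the number of internal vertices), peeling off the root corolla and applying the induction hypothesis to the maximal subtrees grafted onto it. Your upfront normalization of leaf labels --- factoring out a single permutation $\sigma$ before the induction so that permutations never appear inside the composition --- is in fact slightly cleaner than the paper's treatment, which attaches a permutation to each corolla and leaves implicit the equivariance bookkeeping needed to collect those local permutations into the single final symmetric-group action that the statement asserts.
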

\begin{proof}
The proof proceeds by induction on the height of the tree $t \in \mathrm{Tree}(P)$.
If $t$ has height 0, it consists only of a root decorated by a generator $p \in P(0)$, so the statement is trivially true. If $t$ has height 1, it is a corolla decorated by an element $p \in P(n)$ with a certain ordering of the leaves, so it is an element of $P$ followed by a permutation.

Now, for the inductive step, assume the statement holds for all trees of height less than or equal to $k$. Let $t$ be a tree of height $k+1$. We can decompose $t$ into its root corolla, $t_{root}$, and a family of subtrees $\{t_1, \dots, t_m\}$ grafted onto its leaves. Each of these subtrees has a height of at most $k$. By the inductive hypothesis, each $t_j$ can be written as a composition of generators. Applying the inductive hypothesis to each subtree thus yields a decomposition of $t$ as a composition of generators, proving the inductive step.
\end{proof}

\begin{remark}[Convention on the ordering of trees]
\label{rem:tree_convention}
The construction of the free operad is based on \emph{planar trees}, which by definition have an intrinsic order on their leaves. By convention, this order is represented in the plane by a left-to-right arrangement. The elementary corolla associated with a generator $p \in P(k)$ is therefore the tree with $k$ leaves labeled from $1$ to $k$. This labeled tree represents the base operation, corresponding to the action of the identity permutation of the symmetric group. The action of any other permutation $\sigma \in \mathbb{S}_k$ produces a new tree, identical in shape but with the leaf labels reordered according to $\sigma$.
\end{remark}

The following theorem makes the connection between our concrete construction and the abstract definition explicit, establishing the existence and uniqueness of the free operad.

\begin{theorem}[Existence and uniqueness of the free operad]
\label{thm:existence_free_operad}
For any collection of generators $P = \{P(n)\}_{n \in \mathbb{N}_0}$, the free operad on $P$ exists and is unique up to a \emph{unique} isomorphism.
\end{theorem}

\begin{proof}
The proof is divided into two parts: the uniqueness of the free operad (up to isomorphism), and the existence, which is shown by verifying that our construction $\mathcal{T}(P)$ satisfies the universal property.
\begin{itemize}
\item \emph{Uniqueness (up to isomorphism).}
The proof of uniqueness is a standard argument for objects defined by a universal property. Suppose there exist two free operads on $P$, say $\bigr(\mathcal{F}(P), i\bigl)$ and $\bigr(\mathcal{F}'(P), i'\bigl)$, both satisfying the universal property.

The universal property of $\mathcal{F}(P)$ guarantees the existence of a \emph{unique} operad morphism $f: \mathcal{F}(P) \to \mathcal{F}'(P)$ such that $f \circ i = i'$ and the existence of a \emph{unique} operad morphism $g: \mathcal{F}'(P) \to \mathcal{F}(P)$ such that $g \circ i' = i$.

Consider the compositions $g \circ f$ and $f \circ g$.
The morphism $g \circ f: \mathcal{F}(P) \to \mathcal{F}(P)$ extends the family $i$:
\begin{equation}
(g \circ f) \circ i = g \circ (f \circ i) = g \circ i' = i. 
\end{equation}
The identity morphism $\mathrm{id}_{\mathcal{F}(P)}: \mathcal{F}(P) \to \mathcal{F}(P)$ also extends $i$. Since the universal property guarantees that such a morphism is \emph{unique}, we conclude that $g \circ f = \mathrm{id}_{\mathcal{F}(P)}$.

A completely analogous argument shows that $f \circ g = \mathrm{id}_{\mathcal{F}'(P)}$.
Having constructed two mutually inverse morphisms, we conclude that $f$ is an isomorphism and thus the two free operads are isomorphic.

\item \emph{Existence.} We now verify that the operad $\mathcal{T}(P)$, equipped with the inclusion functions $i_n$ from Definition \ref{def:inclusion_functions}, satisfies the universal property. Let $f = \{f_n\}_{n \ge 0}$ be a family of maps from $P$ to an operad $\mathcal{Q}$. We must construct a unique operad morphism $\tilde{f}: \mathcal{T}(P) \to \mathcal{Q}$ such that $\tilde{f} \circ i_n = f_n$ for each $n$ and prove its uniqueness.
\begin{itemize}
\item   \emph{Construction of $\tilde{f}$:}
        We define $\tilde{f}$ on the basis of $\mathcal{T}(P)$, i.e., on the trees. For a generator tree (a corolla $i_k(p)$), the condition $\tilde{f} \circ i_k = f_k$ forces the definition:
        \begin{equation} \tilde{f}\bigl(i_k(p)\bigr) \coloneqq f_k(p) \in \mathcal{Q}(k). 
        \end{equation}
        For any other tree $T$, Lemma \ref{lemma:tree_decomposition} guarantees that it can be uniquely decomposed as the grafting of a family of subtrees $\{T_1, \dots, T_k\}$ onto its root corolla $T_0$. Since $T = \gamma(T_0; T_1, \dots, T_k)$, and since $\tilde{f}$ must be an operad morphism that preserves composition, we are forced to define $\tilde{f}$ recursively:
        \begin{equation} \tilde{f}\big(\gamma(T_0; T_1, \dots, T_k)\big) \coloneqq \gamma_{\mathcal{Q}}\big(\tilde{f}(T_0); \tilde{f}(T_1), \dots, \tilde{f}(T_k)\big). 
        \end{equation}
        This definition is extended by linearity to the entire space $\mathcal{T}(P)$ and, by construction, defines an operad morphism.
        
\item \emph{Uniqueness of $\tilde{f}$:}
Let $g: \mathcal{T}(P) \to \mathcal{Q}$ be another operad morphism that extends $f$, i.e., such that $g \circ i = f$. We  show that $g$ must necessarily coincide with $\tilde{f}$.

Since $\tilde{f}$ and $g$ are linear morphisms, it is sufficient to show that they coincide on the basis elements of $\mathcal{T}(P)$, i.e., on all decorated trees.
\begin{itemize}
    \item \emph{Action on generators:}
    By hypothesis, both $g$ and $\tilde{f}$ extend $f$. Consequently, their images coincide on the corollas, which are the images of the generators via the inclusion maps $i_k$:
    \begin{equation} g\bigl(i_k(p)\bigr) = f_k(p) = \tilde{f}\bigl(i_k(p)\bigr), \quad \forall p \in P(k). 
    \end{equation}
    \item \emph{Action on composite trees:}
    Every tree $T$ with more than one internal vertex is obtained by a finite sequence of compositions starting from corollas (Lemma \ref{lemma:tree_decomposition}). Since both $g$ and $\tilde{f}$ are operad morphisms, they preserve composition.
    
    The action of a morphism on a composite tree is uniquely determined by its action on the component subtrees. Since $g$ and $\tilde{f}$ coincide on the corollas and preserve composition, they coincide on any tree.
\end{itemize}

Since $g$ and $\tilde{f}$ coincide on all basis elements of $\mathcal{T}(P)$, they are the same morphism. Therefore, $\tilde{f}$ is unique.
\end{itemize}

Since we have constructed a unique morphism $\tilde{f}$ for any given family of maps $f$, the operad $\mathcal{T}(P)$ satisfies the universal property. This completes the proof of existence.

\end{itemize}
\end{proof}

\subsection{Presentations of an Operad}
This section applies the algebraic tools of ideals and quotients, developed in Section~\ref{sec:ideals_and_quotients}, to the free operad constructed above. By combining the concept of a free object (which has no relations beyond the axioms) with that of a quotient (which imposes relations), we formalize the powerful idea of defining an operad via \emph{generators and relations}.

\begin{definition}[Operad via generators and relations]
\label{def:operad_generated}
Let $P = \{P(n)\}_{n \in \mathbb{N}_0}$ be a collection of generators and let $\mathcal{T}(P)$ be the corresponding free operad. Let $R = \{R(n) \subseteq \mathcal{T}(P)(n)\}_{n \in \mathbb{N}_0}$ be a collection of subsets, called \emph{relations}. The pair $(P,R)$ is called a \emph{presentation data}.
The \emph{operad generated by $P$ subject to $R$} is the quotient operad:
\begin{equation}
\langle P \mid R \rangle \coloneqq \frac{\mathcal{T}(P)}{(R)},
\end{equation}
where $(R)$ is the operadic ideal generated by $R$.
\end{definition}

\begin{remark}[Form of the relations]
In practice, relations are not usually specified as vectors, but more intuitively as equalities, for example $v=w$. An equality of this type is interpreted as the relation given by the vector $v-w$. In the quotient $\mathcal{T}(P)/(R)$, the equivalence class $[v-w]$ becomes zero, thus forcing the equality $[v]=[w]$.
\end{remark}

\begin{definition}[Presentation and quadratic operads]
\label{def:presentation_and_quadratic}
Let $P = \{P(n)\}_{n \in \mathbb{N}_0}$ be a collection of generators.
\begin{enumerate}
    \item A \emph{presentation} of a symmetric operad $\mathcal{P}$ is a presentation data $(P,R)$ together with an operad isomorphism $\mathcal{P} \cong \langle P \mid R\rangle$. An operad equipped with a presentation is said to be \emph{presented}.
    
    \item The free operad $\mathcal{T}(P)$ admits a natural grading, called \emph{weight}, given by the number of internal vertices (i.e., the number of generators from $P$) that make up each tree. We denote by $\mathcal{T}(P)^{(k)}$ the vector subspace of $\mathcal{T}(P)$ spanned by trees of weight $k$.
    
    \item A presentation data $(P,R)$ constitutes a set of \emph{quadratic data} if all relations are of weight 2, i.e., if $R \subseteq \mathcal{T}(P)^{(2)}$.
    
    \item An operad is called \emph{quadratic} if it can be presented by quadratic data, i.e., if it is of the form
    \begin{equation}
    \langle P \mid R \rangle \quad \text{with} \quad R \subseteq \mathcal{T}(P)^{(2)}.
    \end{equation}
\end{enumerate}
\end{definition}

In other words, an operad is quadratic if all its defining relations involve compositions of exactly two generators. 

\begin{remark}[Generators as Sets vs. $\mathbb{S}$-sets]
It is important to note that the approach presented here, which starts from a collection of sets of generators $P$, is a simplified version adopted for pedagogical reasons.

A more formal treatment would begin with an \emph{$\mathbb{S}$-set} $E = \{E(n)\}_{n \in \mathbb{N}_0}$. The fundamental difference is that each component $E(n)$ is already equipped with a right action of the symmetric group $\mathbb{S}_n$. This action defines the \emph{intrinsic symmetries} of the generators, prior to any composition.

For the purposes of this review, starting from a simple collection of generators and imposing all symmetries through relations is entirely adequate.
\end{remark}

We now state a fundamental result which establishes that the constructive approach via generators and relations is fully general.

\begin{theorem}
\label{thm:every_operad_has_presentation}
Every symmetric operad $\mathcal{P}$ admits a presentation.
\end{theorem}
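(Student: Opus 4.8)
The plan is to construct the ``tautological'' presentation, in which every operation of $\mathcal{P}$ is promoted to a generator and every relation that happens to hold in $\mathcal{P}$ is imposed. Concretely, I would take as generators the underlying sets of the operation spaces: set $P(n)$ to be the underlying set of the vector space $\mathcal{P}(n)$ for each $n \in \mathbb{N}_0$, and let $\mathcal{T}(P)$ be the free operad on this collection, which exists by Theorem~\ref{thm:existence_free_operad}. For each $n$ the identity function $f_n \colon P(n) \to \mathcal{P}(n)$ is a map of sets into the operad $\mathcal{P}$, so the universal property of the free operad (Definition~\ref{def:free_operad_universal}) furnishes a unique operad morphism $\epsilon \colon \mathcal{T}(P) \to \mathcal{P}$ with $\epsilon \circ i_n = f_n$. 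This $\epsilon$ is surjective, since the corolla $i_n(p)$ decorated by $p \in P(n)$ is sent to $p$, and these elements already exhaust $\mathcal{P}(n)$ as a set and hence span it.

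Next I would define the relations to be the entire kernel, $R := \ker\epsilon$, i.e.\ $R(n) := \ker(\epsilon_n) \subseteq \mathcal{T}(P)(n)$. The key verification is that $\ker\epsilon$ is an operadic ideal in the sense of Definition~\ref{def:operadic_ideal}: each $\ker(\epsilon_n)$ is a sub-$\mathbb{S}_n$-module because $\epsilon$ is equivariant, and the collection is absorbing because $\epsilon$ preserves composition -- if one argument of a composite $\gamma(\mu;\nu_1,\dots,\nu_k)$ lies in the kernel, then $\epsilon$ of the composite is a composite in $\mathcal{P}$ having a zero factor, hence zero. Since $\ker\epsilon$ is already an ideal, the ideal it generates satisfies $(R) = \ker\epsilon$, and we may form $\langle P \mid R\rangle = \mathcal{T}(P)/(R)$ as in Definition~\ref{def:operad_generated}.

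Finally I would produce the required isomorphism. Because $(R) = \ker\epsilon \subseteq \ker\epsilon$, the universal property of the quotient operad (Proposition~\ref{prop:universal_property_quotient}) yields a unique operad morphism $\tilde\epsilon \colon \mathcal{T}(P)/(R) \to \mathcal{P}$ with $\tilde\epsilon \circ \pi = \epsilon$, where $\pi$ is the canonical projection. This $\tilde\epsilon$ is surjective because $\epsilon$ is, and injective because we have quotiented by precisely the kernel: if $\tilde\epsilon([x]) = 0$ then $\epsilon(x) = 0$, so $x \in \ker\epsilon = (R)$ and $[x] = 0$. Hence $\tilde\epsilon$ is an isomorphism of operads, and the pair $(P,R)$ together with $\tilde\epsilon$ constitutes a presentation of $\mathcal{P}$.

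I expect the only genuine subtlety -- and the step deserving the most care -- to be the verification that $\ker\epsilon$ is an operadic ideal, since the absorption property must be checked against the total composition $\gamma$ and the equivariance must be read off from the fact that $\epsilon$ is a morphism of \emph{symmetric} operads. The remaining points are formal: the construction produces a deliberately enormous, highly non-minimal presentation, as we impose all true relations at once, but the statement only asserts existence, so no economy is required. This argument is the operadic analogue of the standard fact that every algebra is a quotient of a free algebra.
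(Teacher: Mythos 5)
Your proof is correct, and it follows the paper's overall skeleton---tautological generators $P(n) \coloneqq \mathcal{P}(n)$, the canonical morphism $\epsilon \colon \mathcal{T}(P) \to \mathcal{P}$ induced by the identity maps via the universal property, surjectivity via corollas, and descent to the quotient via Proposition~\ref{prop:universal_property_quotient}---but it differs in the one step that carries all the weight: the choice of relations. The paper takes $R$ to be the explicit family of composition relations $\gamma_{\mathcal{T}(P)}\bigl(i_{n_0}(p_0); i_{n_1}(p_1), \dots, i_{n_k}(p_k)\bigr) - i_m(p')$, one for each identity $p' = \gamma_{\mathcal{P}}(p_0; p_1, \dots, p_k)$ holding in $\mathcal{P}$, and then asserts without argument that $\ker \tilde{f} = (R)$. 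You instead take $R \coloneqq \ker \epsilon$, verify that a kernel is an operadic ideal (your two checks are exactly the right ones: equivariance of $\epsilon$ gives stability under $\mathbb{S}_n$, and multilinearity of $\gamma_{\mathcal{P}}$ gives absorption, since a composite with a zero tensor factor vanishes), note that $(R) = \ker\epsilon$, and then injectivity of the induced map is tautological---the operadic first isomorphism theorem. What each buys: the paper's relations are concrete (literally the composition table of $\mathcal{P}$), while yours are enormous but self-verifying. In fact your choice does more than streamline; it sidesteps a genuine defect in the paper's version. Because $\mathcal{T}(P)$ is free on the underlying \emph{set} of $\mathcal{P}$, the kernel of $\epsilon$ also contains the linearity relations $i_n(p+q) - i_n(p) - i_n(q)$ and $i_n(\lambda p) - \lambda\, i_n(p)$, and these do \emph{not} lie in the ideal generated by the composition relations alone: the symmetric operad $\mathcal{Q}$ with $\mathcal{Q}(n) \coloneqq \mathbb{K}[\mathcal{P}(n)]$ (the linearization of the underlying set-operad, with $\gamma_{\mathcal{Q}}(e_{p}; e_{q_1}, \dots, e_{q_n}) \coloneqq e_{\gamma_{\mathcal{P}}(p; q_1, \dots, q_n)}$ on basis vectors) receives, by the universal property, a morphism from $\mathcal{T}(P)$ that annihilates every composition relation yet sends $i_n(2p) - 2\, i_n(p)$ to $e_{2p} - 2e_p \neq 0$ whenever $p \neq 0$. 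Hence the ideal generated by the paper's $R$ is strictly smaller than $\ker\tilde{f}$, so the paper's quotient is strictly larger than $\mathcal{P}$ as written, and its claim would need the linearity relations added to $R$ (or a separate argument); your kernel-as-relations formulation goes through verbatim.
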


\begin{proof}
The proof is constructive. We provide a \emph{canonical presentation} for a given operad $\mathcal{P}$.
\begin{itemize}
\item \emph{Choice of generators:} We choose as the set of generators $P$ the operad $\mathcal{P}$ itself, viewed as a collection of sets. That is, we set $P(n) \coloneqq \mathcal{P}(n)$ for each $n \ge 0$.

\item \emph{Choice of relations:} The relations $R$ must ensure that the composition in the quotient operad coincides with that of the operad $\mathcal{P}$.

The idea is to impose, for every possible composition in $\mathcal{P}$, a relation in the free operad. Consider any valid composition in $\mathcal{P}$:
\begin{equation}
p' = \gamma_{\mathcal{P}}(p_0; p_1, \dots, p_k). 
\end{equation}
We want this rule to hold in $\langle P \mid R \rangle$ as well. To do this, we translate both sides of this equality into elements of $\mathcal{T}(P)$.

\begin{itemize}
    \item The left-hand side, $p'$, is an element of $\mathcal{P}$ and therefore a generator. Its correspondent in $\mathcal{T}(P)$ is the corolla $i_m(p')$, where $m$ is the arity of $p'$.

    \item The right-hand side is a composition. The composition $\gamma_{\mathcal{T}(P)}$ acts on elements of $\mathcal{T}(P)$, not directly on the generators in $P$. We must therefore first map the generators $p_0, \dots, p_k$ into $\mathcal{T}(P)$ via their respective inclusion functions, obtaining the corollas $i_{n_0}(p_0), \dots, i_{n_k}(p_k)$, which are the elements upon which the composition in $\mathcal{T}(P)$ can act.
\end{itemize}

Therefore, the formal relation expressing the desired equality is the difference between these two trees in $\mathcal{T}(P)$. We define $R$ as the set of all elements of the form:
\begin{equation}
    \gamma_{\mathcal{T}(P)}\big(i_{n_0}(p_0); i_{n_1}(p_1), \dots, i_{n_k}(p_k)\big) - i_m(p')
\end{equation}
where $n_j$ is the arity of the generator $p_j$.

\item \emph{Construction of the isomorphism:} With this presentation data $(P,R)$, we need to show that $\mathcal{P} \cong \langle P \mid R \rangle$.
    Consider the family of identity maps on the generators, $f_n: P(n) \to \mathcal{P}(n)$, given by $f_n(p) = p$. By the universal property of the free operad, this extends to a unique operad morphism $\tilde{f}: \mathcal{T}(P) \to \mathcal{P}$. This morphism is surjective, since the image of the generators is all of $\mathcal{P}$.

    By the way we defined the relations $R$, the kernel of $\tilde{f}$ is exactly the ideal $(R)$. Since $\tilde{f}$ vanishes on the ideal $(R)$, the universal property of the quotient (Proposition \ref{prop:universal_property_quotient}) guarantees that $\tilde{f}$ induces a unique operad morphism $\varphi: \langle P \mid R \rangle \to \mathcal{P}$.
    
    Since $\tilde{f}$ is surjective and we have quotiented by exactly its kernel, the induced morphism $\varphi$ is both injective and surjective, and therefore is an isomorphism. We thus obtain:
    \begin{equation}
    \langle P \mid R \rangle = \frac{\mathcal{T}(P)}{(R)} \cong \mathcal{P}.
    \end{equation}
    This establishes the desired isomorphism and completes the proof.
    \end{itemize}
\end{proof}

Let us apply the notions learned in this section to define $\mathsf{As}$ and $\mathsf{Com}$ via generators and relations.

\begin{example}[Presentation of $\mathsf{As}$ as a quadratic operad]
The operad $\mathsf{As}$ encodes the structure of associative algebras.
\begin{itemize}
        \item \emph{Generators:} we start with a collection $P$ with a single binary generator $\mu \in P(2)$. Its representation as an elementary tree is the 2-corolla:
        \begin{equation}
        \mu = 
        \begin{tikzpicture}[baseline={([yshift=-.5ex]current bounding box.center)}, thick, scale=0.6]
            \draw (0,0) -- (0,0.5);
            \draw (-0.3, 1) -- (0,0.5) -- (0.3, 1);
            \node at (-0.3,1.2) {\tiny 1}; \node at (0.3,1.2) {\tiny 2};
        \end{tikzpicture}
        .
        \end{equation}

        \item \emph{Relations:} the only relation is associativity. This is expressed as an equality between two ternary operations. $R_{\text{assoc}} = \mu \circ (\mu, 1) - \mu \circ (1, \mu)$, where $1 \in \mathcal{T}(P)(1)$ is the unit element of the free operad, represented by the trivial tree. Graphically, the relation is:
        \begin{equation}
        \begin{tikzpicture}[baseline={([yshift=-.5ex]current bounding box.center)}, thick, scale=0.6]
            \draw (0,0) -- (0,0.5); \draw (0,0.5) -- (-0.5,1);
            \draw (-0.5,1) -- (-0.8,1.5); \draw (-0.5,1) -- (-0.2,1.5);
            \draw (0,0.5) -- (0.5,1); \draw (0.5,1) -- (0.5,1.5);
            \node at (-0.8, 1.7) {\tiny 1}; \node at (-0.2, 1.7) {\tiny 2}; \node at (0.5, 1.7) {\tiny 3};
        \end{tikzpicture}
        \quad = \quad
        \begin{tikzpicture}[baseline={([yshift=-.5ex]current bounding box.center)}, thick, scale=0.6]
            \draw (0,0) -- (0,0.5); \draw (0,0.5) -- (0.5,1);
            \draw (0.5,1) -- (0.2,1.5); \draw (0.5,1) -- (0.8,1.5);
            \draw (0,0.5) -- (-0.5,1) -- (-0.5,1.5);
            \node at (-0.5, 1.7) {\tiny 1}; \node at (0.2, 1.7) {\tiny 2}; \node at (0.8, 1.7) {\tiny 3};
        \end{tikzpicture}
        .
        \end{equation}
        \end{itemize}
\end{example}
\begin{remark}
    The relation $R_{\text{assoc}}$ has weight 2. Therefore, the pair $(P, \{R_{\text{assoc}}\})$ constitutes a set of \emph{quadratic data}.
\end{remark}
This formalization of a single associative operation allows us to state the following proposition.
    \begin{proposition}
        $\mathsf{As}$ is presented by $(P,R_\text{assoc})$.
    \end{proposition}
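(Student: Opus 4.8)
The plan is to produce an operad morphism $\varphi\colon \langle P \mid R_{\mathrm{assoc}}\rangle \to \mathsf{As}$ and show it is an isomorphism; by the definition of a presentation, this is exactly what must be verified. I would build $\varphi$ in two steps using the universal properties already available. First, by the universal property of the free operad (Definition \ref{def:free_operad_universal}), the assignment $\mu \mapsto \mu_2 \in \mathsf{As}(2)$ extends to a unique operad morphism $\tilde f\colon \mathcal{T}(P)\to \mathsf{As}$. Then I would check that $\tilde f$ annihilates the relation: since $\tilde f$ preserves composition, $\tilde f(\mu\circ(\mu,1)) = \mu_2\circ(\mu_2,\mathrm{id})$ and $\tilde f(\mu\circ(1,\mu)) = \mu_2\circ(\mathrm{id},\mu_2)$, and both equal $\mu_3$ in $\mathsf{As}(3)\cong\mathbb{K}$ because composition in $\mathsf{As}$ is simply the product in the field (Definition \ref{def:as}). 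Hence $R_{\mathrm{assoc}}\subseteq \ker\tilde f$, so the generated ideal $(R_{\mathrm{assoc}})$ lies in $\ker\tilde f$, and by the universal property of the quotient (Proposition \ref{prop:universal_property_quotient}) $\tilde f$ descends to a morphism $\varphi\colon \langle P\mid R_{\mathrm{assoc}}\rangle \to \mathsf{As}$.

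Surjectivity of $\varphi$ is the easy half. Each generator $\mu_n\in\mathsf{As}(n)$ is an iterated composite of $\mu_2$ — explicitly $\mu_n = \gamma(\mu_2;\mu_{n-1},\mu_1)$, as recorded in Remark \ref{re: as representation structure} — so it lies in the image of $\tilde f$, and therefore of $\varphi$. Since the $\mu_n$ span the one-dimensional spaces $\mathsf{As}(n)$, the map $\varphi$ is surjective in every arity.

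The real content is injectivity, which I would reduce to a dimension bound: it suffices to show $\dim_{\mathbb{K}}\langle P\mid R_{\mathrm{assoc}}\rangle(n)\le 1$ for every $n$, because a surjection onto the one-dimensional space $\mathsf{As}(n)$ from a space of dimension at most $1$ is forced to be an isomorphism. A basis of $\mathcal{T}(P)(n)$ is given by the planar binary trees with $n$ ordered leaves carrying the standard labels $1,\dots,n$ (this is the relevant non-symmetric picture, since $\mathsf{As}$ is non-symmetric), whose number is the Catalan number $C_{n-1}$. The heart of the argument is a normal-form claim: modulo $(R_{\mathrm{assoc}})$ every such tree equals the left comb $\mu\circ(\mu\circ(\cdots\circ\mu))$. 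Indeed $R_{\mathrm{assoc}}$ is precisely the rewriting rule reassociating one adjacent pair of internal vertices, and applying it repeatedly drags any parenthesization to the left comb.

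The main obstacle will be making this normal-form step rigorous: one must verify that the reassociation rewriting terminates and is confluent, so that the class of the left comb genuinely spans $\langle P\mid R_{\mathrm{assoc}}\rangle(n)$ and no additional identifications remain unaccounted for. This is the operadic incarnation of the classical fact — a consequence of Mac Lane coherence — that any two parenthesizations of an ordered product $x_1\cdots x_n$ are linked by associativity moves and hence represent the same element. Granting it, each component of the quotient is spanned by a single element, so $\dim\le 1$, and combined with surjectivity this shows every $\varphi_n$ is an isomorphism, i.e.\ $\mathsf{As}\cong\langle P\mid R_{\mathrm{assoc}}\rangle$.
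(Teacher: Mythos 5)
The paper states this proposition without proof, so there is no in-text argument to compare against; your proposal supplies the missing one, and its overall structure is sound: construct $\varphi\colon\langle P\mid R_{\mathrm{assoc}}\rangle\to\mathsf{As}$ from the universal properties of the free operad and of the quotient, get surjectivity from the recursion $\mu_n=\gamma(\mu_2;\mu_{n-1},\mu_1)$, and get injectivity from the dimension bound $\dim\langle P\mid R_{\mathrm{assoc}}\rangle(n)\le 1$ via the left-comb normal form, the lower bound being forced by surjectivity onto the one-dimensional $\mathsf{As}(n)$. One point, however, must be made explicit rather than left as a parenthetical aside. You invoke Definition~\ref{def:free_operad_universal} and Proposition~\ref{prop:universal_property_quotient}, which the paper states only for \emph{symmetric} operads, while your basis count (planar binary trees with leaves in standard order, Catalan many) is the \emph{non-symmetric} free operad. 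This is not pedantry: in the strictly symmetric reading, $\mathcal{T}(P)(n)$ has basis the $C_{n-1}\cdot n!$ leaf-labelled binary trees, and the quotient by the ideal generated by $R_{\mathrm{assoc}}$ is the operad $\mathsf{Ass}$ with $\dim\mathsf{Ass}(n)=n!$, not $\mathsf{As}$ — so the proposition as stated is true only under your non-symmetric reading. You should therefore say that you are using the non-symmetric analogues of the cited universal properties, which the paper licenses (it remarks that the non-symmetric theory is obtained by forgetting the group actions), and which hold by the same proofs.

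Your ``main obstacle'' is also lighter than you make it: confluence of the reassociation rewriting is not needed anywhere. For the upper bound $\dim\le 1$ you only need that every planar binary tree is joined to the left comb by a finite chain of single reassociations, each step of which lies in the ideal by the absorption property together with multilinearity of $\gamma$. That connectivity follows from termination of leftward rotations alone (for instance, the number of internal edges that are right children strictly decreases at each rotation, and the only tree admitting no leftward rotation is the left comb). Confluence would give uniqueness of normal forms, i.e.\ the lower bound $\dim\ge 1$ — but that is exactly the half you already obtain for free from surjectivity of $\varphi_n$ onto $\mathsf{As}(n)\cong\mathbb{K}$. With these two adjustments your argument is complete and correct.
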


\begin{example}[Presentation of $\mathsf{Com}$ as a quadratic operad]
\label{ex: Com_presentation}
The operad $\mathsf{Com}$ encodes commutative associative algebras. 
\begin{itemize}
\item \emph{Generators:} We start again with a binary generator $\mu \in P(2)$.
    
\item \emph{Relations:} Two types of relations are imposed:
    \begin{itemize}
        \item \emph{Associativity:} $R_{\text{assoc}} = \mu \circ (\mu, 1) - \mu \circ (1, \mu)$.
        \item \emph{Commutativity:} $R_{\text{comm}} = \mu - \mu \cdot \tau = 0$, where $\tau \in \mathbb{S}_2$ is the non-trivial permutation. Graphically:
        \begin{equation}
         \begin{tikzpicture}[baseline={([yshift=-.5ex]current bounding box.center)}, thick, scale=0.6, font=\tiny]
        \draw (0,0) -- (0,0.5);
        \draw (-0.4, 1.2) -- (0,0.5) -- (0.4, 1.2);
        \node at (-0.4,1.4) {1}; \node at (0.4,1.4) {2};
        \end{tikzpicture}
        \quad = \quad
        \begin{tikzpicture}[baseline={([yshift=-.5ex]current bounding box.center)}, thick, scale=0.6, font=\tiny]
        \draw (0,0) -- (0,0.5);
        \draw (-0.4, 1.2) -- (0,0.5) -- (0.4, 1.2);
        \node at (-0.4,1.4) {2}; \node at (0.4,1.4) {1};
        \end{tikzpicture}
        .
        \end{equation}
        \end{itemize}
\end{itemize}
\end{example}
By defining the set of relations as $R = \{R_{\text{assoc}}, R_{\text{comm}}\}$, we impose the constraints required for a commutative and associative algebra, a structure that is formalized in the following proposition.
    \begin{proposition}
        $\mathsf{Com}$ is presented by $(P,R)$, where $R = \{R_{\text{assoc}},R_{\text{comm}}\}$.
    \end{proposition}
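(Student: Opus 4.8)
The plan is to construct a morphism of symmetric operads $\bar\Phi\colon\langle P\mid R\rangle\to\mathsf{Com}$ and show it is an isomorphism componentwise. First I would use the universal property of the free operad (Definition \ref{def:free_operad_universal}) applied to the single binary generator $\mu\in P(2)$ and the assignment $\mu\mapsto\mu_2\in\mathsf{Com}(2)$ to obtain a unique operad morphism $\Phi\colon\mathcal{T}(P)\to\mathsf{Com}$. I would then check that both relations lie in $\ker\Phi$. The element $R_{\mathrm{assoc}}=\mu\circ(\mu,1)-\mu\circ(1,\mu)$ maps to $\mu_2\circ(\mu_2,1)-\mu_2\circ(1,\mu_2)$, which vanishes since $\mathsf{Com}(3)\cong\mathbb{K}$ is one–dimensional and both composites equal $\mu_3$; the element $R_{\mathrm{comm}}=\mu-\mu\cdot\tau$ maps to $\mu_2-\mu_2\cdot\tau$, which vanishes because the $\mathbb{S}_2$–action on $\mathsf{Com}(2)$ is trivial. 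Hence $(R)\subseteq\ker\Phi$, and by the universal property of the quotient (Proposition \ref{prop:universal_property_quotient}) $\Phi$ descends to an operad morphism $\bar\Phi\colon\langle P\mid R\rangle\to\mathsf{Com}$.

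Next I would establish that each component $\bar\Phi_n$ is an isomorphism of vector spaces. Surjectivity is immediate: since $\mathsf{Com}(n)\cong\mathbb{K}\{\mu_n\}$ and each $\mu_n$ is an iterated composition of $\mu_2$ (the analysis of Remark \ref{re: as representation structure} transfers verbatim), every $\mu_n$ lies in the image of $\bar\Phi$. It therefore remains to prove injectivity, equivalently that $\dim_{\mathbb{K}}\langle P\mid R\rangle(n)\le 1$ for $n\ge 1$, together with $\langle P\mid R\rangle(0)=0$. The last assertion is automatic, since a tree built solely from binary vertices with $k$ internal vertices has exactly $k+1$ leaves, so $\mathcal{T}(P)(0)=0$.

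The core of the argument, and the step I expect to be the main obstacle, is a normal–form (spanning) statement: modulo the ideal $(R)$, every decorated planar binary tree with $n$ labelled leaves is equivalent to one fixed canonical tree, say the left comb with leaves read $1,2,\dots,n$. I would prove this by a rewriting argument, using that working modulo $(R)$ permits applying the relations inside any subtree, which is precisely the absorption property built into the generated ideal (Definition \ref{def:generated_ideal}). Applying $R_{\mathrm{assoc}}$ at internal vertices reshapes any planar binary tree of a given leaf–ordering into the left comb of that ordering, collapsing the $C_{n-1}$ distinct shapes to one; applying $R_{\mathrm{comm}}$ at a vertex swaps its two incoming subtrees, and by first using associativity to make any two chosen leaves siblings one realizes every adjacent transposition, hence every permutation of the labels, since adjacent transpositions generate $\mathbb{S}_n$. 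Thus all $C_{n-1}\cdot n!$ basis trees of $\mathcal{T}(P)(n)$ collapse to a single class in the quotient, giving $\dim\langle P\mid R\rangle(n)\le 1$. Combined with surjectivity this forces $\dim\langle P\mid R\rangle(n)=1=\dim\mathsf{Com}(n)$ for $n\ge 1$, so each $\bar\Phi_n$ is an isomorphism and $\bar\Phi$ is an isomorphism of symmetric operads.

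The delicate points to nail down are the confluence and termination of the rewriting, ensuring the canonical form is reached independently of the order of moves, and the bookkeeping showing that the commutativity swaps genuinely generate all of $\mathbb{S}_n$ rather than a proper subgroup; here the equivariance axiom of the operad is what guarantees that swapping two subtrees corresponds to the claimed permutation of the leaf labels. An alternative, less combinatorial route would be to invoke the already–established presentation of the associative operad (the preceding Proposition) to resolve the shape ambiguity first, and then quotient only by commutativity. I would keep this in reserve as a sanity check, but the direct dimension count above is the cleanest self-contained path.
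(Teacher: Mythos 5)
Your argument is sound, but be aware that the paper itself gives no proof of this proposition: it is stated bare inside the example on quadratic presentations, so there is no argument of the paper's to compare yours against. What you propose is the standard complete argument, and it uses only tools the paper actually develops: the universal property of the free operad (Definition \ref{def:free_operad_universal}) to get $\Phi\colon\mathcal{T}(P)\to\mathsf{Com}$ with $\mu\mapsto\mu_2$; the verification that $\Phi$ kills $R_{\text{assoc}}$ and $R_{\text{comm}}$ (both computations are correct — in $\mathsf{Com}$ every composite of the $\mu_k$'s \emph{is} the generator of the target arity by Definition \ref{def:operad-com}, and the $\mathbb{S}_2$-action is trivial); Lemma \ref{lem:existence_of_morphism_presented}, which licenses the passage from ``vanishes on $R$'' to ``vanishes on $(R)$'' and hence the descent to $\bar\Phi\colon\langle P\mid R\rangle\to\mathsf{Com}$; and the absorption property of Definition \ref{def:generated_ideal} together with equivariance, which justify rewriting inside subtrees and identifying a swap of two grafted subtrees with the corresponding block permutation of leaf labels. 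The dimension count (arity $0$ vanishes on both sides; for $n\ge 1$, spanning gives $\dim\le 1$ and surjectivity gives $\dim\ge 1$) then correctly yields componentwise isomorphism, which by Definition \ref{def: s morf} is an isomorphism of symmetric operads.

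One refinement: the ``delicate point'' you flag about confluence and termination of the rewriting is not actually needed. Confluence is what you would invoke to prove a \emph{lower} bound on the dimension of the quotient (that distinct normal forms stay distinct), but your lower bound comes for free from surjectivity of $\bar\Phi_n$ onto the nonzero space $\mathsf{Com}(n)$. For the upper bound you only need reachability: every decorated labeled binary tree is congruent modulo $(R)$ to the identity-labeled left comb, by any sequence of moves whatsoever. So the only facts to write out carefully are (i) local re-association turns any shape into the left comb without changing the left-to-right label sequence, and (ii) re-associating to make positions $i,i+1$ siblings and applying $R_{\text{comm}}$ there realizes each adjacent transposition, and these generate $\mathbb{S}_n$. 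With those two points made explicit, your proof is complete and fills a genuine gap that the paper leaves open.
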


\begin{remark}
The commutativity relation $R_{\text{comm}}$ has weight 1 (it involves only one generator per term), so the presentation of $\mathsf{Com}$ of Example \ref{ex: Com_presentation} is not, strictly speaking, quadratic.

However, it is possible to obtain an equivalent quadratic presentation. Instead of starting from a simple set of generators, one starts from an $\mathbb{S}$-set in which the generator $\mu$ is symmetric. In this case, commutativity is "intrinsic" to the generator. The only relation still required is associativity, which is of weight 2, making the presentation quadratic. Both approaches yield the same operad.
\end{remark}

The following lemma shows that to define a morphism from a presented operad, we only need to specify its action on the generators and verify that this action is compatible with the relations. This simplifies the process considerably.

\begin{lemma}
\label{lem:uniqueness_of_morphism_presented}
Let $\mathcal{P}$ be an operad given by a presentation $\langle P \mid R \rangle$ and let $\mathcal{Q}$ be another operad. A morphism of operads $\varphi: \mathcal{P} \to \mathcal{Q}$ is uniquely determined by its action on $P$.
\end{lemma}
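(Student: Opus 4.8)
The plan is to reduce the statement to the uniqueness clause in the universal property of the free operad (Definition~\ref{def:free_operad_universal}), using the fact that the canonical projection onto a quotient is surjective. First I would fix, via the given presentation $\mathcal{P} \cong \langle P \mid R\rangle = \mathcal{T}(P)/(R)$, the canonical projection $\pi: \mathcal{T}(P) \to \mathcal{P}$, and set $j_n \coloneqq \pi \circ i_n : P(n) \to \mathcal{P}(n)$, where the $i_n$ are the inclusion functions of Definition~\ref{def:inclusion_functions}. With this notation, the phrase ``action of $\varphi$ on $P$'' means precisely the family $\{\varphi_n \circ j_n\}_{n \ge 0}$, and the goal is to show that two morphisms inducing the same such family must coincide.

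Next I would take two operad morphisms $\varphi, \psi: \mathcal{P} \to \mathcal{Q}$ that agree on $P$, i.e. $\varphi_n \circ j_n = \psi_n \circ j_n$ for every $n$, and consider the composites $\varphi \circ \pi$ and $\psi \circ \pi$ from $\mathcal{T}(P)$ to $\mathcal{Q}$. Being composites of operad morphisms, these are themselves operad morphisms out of the free operad. Restricting to the generators gives
\[
(\varphi \circ \pi) \circ i_n = \varphi_n \circ j_n = \psi_n \circ j_n = (\psi \circ \pi) \circ i_n,
\]
so both $\varphi \circ \pi$ and $\psi \circ \pi$ extend one and the same family of maps $P \to \mathcal{Q}$. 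The uniqueness part of the universal property of $\mathcal{T}(P)$ then forces $\varphi \circ \pi = \psi \circ \pi$.

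Finally I would invoke that $\pi$ is surjective, each component $\pi_n \colon \mathcal{T}(P)(n) \to (\mathcal{T}(P)/(R))(n)$ being the quotient surjection, and cancel this epimorphism to conclude $\varphi = \psi$. This is exactly the claimed uniqueness.

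The argument involves no hard computation; the only point requiring care is the bookkeeping that identifies ``the action on $P$'' with the family $\{\varphi_n \circ j_n\}$ \emph{through} the fixed isomorphism $\mathcal{P} \cong \mathcal{T}(P)/(R)$, so that the generators of $\mathcal{P}$ are genuinely the $\pi$-images of the corollas $i_n(p)$. Once this identification is pinned down, the main obstacle disappears and the proof is a routine application of the universal property of the free operad together with the surjectivity of the quotient map.
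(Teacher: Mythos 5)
Your proof is correct, but it takes a different route from the paper's. The paper argues directly inside the quotient: it observes that the classes $[T]$ of decorated trees span each $\mathcal{P}(n)$, then uses the tree-decomposition result (Lemma~\ref{lemma:tree_decomposition}) and the fact that $\varphi$, $\psi$ preserve composition to show, recursively on the structure of $T$, that agreement on the corollas forces $\varphi([T]) = \psi([T])$ for every tree, and concludes by linearity. You instead lift the problem along the canonical projection $\pi\colon \mathcal{T}(P) \to \mathcal{P}$: the composites $\varphi \circ \pi$ and $\psi \circ \pi$ are operad morphisms out of the free operad extending one and the same family of maps on $P$, so the uniqueness clause of the universal property (Definition~\ref{def:free_operad_universal}, Theorem~\ref{thm:existence_free_operad}) gives $\varphi \circ \pi = \psi \circ \pi$, and componentwise surjectivity of $\pi$ lets you cancel it. The two arguments are close relatives — the paper's recursion on trees is essentially the same induction that underlies the uniqueness half of the universal property — but yours is the more modular one: it reuses that already-proved result as a black box instead of repeating the induction inside the quotient, and your explicit bookkeeping $j_n = \pi \circ i_n$ is actually more careful than the paper's, which writes $\varphi(i_k(p))$ where strictly it means the class $[i_k(p)]$. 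What the paper's version buys in exchange is self-containedness: it never needs to invoke that $\pi$ is a (surjective) operad morphism, since it works with representatives directly.
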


\begin{proof}
Let $\varphi, \psi: \mathcal{P} \to \mathcal{Q}$ be two operad morphisms. Suppose that $\varphi(i_k(p)) = \psi(i_k(p))$ for every generator $p \in P(k)$ and for every $k \ge 0$. We are required to prove that $\varphi = \psi$.

Since $\varphi$ and $\psi$ are linear maps, to prove their equality it is sufficient to show that they coincide on a \emph{set of generators} of the vector space $\mathcal{P}(n)$, for each $n \ge 0$.
By definition, $\mathcal{P}(n) = \mathcal{T}(P)(n)/(R)(n)$. Since the set of trees $\mathrm{Tree}(P)(n)$ is a basis for $\mathcal{T}(P)(n)$, the set of their equivalence classes, $\{[T] \mid T \in \mathrm{Tree}(P)(n)\}$, is a set of generators for the quotient space $\mathcal{P}(n)$.
Therefore, it suffices to show that $\varphi([T]) = \psi([T])$ for every tree $T$.

Consider an arbitrary tree $T \in \mathcal{T}(P)$. By Lemma \ref{lemma:tree_decomposition}, $T$ is constructed through a finite number of compositions starting from corollas (the images of the generators $i_k(p)$).
Let us analyze the action of $\varphi$ on $[T]$. Since $\varphi$ is a morphism, it must preserve composition. This means that the image of $\varphi$ on a tree built by composition is determined by the image of $\varphi$ on its component subtrees.

For example, if $T = \gamma(T_0; T_1, \dots, T_k)$, then:
\begin{equation}
\varphi([T]) = \varphi([\gamma(T_0; \dots, T_k)]) = \gamma_{\mathcal{Q}}\bigl(\varphi([T_0]); \dots, \varphi([T_k])\bigr).
\end{equation}
Similarly,
\begin{equation}
\psi([T]) = \psi([\gamma(T_0; \dots, T_k)]) = \gamma_{\mathcal{Q}}\bigl(\psi([T_0]); \dots, \psi([T_k])\bigr).
\end{equation}
Applying this reasoning recursively, the action of $\varphi$ and $\psi$ on $T$ can be reduced to their action on the corollas that make up $T$.
By our initial hypothesis, $\varphi$ and $\psi$ coincide on all corollas. Thus, $\varphi([T]) = \psi([T])$.
By linearity, we conclude that $\varphi = \psi$.
\end{proof}

The previous lemma guarantees that a morphism from a presented operad is uniquely determined by its action on the generators. The following lemma provides the existence condition for such a morphism, playing a role analogous to the Homomorphism Theorem for presented operads. It establishes a practical criterion for verifying when a map on generators gives rise to a well-defined morphism on the quotient.

\begin{lemma}
\label{lem:existence_of_morphism_presented}
Let $\mathcal{P}$ be an operad given by a presentation $\langle P \mid R \rangle$ and let $\mathcal{Q}$ be another operad. A family of maps $f_n: P(n) \to \mathcal{Q}(n)$ extends to a \emph{unique} operad morphism $\varphi: \mathcal{P} \to \mathcal{Q}$ if and only if the induced morphism $\tilde{f}:\mathcal{T}(P) \to \mathcal{Q}$ vanishes on $R$.
\end{lemma}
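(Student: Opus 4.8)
The plan is to deduce the statement by combining the universal property of the free operad (Definition~\ref{def:free_operad_universal}) with the universal property of the quotient (Proposition~\ref{prop:universal_property_quotient}). Throughout, write $\pi: \mathcal{T}(P) \to \mathcal{T}(P)/(R) \cong \mathcal{P}$ for the canonical projection, so that the inclusion of generators into $\mathcal{P}$ is $\pi \circ i_n: P(n) \to \mathcal{P}(n)$. By the universal property of the free operad, the family $f = \{f_n\}$ determines a unique operad morphism $\tilde{f}: \mathcal{T}(P) \to \mathcal{Q}$ with $\tilde{f} \circ i_n = f_n$; this is precisely the morphism whose vanishing on $R$ is at issue.

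First I would establish the key structural fact that $\ker(\tilde{f})$, defined componentwise by $\ker(\tilde{f})(n) = \{x \in \mathcal{T}(P)(n) : \tilde{f}_n(x) = 0\}$, is an operadic ideal of $\mathcal{T}(P)$ in the sense of Definition~\ref{def:operadic_ideal}. Each component is a vector subspace, and it is stable under the $\mathbb{S}_n$-action because $\tilde{f}$ is equivariant, so $\tilde{f}(x \cdot \sigma) = \tilde{f}(x) \cdot \sigma = 0$ whenever $x \in \ker(\tilde{f})(n)$. The absorption property holds because $\tilde{f}$ preserves composition and composition in $\mathcal{Q}$ is multilinear: if one operand of $\gamma$ lies in $\ker(\tilde{f})$, its image under $\tilde{f}$ is $0$, and a composition in $\mathcal{Q}$ with a zero argument vanishes.

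With this in hand, I would show that $\tilde{f}$ vanishes on $R$ if and only if $(R) \subseteq \ker(\tilde{f})$. The implication ``$\Leftarrow$'' is immediate from $R \subseteq (R)$. For ``$\Rightarrow$'', if $R \subseteq \ker(\tilde{f})$ then $\ker(\tilde{f})$ is an operadic ideal containing $R$, and since $(R)$ is the \emph{smallest} such ideal (Definition~\ref{def:generated_ideal} and the remark following it), we conclude $(R) \subseteq \ker(\tilde{f})$. This reduction---from vanishing on the \emph{generating} relations to vanishing on the entire \emph{generated} ideal---is the conceptual core of the argument and the only step requiring genuine input beyond formal diagram chasing; everything else is bookkeeping around the two universal properties.

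Finally I would close both directions of the biconditional. For ``$\Leftarrow$'': assuming $\tilde{f}$ vanishes on $R$, the previous step gives $(R) \subseteq \ker(\tilde{f})$, so the universal property of the quotient produces a unique operad morphism $\varphi: \mathcal{P} \to \mathcal{Q}$ with $\varphi \circ \pi = \tilde{f}$; evaluating on generators yields $\varphi \circ (\pi \circ i_n) = \tilde{f} \circ i_n = f_n$, so $\varphi$ extends $f$, and its uniqueness is exactly Lemma~\ref{lem:uniqueness_of_morphism_presented}. For ``$\Rightarrow$'': if some operad morphism $\varphi: \mathcal{P} \to \mathcal{Q}$ extends $f$, then $\varphi \circ \pi$ is an operad morphism $\mathcal{T}(P) \to \mathcal{Q}$ restricting to $f$ on generators, whence $\varphi \circ \pi = \tilde{f}$ by the uniqueness clause in the universal property of the free operad; consequently $\tilde{f}(R) = \varphi(\pi(R)) = \varphi(0) = 0$, since $\pi$ annihilates $(R) \supseteq R$. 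This establishes the equivalence and completes the proof.
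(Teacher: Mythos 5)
Your proof is correct and follows essentially the same route as the paper's: the universal property of the free operad produces $\tilde{f}$, the universal property of the quotient (Proposition~\ref{prop:universal_property_quotient}) descends it to $\varphi$, and uniqueness is exactly Lemma~\ref{lem:uniqueness_of_morphism_presented}. You are in fact more careful than the paper at one point: where the paper simply asserts that vanishing on $R$ is equivalent to vanishing on the whole ideal $(R)$, you justify it by checking that $\ker(\tilde{f})$ is an operadic ideal (equivariance plus multilinearity of composition in $\mathcal{Q}$ giving absorption) and invoking the minimality of $(R)$ among ideals containing $R$.
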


\begin{proof}
The argument is a direct application of the universal properties of the free operad and the quotient operad, applied in succession.
Given the family of maps $f_n$, the universal property of the free operad $\mathcal{T}(P)$ (Theorem \ref{thm:existence_free_operad}) guarantees the existence of a unique morphism $\tilde{f}: \mathcal{T}(P) \to \mathcal{Q}$.

The universal property of the quotient $\mathcal{P} = \mathcal{T}(P)/(R)$ (Proposition \ref{prop:universal_property_quotient}) states that $\tilde{f}$ induces a morphism $\varphi: \mathcal{P} \to \mathcal{Q}$ if and only if $\tilde{f}$ vanishes on the ideal $(R)$, which is equivalent to requiring that it vanishes on the generators of the ideal, i.e., on $R$.

The uniqueness of the morphism $\varphi$ is a direct consequence of Lemma \ref{lem:uniqueness_of_morphism_presented}.

\end{proof}

\subsection{The Operad $\mathsf{Lie}$}
\label{subsec:lie_operad}

We now apply the formalism of generators and relations to construct one of the most important operads in algebra: the $\mathsf{Lie}$ operad, which encodes the structure of Lie algebras.

\begin{definition}[Presentation of the $\mathsf{Lie}$ operad]
The $\mathsf{Lie}$ operad is defined by the following data:
\begin{enumerate}
    \item \emph{Generators:} the collection of generators $P$ contains a single binary operation, denoted by $\ell$:
    \begin{equation}
    P(2) = \{\ell\}, \ \ P(n)= \varnothing \ \ \text{if} \ n \neq 2.
    \end{equation}
    
    \item \emph{Relations:} two relations, $R_{anti}$ and $R_{Jac}$, are imposed.
    \begin{itemize}
        \item The \emph{anticommutativity} relation ($R_{anti}$):
        \begin{equation}
        \ell + \ell \cdot \tau = 0,
        \end{equation}
        where $\tau \in \mathbb{S}_2$ is the non-trivial permutation.
        
        \item The \emph{Jacobi identity} ($R_{Jac}$). Setting 
        \begin{equation}
            \sigma = \begin{pmatrix} 1 & 2 & 3 \\ 2 & 3 & 1 \end{pmatrix} \in \mathbb{S}_3,
        \end{equation}
        the relation is:
        \begin{equation}
                \ell \circ (\ell, 1) + (\ell \circ (\ell, 1)) \cdot \sigma + (\ell \circ (\ell, 1)) \cdot \sigma^2 = 0.
        \end{equation}
    \end{itemize}
    We define the family of relations as $R \coloneqq \{R_{anti}, R_{Jac}\}$.
\end{enumerate}
The operad is then defined as the quotient operad:
\begin{equation}
\mathsf{Lie} \coloneqq \langle P \mid R \rangle.
\end{equation}
\end{definition}

\begin{remark}[Visualization of the $\mathsf{Lie}$ relations]
The two relations that define the $\mathsf{Lie}$ operad can be represented graphically as follows:

\begin{itemize}
    \item The \emph{anticommutativity} relation $l + l \cdot \tau = 0$ is equivalent to:
    \begin{equation}
    \begin{tikzpicture}[baseline={([yshift=-.5ex]current bounding box.center)}, thick, scale=0.7, font=\tiny]
        \draw (0,0) -- (0,0.7);
        \draw (-0.4, 1.4) -- (0,0.7) -- (0.4, 1.4);
        \node at (-0.4,1.6) {1}; \node at (0.4,1.6) {2};
            \node[fill=white, inner sep=1pt] at (0,0.7) {$l$};
    \end{tikzpicture}
    \quad = \quad - \quad
    \begin{tikzpicture}[baseline={([yshift=-.5ex]current bounding box.center)}, thick, scale=0.7, font=\tiny]
        \draw (0,0) -- (0,0.7);
        \draw (-0.4, 1.4) -- (0,0.7) -- (0.4, 1.4);
        \node at (-0.4,1.6) {2}; \node at (0.4,1.6) {1};
            \node[fill=white, inner sep=1pt] at (0,0.7) {$l$};
    \end{tikzpicture}
    .
    \end{equation}

    \item The \emph{Jacobi identity} is visualized as:
    \begin{equation}
    \begin{tikzpicture}[baseline={([yshift=-.5ex]current bounding box.center)}, thick, scale=0.7, font=\tiny]
        \draw (0,0) -- (0,0.5) -- (-0.5,1.5);
        \draw (0,0.5) -- (0.5,1.5) --(0.5,2.2); \node at (0.5,2.4) {3};
        \draw (-0.5,1.5) -- (-0.8,2.2); \node at (-0.8,2.4) {1};
        \draw (-0.5,1.5) -- (-0.2,2.2); \node at (-0.2,2.4) {2};
            \node[fill=white, inner sep=1pt] at (0,0.5) {$l$}; \node[fill=white, inner sep=1pt] at (-0.5,1.5) {$l$};
    \end{tikzpicture}
    \quad + \quad
    \begin{tikzpicture}[baseline={([yshift=-.5ex]current bounding box.center)}, thick, scale=0.7, font=\tiny]
        \draw (0,0) -- (0,0.5) -- (-0.5,1.5);
        \draw (0,0.5) -- (0.5,1.5) -- (0.5,2.2); \node at (0.5,2.4) {1};
        \draw (-0.5,1.5) -- (-0.8,2.2); \node at (-0.8,2.4) {2};
        \draw (-0.5,1.5) -- (-0.2,2.2); \node at (-0.2,2.4) {3};
            \node[fill=white, inner sep=1pt] at (0,0.5) {$l$}; \node[fill=white, inner sep=1pt] at (-0.5,1.5) {$l$};
    \end{tikzpicture}
    \quad + \quad
    \begin{tikzpicture}[baseline={([yshift=-.5ex]current bounding box.center)}, thick, scale=0.7, font=\tiny]
        \draw (0,0) -- (0,0.5) -- (-0.5,1.5);
        \draw (0,0.5) -- (0.5,1.5) -- (0.5,2.2); \node at (0.5,2.4) {2};
        \draw (-0.5,1.5) -- (-0.8,2.2); \node at (-0.8,2.4) {3};
        \draw (-0.5,1.5) -- (-0.2,2.2); \node at (-0.2,2.4) {1};
                \node[fill=white, inner sep=1pt] at (0,0.5) {$l$}; \node[fill=white, inner sep=1pt] at (-0.5,1.5) {$l$};
    \end{tikzpicture}
    \quad = \quad 0.
    \end{equation}
\end{itemize}
\end{remark}

As with the operads $\mathsf{As}$ and $\mathsf{Com}$, the presentation of $\mathsf{Lie}$ via generators and relations is designed to encode the structure of Lie algebras. 

The proof of the main isomorphism theorem relies on constructing a pair of functors between the two categories. The following two propositions are dedicated to establishing the existence and properties of these functors.

\begin{proposition}
\label{prop:functor_F_Lie}
The correspondence $F: \emph{\texttt{Alg}}(\mathsf{Lie}) \to \emph{\texttt{LieAlg}}_{\mathbb{K}}$, defined on objects by 
\begin{equation}
    F(V,\Phi) \coloneqq \bigl(V, [-,-]_V \coloneqq \Phi_2(\ell)\bigr)
\end{equation} 
and on morphisms by 
\begin{equation}
    F(f)\coloneqq f ,
\end{equation}
is a well-defined functor.
\end{proposition}

\begin{proof}
We verify that $F$ is well-defined on objects and morphisms, and satisfies the functorial axioms.
\begin{enumerate}
    \item \emph{Well-definedness on objects.} We first show that for any $\mathsf{Lie}$-algebra $(V, \Phi)$, the pair $(V, [-,-]_V)$ is a Lie algebra. Bilinearity is obvious since $\Phi_2(\ell) \in \End_V(2)$. We verify the two axioms of Lie algebras.
    \begin{itemize}
        \item \emph{Anticommutativity:} let $\tau \in \mathbb{S}_2$ be the non-trivial transposition. For any $x,y \in V$:
        \begin{equation}
        \begin{split}
            [x,y]_V + [y,x]_V &= \Phi_2(\ell)(x \otimes y) + \Phi_2(\ell)(y \otimes x) \\
            &\overset{(1)}{=} \Phi_2(\ell)(x \otimes y) + \bigl(\Phi_2(\ell) \cdot \tau\bigr)(x \otimes y) \\
            &\overset{(2)}{=} \Phi_2(\ell + \ell \cdot \tau)(x \otimes y) \\
            &= \Phi_2(R_{anti})(x \otimes y) = \Phi_2(0) = 0,
        \end{split}
        \end{equation}
        where
        \begin{itemize}
            \item[(1)]\quad follows from the action of $\tau$ on $\End_V$,
            \item[(2)]\quad follows from the linearity and compatibility with the symmetric group action of $\Phi_2$.
        \end{itemize}
        \item \emph{Jacobi identity:} let $t = \ell \circ (\ell,1)$. For any $x,y,z \in V$:
        \begin{equation}
        \begin{split}
            &[[x,y]_V, z]_V + [[y,z]_V, x]_V + [[z,x]_V, y]_V \\
            &= \Phi_3(t)(x \otimes y \otimes z) + \Phi_3(t \cdot \sigma)(x \otimes y \otimes z) + \Phi_3(t \cdot \sigma^2)(x \otimes y \otimes z) \\
            &\overset{(3)}{=} \Phi_3\big( t + t \cdot \sigma + t \cdot \sigma^2 \big)(x \otimes y \otimes z) \\
            &= \Phi_3(R_{Jac})(x \otimes y \otimes z) = \Phi_3(0) = 0,
        \end{split}
        \end{equation}
        where (3) follows from the linearity and compatibility with the symmetric group action of $\Phi_3$.
    \end{itemize}
    Thus $(V,[-,-]_V)$ is a Lie algebra.

    \item \emph{Well-definedness on morphisms.} If $f: (V, \Phi^V) \to (W, \Phi^W)$ is a morphism of $\mathsf{Lie}$-algebras, then $f$ is a morphism of Lie algebras. The proof is sufficient to use arguments analogous to those in the proof of Proposition \ref{prop:functor_F_As}.

    \item \emph{Functorial properties.} The functoriality is immediate, as in the previous cases for the operads $\mathsf{As}$ and $\mathsf{Com}$, since the action on morphisms is the identity map.
\end{enumerate}
Therefore, $F$ is a well-defined functor.
\end{proof}

\begin{proposition}[Functor from Lie Algebras to $\mathsf{Lie}$-algebras]
\label{prop:functor_G_Lie}
The correspondence $G: \emph{\texttt{LieAlg}}_{\mathbb{K}} \to \emph{\texttt{Alg}}(\mathsf{Lie})$, defined on objects by 
\begin{equation}
    G(\mathfrak{g}, [-,-]) \coloneqq (\mathfrak{g}, \Phi)
\end{equation}
and on morphisms by
\begin{equation}
    G(g)\coloneqq g,
\end{equation}
is a well-defined functor.
\end{proposition}

\begin{proof}
The proof consists of three steps: verifying that the functor is well-defined on objects, on morphisms, and that it satisfies the functorial properties.
\begin{enumerate}
    \item \emph{Well-definedness on objects.} We show that every Lie algebra $(\mathfrak{g}, [-,-])$ can be uniquely endowed with a $\mathsf{Lie}$-algebra structure $(\mathfrak{g}, \Phi)$. This requires proving the existence and uniqueness of an operad morphism $\Phi: \mathsf{Lie} \to \End_{\mathfrak{g}}$ that maps the generator $\ell$ to the Lie bracket $[-,-]$.
    
    We use Lemma \ref{lem:existence_of_morphism_presented}. The operad $\mathsf{Lie}$ is given by the presentation $\langle P \mid R \rangle$, where $P=\{\ell\}$ and $R=\{R_{anti}, R_{Jac}\}$ are the anticommutativity and Jacobi relations. We define a map on the generators $f: P(2) \to \End_{\mathfrak{g}}(2)$ by setting $f(\ell) \coloneqq [-,-]$. By Lemma \ref{lem:existence_of_morphism_presented}, this map extends to a unique operad morphism $\Phi$ if and only if the image, $[-,-]$, satisfies the relations $R$ in the operad $\End_{\mathfrak{g}}$. This is true, since:
    \begin{itemize}
        \item The bracket is anticommutative, thus satisfying the relation $R_{anti}$.
        \item The bracket satisfies the Jacobi identity, thus satisfying the relation $R_{Jac}$.
    \end{itemize}
    Therefore, the morphism $\Phi$ exists and is unique, and $G$ is well-defined on objects.

    \item \emph{Well-definedness on morphisms.} Let $g: (\mathfrak{g}, [-,-]_{\mathfrak{g}}) \to (\mathfrak{h}, [-,-]_{\mathfrak{h}})$ be a morphism of Lie algebras. We are required to verify that $g$ is a morphism of $\mathsf{Lie}$-algebras from $G(\mathfrak{g})$ to $G(\mathfrak{h})$. Let $\Phi^{\mathfrak{g}}$ and $\Phi^{\mathfrak{h}}$ be the $\mathsf{Lie}$-algebra structures on $\mathfrak{g}$ and $\mathfrak{h}$. The condition is $g \circ \Phi^{\mathfrak{g}}_2(\ell) = \Phi^{\mathfrak{h}}_2(\ell) \circ g^{\otimes 2}$. By construction, this condition becomes the equation  $g([x,y]_{\mathfrak{g}}) = [g(x), g(y)]_{\mathfrak{h}}$, which holds by hypothesis, as it is the definition of a Lie algebra morphism.
    
    \item \emph{Functorial properties.} Functoriality follows immediately because the action of $G$ on morphisms is the identity map. It preserves identity morphisms $\bigl(G(\mathrm{id}_\mathfrak{g}) = \mathrm{id}_\mathfrak{g} = \mathrm{id}_{G(\mathfrak{g})}\bigr)$ and composition $\bigl(G(g \circ f) = g \circ f = G(g) \circ G(f)\bigr)$.
\end{enumerate}
Since all conditions are met, $G$ is a well-defined functor.
\end{proof}

Having constructed the two functors, we can state and prove the isomorphism theorem.

\begin{theorem}
\label{th:lie_isomorphism}
The category of $\mathsf{Lie}$-algebras is isomorphic to the category of Lie algebras. This isomorphism is established by the functors defined in Propositions \ref{prop:functor_F_Lie} and \ref{prop:functor_G_Lie}.
\end{theorem}

\begin{proof}
Propositions \ref{prop:functor_F_Lie} and \ref{prop:functor_G_Lie} establish that the functors $F$ and $G$ are well-defined. A direct verification shows that they are mutually inverse, since their compositions yield the identity functors on the respective categories. This provides the isomorphism of categories.
\end{proof}

\subsection{A Unified Perspective: Operads as Algebraic Theories}

The formalism of generators and relations, developed in this section, provides a universal ``algorithm'' for translating any \emph{type of algebra}\footnote{This category includes associative, commutative, and Lie algebras, which are the central examples of this review. Conversely, structures defined by operations with multiple outputs (like co-algebras) or whose operations are not generally multilinear (like groups) cannot be described by this formalism. While more general tools like PROPs are needed to handle multi-output operations, they are still unsuitable for describing groups, whose inversion map is not linear.} (defined by multilinear operations with an arbitrary number of inputs and a single output) into an operad.

The central idea is that every algebraic theory can be encoded through the following systematic procedure:
\begin{enumerate}
    \item \emph{Extract the generators:} one identifies the fundamental operations of the theory. For associative algebras, it is a binary product; for Lie algebras, it is a binary bracket. These fundamental operations form the collection of generators $P$.
    
    \item \emph{Extract the relations:} one translates the axioms of the theory (such as associativity, the Jacobi identity, anticommutativity) into relations $R$.
    
    \item \emph{Construct the operad:} the operad that encodes the algebraic theory is $\langle P \mid R \rangle$.
\end{enumerate}
This framework culminates in the fundamental result that the category of algebras defined in the classical way and the category of algebras over the operad $\langle P \mid R \rangle$ (i.e., its representations) are isomorphic \cite{hilgerponcin}.

This means that an operad is much more than a simple structure: it is an \emph{algebraic theory}, a single object that contains all the information about the operations, symmetries, and relations of a given species of algebras.

\section{Later Developments and Applications}
\label{chap:developments}

In the previous sections, we introduced the notion of an operad and showed how it encodes several fundamental algebraic structures. However, operad theory is much broader and extends far beyond the examples covered. The flexibility of the operadic formalism has allowed for the development of numerous generalizations, designed to describe increasingly complex algebraic and geometric contexts. In this concluding section, we  present a brief overview of some of these later developments, without pretending to be exhaustive, but with the aim of illustrating the theory's broader scope and power.

We  begin with \emph{coloured operads}, a direct extension of operad theory that provides a framework for describing algebraic structures acting on multiple distinct spaces. Subsequently, we  briefly introduce \emph{PROPs} and \emph{properads}, which generalize operads to include operations with multiple inputs and multiple outputs, and \emph{cyclic operads}, which are suitable in contexts endowed with a duality between inputs and outputs.

\subsection{Coloured Operads}
\label{sec:coloured_operads}

The equivalent definitions of an operad that we have considered so far share a common feature: they assume that all operations act on a single base object or ``type''. \emph{Coloured operads} generalize this concept to describe operations that act on a collection of different objects.

In a coloured operad, one starts with a set of ``colours'' $C$, which serves as an index set for the different types of objects. An operation in a coloured operad is thus endowed with a colour for each of its inputs and a colour for its single output.

\begin{definition}[Coloured operad]
\label{def:coloured_operad}
Let $C$ be a set, whose elements are called \emph{colours}. A \emph{(non-symmetric) coloured operad} $\mathcal{P}$ on $C$ consists of the following data:
\begin{enumerate}
    \item A collection of vector spaces $\{\mathcal{P}(c_1, \dots, c_n; c_0)\}_{n \in \mathbb{N}_0}$ for every choice of colours $c_0, c_1, \dots, c_n \in C$. The elements of $\mathcal{P}(c_1, \dots, c_n; c_0)$ are the \emph{abstract operations} with inputs of type $(c_1, \dots, c_n)$ and output of type $c_0$.
    
    \item For each $n, k_1, \dots, k_n \in \mathbb{N}_0$ and for every compatible choice of colours, a composition map, denoted $\gamma$:
\begin{equation}
\begin{split}
\mathcal{P}(c_1, \dots, c_n; c_0) & \otimes \mathcal{P}(d_{1,1}, \dots, d_{1,k_1}; c_1) \otimes \dots \\
& \otimes \mathcal{P}(d_{n,1}, \dots, d_{n,k_n}; c_n) \to \mathcal{P}(d_{1,1}, \dots, d_{n,k_n}; c_0).
\end{split}
\label{eq:your_label_here_split}
\end{equation} 
    
    \item For each colour $c \in C$, a unit operation $1_c \in \mathcal{P}(c;c)$.
\end{enumerate}
This data must satisfy associative and unit laws, taking into account the compatibility of colours.

A coloured operad is called \emph{symmetric} if, in addition, each space $\mathcal{P}(c_1, \dots, c_n; c_0)$ is equipped with a right action of the symmetric group $\mathbb{S}_n$ that permutes the inputs and their respective colours, and if the composition is equivariant with respect to this action.
\end{definition}

\begin{remark}
This definition is a direct generalization of the definition of an operad (see Definitions \ref{def: ns operad} and \ref{def: operad simmetrico}). Indeed, if the set of colours $C$ contains only one element, we recover the definition of a non-symmetric operad.
\end{remark}

\begin{remark}
    The definition \ref{def:coloured_operad} of a coloured operad is given in \texttt{Vect}, but it can be generalized to any symmetric monoidal category $(\mathscr{C},\otimes,u)$.
\end{remark}

\begin{remark}[Equivalence with multicategories]
The definition of coloured operad coincides with the definition of a \emph{multicategory} (symmetric or non-symmetric, depending on the case), in which the set of objects is the set of colours $C$ (see Definitions \ref{def:multicategory} and \ref{def: multicategoria simmetrica}). In other words, an operad on $C$ is a multicategory whose objects are the elements of $C$.
\end{remark}

The notion of a \emph{representation} of an operad (or an \emph{algebra over an operad}) extends to the case of a {coloured operad}.

\begin{definition}
    Let $\mathcal{P}$ be a coloured operad on $C$. A \emph{representation} of $\mathcal{P}$ consists of:
    \begin{enumerate}
        \item A collection of vector spaces $\{A_c\}_{c \in C}$, one for each colour $c \in C$.
        \item For each operation $p \in \mathcal{P}(c_1, \dots, c_n; c)$, where $c_1, \dots, c_n$ are the input colours and $c$ is the output colour, a linear map
        \begin{equation}
        \Phi_p : A_{c_1} \otimes \dots \otimes A_{c_n} \to A_c.
        \end{equation}
        These maps must satisfy the following three axioms:
        \begin{itemize}
          \item \emph{Associativity.}
            Given $p\in P(c_1,\dots,c_n;c)$ and $q_i\in P(c_{i,1},\dots,c_{i,k_i};c_i)$
            \begin{equation}
              \Phi_{\gamma(p;q_1,\dots,q_n)}
              \;=\;
              \Phi_p
              \,\circ\,
              \bigl(\Phi_{q_1}\otimes\cdots\otimes\Phi_{q_n}\bigr).
            \end{equation}
          
          \item \emph{Unit.}
            For each $c \in C$ and each \(1_c\in P(c;c)\)
            \begin{equation}
              \Phi_{1_c} \;=\; \mathrm{id}_{A_c}.
            \end{equation}
          
          \item \emph{Equivariance.}
            For each permutation \(\sigma\in \mathbb{S}_n\) and each
            \(p\in P(c_1,\dots,c_n;c)\),
            \begin{equation}
              \Phi_{p\cdot\sigma}
              \;=\;
              \Phi_p \,\circ\, \sigma,
            \end{equation}
            where \(\sigma\) acts on 
            \(A_{c_1}\otimes\cdots\otimes A_{c_n}\) by permuting the factors.
        \end{itemize}
    \end{enumerate}
\end{definition}

\subsubsection{Graphical Representation and Composition}
Graphically, the operations of a coloured operad are represented by planar trees whose half-edges are coloured to indicate the type of object they act upon. Composition is allowed only if the colours of the outputs of the ``inner'' operations match the colours of the inputs of the ``outer'' operation.

\begin{example}
Suppose we have an operad with four colours: orange ($O$), red ($R$), blue ($B$), and green ($G$). An operation $f \in \mathcal{P}(O, R, B, R; G)$ can be represented as follows.

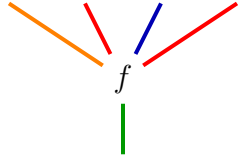
\begin{figure}[H]
    \centering
    \begin{tikzpicture}[thick]

        \colorlet{orange}{orange}
        \colorlet{red}{red}
        \colorlet{blue}{blue!70!black}
        \colorlet{green}{green!60!black}

        \node[font=\large] (f) at (0,0) {$f$};

        \draw[green, line width=1.5pt] (f) -- (0,-1);

        \draw[orange, line width=1.5pt] (f) -- (-1.5,1);
        \draw[red, line width=1.5pt] (f) -- (-0.5,1);
        \draw[blue, line width=1.5pt] (f) -- (0.5,1);
        \draw[red, line width=1.5pt] (f) -- (1.5,1);
    \end{tikzpicture}
    \caption{Representation of an operation $f: O \times R \times B \times R \to G$.}
\end{figure}

Consider two other operations, $g: O \times R \to O$ and $h: B \times G \to R$. The composition $g \circ (g,h)$ is well-defined, since the output colours of $g$ (orange) and $h$ (red) match the input colours of $g$.
\begin{figure}[H]
    \centering
    \begin{tikzpicture}[thick]

        \colorlet{orange}{orange}
        \colorlet{red}{red}
        \colorlet{blue}{blue!70!black}
        \colorlet{green}{green!60!black}

        \node[font=\large] (g_root) at (0,0) {$g$};
        \node[font=\large] (g_leaf) at (-1,1.5) {$g$};
        \node[font=\large] (h_leaf) at (1,1.5) {$h$};
        
        \draw[orange, line width=1.5pt] (g_root) -- (g_leaf);
        \draw[red, line width=1.5pt] (g_root) -- (h_leaf);
        \draw[orange, line width=1.5pt] (g_root) -- (0,-1);
        
        \draw[orange, line width=1.5pt] (g_leaf) -- (-1.5,2.5);
        \draw[red, line width=1.5pt] (g_leaf) -- (-0.5,2.5);
        \draw[blue, line width=1.5pt] (h_leaf) -- (0.5,2.5);
        \draw[green, line width=1.5pt] (h_leaf) -- (1.5,2.5);
    \end{tikzpicture}
    \caption{Valid composition $g \circ (g,h)$.}
\end{figure}
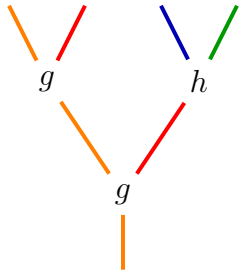
On the other hand, the composition $h \circ (g,g)$ is not allowed. The operation $h$ requires inputs of type blue and green, but it would receive two inputs of type orange from $g$. This colour mismatch makes the composition not well-defined.
\end{example}

\subsubsection{Examples}

\begin{example}[Coloured endomorphism operad]
The canonical example is the endomorphism operad on a family of vector spaces $\{V_c\}_{c \in C}$. The operations of this operad are the multilinear maps of the form:
\begin{equation}
f: V_{c_1} \otimes V_{c_2} \otimes \dots \otimes V_{c_n} \to V_{c_0}. 
\end{equation}
The composition laws are given by the usual composition of multilinear applications.
\end{example}

\begin{example}[The operad $\mathsf{As_{\bullet,\circ}}$ for an algebra morphism]
An application of coloured operads is to encode morphisms between algebras. The structure given by two associative algebras $(A, \mu_A)$, $(B, \mu_B)$ and an algebra morphism $f: A \to B$ can be encoded by a two-coloured operad, denoted $\mathsf{As_{\bullet,\circ}}$.

This encoding is achieved by using a set of two colours, $C \coloneqq \{\bullet, \circ\}$, which represent $A$ and $B$ respectively.

The operad $\mathsf{As_{\bullet,\circ}}$ is defined by the following generators, whose types are specified by their input and output colours:
\begin{itemize}
    \item A binary operation $\mu_\bullet$ of type $(\bullet, \bullet; \bullet)$, which corresponds to the product $\mu_A$ in $A$.
    \item A binary operation $\mu_\circ$ of type $(\circ, \circ; \circ)$, which corresponds to the product $\mu_B$ in $B$.
    \item A unary operation $f$ of type $(\bullet; \circ)$, which corresponds to the morphism $f: A \to B$.
\end{itemize}
Graphically, the three generators are:
\[
\begin{tikzpicture}[baseline={([yshift=-.5ex]current bounding box.center)}, thick, scale=0.7]
    \node (v) at (0,0.5) {$\mu_\bullet$};
    \draw (v) -- (0,-0.8);
    \draw (v) -- (-0.8, 1.8);
    \draw (v) -- (0.8, 1.8);
    \node[draw, circle, fill=black, inner sep=1.5pt] at (0,-0.8) {};
    \node[draw, circle, fill=black, inner sep=1.5pt] at (-0.8, 1.8) {};
    \node[draw, circle, fill=black, inner sep=1.5pt] at (0.8, 1.8) {};
\end{tikzpicture}
\qquad\qquad
\begin{tikzpicture}[baseline={([yshift=-.5ex]current bounding box.center)}, thick, scale=0.7]
    \node (w) at (0,0.5) {$\mu_\circ$};
    \draw (w) -- (0,-0.8);
    \draw (w) -- (-0.8, 1.8);
    \draw (w) -- (0.8, 1.8);
    \node[draw, circle, fill=white, inner sep=1.5pt] at (0,-0.8) {};
    \node[draw, circle, fill=white, inner sep=1.5pt] at (-0.8, 1.8) {};
    \node[draw, circle, fill=white, inner sep=1.5pt] at (0.8, 1.8) {};
\end{tikzpicture}
\qquad\qquad
\begin{tikzpicture}[baseline={([yshift=-.5ex]current bounding box.center)}, thick, scale=0.7, font=\small]
    \node(f) at (0,0.5) {$f$};
    \draw (f) -- (0,1.8);
    \draw (f) -- (0,-0.8);
    \node[draw, circle, fill=white, inner sep=1.5pt] at (0, -0.8) {};
    \node[draw, circle, fill=black, inner sep=1.5pt] at (0, 1.8) {};
\end{tikzpicture}.\
\]
The relations defining this operad are the associativity of $\mu_\bullet$ and $\mu_\circ$, and the condition that $f$ is an algebra morphism, i.e., $f \circ \mu_\bullet = \mu_\circ \circ (f,f)$.

A representation of this coloured operad consists of the choice of two vector spaces $V_\bullet$ and $V_\circ$ and a set of linear maps that interpret the generators. The relations of the operad force these linear maps to define two associative algebra structures and a morphism from the first algebra to the second.
\end{example}

The presentation of the operad $\mathsf{As_{\bullet,\circ}}$ is intentionally informal. For a formal construction, it would have been necessary to extend the notions of the previous section to the coloured context, defining the free coloured operad, the ideal generated by relations, the quotient coloured operad, etc.

\subsection{Notes on Properads and PROPs}
\label{sec:props_properads}
The theory of operads, as we have developed it, is based on operations with an arbitrary number of inputs but \emph{only one output}. Operads are ideal for encoding associative, commutative, and Lie algebras. However, this single-output framework is insufficient to describe some fundamental algebraic structures. 

Perhaps the most important such structure is the \emph{bialgebra} \cite{markl08}.

\begin{example}[Bialgebra]
A \emph{bialgebra} is a vector space $V$ equipped with four linear maps: a \emph{product} $\mu: V \otimes V \to V$, a \emph{unit} $\eta: \mathbb{K} \to V$, a \emph{coproduct} $\Delta: V \to V \otimes V$, and a \emph{counit} $\varepsilon: V \to \mathbb{K}$.

This structure is required to be both a unital associative algebra and a counital coassociative coalgebra, whose structures are compatible. In other words, the following axioms must be satisfied:
\begin{enumerate}
    \item \emph{Algebra Structure:} $(V,\mu,\eta)$ forms a unital associative algebra. That is, for all $u,v,w \in V$:
    \begin{itemize}
        \item \emph{Associativity:} $\mu(\mu(u,v),w) = \mu(u,\mu(v,w))$.
        \item \emph{Unit:} For $1_V = \eta(1_\mathbb{K})$, we have $\mu(1_V, v) = v = \mu(v, 1_V)$.
    \end{itemize}

    \item \emph{Coalgebra Structure:} $(V,\Delta,\varepsilon)$ forms a counital coassociative coalgebra. That is, for all $v \in V$:
    \begin{itemize}
        \item \emph{Coassociativity:} $(\mathrm{id} \otimes \Delta) \circ \Delta(v) = (\Delta \otimes \mathrm{id}) \circ \Delta(v)$.
        \item \emph{Counit:} After identifying $V$ with $V \otimes \mathbb{K}$ and $\mathbb{K} \otimes V$, we have $(\varepsilon \otimes \mathrm{id}) \circ \Delta(v) = v = (\mathrm{id} \otimes \varepsilon) \circ \Delta(v)$.
    \end{itemize}

    \item \emph{Compatibility Condition:} The algebra and coalgebra structures must be compatible. As established in \cite[Chapter III, Theorem 111.2.1]{Kassel1995}, this compatibility is satisfied if and only if one of the following two equivalent conditions holds:
    \begin{itemize}
        \item[(a)] The coalgebra maps, $\Delta$ and $\varepsilon$, are morphisms of algebras.
        \item[(b)] The algebra maps, $\mu$ and $\eta$, are morphisms of coalgebras.
    \end{itemize}
\end{enumerate}
\label{ex:bialgebra}
\end{example}

The operadic framework is perfectly suited to describe algebraic structures built from multi-ary operations with a single output. For instance, an associative algebra, whose structure is defined by the binary product $\mu: V \otimes V \to V$, is precisely an algebra over the operad $\mathsf{As}$, as we have seen in Subsection \ref{subsec: as}.

It is also true that operads can describe dual structures. Through the notion of a \textit{$\mathcal{P}$-co-algebra} — defined by maps into the \emph{coendomorphism operad} $\mathrm{CoEnd}_V$ — a coassociative coalgebra can be characterized as an $\mathsf{As}$-co-algebra. See \cite[Section 5.2.17]{lodayvallette}.

The limitation of the operadic formalism becomes apparent, however, when we consider a \textit{bialgebra}. This structure is not merely an algebra or a co-algebra, but both simultaneously, linked by a crucial compatibility condition between product and coproduct (see Example \ref{ex:bialgebra}). A single operad cannot govern both an algebra structure (a map to $\mathrm{End}_V$) and a co-algebra structure (a map to $\mathrm{CoEnd}_V$) on the same space. To capture this richer structure, a more general formalism is required.

\emph{PROPs} (an acronym for \emph{PROducts and Permutations category}) are the mathematical objects designed to describe this type of structure. Although more general, they appeared before operads, in a work by Mac Lane in 1965 \cite{MacLane1965}.

Intuitively, a PROP $\mathcal{P}$ is a collection of spaces of ``abstract operations'' $\mathcal{P}(m,n)$, where $n$ is the number of inputs and $m$ is the number of outputs. These operations can be composed in two ways:
\begin{enumerate}
    \item \emph{Vertical composition} ($\circ$): The output of one operation becomes the input of another (the standard composition).
    \item \emph{Horizontal composition} ($\otimes$): Two operations are ``placed next to each other'' to form a single, combined operation.
\end{enumerate}
The paradigmatic example is once again the endomorphism operad, which in this context becomes the \emph{endomorphism PROP}. For a vector space $V$, this is defined as the collection
\begin{equation}
 \End_V := \{\End_V(m, n) = \Hom(V^{\otimes n}, V^{\otimes m})\}_{m,n \ge 0} 
\end{equation}
with the tensor product of linear maps as horizontal composition and the usual composition of multilinear maps as vertical composition.

PROPs, however, are combinatorially complex, in part because their horizontal composition allows for the description of operations that decompose into disconnected components. For many applications, it is sufficient to consider a more restricted version: \emph{properads} (see, e.g. \cite{markl08}).

The fundamental difference, informally, is that properads only allow compositions along \emph{connected} graphs (see, e.g. \cite{vallette14}). This condition precludes the possibility of composing operations on disjoint sets of inputs and outputs.

Every properad is also a PROP, but not vice versa.

\subsection{Notes on Cyclic Operads}
\label{sec:cyclic_operads}

For some operads, there is no clear distinction between ``inputs'' and ``outputs''. \emph{Cyclic operads}, introduced by Getzler and Kapranov \cite{GetzlerKapranov1995}, formalize this phenomenon. Informally, cyclic operads are operads equipped with an extra symmetry that allows the exchange of the output with one of the inputs, see e.g. \cite{markl08}.

To graphically represent the abstract operations of cyclic operads, \emph{cyclic trees} are used. A cyclic tree with $n+1$ \emph{legs} is an unrooted operadic tree whose half-edges are labeled with the elements of the set $\{0, 1, \dots, n\}$ (see Figure~\ref{fig:cyclic_tree}). Since we do not assume the choice of a root, it no longer makes sense to speak of input and output vertices \cite{markl08}.

\begin{figure}
\centering
\begin{tikzpicture}[scale=1.5, thick]
    \node[circle, fill, inner sep=1pt] (c) at (0,0) {};

    \node[circle, fill, inner sep=1pt] (v1) at (-0.8, 0.8) {};
    \node[circle, fill, inner sep=1pt] (v2) at (0.8, 0.8) {};
    \node[circle, fill, inner sep=1pt] (v3) at (-1.1, -0.4) {};
    \node[circle, fill, inner sep=1pt] (v4) at (1.1, -0.6) {};
    \node[circle, fill, inner sep=1pt] (v5) at (-0.3, -0.9) {};

    \draw (c) -- (v1);
    \draw (c) -- (v2);
    \draw (c) -- (v3);
    \draw (c) -- (v4);
    \draw (c) -- (v5);
    \draw (c) -- ++(-90:1.2cm) node[below] {$0$};
    \draw (c) -- ++(-40:1.4cm) node[below] {$3$};
    
    \draw (v1) -- ++(135:0.7cm) node[above left] {$8$};
    \draw (v1) -- ++(45:0.6cm) node[above right] {$1$};

    \draw (v2) -- ++(110:0.7cm) node[above] {$9$};
    \draw (v2) -- ++(20:0.8cm) node[right] {$2$};
    
    \draw (v3) -- ++(160:0.8cm) node[left] {$4$};
    \draw (v3) -- ++(-135:0.6cm) node[below left] {$5$};
    
    \draw (v4) -- ++(0:0.9cm) node[right] {$6$};
    \draw (v4) -- ++(-60:0.8cm) node[below right] {$7$};
\end{tikzpicture}
\caption{Example of a labeled cyclic tree. Adapted from \cite[Figure 8]{markl08}.}
\label{fig:cyclic_tree}
\end{figure}
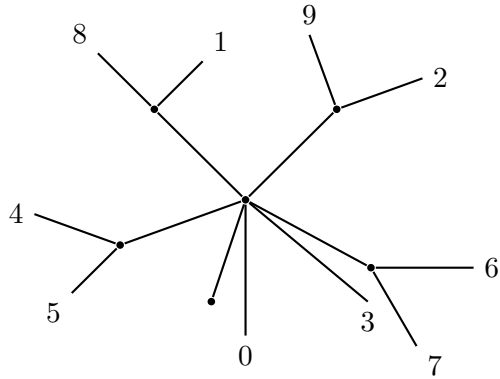

The natural algebraic context in which cyclic operads emerge is provided by vector spaces equipped with a symmetric, non-degenerate bilinear form. A basic example is provided by \emph{symmetric Frobenius algebras}. A symmetric Frobenius algebra is a unital associative algebra $F$ equipped with a symmetric non-degenerate bilinear form such that $\langle ab, c \rangle = \langle a, bc \rangle$, for all $a,b,c \in F$ \cite{vallette14}.


\begin{thebibliography}{99}

\bibitem[ATL18]{AppelLaredo2018}
\textsc{Appel, A.; Toledano Laredo, V.:} \newblock A 2-categorical extension of Etingof–Kazhdan quantisation.
\newblock \emph{Sel. Math. New Ser.}, \textbf{24}(4):3529–3617, 2018.
\newblock DOI: \texttt{10.1007/s00029-017-0381-z}.

\bibitem[BV73]{boardmanvogt}
\textsc{Boardman, J. M.; Vogt, R. M.:} \newblock \emph{Homotopy Invariant Algebraic Structures on Topological Spaces}.
\newblock Lecture Notes in Mathematics, Vol. 347. Springer-Verlag, Berlin, 1973.
\newblock DOI: \texttt{10.1007/bfb0068547}.

\bibitem[BM19]{BremnerMarkl2019}
\textsc{Bremner, M.; Markl, M.:} \newblock Distributive laws between the Three Graces.
\newblock \emph{Theory Appl. Categ.}, \textbf{34}:1317--1342, 2019.
\newblock URL: \url{www.tac.mta.ca/tac/volumes/34/41/34-41abs.html}.

\bibitem[C05]{Calaque2005}
\textsc{Calaque, D.:} \newblock Formality for Lie algebroids.
\newblock \emph{Commun. Math. Phys.}, \textbf{257}(3):563--578, 2005.
\newblock DOI: \texttt{10.1007/s00220-005-1350-5}.

\bibitem[CW15]{CalaqueWillwacher2015}
\textsc{Calaque, D.; Willwacher, T.:} \newblock Triviality of the higher formality theorem.
\newblock \emph{Proc. Am. Math. Soc.}, \textbf{143}(12):5181--5193, 2015.
\newblock DOI: \texttt{10.1090/proc/12670}.

\bibitem[DM69]{DeligneMumford1969}
\textsc{Deligne, P.; Mumford, D.:} \newblock The irreducibility of the space of curves of given genus.
\newblock \emph{Publ. Math., Inst. Hautes Étud. Sci.}, \textbf{36}:75--109, 1969.
\newblock DOI: \texttt{10.1007/BF02684599}.

\bibitem[Die17]{diestel17}
\textsc{Diestel, R.:} \newblock \emph{Graph Theory}.
\newblock 5th edition. Graduate Texts in Mathematics, Vol. 173. Springer, Berlin, 2017.
\newblock DOI: \texttt{10.1007/978-3-662-53622-3}.

\bibitem[D05]{Dolgushev2005a}
\textsc{Dolgushev, V. A.:} \newblock Covariant and equivariant formality theorems.
\newblock \emph{Adv. Math.}, \textbf{191}(1):147--177, 2005.
\newblock DOI: \texttt{10.1016/j.aim.2004.02.001}.

\bibitem[ED21]{esposito:2021}
\textsc{Esposito, C.; de Kleijn, N.:} \newblock $L_\infty$-resolutions and twisting in the curved context.
\newblock \emph{Rev. Mat. Iberoam.}, \textbf{37}(4):1581--1598, 2021.
\newblock DOI: \texttt{10.4171/rmi/1239}.

\bibitem[ENST25]{esposito:2025}
\textsc{Esposito, C.; Nest, R.; Schnitzer, J.; Tsygan, B.:} \newblock Quantization of the Momentum Map via $\mathfrak{g}$-adapted Formalities.
\newblock \emph{Preprint}, arXiv:2502.18295 [math.QA], 2025.
\newblock URL: \url{https://arxiv.org/abs/2502.18295}.

\bibitem[EGNO15]{EtingofGNO2015}
\textsc{Etingof, P.; Gelaki, S.; Nikshych, D.; Ostrik, V.:} \newblock \emph{Tensor Categories}.
\newblock Mathematical Surveys and Monographs, Vol. 205. American Mathematical Society, Providence, RI, 2015.
\newblock DOI: \texttt{10.1090/surv/205}.

\bibitem[ES98]{EtingofSchiffman1998}
\textsc{Etingof, P.; Schiffmann, O.:} \newblock \emph{Lectures on Quantum Groups}.
\newblock International Press, Boston, MA, 1998.

\bibitem[F09]{fresse2009}
\textsc{Fresse, B.:} \newblock \emph{Modules over Operads and Functors}.
\newblock Lecture Notes in Mathematics, Vol. 1967. Springer, Berlin, 2009.
\newblock DOI: \texttt{10.1007/978-3-540-89056-0}.

\bibitem[FTW18]{FresseTurchinWillwacher18}
\textsc{Fresse, B.; Turchin, V.; Willwacher, T.:}
\newblock The homotopy theory of operad subcategories.
\newblock \emph{J. Homotopy Relat. Struct.}, \textbf{13}(4):689--702, 2018.
\newblock DOI: \texttt{10.1007/s40062-018-0198-2}.

\bibitem[F17]{fresse2017}
\textsc{Fresse, B.:} \newblock \emph{Homotopy of Operads and Grothendieck-Teichmüller Groups. Part 1: The Algebraic Theory and Its Topological Background}.
\newblock Mathematical Surveys and Monographs, Vol. 217. American Mathematical Society, Providence, RI, 2017.
\newblock DOI: \texttt{10.1090/surv/217.1}.

\bibitem[Ger64]{Gerstenhaber1964}
\textsc{Gerstenhaber, M.:} \newblock On the deformation of rings and algebras.
\newblock \emph{Ann. Math. (2)}, \textbf{79}(1):59--103, 1964.
\newblock DOI: \texttt{10.2307/1970484}.

\bibitem[Get94]{Getzler1994}
\textsc{Getzler, E.:} \newblock Batalin-Vilkovisky algebras and two-dimensional topological field theories.
\newblock \emph{Commun. Math. Phys.}, \textbf{159}(2):265--285, 1994.
\newblock DOI: \texttt{10.1007/BF02102639}.

\bibitem[Get95]{Getzler1995}
\textsc{Getzler, E.:} \newblock Operads and moduli spaces of genus 0 Riemann surfaces.
\newblock In: R. Dijkgraaf, C. Faber, G. van der Geer (eds), \emph{The Moduli Space of Curves}. Progress in Mathematics, Vol. 129, pp. 199--230. Birkhäuser, Basel, 1995.
\newblock DOI: \texttt{10.1007/978-1-4612-4264-2\_9}.

\bibitem[GK95]{GetzlerKapranov1995}
\textsc{Getzler, E.; Kapranov, M. M.:} \newblock Cyclic operads and cyclic homology.
\newblock In: S.-T. Yau (ed), \emph{Geometry, Topology and Physics for Raoul Bott}. Conference Proceedings and Lecture Notes in Geometry and Topology, Vol. 4, pp. 167--201. International Press, Cambridge, MA, 1995.
\newblock DOI: \texttt{10.4310/CP.1995}.

\bibitem[GK94]{GinzburgKapranov1994}
\textsc{Ginzburg, V.; Kapranov, M. M.:} \newblock Koszul duality for operads.
\newblock \emph{Duke Math. J.}, \textbf{76}(1):203--272, 1994.
\newblock DOI: \texttt{10.1215/S0012-7094-94-07608-4}.

\bibitem[HP11]{hilgerponcin}
\textsc{Hilger, P.; Poncin, N.:} \newblock \emph{Lectures on Algebraic Operads}.
\newblock University of Luxembourg, 2011.
\newblock URL: \url{https://orbilu.uni.lu/handle/10993/14381}.

\bibitem[Kad80]{Kadeishvili1980}
\textsc{Kadeishvili, T. V.:} \newblock On the homology theory of fibre spaces.
\newblock \emph{Russ. Math. Surv.}, \textbf{35}(3):231--238, 1980.
\newblock DOI: \texttt{10.1070/RM1980v035n03ABEH001842}.

\bibitem[Kad89]{Kadeishvili1989}
\textsc{Kadeishvili, T.:} \newblock Structure of A($\infty$)-algebra and Hochschild and Harrison cohomology.
\newblock \emph{Tr. Tbilis. Mat. Inst. Razmadze Akad. Nauk Gruz. SSR}, \textbf{91}:19--27, 1989.
\newblock URL (arXiv): \texttt{https://arxiv.org/abs/math/0210331}.

\bibitem[Kas95]{Kassel1995}
\textsc{Kassel, C.:} \newblock \emph{Quantum Groups}.
\newblock Graduate Texts in Mathematics, Vol. 155. Springer-Verlag, New York, 1995.
\newblock DOI: \texttt{10.1007/978-1-4612-0783-2}.

\bibitem[Kel82]{Kelly1982}
\textsc{Kelly, G. M.:} \newblock \emph{Basic Concepts of Enriched Category Theory}.
\newblock London Mathematical Society Lecture Note Series, Vol. 64. Cambridge University Press, Cambridge, 1982.
\newblock URL (ristampa): \texttt{http://www.tac.mta.ca/tac/reprints/articles/10/tr10abs.html}.

\bibitem[KM95]{krizmay}
\textsc{Kriz, I.; May, J. P.:} \newblock \emph{Operads, algebras, modules and motives}.
\newblock Astérisque, Vol. 233. Société Mathématique de France, Paris, 1995.
\newblock URL: \url{http://www.numdam.org/issues/AST\_1995\_\_233\_/}.

\bibitem[Knu83]{Knudsen1983}
\textsc{Knudsen, F. F.:} \newblock The projectivity of the moduli space of stable curves, II: The stacks $M_{g,n}$.
\newblock \emph{Math. Scand.}, \textbf{52}:161--199, 1983.
\newblock DOI: \texttt{10.7146/math.scand.a-12001}.

\bibitem[Kon92]{Kontsevich1992}
\textsc{Kontsevich, M.:} \newblock Intersection theory on the moduli space of curves and the matrix Airy function.
\newblock \emph{Commun. Math. Phys.}, \textbf{147}(1):1--23, 1992.
\newblock DOI: \texttt{10.1007/BF02099526}.

\bibitem[Kon03]{Kontsevich2003}
\textsc{Kontsevich, M.:} \newblock Deformation quantization of Poisson manifolds.
\newblock \emph{Lett. Math. Phys.}, \textbf{66}(3):157--216, 2003.
\newblock DOI: \texttt{10.1023/B:MATH.0000027508.00421.bf}.

\bibitem[Lei04]{leinster04}
\textsc{Leinster, T.:} \newblock \emph{Higher Operads, Higher Categories}.
\newblock London Mathematical Society Lecture Note Series, Vol. 298. Cambridge University Press, Cambridge, 2004.
\newblock DOI: \texttt{10.1017/CBO9780511525896}.

\bibitem[Lei14]{leinster14}
\textsc{Leinster, T.:} \newblock \emph{Basic Category Theory}.
\newblock Cambridge Studies in Advanced Mathematics, Vol. 143. Cambridge University Press, Cambridge, 2014.
\newblock DOI: \texttt{10.1017/CBO9781107360068}.

\bibitem[LV12]{lodayvallette}
\textsc{Loday, J.-L.; Vallette, B.:} \newblock \emph{Algebraic Operads}.
\newblock Grundlehren der mathematischen Wissenschaften, Vol. 346. Springer, Berlin, 2012.
\newblock DOI: \texttt{10.1007/978-3-642-30362-3}.

\bibitem[Mac63]{maclane63}
\textsc{Mac Lane, S.:} \newblock Natural associativity and commutativity.
\newblock \emph{Rice Univ. Stud.}, \textbf{49}(4):28--46, 1963.
\newblock URL: \url{https://scholarship.rice.edu/handle/1911/62319}.

\bibitem[Mac65]{MacLane1965}
\textsc{Mac Lane, S.:} \newblock Categorical algebra.
\newblock \emph{Bull. Am. Math. Soc.}, \textbf{71}:40--106, 1965.
\newblock DOI: \texttt{10.1090/S0002-9904-1965-11234-4}.

\bibitem[Mac98]{maclane98}
\textsc{Mac Lane, S.:} \newblock \emph{Categories for the Working Mathematician}.
\newblock 2nd edition. Graduate Texts in Mathematics, Vol. 5. Springer, New York, 1998.
\newblock DOI: \texttt{10.1007/978-1-4757-4721-8}.

\bibitem[Mar08]{markl08}
\textsc{Markl, M.:} \newblock Operads and PROPs.
\newblock In: M. Hazewinkel (ed), \emph{Handbook of Algebra}, Vol. 5, pp. 87--140. North-Holland (Elsevier), Amsterdam, 2008.
\newblock DOI: \texttt{10.1016/S1570-7954(07)05002-4}.

\bibitem[MSS02]{markl_shnider_stasheff}
\textsc{Markl, M.; Shnider, S.; Stasheff, J.:}
\newblock \emph{Operads in Algebra, Topology and Physics}.
\newblock Mathematical Surveys and Monographs, Vol. 96. American Mathematical Society, Providence, RI, 2002.
\newblock DOI: \texttt{10.1090/surv/096}.

\bibitem[May72]{may_geometry}
\textsc{May, J. P.:}
\newblock \emph{The Geometry of Iterated Loop Spaces}.
\newblock Lecture Notes in Mathematics, Vol. 271. Springer, Berlin, 1972.
\newblock DOI: \texttt{10.1007/bfb0067491}.

\bibitem[MZZ25]{MayZhangZou2025}
\textsc{May, J. P.; Zhang, R.; Zou, F.:}
\newblock Unital operads, monoids, monads, and bar constructions.
\newblock \emph{Adv. Math.}, \textbf{461}, 110065, 2025.
\newblock DOI: \texttt{10.1016/j.aim.2024.110065}.

\bibitem[SS]{samchuckschnarch}
\textsc{Samchuck-Schnarch, S.:}
\newblock \emph{An Introduction to Operad Theory}. Lecture Notes, University of Ottawa.
\newblock URL: \url{https://alistairsavage.ca/pubs/Samchuck-Schnarch-Operads.pdf}.

\bibitem[S01]{smirnov}
\textsc{Smirnov, V. A.:} \newblock \emph{Simplicial and Operad Methods in Algebraic Topology}.
\newblock Translations of Mathematical Monographs, Vol. 198. American Mathematical Society, Providence, RI, 2001.
\newblock DOI: \texttt{10.1090/mmono/198}.

\bibitem[Sta63]{Stasheff1963}
\textsc{Stasheff, J. D.:} \newblock Homotopy associativity of H-spaces I, II.
\newblock \emph{Trans. Am. Math. Soc.}, \textbf{108}(2):275--312, 1963.
\newblock DOI (I): \texttt{10.2307/1993608}, DOI (II): \texttt{10.2307/1993609}.

\bibitem[Tam98]{Tamarkin1998}
\textsc{Tamarkin, D.:}
\newblock Another proof of M. Kontsevich formality theorem.
\newblock \emph{Preprint}, arXiv:math/9803025, 1998.
\newblock URL: \url{https://arxiv.org/abs/math/9803025}.

\bibitem[Val07]{Vallette2007}
\textsc{Vallette, B.:} \newblock A Koszul duality for PROPs.
\newblock \emph{Trans. Am. Math. Soc.}, \textbf{359}(10):4865--4943, 2007.
\newblock DOI: \texttt{10.1090/S0002-9947-07-04182-7}.

\bibitem[Val14]{vallette14}
\textsc{Vallette, B.:} \newblock Algebra + Homotopy = Operad.
\newblock In: T. Ratiu, A. Weinstein, S. Zelditch (eds), \emph{Symplectic, Poisson, and Noncommutative Geometry}, pp. 229--290. Cambridge University Press, Cambridge, 2014.
\newblock DOI: \texttt{10.1017/CBO9781139519419.009}.

\bibitem[Vit25]{Vitagliano2025}
\textsc{Vitagliano, L.:} \newblock \emph{Homology \& cohomology. A primer for undergraduates through applications}.
\newblock World Scientific, Singapore, 2025.
\newblock DOI: \texttt{10.1142/14116}.

\bibitem[W15]{Willwacher2015}
\textsc{Willwacher, T.:} \newblock M. Kontsevich’s graph complex and the Grothendieck–Teichmüller Lie algebra.
\newblock \emph{Invent. Math.}, \textbf{200}(3):671–760, 2015.
\newblock DOI: \texttt{10.1007/s00222-014-0528-x}.

\bibitem[Zha16]{zhang2016}
\textsc{Zhang, W.:} \newblock Group Operads and Homotopy Theory.
\newblock \emph{Homology Homotopy Appl.}, \textbf{18}(1):253--277, 2016.
\newblock DOI: \texttt{10.4310/HHA.2016.v18.n1.a12}.

\bibitem[Zwi93]{Zwiebach1993}
\textsc{Zwiebach, B.:} \newblock Closed string field theory: Quantum action and the Batalin-Vilkovisky master equation.
\newblock \emph{Nucl. Phys. B}, \textbf{390}(1):33--152, 1993.
\newblock DOI: \texttt{10.1016/0550-3213(93)90388-6}.
\end{thebibliography}
\end{document}